\numberwithin{figure}{section}
\numberwithin{equation}{section}
\title{Symmetric Dyck tilings, ballot tableaux and tree-like tableaux of shifted shapes}
\author[K.~Shigechi]{Keiichi~Shigechi}
\email{k1.shigechi AT gmail.com}
\date{\today}
\newcommand\tikzpic[2]{
\raisebox{#1\totalheight}{
\begin{tikzpicture}
#2
\end{tikzpicture}
}}
\newcommand\circnum[1]{
\hspace{-3mm}
\tikzpic{-0.22}{[scale=0.7]
\node[draw,circle,inner sep=1pt]at(0,0){$#1$};
}
\hspace*{-3mm}
}
\newtheorem{theorem}[figure]{Theorem}
\newtheorem{example}[figure]{Example}
\newtheorem{lemma}[figure]{Lemma}
\newtheorem{claim}[figure]{Claim}
\newtheorem{defn}[figure]{Definition}
\newtheorem{prop}[figure]{Proposition}
\newtheorem{cor}[figure]{Corollary}
\newtheorem{remark}[figure]{Remark}
\begin{document}

\begin{abstract}
Symmetric Dyck tilings and ballot tilings are certain tilings in the region
surrounded by two ballot paths.
We study the relations of combinatorial objects which are bijective to 
symmetric Dyck tilings such as labeled trees, Hermite histories, and perfect 
matchings.
We also introduce two operations on labeled trees for symmetric Dyck tilings: 
symmetric Dyck tiling strip (symDTS) and symmetric Dyck tiling ribbon (symDTR).
We give two definitions of Hermite histories for symmetric Dyck tilings, and 
show that they are equivalent by use of the correspondence between symDTS operation
and an Hermite history.
Since ballot tilings form a subset in the set of symmetric Dyck tilings,
we construct an inclusive map from labeled trees for ballot tilings to 
labeled trees for symmetric Dyck tilings.
By this inclusive map, the results for symmetric Dyck tilings can be applied 
to those of ballot tilings.
We introduce and study the notions of ballot tableaux and tree-like tableaux of shifted 
shapes, which are generalizations of Dyck tableaux and tree-like tableaux, respectively.
The correspondence between ballot tableaux and tree-like tableaux of shifted shapes 
is given by using the symDTR operation and the structure of labeled trees for 
symmetric Dyck tilings.
\end{abstract}

\maketitle

\tableofcontents

\section{Introduction}
\label{sec:Intro}
Cover-inclusive Dyck tilings were introduced by Zinn-Justin and the author 
in the study of Kazhdan--Lusztig polynomials for Grassmannian permutations in \cite{SZJ12}.
Independently, Kenyon and Wilson introduced them in the study of the double-dimer 
model and spanning trees \cite{KW11,KW15}. 
Since then, Dyck tilings has appeared in connection with different contexts such 
as fully packed loop models \cite{FN12}, multiple Schramm--Loewner evolutions \cite{PelWu19,Pon18}, 
and the intersection cohomology of Grassmannian Schubert varieties \cite{Pa19}. 

Dyck tilings are tilings by Dyck tiles in the region surrounded by two Dyck paths.
The partition function of Dyck tilings with fixed boundary paths is defined 
as the sum of the weights given to Dyck tilings.
The partition function with a fixed lower Dyck path is written as a product 
of $q$-integers (conjectured for $q=1$ case in \cite{KW11} and proved in \cite{K12}).
There, linear extensions of tree posets play a central role to study the partition
function \cite{KMPW12}.

There are several generalizations of Dyck tilings.
The first one is ballot tilings, which are type $B$ analogue of Dyck tilings \cite{S17}.
Ballot tilings are tilings by ballot tiles, which include Dyck tiles, in the
region surrounded by two ballot paths.
Since Dyck paths are also ballot paths, ballot tilings can be regraded as a 
generalization of Dyck tilings. 
The second is symmetric Dyck tilings, which are Dyck tilings with a symmetry along the 
vertical line in the middle \cite{JVK16}.
A symmetric Dyck tiling can also be regarded as a tiling in the region surrounded by 
two ballot paths.
The difference between ballot tilings and symmetric Dyck tilings is that 
the former has conditions on ballot tiles forming a ballot tiling.
The third is $k$-Dyck tilings studied in \cite{JVK16}.
$k$-Dyck tilings are defined by replacing Dyck paths and Dyck tiles by
$k$-Dyck paths and $k$-Dyck tiles.
In this paper, we study the first two generalizations of Dyck tilings: ballot tilings
and symmetric Dyck tilings.
The partition function of ballot tilings was obtained in \cite{S17}, and that of symmetric 
Dyck tilings was obtained in \cite{JVK16}.
As for symmetric Dyck tilings, we introduce a different partition function from \cite{JVK16} 
by choosing simple weights for tilings. 
As a result, the partition function can be written in terms of $q$-integers in a simple form.

The set of symmetric Dyck tilings of size $2n$ is obviously the subset in the set of 
Dyck tilings of size $2n$.
By dividing a symmetric Dyck tiling of size $2n$ into two pieces in the middle,
we have a tiling which we call a fundamental symmetric Dyck tiling of size $n$.
Symmetric Dyck tilings of size $2n$ have top and lower Dyck paths of size $2n$, 
and fundamental symmetric Dyck tilings have top and lower ballot paths of size $n$. 
The ballot tilings are fundamental symmetric Dyck tilings with some conditions on 
ballot tiles.
In ballot tilings of size $n$, we obtain Dyck tilings of size $n$ by 
restricting ourselves to Dyck tiles.
Therefore, we have natural inclusions of these three types of tilings:  
\begin{align*}
\mathrm{DT}_{n}\subset \text{ballot tilings} \subset \text{symmetric Dyck tilings} \subset \mathrm{DT}_{2n}, 
\end{align*}
where $\mathrm{DT}_{n}$ denote the Dyck tilings of size $n$.
Dyck tilings are used to compute maximal parabolic Kazhdan--Lusztig polynomials for Hecke algebras 
of type $A$ in \cite{SZJ12}, and ballot tilings are for type $B$ in \cite{S141}.
The weight of Dyck or ballot tilings to compute Kazhdan--Lusztig polynomials is 
different from the weight of tilings to obtain the partition functions studied in \cite{KW11,K12}.
From the above-mentioned inclusions, it is natural to regard symmetric Dyck tilings as tilings in-between 
type $A$ and type $B$.
Typically, the partition function for symmetric Dyck tilings (see Theorem \ref{thrm:pfsymDT})
looks similar to both the one for Dyck tilings (see Theorem \ref{thrm:DT})
and the one for ballot tilings (see Theorem \ref{thrm:pfbt}).
Actually, (fundamental) symmetric Dyck tilings have properties similar to both type $A$ and 
type $B$ as explained below.

There are several combinatorial objects which are bijective to Dyck tilings.
Main objects appeared in the analysis of Dyck tilings are the following
five combinatorial objects: labeled trees, 
Hermite histories, perfect matchings, Dyck tableaux and tree-like tableaux. 
First three notions appeared in the study of Dyck tilings \cite{KMPW12}, 
and the last two notions appeared in \cite{ABDH11} and \cite{ABN11}.
In this paper, we utilize these five combinatorial objects for the study of 
symmetric Dyck tilings and ballot tilings.

As mentioned above, ballot tilings are included in symmetric Dyck tilings.
However, labeled trees for ballot tilings and symmetric Dyck tilings have 
different structures due to the fact that labeled trees encode information
about tilings.
Especially, when we regard a ballot tiling as a symmetric Dyck tiling,
the labeled trees are different even though we have the same tiling.
To solve this discrepancy, we introduce an inclusive map from a labeled tree 
for a ballot tiling to a labeled tree for a symmetric Dyck tiling such that 
the underlying two tilings are the same (see Section \ref{sec:symDTSbt}).
The image of the inclusive map is a subset in  
the set of fundamental symmetric Dyck tilings.
Then, one can apply the results for symmetric Dyck tilings to the ones 
for ballot tilings by this inclusive map.
Since this inclusive map preserves the properties as type $B$, 
the subset of fundamental symmetric Dyck tilings has also the properties
as type $B$.

In addition, we have two operations on labeled trees to obtain symmetric 
Dyck tilings: symmetric Dyck tilings strip (symDTS) and 
symmetric Dyck tiling ribbon (symDTR).
Similarly, we have two operations on labeled trees to obtain ballot tilings,
which are a composition of the inclusive map from ballot tilings to symmetric
Dyck tilings and symDTS/symDTR.
These operations on labeled trees are a natural generalizations of 
Dyck tiling strip and Dyck tiling ribbon bijections studied in \cite{KMPW12}.
We study symmetric Dyck tilings and ballot tilings by combining the five 
combinatorial objects mentioned above and these two bijections.

We generalize the notions of labeled trees, Hermite histories and 
perfect matchings in \cite{KMPW12} to the cases of symmetric Dyck tilings and 
ballot tilings, 
and study the relations among them.
We need to implement some extra structures in 
Hermite histories and perfect matchings since labeled trees for symmetric 
Dyck tilings and ballot tilings have also some extra information 
compared to Dyck tilings.
Acturally, although the size of a perfect matching is equal to the one of 
a (non-symmetric) Dyck tiling, the size of a perfect matching is possibly 
larger than the one of a symmetric Dyck tiling.
This discrepancy of the sizes reflects the vertical symmetry of the tiling.

We give two definitions of Hermite histories for symmetric Dyck tiligs.
One is the definition by using the nesting numbers of a perfect matching, 
and the other is the one by using the number of tiles in a symmetric 
Dyck tiling.
One of the main results is that the two definitions of Hermite histories 
coincide with each other (see Theorem \ref{thrm:Hh}).
Given the Hermite history of a symmetric Dyck tiling $\mathcal{D}$,
one can define a word $w_1$ which encodes the numbers of tiles associated 
to up steps in the lower boundary path.
Reversely, given a word $w_1$ and a lower boundary path, one can 
reconstruct $\mathcal{D}$ by using the Hermite history.
Similarly, given $\mathcal{D}$, one can define a word $w_2$ by the 
symmetric Dyck tiling strip. 
Then, a pair of $w_2$ and a lower boundary path is bijective to 
$\mathcal{D}$.
Thus, we have two words $w_1$ and $w_2$ for a symmetric Dyck tiling $\mathcal{D}$.
These two words are different in general. 
The difference comes from the existence of non-trivial Dyck tiles in $\mathcal{D}$. 
We give two explicit maps from the word $w_2$ to the word $w_1$.  
The first map is obtained by use of the tree structure associated with $\mathcal{D}$.
The second map is by use of the set encoding the information about non-trivial Dyck 
tiles in $\mathcal{D}$. 
These maps play a central role to prove Theorem \ref{thrm:Hh}.
The correspondence among labeled trees, Hermite histories and perfect matchings 
is a property reminiscent of the correspondence in type $A$.

Originally, the notions of Dyck tableaux and tree-like tableaux 
are introduced for Dyck tilings with the zig-zag lower boundary path, or 
equivalently for a permutation \cite{ABDH11,ABN11}.
The notions of Dyck tableaux and tree-like tableaux are generalized to 
the generic lower boundary path, or equivalently for labeled trees of generic shape 
in \cite{S19}.
As for symmetric Dyck tilings, in this paper, we introduce the notions of ballot tableaux 
and tree-like tableaux of shifted shapes.
One of the main objects in this paper is the introduction of these ballot tableaux
and tree-like tableaux of shifted shapes.
They are generalizations of Dyck tableaux in \cite{ABDH11,S19} and tree-like tableaux
in \cite{ABN11,S19}.
In \cite{ABN11}, tree-like tableaux for symmetric Dyck tilings are introduced and 
they can also be regarded as tableaux of shifted shapes by cutting them along the main 
diagonal.
The tableaux are associated with symmetric Dyck tilings whose lower boundary path 
is a zig-zag path.
The advantage of our construction of tree-like tableaux of shifted shapes
is that we have tableaux for labeled trees of generic shapes, in other words,
for general lower boundary ballot paths.
This is realized by relaxing a condition to construct tree-like tableaux, namely,
by introducing diagonal insertions.
We no longer have a simple correspondence between tree-like tableaux and 
binary trees as observed in \cite{ABN11} due to diagonal insertions.
However, tree-like tableaux of shifted shapes still possess the properties
similar to ballot tableaux.

Ballot tableaux and tree-like tableaux of shifted shapes have similar 
properties to the symmetric DTR bijection rather than the symmetric DTS 
bijection.
Since the symmetric DTR bijection has an insertion algorithm, 
we introduce insertion procedures for ballot tableaux and tree-like 
tableaux of shifted shapes.
Then, we show that there exists a bijection between ballot tableaux and tree-like 
tableaux by use of the symmetric DTR bijection.
We also study the enumeration of tree-like tableaux of shifted shapes and show 
that the enumeration is given by an integer sequence with a simple recurrence 
relation.

The paper is organized as follows.
In section 2, we introduce symmetric Dyck tilings and ballot tilings, 
and define the partition functions for these tilings.
In section 3, we give definitions of labeled trees for both symmetric 
Dyck tilings and ballot tilings.
We give a description of the partition functions in terms of the trees.
In section 4, we introduce the operation on symmetric Dyck tilings
and ballot tilings, which is called symmetric Dyck tiling strip.
This operation acting on symmetric Dyck tilings and ballot tilings 
gives the correspondence between labeled trees and the tilings.
We also give an explicit inclusion from ballot tilings to symmetric Dyck 
tilings. 
In section 5, we construct a bijection among symmetric Dyck tilings, 
Hermite histories, and perfect matchings. 
To study these combinatorial objects, we make use of the symDTS bijection and 
insertion histories.
These are a generalization of the results for Dyck tiling in \cite{S19}. 
In section 6, we introduce another operation called symmetric Dyck 
tiling ribbon acting on labeled trees.
In sections 7 and 8, we introduce and study the notions of ballot tableaux
and tree-like tableaux of shifted shapes.
The operation symDTR introduced in section 6 is a key tool to study ballot tableaux and tree-like 
tableaux for shifted shapes.

\paragraph{\bf Notations}
We introduce the quantum integer $[n]:=\sum_{i=0}^{n-1}q^{i}$,  
quantum factorial $[n]!:=\prod_{i=1}^{n}[i]$, $[2m]!!:=\prod_{i=1}^{m}[2i]$, 
and the $q$-analogue of the binomial coefficients
\begin{align*}
\genfrac{[}{]}{0pt}{}{n}{m}:=\frac{[n]!}{[n-m]![m]!}, 
\qquad
\genfrac{[}{]}{0pt}{}{n}{m}_{q^{2}}:=\frac{[2n]!!}{[2(n-m)]!!\cdot[2m]!!}.
\end{align*}

\section{Symmetric Dyck tilings and ballot tilings}
\subsection{Dyck tilings}
Dyck paths of size $n$ are defined as lattice paths starting from $(0,0)$ 
to $(2n,0)$, which are not below the horizontal line $y=0$ in the Cartesian coordinate.  
Each step of a Dyck path is either $(+1,+1)$ (an up or ``$U$" step) or 
$(+1,-1)$ (a down or ``$D$" step).
We call the Dyck path $(UD)^{n}$ a zig-zag Dyck path or simply a zig-zag path.

A Dyck tile (also called ``Dyck strip") is a ribbon (connected skew shapes which do not 
contain a $2\times2$ rectangle) such that the centers of boxes forming the ribbon form 
a Dyck path.
The size of a Dyck tile is the size of underlining Dyck path plus one.
Therefore, the size of a single box is one.

We associate a Dyck path $\lambda$ with a Young diagram formed by boxes 
in the region specified by $\lambda$, the line $y=x$ and the line $y=-x+2n$. 
Let $\lambda$ and $\mu$ be Dyck paths. 
When the skew Young diagram $\lambda/\mu$ exists, we say that $\mu$ is above $\lambda$
or $\lambda$ is below $\mu$, and denote it by $\lambda\le \mu$.

A Dyck tiling is a tiling of a skew Young diagram $\lambda/\mu$ by Dyck tiles. 
A Dyck tiling $D$ is called cover-inclusive if we translate a Dyck tile of $D$ 
downward by $(0,-2)$, then it is strictly below $\lambda$ or contained in 
another Dyck tile.
In this paper, since we consider only cover-inclusive Dyck tiles, we simply 
denote it by Dyck tilings.
Some examples of Dyck tilings are in Figure \ref{fig:DyckTilings}.
\begin{figure}[ht]
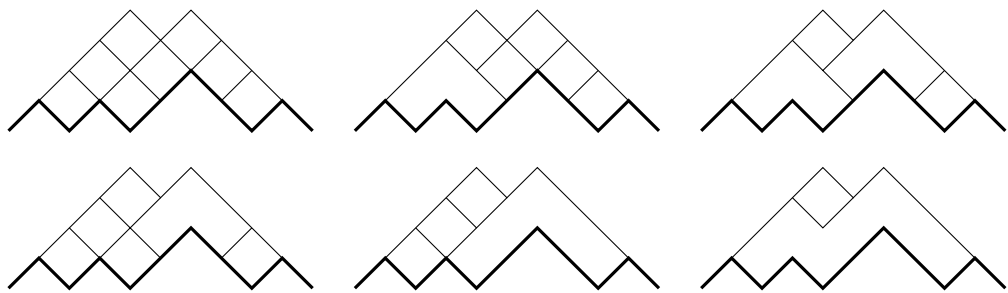

\tikzpic{-0.5}{[x=0.4cm,y=0.4cm]
\draw[very thick](0,0)--(1,1)--(2,0)--(3,1)--(4,0)--(6,2)--(8,0)--(9,1)--(10,0);
\draw(1,1)--(4,4)--(5,3)--(6,4)--(9,1);
\draw(2,2)--(3,1)--(5,3)(3,3)--(5,1)(5,3)--(6,2)(6,2)--(7,3)(7,1)--(8,2);
}
\tikzpic{-0.5}{[x=0.4cm,y=0.4cm]
\draw[very thick](0,0)--(1,1)--(2,0)--(3,1)--(4,0)--(6,2)--(8,0)--(9,1)--(10,0);
\draw(1,1)--(4,4)--(5,3)--(6,4)--(9,1);
\draw(4,2)--(5,3)(3,3)--(5,1)(5,3)--(6,2)(6,2)--(7,3)(7,1)--(8,2);
}
\tikzpic{-0.5}{[x=0.4cm,y=0.4cm]
\draw[very thick](0,0)--(1,1)--(2,0)--(3,1)--(4,0)--(6,2)--(8,0)--(9,1)--(10,0);
\draw(1,1)--(4,4)--(5,3)--(6,4)--(9,1);
\draw(4,2)--(5,3)(3,3)--(5,1)(7,1)--(8,2);
}
\\[12pt]
\tikzpic{-0.5}{[x=0.4cm,y=0.4cm]
\draw[very thick](0,0)--(1,1)--(2,0)--(3,1)--(4,0)--(6,2)--(8,0)--(9,1)--(10,0);
\draw(1,1)--(4,4)--(5,3)--(6,4)--(9,1);
\draw(2,2)--(3,1)--(5,3)(3,3)--(5,1)(7,1)--(8,2);
}
\tikzpic{-0.5}{[x=0.4cm,y=0.4cm]
\draw[very thick](0,0)--(1,1)--(2,0)--(3,1)--(4,0)--(6,2)--(8,0)--(9,1)--(10,0);
\draw(1,1)--(4,4)--(5,3)--(6,4)--(9,1);
\draw(2,2)--(3,1)--(5,3)(3,3)--(4,2);
}
\tikzpic{-0.5}{[x=0.4cm,y=0.4cm]
\draw[very thick](0,0)--(1,1)--(2,0)--(3,1)--(4,0)--(6,2)--(8,0)--(9,1)--(10,0);
\draw(1,1)--(4,4)--(5,3)--(6,4)--(9,1);
\draw(3,3)--(4,2)--(5,3);
}
\caption{Dyck tilings of ske shape $\lambda/\mu$ with $\lambda=UDUDUUDDUD$ and 
$\mu=UUUUDUDDDD$.}
\label{fig:DyckTilings}
\end{figure}

Let $\mathcal{D}(\lambda)$ be the set of Dyck tilings above $\lambda$.
Given a Dyck tiling $D\in\mathcal{D}(\lambda)$, 
we define the weight of a Dyck tile $d$ of size $n_{d}$ in $D$ by
\begin{align*}
\mathrm{wt}(d):=n_{d}.
\end{align*}
The weight of a Dyck tiling $D$ is the sum of the weights of Dyck tiles 
consisting of $D$.
The partition function $P(\lambda)$ of Dyck tilings above $\lambda$ is defined 
by
\begin{align*}
P(\lambda):=
\sum_{D\in\mathcal{D}(\lambda)}\sum_{d\in D}\mathrm{wt}(d).
\end{align*}

We introduce a {\it chord} of a Dyck path and its length.
We make a pair of $U$ and $D$ which are next to each other in $\lambda$.
We continue making a pair of $U$ and $D$ by ignoring already paired $U$ 
and $D$. 
Note that a Dyck path of length $2n$ consists of $n$ pairs of $U$ and $D$.
We call this pair of $U$ and $D$ a chord of the Dyck path $\lambda$.
Let $\Lambda_{c}(\lambda)$ be the set of chords in the Dyck path $\lambda$.
The length of a chord $c$ is one plus the number of chords between $U$ 
and $D$ in $c$.
The length of the chord $c$ is denoted by $l(c)$.

\begin{theorem}[Conjecture 1 in \cite{KW11} and Theorem 1.1 in \cite{K12}]
\label{thrm:DT}
The partition function associated with the Dyck path $\lambda$ is expressed 
in terms of $q$-integers:
\begin{align*}
P(\lambda)=\genfrac{}{}{1pt}{}{[n]!}{\prod_{c\in\Lambda_{c}(\lambda)}[l(c)]}.
\end{align*}
\end{theorem}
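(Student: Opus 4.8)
The plan is to reduce the evaluation of $P(\lambda)$ to the enumeration of linear extensions of a rooted forest and then invoke the $q$-analogue of the hook-length formula. Throughout I read the partition function as the weighted generating function $P(\lambda)=\sum_{D\in\mathcal{D}(\lambda)}q^{\mathrm{wt}(D)}$, where $\mathrm{wt}(D)=\sum_{d\in D}\mathrm{wt}(d)$ is the total weight, since this is the only reading under which the stated $q$-identity can hold.

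First I would make explicit the rooted forest hidden in the chord set $\Lambda_{c}(\lambda)$. The pairing procedure defining chords is an arc-matching of the up- and down-steps of $\lambda$, so any two chords are either disjoint or nested; declaring a chord $c'$ to be a child of $c$ whenever $c'$ is immediately nested inside $c$ turns $\Lambda_{c}(\lambda)$ into a rooted forest $F_{\lambda}$ on $n$ vertices, where $n=|\Lambda_{c}(\lambda)|$. By the very definition of $l(c)$ as one plus the number of chords enclosed by $c$, the length $l(c)$ is exactly the size of the subtree of $F_{\lambda}$ rooted at $c$, that is, its hook length. Hence $\prod_{c}[l(c)]$ is the product of $q$-hook lengths of $F_{\lambda}$, and the target reads $P(\lambda)=[n]!/\prod_{c}[l(c)]$.

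Second, I would build the Dyck tilings above $\lambda$ one chord at a time. Processing the chords in a fixed order, each step grows the lower path by inserting one chord and either slides the current tiling up across the new chord, creating new Dyck tiles, or leaves it fixed, with the cover-inclusive condition dictating the admissible amounts. The core claim is that the record of how far the tiling is pushed at each insertion is precisely a linear extension $L$ of $F_{\lambda}$, yielding a bijection $\mathcal{D}(\lambda)\xrightarrow{\sim}\mathcal{L}(F_{\lambda})$ onto the set of linear extensions, and that under this bijection $\mathrm{wt}(D)$ equals a statistic $\mathrm{maj}(L)$ of major-index type, each unit of weight being an inversion forced when a chord is inserted beneath chords it must nest under. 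This identifies
\[
P(\lambda)=\sum_{L\in\mathcal{L}(F_{\lambda})}q^{\mathrm{maj}(L)}.
\]
Applying the $q$-hook-length formula for forests, $\sum_{L\in\mathcal{L}(F)}q^{\mathrm{maj}(L)}=[n]!/\prod_{v}[h(v)]$, and substituting $h(c)=l(c)$ then gives the stated expression.

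I expect the main obstacle to be the second step: proving that the chord-insertion procedure is a genuine bijection onto $\mathcal{D}(\lambda)$ and, harder still, that the weight matches $\mathrm{maj}$ on the nose rather than merely in distribution. This requires an induction on the number of chords together with a careful analysis of how the cover-inclusive condition constrains each upward slide. A self-contained alternative avoids the external hook-length formula: peel off an outermost chord $c_{0}$ (a root of $F_{\lambda}$, whose deletion leaves $\lambda'$ a Dyck path on $n-1$ chords and leaves every remaining hook length unchanged) and establish directly from the tiling definition the recurrence $P(\lambda)=\tfrac{[n]}{[l(c_{0})]}\,P(\lambda')$, which is readily checked to be consistent with the product formula by induction. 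Here the delicate point migrates to classifying the ways the topmost tiles may cover $c_{0}$, so as to extract the factor $[n]/[l(c_{0})]$.
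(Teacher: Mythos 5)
This theorem is not proved in the paper at all: it is imported verbatim as a known result (Conjecture 1 of \cite{KW11}, proved in \cite{K12}; the $q$-weighted form via linear extensions is \cite{KMPW12}), and the paper only ever invokes it (e.g.\ at the end of the proof of Theorem \ref{thrm:pfsymDT}). So there is no internal proof to compare against; the relevant comparison is with the cited sources. Your outline is essentially the route of \cite{KMPW12}: the chord set $\Lambda_{c}(\lambda)$ is indeed a rooted forest under nesting, $l(c)$ is exactly the hook length (size of the subtree at $c$), the chord-by-chord growth is the Dyck tiling strip bijection onto linear extensions (equivalently, natural labels of the tree), and the weight identity you need is precisely what this paper later quotes as Lemma \ref{lemma:wtDT} ($\sum_{d}\mathrm{wt}(d)=\mathrm{Inv}(w(L))$). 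Two small corrections: the statistic matched on the nose by the DTS bijection is the \emph{inversion} number of the pre-order word, not a major index, so you should either use the inv-version of the Bj\"orner--Wachs hook-length formula for forests or separately invoke the equidistribution of $\mathrm{inv}$ and $\mathrm{maj}$ over linear extensions of a forest; and your reading of $P(\lambda)$ as $\sum_{D}q^{\mathrm{wt}(D)}$ is the correct repair of the paper's typo. Your alternative recurrence $P(\lambda)=\tfrac{[n]}{[l(c_{0})]}P(\lambda')$ is consistent with the product formula, but since $[n]/[l(c_{0})]$ need not be a polynomial it cannot be established tile-by-tile in the naive way; one would have to prove $[l(c_{0})]\,P(\lambda)=[n]\,P(\lambda')$, which is essentially as hard as the theorem, so the linear-extension route is the one to carry out. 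As a blind reconstruction of the cited proof your sketch is sound, with the acknowledged gap being the bijection and weight-preservation claims in your second step.
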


\subsection{Ballot tile}
For non-negative integers $n$ and $n'$, 
a ballot path of size $(n,n')$ are defined as a lattice path 
starting from $(0,0)$ to $(2n+n',n')$ which are not below
the horizontal line $y=0$.
When $n'=0$, a ballot path is nothing but a Dyck path.
Each step of a ballot path is either a $U$ step or a $D$ step.
A ballot tile of size $(n,n')$ is a ribbon such that the centers 
of boxes forming the ribbon form a ballot path of size $(n,n')$.
Note that we do not add one to define the size of a ballot tile
(see the definition of the size of a Dyck tile in the previous subsection).

\subsection{Symmetric Dyck tilings}
Let $\lambda$ be a Dyck path of size $n$ and symmetric along the 
vertical line $x=n$.
A symmetric Dyck tiling over $\lambda$ is a Dyck tiling 
that is symmetric along the vertical line $x=n$.
The left picture of Figure \ref{fig:SymDTDT} is an example 
of a symmetric Dyck tiling over $UDUDUUUDUDDDUDUD$.
\begin{figure}[ht]
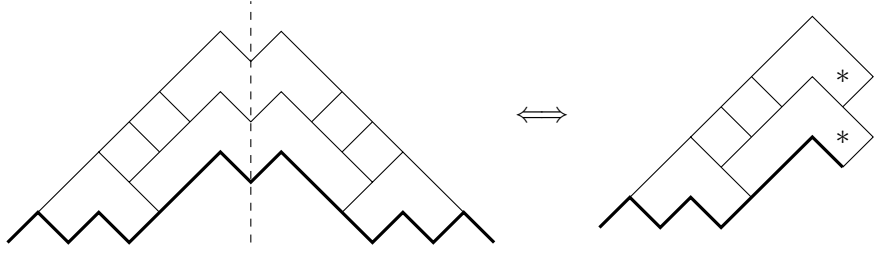

\tikzpic{-0.5}{[x=0.4cm,y=0.4cm]
\draw[very thick](0,0)--(1,1)--(2,0)--(3,1)--(4,0)--(7,3)--(8,2)--
	(9,3)--(12,0)--(13,1)--(14,0)--(15,1)--(16,0);
\draw(1,1)--(3,3)--(5,1)(4,2)--(7,5)--(8,4)--(9,5)--(12,2)
	(11,1)--(13,3)--(15,1)(3,3)--(7,7)--(8,6)--(9,7)--(13,3);
\draw(4,4)--(5,3)(5,5)--(6,4)(12,4)--(11,3)(11,5)--(10,4);
\draw[dashed](8,0)--(8,8);
}
$\Longleftrightarrow$
\tikzpic{-0.5}{[x=0.4cm,y=0.4cm]
\draw[very thick](0,0)--(1,1)--(2,0)--(3,1)--(4,0)--(7,3)--(8,2);
\draw(1,1)--(3,3)--(5,1)(3,3)--(7,7)--(9,5)--(8,4);
\draw(4,4)--(5,3)(5,5)--(6,4);
\draw(4,2)--(7,5)--(9,3)--(8,2);
\draw(8,3)node{$\ast$}(8,5)node{$\ast$};
}
\caption{A symmetric Dyck tiling}
\label{fig:SymDTDT}
\end{figure}

Given a symmetric Dyck tiling, we cut it along the line $x=n$ and put 
$\ast$ on the boxes positioned at the line $x=n$.
Then, a diagram obtained from a symmetric Dyck tiling by a cut 
is called a {\it fundamental} symmetric Dyck tiling.
The right picture of Figure \ref{fig:SymDTDT} is an example of a fundamental 
symmetric Dyck tiling.

\begin{remark}
\label{remark:bijfsD}
It is obvious that there exists the bijection between symmetric Dyck tilings
and fundamental symmetric Dyck tilings. 
Thus, we sometimes call a fundamental symmetric Dyck tiling simply a symmetric 
Dyck tiling.
\end{remark}

Figure \ref{fig:SymDT} is an example of fundamental symmetric Dyck tilings 
over the path $UDUU$.
\begin{figure}[ht]
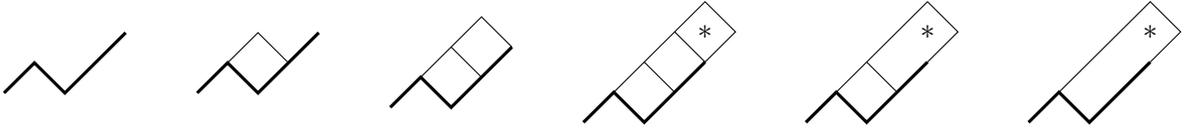

\tikzpic{-0.5}{[x=0.4cm,y=0.4cm]
\draw[very thick](0,0)--(1,1)--(2,0)--(4,2);
}
\quad
\tikzpic{-0.5}{[x=0.4cm,y=0.4cm]
\draw[very thick](0,0)--(1,1)--(2,0)--(4,2);
\draw(1,1)--(2,2)--(3,1);
}
\quad
\tikzpic{-0.5}{[x=0.4cm,y=0.4cm]
\draw[very thick](0,0)--(1,1)--(2,0)--(4,2);
\draw(1,1)--(3,3)--(4,2)(2,2)--(3,1);	
}
\quad
\tikzpic{-0.5}{[x=0.4cm,y=0.4cm]
\draw[very thick](0,0)--(1,1)--(2,0)--(4,2);
\draw(1,1)--(4,4)--(5,3)--(4,2)(2,2)--(3,1)(3,3)--(4,2);
\draw(4,3)node{$\ast$};
}
\quad
\tikzpic{-0.5}{[x=0.4cm,y=0.4cm]
\draw[very thick](0,0)--(1,1)--(2,0)--(4,2);
\draw(1,1)--(4,4)--(5,3)--(4,2);
\draw(2,2)--(3,1);
\draw(4,3)node{$\ast$};
}
\quad
\tikzpic{-0.5}{[x=0.4cm,y=0.4cm]
\draw[very thick](0,0)--(1,1)--(2,0)--(4,2);
\draw(1,1)--(4,4)--(5,3)--(4,2);
\draw(4,3)node{$\ast$};
}
\caption{Symmetric Dyck tilings over the path $UDUU$.}
\label{fig:SymDT}
\end{figure}

We denote by $\mathcal{D}^{\mathrm{sym}}$ the set of 
fundamental symmetric Dyck tilings.
We define the following weights for a ballot tile $d$.
\begin{enumerate}
\item When $d$ is a Dyck tile of size $n$, the weight of $d$ is $n$.
\item When $d$ is a ballot tile of size $(n,n')$ with $n\ge0$ and $n'\ge1$, 
the weight of $d$ (denoted by $\mathrm{wt}(d)$) is given by 
\begin{align}
\label{eqn:WTSymDT}
\mathrm{wt}(d):=
\begin{cases}
1, & \text{for } n=0, \\ 
n+1, & \text{for } n\ge1.
\end{cases}
\end{align}
\end{enumerate}

We define the partition function for fundamental symmetric Dyck tilings as follows. 
\begin{defn}
The partition function $P^{\mathrm{sym}}(\lambda)$ of fundamental symmetric Dyck 
tilings above $\lambda$ is defined by
\begin{align*}
P^{\mathrm{sym}}(\lambda):=
\sum_{D\in\mathcal{D}^{\mathrm{sym}}}
\sum_{d\in D}q^{\mathrm{wt}(d)}.
\end{align*}
\end{defn}

\begin{example}
We consider symmetric Dyck tilings above $\lambda=UDUU$ (see Figure \ref{fig:SymDT}).
The weights of the symmetric Dyck tilings are 
$1, q, q^2, q^3, q^2$ and $q$ from left to right.
The partition function is given by $(1+q+q^{2})(1+q)$.
\end{example}

\subsection{Ballot tilings}
\label{sec:bt}
In this subsection, we introduce the notion of ballot tilings 
which form a subset in the set of symmetric Dyck tilings.

\begin{defn}
\label{defn:bt}
Let $\lambda$ be a ballot path of size $(n,n')$.
A ballot tiling of size $n+n'$ above $\lambda$ 
is a fundamental symmetric Dyck tiling of size $n+n'$ above $\lambda$
satisfying the following constraints.
\begin{enumerate}
\item The rightmost box of a ballot tile of size $(m,m')$  
contains $\ast$ if it is on the vertical line $x=2n+n'$. 
\item The number of ballot tiles of size $(m,m')$ with $m'\ge1$ is 
even when $m'$ is odd, and zero when $m'$ is even.
\end{enumerate}
\end{defn}
Note that a Dyck tile whose right-most box $b$ is on the line $x=2n+n'$
has $\ast$ on $b$, since a Dyck tile is a ballot tile of size $(m,0)$.

\begin{example}
In Figure \ref{fig:BallotTilings}, we list up all ballot tilings 
above the ballot path $UDUDU$.
The last ballot tiling contains two ballot tiles of size $(0,1)$ 
and satisfies the second condition in Definition \ref{defn:bt}.
\begin{figure}[ht]
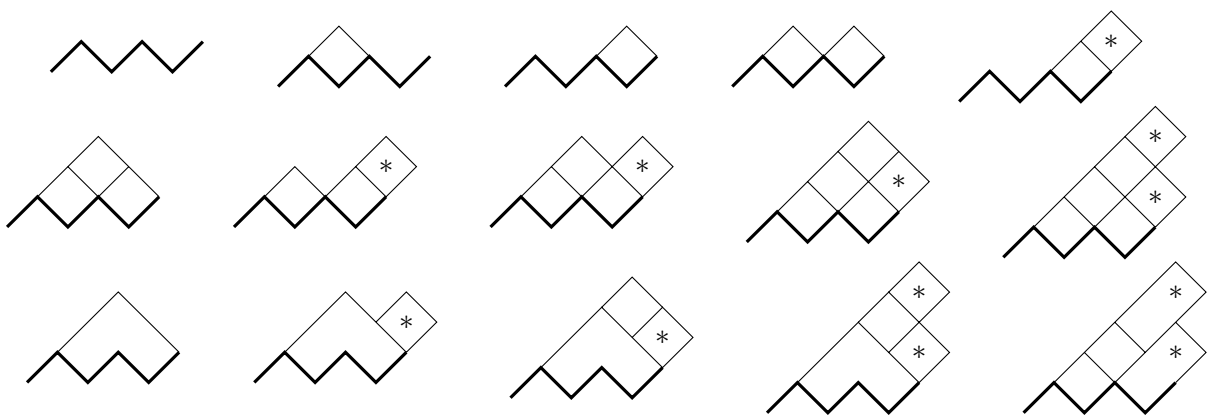

\tikzpic{-0.5}{[x=0.4cm,y=0.4cm]
\draw[very thick](0,0)--(1,1)--(2,0)--(3,1)--(4,0)--(5,1);
}
\quad
\tikzpic{-0.5}{[x=0.4cm,y=0.4cm]
\draw[very thick](0,0)--(1,1)--(2,0)--(3,1)--(4,0)--(5,1);
\draw(1,1)--(2,2)--(3,1);
}
\quad
\tikzpic{-0.5}{[x=0.4cm,y=0.4cm]
\draw[very thick](0,0)--(1,1)--(2,0)--(3,1)--(4,0)--(5,1);
\draw(3,1)--(4,2)--(5,1);
}
\quad
\tikzpic{-0.5}{[x=0.4cm,y=0.4cm]
\draw[very thick](0,0)--(1,1)--(2,0)--(3,1)--(4,0)--(5,1);
\draw(1,1)--(2,2)--(3,1)--(4,2)--(5,1);
}
\quad
\tikzpic{-0.5}{[x=0.4cm,y=0.4cm]
\draw[very thick](0,0)--(1,1)--(2,0)--(3,1)--(4,0)--(5,1);
\draw(3,1)--(5,3)--(6,2)--(4,0)(4,2)--(5,1);
\draw(5,2)node{$\ast$};
}\\
\tikzpic{-0.5}{[x=0.4cm,y=0.4cm]
\draw[very thick](0,0)--(1,1)--(2,0)--(3,1)--(4,0)--(5,1);
\draw(1,1)--(3,3)--(5,1)(2,2)--(3,1)--(4,2);
}
\quad
\tikzpic{-0.5}{[x=0.4cm,y=0.4cm]
\draw[very thick](0,0)--(1,1)--(2,0)--(3,1)--(4,0)--(5,1);
\draw(1,1)--(2,2)--(3,1)--(5,3)--(6,2)--(5,1)(4,2)--(5,1);
\draw(5,2)node{$\ast$};
}
\quad
\tikzpic{-0.5}{[x=0.4cm,y=0.4cm]
\draw[very thick](0,0)--(1,1)--(2,0)--(3,1)--(4,0)--(5,1);
\draw(1,1)--(3,3)--(5,1)(2,2)--(3,1)--(5,3)--(6,2)--(5,1);
\draw(5,2)node{$\ast$};
}
\quad
\tikzpic{-0.5}{[x=0.4cm,y=0.4cm]
\draw[very thick](0,0)--(1,1)--(2,0)--(3,1)--(4,0)--(5,1);
\draw(1,1)--(3,3)--(5,1)(2,2)--(3,1)--(5,3)--(6,2)--(5,1)(3,3)--(4,4)--(5,3);
\draw(5,2)node{$\ast$};
}
\quad
\tikzpic{-0.5}{[x=0.4cm,y=0.4cm]
\draw[very thick](0,0)--(1,1)--(2,0)--(3,1)--(4,0)--(5,1);
\draw(1,1)--(3,3)--(5,1)(2,2)--(3,1)--(5,3)--(6,2)--(5,1)
     (3,3)--(4,4)--(5,3)(4,4)--(5,5)--(6,4)--(5,3);
\draw(5,2)node{$\ast$}(5,4)node{$\ast$};
}\\
\quad
\tikzpic{-0.5}{[x=0.4cm,y=0.4cm]
\draw[very thick](0,0)--(1,1)--(2,0)--(3,1)--(4,0)--(5,1);
\draw(1,1)--(3,3)--(5,1);
}
\quad
\tikzpic{-0.5}{[x=0.4cm,y=0.4cm]
\draw[very thick](0,0)--(1,1)--(2,0)--(3,1)--(4,0)--(5,1);
\draw(1,1)--(3,3)--(5,1)(4,2)--(5,3)--(6,2)--(5,1);
\draw(5,2)node{$\ast$};
}
\quad
\tikzpic{-0.5}{[x=0.4cm,y=0.4cm]
\draw[very thick](0,0)--(1,1)--(2,0)--(3,1)--(4,0)--(5,1);
\draw(1,1)--(3,3)--(5,1)(4,2)--(5,3)--(6,2)--(5,1)
     (3,3)--(4,4)--(5,3);
\draw(5,2)node{$\ast$};
}
\quad
\tikzpic{-0.5}{[x=0.4cm,y=0.4cm]
\draw[very thick](0,0)--(1,1)--(2,0)--(3,1)--(4,0)--(5,1);
\draw(1,1)--(3,3)--(5,1)(4,2)--(5,3)--(6,2)--(5,1)
     (3,3)--(4,4)--(5,3)(4,4)--(5,5)--(6,4)--(5,3);
\draw(5,2)node{$\ast$}(5,4)node{$\ast$};
}
\quad
\tikzpic{-0.5}{[x=0.4cm,y=0.4cm]
\draw[very thick](0,0)--(1,1)--(2,0)--(3,1)--(4,0)--(5,1);
\draw(1,1)--(5,5)--(6,4)--(3,1)(2,2)--(3,1)(3,3)--(4,2)(5,3)--(6,2)--(5,1);
\draw(5,2)node{$\ast$}(5,4)node{$\ast$};
}
\caption{Ballot tilings over the path $UDUDU$.}
\label{fig:BallotTilings}
\end{figure}
\end{example}

\begin{remark}
\label{rmk:KL}
Some remarks are in order.
\begin{enumerate}
\item
Ballot tilings first appeared in the study of the Kazhdan--Lusztig polynomials 
associated with the Hermitian symmetric pair $(B_{n},A_{n-1})$ \cite{S141}.
Ballot tilings are a generalization of Dyck tilings.
If we restrict ourselves to ballot tilings above $\lambda$ of size $(n,n')$, 
whose boxes have no $\ast$, we simply have the set of Dyck tilings 
of size $n+n'$.
\item
In this paper, we consider the two generalizations of Dyck tilings: symmetric 
Dyck tilings and ballot tilings.
As already mentioned in Section \ref{sec:Intro}, ballot tilings can be regarded 
as a fundamental symmetric Dyck tiling with conditions on ballot tiles.
We study fundamental symmetric Dyck tilings in details, and apply their results to 
ballot tilings by introducing the inclusive map from ballot tilings to 
fundamental symmetric Dyck tilings (see Section \ref{sec:symDTSbt}).
\item
There are several studies on other generalizations of Dyck tilings as follows.
In \cite{JVK16}, $k$-Dyck tilings and symmetric Dyck tilings are defined and studied. 
Recall that the number of Dyck paths of size $n$ is given by the Catalan number and 
that of $k$-Dyck paths is by Fuss-Catalan number.
Thus, a generalization from Dyck tilings to $k$-Dyck tilings is associated with 
a generalization from the Catalan number to the Fuss-Catalan number.
As for symmetric Dyck tilings, the weight of a tiling given in \cite{JVK16} 
is different from the one in Eqn. (\ref{eqn:WTSymDT}).
Our choice of the weight gives a simple formula for the partition function 
of fundamental symmetric Dyck tilings (see Section \ref{sec:EnusymDT}).
Symmetric Dyck tilings, whose lower boundary path is restricted to a zig-zag path, 
are studied in \cite{ABN11} in the context of symmetric tree-like tableaux.
In our construction of fundamental symmetric Dyck tilings and tree-like tableaux 
of shifted shapes, we can consider not only zig-zag paths but also general ballot 
paths.
Tree-like tableaux in Section \ref{sec:TlTshifted} can be viewed as a natural 
generalization of tree-like tableaux studied in \cite{ABN11}.
\end{enumerate}
\end{remark}

We denote by $\mathcal{B}(\lambda)$ the set of ballot tilings above a ballot path
$\lambda$.
We define the weight for a balot tile $b$ in $B\in\mathcal{B}(\lambda)$
as follows.
\begin{enumerate}
\item When $b$ is a Dyck tile of size $(n,0)$, the weight of $b$ is n.
\item When $b$ is a ballot tile of size $(n,n')$, the weight 
of $b$ is given by $n+(n'+1)/2$ for $n\ge0$ and $n'\ge 1$ odd.
\end{enumerate}
Note that the weight is non-negative since $n'$ is always odd for a ballot 
tile.
We denote by $\mathrm{wt}(b)$ the weight of a ballot tile $b$.
The partition function for ballot tilings above $\lambda$ is defined 
as 
\begin{align*}
P^{\mathrm{bal}}(\lambda):=
\sum_{B\in\mathcal{B}(\lambda)}\sum_{b\in B}q^{\mathrm{wt}(b)}.
\end{align*}

\begin{example}
Consider a ballot tilngs above $\lambda=UDUDU$ (see Figure \ref{fig:BallotTilings}).
The last ballot tiling contains two ballot tiles of weight one and two single boxes.
The weight of this ballot tiling is four.
The partition function is $P^{\mathrm{bal}}(\lambda)=[3][5]$. 
\end{example}

Let $\lambda$ be a symmetric Dyck path of length $2n$, and 
$\lambda'$ be a ballot path obtained by taking the first $n$
steps of $\lambda$.
The sets of Dyck, symmetric Dyck or ballot tilings above $\lambda$ or $\lambda'$
satisfy 
\begin{align*}
\mathcal{D}(\lambda')\subset \mathcal{B}(\lambda')
\subset\mathcal{D}^{\mathrm{sym}}(\lambda')\subset\mathcal{D}(\lambda).
\end{align*}
Note that in $\mathcal{D}^{\mathrm{sym}}(\lambda')\subset\mathcal{D}(\lambda)$,
we identify a fundamental symmetric Dyck tiling of size $n$ with 
a symmetric Dyck tiling of size $2n$ as in Remark \ref{remark:bijfsD}.

\begin{remark}
The set $\mathcal{D}^{\mathrm{sym}}(\lambda')$ with a zig-zag path $\lambda'$ 
is studied in \cite{ABN11} in the context of symmetric tree-like tableaux 
(see Remark \ref{rmk:KL}).
There, when $\lambda'$ is a zig-zag path, the cardinality of the 
set $\mathcal{D}^{\mathrm{sym}}(\lambda')$ is $2^{n}n!$, which is equal to 
the cardinality of the Weyl group of type $B$.
On the other hand, $\mathcal{B}(\lambda')$ naturally appears as type $B$ analogue of 
the Dyck tilings in the context of Kazhdan--Lusztig polynomials for an Hermite symmetric
pair \cite{S141} (see also Remark \ref{rmk:KL}) and Temperley--Lieb Hamiltonian with boundaries studied in 
\cite{S142}. Therefore, it may be natural to view $\mathcal{B}(\lambda')$ as type $B$ 
analogue of $\mathcal{D}(\lambda)$.
The symmetric Dyck tilings $\mathcal{D}^{\mathrm{sym}}(\lambda')$ possesses 
the properties of type $A$ and type $B$ at the same time.
By using the inclusion $\mathcal{B}(\lambda')\subset\mathcal{D}^{\mathrm{sym}}(\lambda')$, 
we embed the set of the ballot tilings in the set of symmetric Dyck tilings (see Section \ref{sec:symDTSbt}).
This embedding plays a central role when we construct the ballot tableaux and 
generalized tree-like tableaux corresponding to $\mathcal{B}(\lambda')$. 
\end{remark}

\section{Trees for symmetric Dyck tilings and ballot tilings}
In this section, we introduce trees with increasing labels from the root to leaves 
for symmetric Dyck tilings and ballot tilngs.
In both cases, the lower boundary path is a ballot path.

In this paper, we consider only planted plane trees. 
A tree consists of the root, nodes, and the edges.
A node has several nodes just below it and they are connected by edges.
Each node has at most one node which is above it.
Thus, if we consider a path from a node to the root, we have a unique path 
connecting from the root to the node.

We put positive integers on edges of a tree $T$ and we call them 
a label of $T$.

When an edge $E'$ is strictly right (resp. left) to the edge $E$,
we denote this relation by $E\rightarrow E'$ (resp. $E'\leftarrow E$).
Suppose that we pass through the edge $E'$ when we consider a path 
from the edge $E$ to the root. Then, we say that the edge $E'$ is 
above $E$ and denote this relation by $E\uparrow E'$.

We follow the construction of trees in \cite{Boe88,LS81}.

\subsection{Trees for symmetric Dyck tilings}
\label{sec:treesforsymDT}
Let $\lambda$ be a ballot path of size $(n,n')$. 
A concatenation of $\lambda$ and $\underbrace{D\ldots D}_{n'}$ gives 
a Dyck path of size $n+n'$.
We denote by $\overline{\lambda}$ a Dyck path obtained from the ballot path 
$\lambda$ by the addition of $D$'s.

We define a tree associated with $\overline{\lambda}$ as follows.
\begin{enumerate}
\item When $\overline{\lambda}=\emptyset$, we have an empty tree. 
\item Suppose that $\overline{\lambda}$ is a concatenation of two Dyck 
paths $\lambda_{1}$ and $\lambda_{2}$, {\it i.e.,} $\overline{\lambda}=\lambda_{1}\circ\lambda_{2}$.
Then, the tree for $\overline{\lambda}$ is obtained by attaching the two trees for $\lambda_{1}$
and $\lambda_{2}$ at their roots.
\item Let $\mu$ be a Dyck path. When $\overline{\lambda}=U\mu D$, we attach an edge just above the 
the tree for $\mu$.
If the $D$ step in $U\mu D$ is the $D$ step added to construct $\overline{\lambda}$,
we put a dot $\bullet$ on the corresponding edge.
\end{enumerate}

For example, when $\lambda=UDUUDU$, the tree is depicted as below.
\begin{align*}
\tikzpic{-0.5}{[scale=0.6]
\coordinate
	child{coordinate(c1)}
	child{coordinate(c2)
		child{coordinate(c3)}
		child{coordinate(c4)}
	     };
\node at ($(0,0)!0.5!(c2)$){$\bullet$};
\node at ($(c2)!0.5!(c4)$){$\bullet$};
}
\end{align*}

We denote the set of edges of the tree for $\lambda$ by 
$\mathcal{E}$ and the set of dotted edges 
by $\mathcal{E}_{\bullet}\subset \mathcal{E}$.

\paragraph{\bf Natural label}
Let $\lambda$ be a ballot path and $T(\lambda)$ be a tree for a 
symmetric Dyck tiling above $\lambda$.
Let $N=n+n'$ be the number of edges in $T(\lambda)$, where 
$n$ is the number of edges without dots and $n'$ is the number of 
edges with dots.
A natural label $L$ of $T(\lambda)$ is a labelling such that 
integers on the edges of $T(\lambda)$ are increasing 
from the root to leaves and each integer in $[1,N]$ 
appears exactly once. 
We denote by $\mathcal{L}(\lambda)$ the set of natural labels of $T(\lambda)$.

We put a circle on a label of $T(\lambda)$ by the following rules.
\begin{enumerate}
\item There is no circle on a label which are on a dotted edge.
\item We may put a circle on a label which are on an edge without a dot.
\end{enumerate}

\paragraph{\bf Insertion history}
Let $L$ be a natural label of the tree $T(\lambda)$, 
and $E(i)$ be the edge with label $i$.
We define $h_{i}\in[0,2(i-1)]$ as 
\begin{align*}
h_{i}:=2\cdot|\{j<i| E(j)\leftarrow E(i)\}|
+|\{j<i | E(i)\uparrow E(j)\}|.
\end{align*}
We put a circle on $h_{i}$ if the edge $E(i)$ is circled,
and put a box on $h_{i}$ if the edge $E(i)$ has a dot.
We call an integer sequence $\mathbf{h}:=(h_{1},\ldots,h_{n+n'})$, 
whose entry $h_{i}$, $1\le i\le n+n'$, may have a circle or a box, 
an {\it insertion history}. 

Since dotted edges form a partial tree from the root to a leaf in $T(\lambda)$,
the integers with a box appear in $\mathbf{h}$ increasingly.

\begin{example}
The insertion history for the tree
\begin{align*}
\tikzpic{-0.5}{[scale=0.6]
\coordinate
	child{coordinate(c3)
		child{coordinate(c5)}
		child[missing]}
	child{coordinate(c1)
		child{coordinate(c2)}
		child{coordinate(c4)}};
\node at(c3){$-$};
\draw[anchor=south east]($(0,0)!0.7!(c3)$)node{$\circnum{3}$};
\draw[anchor=south east]($(c3)!0.6!(c5)$)node{$5$};
\draw[anchor=south west]($(0,0)!0.5!(c1)$)node{$1$};
\draw[anchor=south east]($(c1)!0.7!(c2)$)node{$\circnum{2}$};
\draw[anchor=south west]($(c1)!0.5!(c4)$)node{$4$};
\node at ($(0,0)!0.5!(c1)$){$\bullet$};
\node at ($(c1)!0.5!(c4)$){$\bullet$};
}
\end{align*}
is $\mathbf{h}=(\boxed{0},\ \circnum{1}\ ,\ \circnum{0}\ ,\boxed{5},1)$.
\end{example}

\subsection{Trees for ballot tilings}
\label{sec:TreeBT}
We introduce a tree for a ballot path $\lambda$. 
An edge of the tree may have a dot ($\bullet$), an incoming arrow,
or an outgoing arrow. 
This tree structure first appeared in the study of the Kazhdan--Lusztig 
polynomials for the Hermitian symmetric pair \cite{Boe88}.
We follow \cite{Boe88,S141} for the definition of trees.

Let $Z$ be the set of Dyck words.
If a ballot path $\lambda$ is written as a concatenation $\lambda=\lambda'\underline{U}z$
with $z\in Z$ and a ballot path $\lambda'$, we call the underlined $U$ a {\it terminal} $U$.
If a ballot path is written as $\lambda=\lambda'\underline{U}z_{2r}\underline{U}z_{2r-1}U
\dots z_{3}Uz_{2}Uz_{1}Uz_{0}$ with $r\ge1$ and $z_{i}\in Z$ for $1\le i\le 2r$, 
we call the underlined two $U$'s a $UU$-pair.
A $U$ which is not classified above is called an {\it extra} $U$.
We construct a tree for a ballot path $\lambda$ as follows.
\begin{enumerate}
\item When $\lambda=\emptyset$, we have an empty tree.  
\item When $\lambda=z\lambda'$ with $z\in Z$ and a ballot path $\lambda'$, 
the tree for $\lambda$ is obtained by attaching the trees for $z$ and $\lambda'$ at their
roots.
\item When $\lambda=UzD$ with $z\in Z$, the tree for $\lambda$ is obtained by attaching 
an edge just above the tree for $z$.
\item When $\lambda=\underline{U}\lambda'$ and the underlined $U$ is the terminal $U$,
the tree for $\lambda$ is obtained by putting an edge just above the tree for $\lambda'$.
We mark the edge with a dot ($\bullet$).
\item When $\lambda=\underline{U}z\underline{U}\lambda'$ with $z\in Z$ and underlined 
$U$'s form a $UU$-pair, the tree for $\lambda$ is obtained by attaching an edge above 
the root of $z\lambda'$. We mark the edge with a dot ($\bullet$).
\item When $\lambda=\underline{U}\lambda'$ and the underlined $U$ is an extra $U$,
the tree for $\lambda$ is the same as the one for $\lambda'$.
\end{enumerate}
A tree for a ballot path $\lambda$ needs one more information, incoming and outgoing
arrows between edges.
Suppose that a ballot path $\lambda$ is written as 
$\lambda=\lambda'z_{2r+1}\lambda''$ with $\lambda''=Uz_{2r}Uz_{2r-1}\cdots z_{1}Uz_{0}$
and $z_{2r+1}=x_{s}x_{s-1}\cdots x_{1}$, $x_{i}\in Z$.
The path $x_{i}$ for $1\le i\le s$ cannot be decomposed into a concatenation of 
two non-empty Dyck words.
The tree for $x_{i}$ contains a unique maximal edge connecting to the root, and 
this edge corresponds to a $UD$ pair.
The tree for $\lambda''$ also contains a unique maximal edge corresponding to 
a $UU$-pair or a terminal $U$.
The maximal edge of the tree for $x_{i}$ (resp. $\lambda''$) is said to 
{\it immediately precede} the maximal edge of the tree for $x_{i+1}$ (resp. $x_{1}$).
We put an arrow between edges in the tree for $\lambda$.
\begin{enumerate}
\item[(7)] When an edge $E$ immediately precedes an edge $E'$ in the tree for $\lambda$,
we put a dashed arrow form $E$ to $E'$.
\end{enumerate}

For example, when $\lambda=UDUUDU$, the tree for a ballot path $\lambda$ is depicted as below.
\begin{align*}
\tikzpic{-0.5}{[scale=0.6]
\coordinate
	child{coordinate(c1)}
	child{coordinate(c2)}
	child{coordinate(c3)};
\draw[dashed,latex-]($(0,0)!0.5!(c2)$)--($(0,0)!0.5!(c3)$);
\node at ($(0,0)!0.5!(c3)$){$\bullet$};
}
\end{align*}

\subsection{Enumeration}
In this subsection, we consider the enumerations of symmetric Dyck tilings
and ballot tilings with a fixed lower boundary path.
Each tiling has an algorithm to compute the partition function by making 
use of the trees associated with them.

The partition function of symmetric Dyck tilings has similar structure 
to both (non-symmetric) Dyck tilings and ballot tilings 
(see Theorem \ref{thrm:DT} for non-symmetric Dyck tilings, 
Theorem \ref{thrm:pfsymDT} for symmetric Dyck tilings and 
Theorem \ref{thrm:pfbt} for ballot tilings).

\subsubsection{Symmetric Dyck tilings}
\label{sec:EnusymDT}
Let $\lambda$ be a ballot path and $T(\lambda)$ be the tree for 
a symmetric Dyck tiling above $\lambda$.
Given an edge $E$ of $T(\lambda)$ and a natural label $L$, we denote by $l(E)$ 
the label of the edge $E$.
When $L\in\mathcal{L}(\lambda)$, we define the degree $\mathrm{deg}_{1}(L)$ by 
\begin{align*}
\mathrm{deg}_{1}(L):=
\sum_{E\in\mathcal{E}}
\#\{E'| l(E)>l(E'), E\rightarrow E'\},
\end{align*}
where $\mathcal{E}$ is the set of edges in $L$.

\begin{theorem}
\label{thrm:pfsymDT}
Let $\lambda$ be a ballot path of length $(n,n')$.
The partition function $P^{\mathrm{sym}}(\lambda)$ 
for fundamental symmetric Dyck tilings above $\lambda$ 
is given by
\begin{align}
\label{eqn:pfsym}
P^{\mathrm{sym}}(\lambda)
=\left(\sum_{L\in\mathcal{L}(\lambda)}
q^{\mathrm{deg}_{1}(L)}\right)
\cdot 
\prod_{1\le i\le n}(1+q^{i}).
\end{align}
\end{theorem}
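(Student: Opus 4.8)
The plan is to prove (\ref{eqn:pfsym}) by realizing both sides as weighted sums over the same decorated trees. The main tool is a weight-preserving bijection — the symmetric Dyck tiling strip (symDTS) — between the set $\mathcal{D}^{\mathrm{sym}}(\lambda)$ of fundamental symmetric Dyck tilings above $\lambda$ and the set of pairs $(L,C)$, where $L\in\mathcal{L}(\lambda)$ is a natural label of the tree $T(\lambda)$ and $C$ is a subset of the $n$ non-dotted edges (the circled edges). Under this correspondence the right-hand side of (\ref{eqn:pfsym}) splits exactly as it should: the factor $\sum_{L}q^{\mathrm{deg}_{1}(L)}$ records the contribution of the underlying label, and the factor $\prod_{1\le i\le n}(1+q^{i})$ records the independent contribution of the circles.

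First I would build the bijection by inserting the edges of $T(\lambda)$ in increasing order of their labels, reading off for the $i$-th edge its insertion datum $h_{i}$; the dotted edges are forced to be inserted along the central structure of $\overline{\lambda}$ coming from the appended $D$-steps, while each non-dotted edge carries the binary choice recorded by $C$, namely whether a tile is extended across the central line $x=n$ into a $\ast$-marked ballot tile. The central computational step is the weight decomposition
\begin{align*}
\mathrm{wt}(D)=\mathrm{deg}_{1}(L)+\sum_{E\in C}\mathrm{cw}(E),
\end{align*}
where $\mathrm{cw}(E)$ is the weight gained by circling the edge $E$. Here $\mathrm{deg}_{1}(L)=\sum_{i}\#\{j<i:E(i)\to E(j)\}$ counts, for each edge, the previously placed edges lying strictly to its right; geometrically this accumulates the sizes of the Dyck tiles created during the insertion, exactly as in the type-$A$ Dyck-tiling-strip analysis of \cite{KMPW12}. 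The summand $\mathrm{cw}(E)$ is the extra weight, governed by (\ref{eqn:WTSymDT}), of the central ballot tile produced when $E$ is circled.

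The second step is to show that for every fixed natural label $L$ the multiset $\{\mathrm{cw}(E):E\text{ non-dotted}\}$ equals $\{1,2,\ldots,n\}$. Granting this, summing $q^{\mathrm{wt}(D)}$ over the $2^{n}$ choices of $C$ factors as $q^{\mathrm{deg}_{1}(L)}\prod_{i=1}^{n}(1+q^{i})$; since this product does not depend on $L$, summing over $L\in\mathcal{L}(\lambda)$ and pulling the product out of the sum yields precisely (\ref{eqn:pfsym}).

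The hard part will be the weight bookkeeping underlying these two steps. One must verify that the weight of a symmetric Dyck tiling, computed with the special weights (\ref{eqn:WTSymDT}), separates additively into the inversion statistic $\mathrm{deg}_{1}(L)$ over all edges and a sum of circle weights concentrated near the central line, so that the vertical symmetry and the $\ast$-marked boxes are accounted for correctly; and one must establish the label-independence that makes the generating function a genuine product, namely that the circle weights always exhaust $\{1,\ldots,n\}$. I expect to prove the latter by tracking, throughout the symDTS insertion, that the $n$ non-dotted edges receive pairwise distinct circle weights in the range $[1,n]$. As a consistency check, for $\lambda=UDUU$ one has $n=1$: the three natural labels give $\mathrm{deg}_{1}\in\{0,1,2\}$ and circling the unique non-dotted edge always adds $1$, reproducing the partition function $(1+q+q^{2})(1+q)$.
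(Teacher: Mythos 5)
Your route is genuinely different from the paper's. The paper proves (\ref{eqn:pfsym}) by a recurrence on the shape of $\lambda$: Lemma \ref{lemma:PsymNM} settles the case $U^{N}D^{N}U^{M}$ by a three-term recurrence, the general case is encoded in the recurrence (\ref{eqn:recPsym}) indexed by the valleys of $\lambda$, the closed form (\ref{eqn:facsym}) is verified to satisfy it by induction, and the $q$-binomial factor is identified with $\sum_{L}q^{\mathrm{deg}_{1}(L)}$ via Theorem \ref{thrm:DT}. You instead propose a bijective refinement through the symDTS correspondence of Theorem \ref{thrm:bijfsDTS}, splitting each tiling into a label $L$ and a circle set $C$ and factoring the weight accordingly. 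If carried out, this is arguably more informative than the paper's computation, since it explains termwise why the generating function factors; your identification of the circle-free weight with $\mathrm{deg}_{1}(L)$ is safe (it is Lemma \ref{lemma:wtDT}, since inversions of the pre-order word are exactly the pairs counted by $\mathrm{deg}_{1}$), and your check for $UDUU$ is correct.

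As written, though, the proposal has a genuine gap: the two statements carrying all the content --- that $\mathrm{wt}(D)=\mathrm{deg}_{1}(L)+\sum_{E\in C}\mathrm{cw}(E)$ with $\mathrm{cw}(E)$ independent of $C$, and that the $\mathrm{cw}(E)$ exhaust $\{1,\ldots,n\}$ --- are asserted rather than proved, and neither is routine. The difficulty is that circling the edge $E(i)$ does not merely create one extra tile near the central line: it appends an up step to the top boundary, so every later addition of a strip deposits one extra single box (a Dyck tile disjoint from the $\ast$-marked tile), while a later Dyck spread performed at the rightmost column enlarges the $\ast$-marked tile itself, raising its weight under (\ref{eqn:WTSymDT}) from $1$ to $m+1$. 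Thus $\mathrm{cw}(E(i))$ is distributed over several tiles, and one must prove (i) that these contributions total $\#\{j\ge i : E(j)\ \text{undotted}\}$, which is exactly what makes the multiset of circle weights equal $\{1,\ldots,n\}$, and (ii) that when several edges are circled, the extra up steps and extra strip boxes attached to distinct circles neither interact with one another nor alter the sizes of the circle-free Dyck tiles, so that the contributions really add. Both points require an induction along the symDTS steps that the proposal only promises; until that bookkeeping is supplied the factorization over $C$ is unsupported, although with it the argument does go through.
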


We introduce the following lemma for the proof of Theorem \ref{thrm:pfsymDT}.
\begin{lemma}
\label{lemma:PsymNM}
Let $\lambda:=U^{N}D^{N}U^{M}$ be a ballot path.
The partition function $P(N,M):=P^{\mathrm{sym}}(\lambda)$
is given by
\begin{align}
\label{eqn:PMN}
P(N,M)=\genfrac{[}{]}{0pt}{}{M+N}{M}\cdot \prod_{i=1}^{N}(1+q^{i}).
\end{align}
\end{lemma}
\begin{proof}
Let $b_{1}$ and $b_{2}$ be the leftmost and bottom-most boxes above $\lambda$.
When $b_{1}$ is not occupied by a Dyck tile, the partition function for such symmetric 
Dyck tilings is equal to $P(N-1,M)$.
When $b_{1}$ is occupied by a Dyck tile, we have two cases:
1) $b_{2}$ is occupied by a Dyck tile of size one, and 2) $b_{2}$ is occupied by 
a ballot tile of size $(0,M)$.
In the first case, the partition function is equal to $q^{N}P(N,M-1)$.
In the second case, the partition function is equal to $q^{N}P(N-1,M)$ since 
a ballot tile of size $(0,M)$ gives a factor $q$ and the boxes between $b_1$ and 
$b_2$ have the weight $q$.
Thus, the partition function $P(N,M)$ satisfies 
\begin{align*}
P(N,M)=P(N-1,M)+q^{N}P(N-1,M)+q^{N}P(N,M-1),
\end{align*}
with the initial condition $P(1,M)=(1+q)[M+1]$.
By solving the recurrence equation, we obtain Eqn. (\ref{eqn:PMN}).
\end{proof}

\begin{proof}[Proof of Theorem \ref{thrm:pfsymDT}]
Let $\lambda$ be a ballot path of the form 
\begin{align*}
U^{m_{0}}D^{n_{0}}U^{m_{1}}D^{n_{1}}\ldots U^{m_{s-1}}D^{n_{s-1}}U^{m_{s}}.
\end{align*}
We call a configuration of partial path $DU$ a valley in $\lambda$.
By definition, the path $\lambda$ contains $s$ valleys.
We enumerate all valleys by $1,2,\ldots,s$ from left to right.
We define the subset of valleys $\mathrm{Val}(\lambda)$ as follows.
The first valley is in $\mathrm{Val}(\lambda)$.
Take a valley $v$. 
Then, $v\in\mathrm{Val}(\lambda)$ if and only if
there is no valley in $\mathrm{val}(\lambda)$ which is left to $v$
and lower than $v$.
Therefore, the heights of valleys in $\mathrm{Val}(\lambda)$ are weakly decreasing.
Let the $p_{i}$-th ($1\le i\le r$) valley be in $\mathrm{Val}(\lambda)$. 

We define a sequence of integers by
\begin{align*}
\mathbf{n}&:=(m_{0},n_{0},m_{1},n_{1},\ldots,m_{s-1},n_{s-1},m_{s}), \\
\mathbf{n}_{0}&:=(m_{0}-1,n_{0}-1,m_{1},n_{1},\ldots,m_{s-1},n_{s-1},m_{s}), \\
\mathbf{n}_{1}&:=(m_{0}+1,n_{0},m_{1}-1,n_{1},\ldots,m_{s}), \\
\mathbf{n}_{i}&:=(m_{0}+1,n_{0}-1,m_{1},\ldots,n_{p_{i}-1}+1,m_{p_{i}}-1,n_{p_{i}},\ldots,m_{s-1},n_{s-1},m_{s}), \\
\mathbf{n}_{r+1}&:=(m_{0}+1,n_{0}-1,m_{1},\ldots,n_{s-1},m_{s}-1),
\end{align*}
where $2\le i\le r$.
We define $N_{i}:=\sum_{j=0}^{p_{i}-1}n_{j}$ for $1\le i\le r$.
We denote by $P(\mathbf{n})$ the partition function of the fundamental symmetric Dyck 
tilings above $\lambda$.
The partition function satisfies the following recurrence relation:
\begin{align}
\label{eqn:recPsym}
P(\mathbf{n})=P(\mathbf{n}_{0})+
q^{n_{0}}P(\mathbf{n}_{1})+
\sum_{i=2}^{r}q^{N_{i}}P(\mathbf{n}_{i})
+q^{N_{r}}P(\mathbf{n}_{r+1}).
\end{align}
We show that 
\begin{align}
\label{eqn:facsym}
P(\mathbf{n})=
\prod_{i=0}^{s-1}\genfrac{[}{]}{0pt}{}{n_{i}+m_{i+1}+\cdots+m_{s}}{n_{i}}
\cdot \prod_{j=1}^{N}(1+q^{j}),
\end{align}
with $N:=\sum_{i=0}^{s}n_{i}$ 
satisfies the recurrence relation (\ref{eqn:recPsym}) by induction.
From Lemma \ref{lemma:PsymNM}, we have $P(1,1,1)=[2]^{2}$.
By substituting Eqn. (\ref{eqn:facsym}) into Eqn. (\ref{eqn:recPsym}) and 
taking a sum of $P(\mathbf{n_{0}})+q^{N_{r}}P(\mathbf{n}_{r+1})$ first,
we obtain the left hand side of Eqn. (\ref{eqn:recPsym}).

The remaining is to show 
\begin{align*}
\sum_{L\in\mathcal{L}(\lambda)}
q^{\mathrm{deg}_{1}(L)}=
\prod_{i=0}^{s-1}\genfrac{[}{]}{0pt}{}{n_{i}+m_{i+1}+\cdots+m_{s}}{n_{i}}.
\end{align*}
This is obvious from the fact that the partition function of 
(non-symmetric) Dyck tilings can be expressed in terms of quantum integers 
as in Theorem \ref{thrm:DT} (c.f. \cite{K12}).
\end{proof}

\begin{example}
We consider fundamental symmetric Dyck tilings above $\lambda=UDUUDU$.
We have $n=2$.
The partition function is given by 
$P^{\mathrm{sym}}(\lambda)=(1+q)^{2}(1+q^{2})(1+q+q^2+q^3)$.
\end{example}

The first factor in the right hand side of Eqn. (\ref{eqn:pfsym}) 
is nothing but the partition function of the (non-symmetric) 
Dyck tiling above $\overline{\lambda}$.
This partition function can be easily calculated in terms of trees.
First we delete dots on the tree $T(\lambda)$.
Then, when a partial tree has several children, we transform 
this partial tree into the one with one less children: 
\begin{align*}
\tikzpic{-0.5}{
\draw(0,0)--(-0.3,-0.3)(-0.7,-0.7)--(-1,-1);
\draw(-0.2,-0.2)node{\rotatebox{-45}{$-$}}(-0.8,-0.8)node{\rotatebox{-45}{$-$}};
\draw[dashed](-0.3,-0.3)--(-0.7,-0.7);
\draw[decoration={brace,mirror,raise=5pt},decorate]
  (0,0) --(-1,-1);
\draw(-0.2,-0.2)node[left=9pt]{$N$};
\draw(0,0)--(0.3,-0.3)(0.7,-0.7)--(1,-1);
\draw[dashed](0.3,-0.3)--(0.7,-0.7);
\draw(0.2,-0.2)node{\rotatebox{45}{$-$}}(0.8,-0.8)node{\rotatebox{45}{$-$}};
\draw[decoration={brace,mirror,raise=5pt},decorate]
  (1,-1)--(0,0);
\draw(0.2,-0.2)node[right=9pt]{$M$};
}
&\mapsto\genfrac{[}{]}{0pt}{}{M+N}{M}\cdot
\tikzpic{-0.4}{
\draw(0,0)--(0,-0.3)(0,-0.7)--(0,-1);
\draw(0,-0.2)node{$-$}(0,-0.8)node{$-$};
\draw[dashed](0,-0.3)--(0,-0.7);
\draw[decoration={brace,mirror,raise=5pt},decorate]
    (0,-1)-- (0,0);
\draw(0,-0.5)node[right=9pt]{$M+N$};
}
\end{align*}
We arrive at the tree with a unique leaf by successive applications
of the above operation.
This tree gives the factor one.

The second factor in the right hand side of Eqn. (\ref{eqn:pfsym}) 
comes from the number of the dotted edges in $T(\lambda)$.

\begin{example}
We consider $\lambda=UDUUDUDD$. Then, we have 
\begin{align*}
\tikzpic{-0.5}{[scale=0.6]
\coordinate
	child{coordinate(c1)}
	child{coordinate(c2)
		child{coordinate(c3)}
		child{coordinate(c4)}
	     };
}\rightarrow
\genfrac{[}{]}{0pt}{}{2}{1}\cdot
\tikzpic{-0.5}{[scale=0.5]
\coordinate
	child{coordinate(c1)}
	child{coordinate(c2)
                child[missing]
		child{coordinate(c3)
                child[missing]
		child{coordinate(c4)}}
	     };
\node at (c2) {$-$};\node at (c3) {$-$};
}\rightarrow
\genfrac{[}{]}{0pt}{}{2}{1}\genfrac{[}{]}{0pt}{}{4}{1}
=[4][2].
\end{align*}
\end{example}

\subsubsection{Ballot tilings}
\label{sec:enumBT}
We summarize the results of \cite{S17} with respect to the 
enumeration of ballot tilings above $\lambda$.

Given a tree $T$ for a ballot tiling, let $L_{\mathrm{ref}}$ 
be a label of $T$ such that the modified post-order word 
is the identity. We denote $n_{E}^{\mathrm{ref}}$ be an integer
on an edge $E$ in $L_{\mathrm{ref}}$.
Here, the modified post-order means as below.
We visit edges on a tree one-by-one following the post-order, and 
visit edges $E$'s with a dot $(\bullet)$ from top to bottom up to a ramification 
point soon after all the edges on the sub-tree which is left to $E$'s 
are visited.
Then, we continue visiting remaining edges by the modified post-order.

\begin{example}
\label{ex:bt1}
Let $\lambda=UDUUUUDU$. The tree for $\lambda$ and its $L_{\mathrm{ref}}$ are 
as below.
\begin{align*}
\tikzpic{-0.5}{[scale=0.6]
\coordinate
	child{coordinate(c1)}
	child{coordinate(c2)
		child{coordinate(c3)}
		child{coordinate(c4)}
	};
\node at ($(0,0)!0.5!(c2)$){$\bullet$};
\node at ($(c2)!0.5!(c4)$){$\bullet$};
\draw[dashed,latex-]($(c2)!0.5!(c3)$)--($(c2)!0.5!(c4)$);
}\leadsto
\tikzpic{-0.5}{[scale=0.6]
\coordinate
	child{coordinate(c1)}
	child{coordinate(c2)
		child{coordinate(c3)}
		child{coordinate(c4)}
	};
\node[anchor=south east]at($(0,0)!.5!(c1)$){$1$};
\node[anchor=south west]at($(0,0)!.5!(c2)$){$2$};
\node[anchor=south east]at($(c2)!.5!(c3)$){$3$};
\node[anchor=south west]at($(c2)!.5!(c4)$){$4$};
}
\end{align*}
\end{example}

Given $L_{\mathrm{ref}}$, we define 
\begin{align*}
S(E):=|T|+1-n_{E}^{\mathrm{ref}},
\end{align*}
where $|T|$ is the number of edges in $T$.
We denote by $\mathcal{E}$ and $\mathcal{E}_{\bullet}$ the set 
of edges in $T$ and the set of edges with $\bullet$.

Given an edge $E$, we consider a sequence of edges from $E$ to the root.
When $E'$ appears in the sequence, we say that $E'$ is an ancestor edge 
of $E$.
Let $A(T)\subseteq\mathcal{E}\setminus\mathcal{E}_{\bullet}$
be the set of edges such that all the ancestor edges do not have an incoming 
arrow.
Then, we define $B(T):=\mathcal{E}\setminus(\mathcal{E}_{\bullet}\cup A(T))$, 
that is, $B(T)$ is the set of edges $E$ such that an ancestor edge of $E$ 
has an incoming arrow.

We put a circle on a label of an edge in $T$ by the following rule.
\begin{enumerate}
\item An edge in $\mathcal{E}_{\bullet}$ does not have a circle.
\item An edge without an incoming arrow has a circle.
\item Suppose that we have a sequence of $p+1$ edges with arrows
$E_1\leftarrow\ldots\leftarrow E_p\leftarrow E$. 
We define $n_{E}$ as the label of the edge $E$ in the natural label.
If $n_{E_i}>n_{E}$ for all $1\le i\le p$, then we put circles on the edges 
$E_1$ to $E_p$.
\end{enumerate}

Let $\mathcal{E}_{\circ}$ be the set of edges with a circle.
Note that $\mathcal{E}_{\circ}$ depends on a label of $T$ due to 
the third rule.
By definition, we have $A(T)\subseteq\mathcal{E}_{\circ}$.
We define 
\begin{align*}
\mathrm{deg}_{2}(L):=
\sum_{E\in\mathcal{E}}
\#\{E'| n_{E}>n_{E'}, E\rightarrow E'\}.
\end{align*}

\begin{theorem}[Theorem 8.8 in \cite{S17}]
\label{thrm:pfbt}
The partition function for ballot tilings above a ballot path $\lambda$
is given by 
\begin{align*}
P^{\mathrm{bal}}(\lambda)
=\left(\sum_{L}q^{\mathrm{deg}_{2}(L)}
\prod_{E\in B\cap\mathcal{E}_{\circ}(L)}
(1+q^{S(E)-1})
\right)\cdot 
\prod_{E\in A(T)}(1+q^{S(E)}).
\end{align*}
\end{theorem}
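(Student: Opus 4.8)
The plan is to prove Theorem~\ref{thrm:pfbt} by induction on the number $|T|=n+n'$ of edges, using a boundary-peeling recurrence in the same spirit as the proof of Theorem~\ref{thrm:pfsymDT}, but now carrying along the finer tree data: the inversion statistic $\mathrm{deg}_{2}$, the edge sets $A(T)$ and $B(T)$, and the label-dependent circle set $\mathcal{E}_{\circ}$. The guiding structural observation is that the right-hand side factors into a \emph{Dyck-like} generating function (the bracketed label sum) times a product of independent binary factors $1+q^{S(E)}$ indexed by $A(T)$; the former should account for the Dyck-tile content of a ballot tiling, while the latter should account for the optional ballot tiles carrying a $\ast$. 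I would first record the consistency check that when $\lambda$ is a genuine Dyck path one has $\mathcal{E}_{\bullet}=\emptyset$, no arrows, $B(T)=\emptyset$ and $A(T)=\mathcal{E}$, so the formula collapses to $\bigl(\sum_{L}q^{\mathrm{deg}_{2}(L)}\bigr)\prod_{E\in\mathcal{E}}(1+q^{S(E)})$, where the label sum reproduces the Dyck partition function of Theorem~\ref{thrm:DT} and the product encodes the optional $\ast$-tiles. The smallest case $\lambda=UD$ already illustrates this, giving $1+q$ (the empty tiling together with one ballot tile of size $(0,1)$ and weight $1$), and fixes the base of the induction.

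For the inductive step I would examine the leftmost, bottom-most box $b$ above $\lambda$ and split ballot tilings according to how $b$ is covered: (i) $b$ is uncovered; (ii) $b$ sits in a Dyck tile; (iii) $b$ sits in a genuine ballot tile whose rightmost box carries a $\ast$. Each case removes a $U$ (or a $UD$/$DU$ configuration) from $\lambda$, reduces to a strictly smaller ballot path, and contributes an explicit monomial $q^{\mathrm{wt}}$ governed by the weight rules ($\mathrm{wt}=n$ for a Dyck tile of size $(n,0)$ and $\mathrm{wt}=n+(n'+1)/2$ for an odd ballot tile). Summing over these cases yields a recurrence for $P^{\mathrm{bal}}(\lambda)$ analogous to~(\ref{eqn:recPsym}). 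On the tree side, peeling $b$ corresponds to pruning the maximal edge, or to the child-reduction move pictured in the proof of Theorem~\ref{thrm:pfsymDT}, and I would track how $\mathrm{deg}_{2}$, $S$, $A(T)$, $B(T)$ and the circling rules restrict to the pruned tree. The heart of the argument is to verify that the proposed right-hand side obeys exactly this recurrence, matching case (iii) with the splitting of the factor $1+q^{S(E)}$ (respectively $1+q^{S(E)-1}$) at the peeled edge into its two summands.

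The main obstacle, as I see it, is the label-dependence of $\mathcal{E}_{\circ}$ coming from the third circling rule, which turns the arrow chains $E_{1}\leftarrow\cdots\leftarrow E_{p}\leftarrow E$ on and off according to the relative order of the natural labels. Because of this, the inner product $\prod_{E\in B\cap\mathcal{E}_{\circ}(L)}(1+q^{S(E)-1})$ cannot be pulled outside the label sum, and the recurrence must be shown to respect this coupling. Concretely, I would prove a compatibility lemma describing how an arrow chain and its induced circles behave when the maximal edge is removed or when a child is absorbed by the reduction move, and then check that the three circling rules, the containment $A(T)\subseteq\mathcal{E}_{\circ}$, and the bookkeeping of $S(E)$ are all consistent with each peeling case. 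Once this lemma is in place, the induction closes by comparing the recurrence for $P^{\mathrm{bal}}(\lambda)$ with the recurrence satisfied by the closed formula, exactly as in the verification of~(\ref{eqn:facsym}) against~(\ref{eqn:recPsym}).
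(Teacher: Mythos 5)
First, a remark on the comparison you were asked to survive: this paper does not prove Theorem~\ref{thrm:pfbt} at all. It is imported verbatim as Theorem~8.8 of \cite{S17}, and Section~\ref{sec:enumBT} merely restates the ingredients ($L_{\mathrm{ref}}$, $S(E)$, $A(T)$, $B(T)$, the circling rules) and illustrates the formula on examples. So there is no in-paper argument to measure your proposal against; it has to stand on its own as a reconstruction of the proof in \cite{S17}, and as such it is an outline of a strategy rather than a proof.

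Judged on its own terms, the plan has the right general shape (a peeling recurrence in the spirit of Lemma~\ref{lemma:PsymNM} and Theorem~\ref{thrm:pfsymDT}), but there are two concrete gaps. The first already shows up in your base case: above $\lambda=UD$ the nonempty ballot tiling is a \emph{single box} carrying $\ast$ --- a ballot tile of size $(0,0)$, equivalently a Dyck tile of size one and weight $1$ --- not ``one ballot tile of size $(0,1)$''. A tile of size $(0,1)$ is a two-box ribbon, and a tiling containing exactly one such tile is forbidden by condition (2) of Definition~\ref{defn:bt}, which forces tiles of size $(m,m')$ with $m'$ odd to occur an \emph{even} number of times. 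This is not a typo-level slip, because that parity constraint is precisely what distinguishes ballot tilings from fundamental symmetric Dyck tilings, and your recurrence never engages with it: case (iii) of your peeling adds one $\ast$-carrying ballot tile at a time and matches it against a single binary factor $1+q^{S(E)}$, but condition (2) does not allow positive-height ballot tiles to be toggled independently --- they must be created in pairs, and the distinction between the factors $1+q^{S(E)}$ for $E\in A(T)$ and the shifted factors $1+q^{S(E)-1}$ for $E\in B\cap\mathcal{E}_{\circ}(L)$ encodes exactly this pairing through the arrow structure. The second gap is that everything genuinely hard is deferred to an unnamed ``compatibility lemma'': when you peel a boundary box or delete a boundary step of $\lambda$, the classification of up steps into terminal $U$'s, $UU$-pairs and extra $U$'s can change globally (removing one $D$ can convert a $UU$-pair into a terminal $U$ and reroute entire arrow chains), so the pruned tree is not simply the old tree minus an edge, and the sets $A(T)$, $B(T)$, $\mathcal{E}_{\circ}(L)$ and the values $S(E)$ do not restrict in the naive way. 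Without at least a precise statement of how these data transform under one peeling step, the induction cannot be closed, and the proposal as written does not supply it.
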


\begin{example}
\label{ex:bt2}
We consider the same ballot path in Example \ref{ex:bt1}, {\it i.e.}, $\lambda=UDUUUUDU$. 
The tree for $\lambda$ contains two dotted edges and one arrow.
The natural labels with circles are 
\begin{align*}
\begin{matrix}
\circnum{1} & 2 & \\
& 3 & 4
\end{matrix}
\qquad
\begin{matrix} 
\circnum{1} & 2 & \\
& \circnum{4} & 3
\end{matrix} 
\qquad
\begin{matrix} 
\circnum{2} & 1 & \\
& 3 & 4
\end{matrix} 
\qquad
\begin{matrix} 
\circnum{2} & 1 & \\
& \circnum{4} & 3
\end{matrix} 
\qquad
\begin{matrix} 
\circnum{3} & 1 & \\
& 2 & 4
\end{matrix} 
\qquad
\begin{matrix} 
\circnum{3} & 1 & \\
& \circnum{4} & 2
\end{matrix}
\qquad
\begin{matrix} 
\circnum{4} & 1 & \\
& 2 & 3
\end{matrix} 
\qquad
\begin{matrix} 
\circnum{4} & 1 & \\
& \circnum{3} & 2
\end{matrix}  
\end{align*}

The circles give the factor 
\begin{align*}
\begin{matrix}
\circnum{\ast} & \ast & \\
& \ast & \ast
\end{matrix}
\mapsto (1+q^{4}), 
\qquad
\begin{matrix}
\circnum{\ast} & \ast & \\
& \circnum{\ast} & \ast
\end{matrix}
\mapsto (1+q)(1+q^{4}).
\end{align*}
The partition function is computed as
\begin{align*}
(1+q^{4})(1+q+q^2+q^3)+(1+q)(1+q^4)(q+q^2+q^3+q^4)=[3][8].
\end{align*}
\end{example}

The partition function can be easily computed by starting from the tree
for a ballot path $\lambda$ (see Section 7.2 and 7.3 in \cite{S17}).
We define operations on a partial tree as follows.
\begin{align*}
\tikzpic{-0.5}{
\draw(0,0)--(-0.3,-0.3)(-0.7,-0.7)--(-1,-1);
\draw(-0.2,-0.2)node{\rotatebox{-45}{$-$}}(-0.8,-0.8)node{\rotatebox{-45}{$-$}};
\draw[dashed](-0.3,-0.3)--(-0.7,-0.7);
\draw[decoration={brace,mirror,raise=5pt},decorate]
  (0,0) --(-1,-1);
\draw(-0.2,-0.2)node[left=9pt]{$N$};
\draw(0,0)--(0.3,-0.3)(0.7,-0.7)--(1,-1);
\draw[dashed](0.3,-0.3)--(0.7,-0.7);
\draw(0.2,-0.2)node{\rotatebox{45}{$-$}}(0.8,-0.8)node{\rotatebox{45}{$-$}};
\draw[decoration={brace,mirror,raise=5pt},decorate]
  (1,-1)--(0,0);
\draw(0.2,-0.2)node[right=9pt]{$M$};
}
&\mapsto\genfrac{[}{]}{0pt}{}{M+N}{M}\cdot
\tikzpic{-0.4}{
\draw(0,0)--(0,-0.3)(0,-0.7)--(0,-1);
\draw(0,-0.2)node{$-$}(0,-0.8)node{$-$};
\draw[dashed](0,-0.3)--(0,-0.7);
\draw[decoration={brace,mirror,raise=5pt},decorate]
    (0,-1)-- (0,0);
\draw(0,-0.5)node[right=9pt]{$M+N$};
} \\
\tikzpic{-0.5}{
\draw(0,0)--(-0.3,-0.3)(-0.7,-0.7)--(-1,-1);
\draw(-0.2,-0.2)node{\rotatebox{-45}{$-$}}(-0.8,-0.8)node{\rotatebox{-45}{$-$}};
\draw[dashed](-0.3,-0.3)--(-0.7,-0.7);
\draw[decoration={brace,mirror,raise=5pt},decorate]
  (0,0) --(-1,-1);
\draw(-0.2,-0.2)node[left=9pt]{$N$};
\draw(0,0)--(0.35,-0.35)(0.65,-0.65)--(1,-1);
\draw[dashed](0.35,-0.35)--(0.65,-0.65);
\draw(0.3,-0.3)node{\rotatebox{45}{$-$}}(0.7,-0.7)node{\rotatebox{45}{$-$}};
\draw(0.15,-0.15)node{$\bullet$}(0.85,-0.85)node{$\bullet$};
\draw[decoration={brace,mirror,raise=5pt},decorate]
  (1,-1)--(0,0);
\draw(0.2,-0.2)node[right=9pt]{$M$};
}&\mapsto
\genfrac{[}{]}{0pt}{}{M+N}{M}_{q^{2}}\prod_{i=1}^{N}(1+q^{i})\cdot
\tikzpic{-0.4}{
\draw(0,0)--(0,-0.35)(0,-0.65)--(0,-1);
\draw(0,-0.3)node{$-$}(0,-0.7)node{$-$};
\draw[dashed](0,-0.4)--(0,-0.6);
\draw(0,-0.15)node{$\bullet$}(0,-0.85)node{$\bullet$};
\draw[decoration={brace,mirror,raise=5pt},decorate]
    (0,-1)-- (0,0);
\draw(0,-0.5)node[right=9pt]{$M+N$};
} \\
\tikzpic{-0.5}{
\draw(0,0)--(-0.45,-0.45)(-0.7,-0.7)--(-1,-1);
\draw(-0.4,-0.4)node{\rotatebox{-45}{$-$}}(-0.8,-0.8)node{\rotatebox{-45}{$-$}};
\draw[dashed](-0.3,-0.3)--(-0.7,-0.7);
\draw[decoration={brace,mirror,raise=5pt},decorate]
  (0,0) --(-1,-1);
\draw(-0.2,-0.2)node[left=9pt]{$N$};
\draw(0,0)--(0.45,-0.45)(0.65,-0.65)--(1,-1);
\draw[dashed](0.45,-0.45)--(0.65,-0.65);
\draw(0.4,-0.4)node{\rotatebox{45}{$-$}}(0.7,-0.7)node{\rotatebox{45}{$-$}};
\draw(0.15,-0.15)node{$\bullet$}(0.85,-0.85)node{$\bullet$};
\draw[decoration={brace,mirror,raise=5pt},decorate]
  (1,-1)--(0,0);
\draw(0.2,-0.2)node[right=9pt]{$M$};
\draw[latex-,dashed](-0.3,-0.3)--(0.3,-0.3);
}&\mapsto
\genfrac{[}{]}{0pt}{}{M+N}{M}_{q^{2}}
\frac{[2M+N]}{[2(M+N)]}\prod_{i=1}^{N}(1+q^{i}) \cdot
\tikzpic{-0.4}{
\draw(0,0)--(0,-0.35)(0,-0.65)--(0,-1);
\draw(0,-0.3)node{$-$}(0,-0.7)node{$-$};
\draw[dashed](0,-0.4)--(0,-0.6);
\draw(0,-0.15)node{$\bullet$}(0,-0.85)node{$\bullet$};
\draw[decoration={brace,mirror,raise=5pt},decorate]
    (0,-1)-- (0,0);
\draw(0,-0.5)node[right=9pt]{$M+N$};
\draw[latex-,dashed](-0.4,-0.15)node{$($}--(-0.1,-0.15)node{$)$};
}
\end{align*}
By successive applications of above operations, we arrive at a tree
without a ramification. 
All edges in the tree are either with dots, or without dots.
We assign factors to the trees (not a partial tree) as follows.
\begin{align*}
\tikzpic{-0.4}{
\draw(0,0)--(0,-0.3)(0,-0.7)--(0,-1);
\draw(0,-0.2)node{$-$}(0,-0.8)node{$-$};
\draw[dashed](0,-0.3)--(0,-0.7);
\draw[decoration={brace,mirror,raise=5pt},decorate]
    (0,-1)-- (0,0);
\draw(0,-0.5)node[right=9pt]{$N$};
}&\mapsto\prod_{i=1}^{N}(1+q^{i}), \\
\tikzpic{-0.4}{
\draw(0,0)--(0,-0.35)(0,-0.65)--(0,-1);	
\draw(0,-0.3)node{$-$}(0,-0.7)node{$-$};
\draw[dashed](0,-0.4)--(0,-0.6);
\draw(0,-0.15)node{$\bullet$}(0,-0.85)node{$\bullet$};
\draw[decoration={brace,mirror,raise=5pt},decorate]
    (0,-1)-- (0,0);
\draw(0,-0.5)node[right=9pt]{$N$};
}&\mapsto 1.
\end{align*}

\begin{example}
We consider $\lambda=UDUUUUDU$.
Then, the partition function can be calculated by 
\begin{align*}
\tikzpic{-0.5}{[scale=0.6]
\coordinate
	child{coordinate(c1)}
	child{coordinate(c2)
		child{coordinate(c3)}
		child{coordinate(c4)}
	};
\node at ($(0,0)!0.5!(c2)$){$\bullet$};
\node at ($(c2)!0.5!(c4)$){$\bullet$};
\draw[dashed,latex-]($(c2)!0.5!(c3)$)--($(c2)!0.5!(c4)$);
}\rightarrow
\frac{[4]}{[2]}\frac{[3]}{[4]}[2]
\tikzpic{-0.5}{[scale=0.6]
\coordinate
	child{coordinate(c1)}
	child{coordinate(c2)
		child[missing]
		child{coordinate(c3)
			child[missing]
			child{coordinate(c4)}
		}
	};
\node at (c2){$-$};\node at (c3){$-$};
\node at ($(0,0)!.5!(c2)$){$\bullet$};
\node at ($(c2)!.5!(c3)$){$\bullet$};
\node at ($(c3)!.5!(c4)$){$\bullet$};
}\rightarrow
[3]\frac{[8]}{[2]}[2]=[3][8].
\end{align*}
We have the same partition function as Example \ref{ex:bt2}.
\end{example}

\section{Symmetric Dyck tiling strip and ballot tilings}
\subsection{Symmetric Dyck tiling strip}
\label{sec:symDTS}
Fundamental symmetric Dyck tilings above $\lambda$ are bijective to 
natural labels with circles. 
We define the {\it symmetric Dyck tiling strip} (symDTS for short) for a natural label with circles.
This algorithm is a generalization of the Dyck tiling strip bijection
for (non-symmetric) Dyck tilings.
The symDTS bijection consists of three operations: spread of a symmetric Dyck tiling, 
addition of a strip, and addition of a single box with $\ast$.

\paragraph{\bf Spreads of a symmetric Dyck tiling}
We define two spreads of a symmetric Dyck tiling: 
the Dyck spread and the ballot spread.
The first one corresponds to an insertion of a path $UD$ into a ballot path,
and the second one corresponds to an insertion of a path $U$.

We define the Dyck spread of a symmetric Dyck tiling following \cite{KMPW12}.
Given a ballot path $\lambda$ of length $(n,n')$ and a line $x=s$ with $0\le s\le 2n+n'$,
we define the Dyck spread of $\lambda$ at $x=s$ to be the ballot path $\lambda'$ 
consisting of the following points:
\begin{align*}
\{(x-1,y)| (x,y)\in\lambda, x\le s\}
\cup
\{(x,y+1)| (x,y)\in\lambda, x=s\}
\cup 
\{(x+1,y)| (x,y)\in\lambda, x\ge s\}.
\end{align*}
The Dyck spread of the symmetric Dyck tile at $x=s$ is defined by the spread of 
the ballot path at $x=s$ that coordinatizes the symmetric Dyck tile.
The Dyck spread of a symmetric Dyck tiling at $x=s$ is defined by taking 
the Dyck spread of the ballot tiles contained in the symmetric Dyck tiling.

Similarly, we define the ballot spread of a symmetric Dyck tiling as follows.
Given a ballot path $\lambda$ of length $(n,n')$, and a line $x=s$ with $0\le s\le 2n+n'$,
we define the ballot spread of $\lambda$ at $x=s$ to be the 
ballot path $\lambda'$ consisting of the following points:
\begin{align*}
\{(x-1,y)| (x,y)\in\lambda, x\le s\}
\cup 
\{(x,y+1)| (x,y)\in\lambda, x\ge s\}.
\end{align*}
The ballot spreads of a Dyck tile and of a symmetric Dyck tiling are defined 
similarly as the Dyck spreads.

\paragraph{\bf Addition of a strip}
Addition of a strip is sometimes called the {\it strip-growth} of a Dyck tiling \cite{KMPW12}.
After the spread of a symmetric Dyck tiling at $x=s$, we add single tiles on the up steps, 
which is right to $x=s$, of the new top path obtained by a Dyck or ballot spread.

\paragraph{\bf addition of a single box with $\ast$}
We add a single box with $\ast$ on the symmetric Dyck tiling obtained after the addition
of a strip.
The box with $\ast$ is at $x=2n+n'$ above the ballot path $\lambda$ of length $(n,n')$.

The symDTS bijection from natural labels with circles to symmetric Dyck tilings is 
defined as follows.
Let $T(\lambda)$ be a tree for a symmetric Dyck tiling associated with the ballot path $\lambda$.
We denote by $E(n)$ the edge with the label $n$ in $T(\lambda)$.

Let an integer sequence $\mathbf{p}:=(p_{1},\ldots,p_{n})$ be 
an insertion history for a natural label of the tree $T(\lambda)$ 
defined in Section \ref{sec:treesforsymDT}.
When $\mathbf{p}=\emptyset$, we define the symmetric Dyck tiling as a single dot at $(0,0)$.
Given an insertion history $\mathbf{p}=(p_1,\ldots,p_{n})$, 
we let $\mathbf{p}':=(p_{1},\ldots,p_{n-1})$ and $D'$ be the symmetric 
Dyck tiling associated with $\mathbf{p}'$.
We obtain a symmetric Dyck tiling $D$ for $\mathbf{p}$ recursively 
by the following three operations.
\begin{enumerate}[(symDTS1)]
\item
\label{symDTS1}
If the edge $E(n)$ in $T(\lambda)$ is an edge without a dot, we perform 
a Dyck spread of $D'$ at $x=p_{n}$.
Similarly, when $E(n)$ is an edge with a dot, we perform 
a ballot spread of $D'$ at $x=p_{n}$.
\item
We perform the addition of a strip on the symmetric Dyck tiling obtained by the step (symDTS1).
\item If the edge $E(n)$ has the label $n$ with a circle, we perform the addition 
of a single box with $\ast$ on the symmetric Dyck tiling obtained by the step (symDTS2) and
denote by $D$ the new symmetric Dyck tiling.
Otherwise, we denote by $D$ the symmetric Dyck tiling obtained by the step (symDTS2).
\end{enumerate}

Figure \ref{fig:symDTS} is an example of the symDTS bijection on the following labeled tree. 
\begin{figure}[ht]
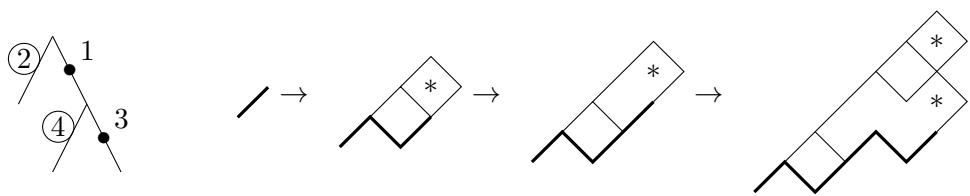

\tikzpic{-0.5}{[scale=0.6]
\coordinate
	child{coordinate(c1)}
	child{coordinate(c2)
		child{coordinate(c3)}
		child{coordinate(c4)}	
	};	
\node at ($(0,0)!0.5!(c2)$){$\bullet$};
\node at($(c2)!0.5!(c4)$){$\bullet$};
\node[anchor=south east,circle,draw,inner sep=1pt]at($(0,0)!0.5!(c1)$){$2$};
\node[anchor=south east,circle,draw,inner sep=1pt]at($(c2)!0.5!(c3)$){$4$};
\node[anchor=south west]at($(0,0)!.5!(c2)$){$1$};
\node[anchor=south west]at($(c2)!.5!(c4)$){$3$};
}
\qquad
\tikzpic{-0.5}{[x=0.4cm,y=0.4cm]
\draw[very thick](0,0)--(1,1);
}$\rightarrow$
\tikzpic{-0.5}{[x=0.4cm,y=0.4cm]
\draw[very thick](0,0)--(1,1)--(2,0)--(3,1);
\draw(1,1)--(3,3)--(4,2)--(3,1)--(2,2);
\node at (3,2){$\ast$};
}$\rightarrow$
\tikzpic{-0.5}{[x=0.4cm,y=0.4cm]
\draw[very thick](0,0)--(1,1)--(2,0)--(4,2);
\draw(1,1)--(2,2)--(3,1);
\draw(2,2)--(4,4)--(5,3)--(4,2);
\node at (4,3){$\ast$};
}$\rightarrow$
\tikzpic{-0.5}{[x=0.4cm,y=0.4cm]
\draw[very thick](0,0)--(1,1)--(2,0)--(4,2)--(5,1)--(6,2);
\draw(1,1)--(2,2)--(3,1);
\draw(2,2)--(4,4)--(5,3)--(6,4)--(7,3)--(6,2);
\draw(4,4)--(6,6)--(7,5)--(6,4)--(5,5);
\node at (6,3){$\ast$};
\node at (6,5){$\ast$};
}
\caption{A natural label with circles and the growth of a symmetric Dyck tiling.
The insertion history is $\mathbf{p}=(0,0,3,3)$.}
\label{fig:symDTS}
\end{figure}

\begin{theorem}
\label{thrm:bijfsDTS}
The symmetric DTS map is a bijection between natural lables of a tree $T(\lambda)$
and fundamental symmetric Dyck tilings over $\lambda$.
\end{theorem}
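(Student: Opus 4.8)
The plan is to prove the statement by induction on $N=n+n'$, the number of edges of $T(\lambda)$, by exhibiting an explicit inverse to the symDTS map. A natural label together with its circles determines, and is determined by, its insertion history $\mathbf{p}=(p_1,\ldots,p_N)$, where $p_i\in[0,2(i-1)]$ and $p_i$ carries a box when $E(i)$ is dotted and may carry a circle otherwise; so it suffices to prove that the recursive rule sending $\mathbf{p}$ to a tiling is a bijection onto fundamental symmetric Dyck tilings over $\lambda$. The inductive hypothesis is that symDTS is bijective for every ballot path whose tree has fewer than $N$ edges, and the inductive step reduces everything to analysing the single last insertion governed by $p_N$ and its decoration.

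First I would check that the forward map is well defined, i.e.\ that the output is genuinely a fundamental symmetric Dyck tiling over $\lambda$. Working inductively, suppose $D'$ is the tiling attached to $\mathbf{p}'=(p_1,\ldots,p_{N-1})$ over the ballot path $\lambda'$ obtained by deleting the leaf chord of maximal label from $\overline{\lambda}$. A Dyck spread inserts a $UD$ into the lower boundary at $x=p_N$ and a ballot spread inserts a single $U$; in either case the spread of every ballot tile is again a ballot tile and the cover-inclusive property is preserved, so after the strip addition the diagram is again a fundamental symmetric Dyck tiling. One checks that the lower boundary evolves exactly as the tree grows: a dotted edge (ballot spread) creates the extra $U$ counted by $n'$, an undotted edge (Dyck spread) creates a matched $UD$ counted by $n$, and the optional box with $\ast$ is legal precisely when $E(N)$ is undotted, matching the circling rule. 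Hence after all $N$ steps the boundary is $\lambda$ and the underlying tree is $T(\lambda)$.

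For the inverse, given a fundamental symmetric Dyck tiling $D$ over $\lambda$ I would peel off the last insertion as follows. Because labels increase from the root to the leaves, the edge $E(N)$ carrying the maximal label is a leaf of $T(\lambda)$, and $\overline{\lambda}$ records whether it is dotted (the last spread was then a ballot spread, removing an extra $U$ of $\lambda$) or undotted (a Dyck spread, removing a matched $UD$); the presence or absence of the top $\ast$-box on the rightmost column tells us whether $E(N)$ was circled. Undoing the box addition and the strip addition, and then contracting the steps inserted by the spread (deleting the $U$ or the $UD$), returns a tiling $D'$ over the smaller path $\lambda'$, and the horizontal position at which the contraction occurs is the recovered value $p_N$. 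Applying the inductive hypothesis to $D'$ reconstructs $\mathbf{p}'$, and appending $p_N$ with its decoration gives the insertion history of $D$; the two compositions are then identities by construction.

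The main obstacle is the reversibility of the spread together with the strip growth: one must show that the tiles added during the addition of a strip at step $N$ form exactly the set that can be removed, so that after deleting them and contracting the inserted steps the remaining tiling $D'$ is unchanged and still cover-inclusive, and that the steps to be contracted --- hence the value $p_N\in[0,2(N-1)]$ --- are uniquely determined by $D$. This is the point where the two spread types must be disentangled: a ballot spread contracts a single $U$ while a Dyck spread contracts a $UD$, and the argument must confirm that the dotted or undotted status read off from $\overline{\lambda}$ selects the correct contraction. This local analysis generalizes the reversibility of the classical DTS bijection in \cite{KMPW12}; the genuinely new ingredients are the ballot spread and the $\ast$-box, whose recovery from the rightmost column is what makes the decorated natural label, rather than a bare labeled tree, the correct indexing set.
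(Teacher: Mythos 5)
Your proposal is correct and follows essentially the same route as the paper: induction on the number of edges $N=n+n'$, reversing the steps (symDTS1)--(symDTS3) to recover the spread position $p_N$, the dotted/circled status of $E(N)$, and the smaller tiling $\mathcal{D}'$ over $\lambda'$, then invoking the inductive hypothesis. You phrase it as building an explicit two-sided inverse while the paper splits it into injectivity (from the definition) plus surjectivity, but the substance --- and even the point you flag as the main obstacle, the reversibility of the spread-plus-strip step, which the paper likewise treats as routine --- is the same.
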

\begin{proof}
The symmetric DTS map sends a natural label of a tree $T(\lambda)$ to 
a fundamental symmetric Dyck tiling over $\lambda$. 
Then, injectivity of the map directly follows from the definition.
We will show that the map is surjective.

We prove surjectivity by induction. 
Let $\lambda$ be a ballot path of length $(n,n')$ and 
$T'$ be a tree for a ballot path $\lambda'$ where $\lambda'$ 
is obtained from $\lambda$ by deleting one edge.
By induction hypothesis, the symmetric DTS map is a bijection 
between natural labels of $T'$ and fundamental symmetric 
Dyck tilings over $\lambda'$.
Let $\mathcal{D}$ be a fundamental symmetric Dyck tiling over 
a ballot path $\lambda$.
By reversing the steps from (symDTS1) to (symDTS3), 
one can easily find the line $x=p_{n+n'}$ and a fundamental symmetric
Dyck tiling $\mathcal{D}'$ over $\lambda'$. 
The line $x=p_{n+n'}$ is the place where we perform a Dyck or ballot spread 
on $\mathcal{D}'$.
If we have to add a single box with $\ast$, the label $n+n'$ should be circled.
We can extract the following three information from the tiling $\mathcal{D}$: 
the natural label $L'$ for the fundamental symmetric 
Dyck tiling $\mathcal{D}'$, the position $x=p_{n+n'}$, and whether the label $n$ 
is circled or not.
From these, one can construct	 a unique natural label $L$ for the tiling $\mathcal{D}'$.
Thus, there exists a natural label for a given fundamental symmetric 
Dyck tiling, which implies the map is surjective.
Together with injectivity, we conclude that the map is a bijection.
\end{proof}

\subsection{An inclusive map from ballot tilings to symmetric Dyck tilings}
\label{sec:symDTSbt}

In general, the tree for a ballot tiling above $\lambda$ is different 
from the one for a fundamental symmetric Dyck tiling above $\lambda$
even if the underlying tiling is the same.
Since ballot tilings are the subset in the set of fundamental symmetric Dyck tilings,
we will construct an inclusive map from a natural label of a ballot tiling above $\lambda$
to a natural label of a fundamental symmetric Dyck tiling above $\lambda$.

Let $\mathcal{L}^{\mathrm{D}}(\lambda)$ be the set of natural labels 
for fundamental symmetric Dyck tilings above $\lambda$ and $\mathcal{L}^{\mathrm{b}}(\lambda)$ 
be the set of natural labels for ballot tilings above $\lambda$. 
Then, we define $\mathcal{L}^{\mathrm{b}\subset\mathrm{D}}(\lambda)$ as a subset in 
$\mathcal{L}^{\mathrm{D}}$ that corresponds to ballot tilings.

We summarize the properties of a natural label in $\mathcal{L}^{\mathrm{b}\subset\mathrm{D}}$
as follows.
Then, we will give a construction of such natural labels 
in $\mathcal{L}^{\mathrm{b}\subset\mathrm{D}}$.

A natural label $L\in\mathcal{L}^{\mathrm{b}\subset\mathrm{D}}\subset \mathcal{L}^{D}$ 
may have dotted edges.
By definition of a tree, the dotted edges in $L$ form a subtree starting from the root and 
ending at a unique leaf without ramifications.
We enumerate these dotted edges by $1,2\ldots,r$ from the leaf to the root, where $r$ is the 
number of dotted edges.
We divide these dotted edges into two classes: the dotted edges enumerated by odd integers, 
and the ones enumerated by even integers.
We call them odd and even dotted edges respectively.
We denote by $E_i$ for $1\le i\le r$ the label of the dotted edge enumerated by $i$.
Recall that a ballot tiling is a fundamental symmetric Dyck tiling with two conditions 
(see Section \ref{sec:bt}).
These conditions can be written in terms of a natural label 
$L\in\mathcal{L}^{\mathrm{b}\subset\mathrm{D}}$ as follows.
A natural label $L\in\mathcal{L}^{\mathrm{b}\subset\mathrm{D}}$ should satisfy 
\begin{enumerate}[(P1)]
\item
\label{Enu:P1} 
Let $s\in[1,r]$ be an even integer on a dotted edge.
There is no label $l$ such that $E_{s+1}<l<E_{s}$ and $l$ is circled.
\item
\label{Enu:P2}
 Let $s\in[1,r]$ be an odd integer on a dotted edge.
The number of labels $l$ such that $E_{s+1}<l<E_{s}$ and $l$ is circled 
is even.
\end{enumerate}

Let $L^{b}\in\mathcal{L}^{\mathrm{b}}(\lambda)$ be a natural label with circles 
on the tree associated with $\lambda$.
We will define a map $\iota$ from $\mathcal{L}^{\mathrm{b}}$
to $\mathcal{L}^{\mathrm{b}\subset\mathrm{D}}$.
The map $\iota$ is given as a composition of two maps 
\begin{align*}
L^{b} \mapsto \overline{L} \mapsto L, 
\end{align*}
where $\overline{L}\in\mathcal{L}^{\mathrm{D}}$ and $L\in\mathcal{L}^{\mathrm{b}\subset\mathrm{D}}$.
Note that $\overline{L}$ may not be in $\mathcal{L}^{\mathrm{b}\subset\mathrm{D}}$.

In $L^{b}$, a dotted edge corresponds to a terminal $U$, or a $UU$-pair.
There is no edge corresponding to an extra $U$.
As in the case of a natural label $\mathcal{L}^{\mathrm{D}}$, 
dotted edges in $L^{b}$ form a subtree starting from the root and ending at a unique leaf
without ramifications.
The labels on the dotted edges in $L^{b}$ are decreasing from the leaf to the root.
Let $T$ be a tree for a fundamental symmetric Dyck tiling for a ballot path $\lambda$.
Note that a tree $T$ for a fundamental symmetric Dyck tiling is in general different from
the one for a ballot tiling. 
Since each $U$ in a ballot path $\lambda$ has a corresponding dotted edge in $T$ 
if it is not paired with $D$, the tree $T$ has dotted edges corresponding to a 
terminal $U$, a $U$ in a $UU$-pair and an extra $U$ (if it exists) in the ballot path $\lambda$.

Recall that we enumerate dotted edges in $T$ from the leaf to the root by $1,2,\ldots,r$, 
where $r$ is the number of dotted edges in $T$.
Let $T'$ be a tree for a ballot tiling.
Given an edge $f$ in $T$, we consider a unique sequence of edges from $f$ to the root.
When this sequence contains at least one dotted edges enumerated by integers larger 
than or equal to $k$, we say that the ancestor edge with a dot is the $k$-th dotted edge.
Similarly, when the sequence contains no dotted edges, we say that the ancestor edge with a dot 
is $r+1$-th dotted edge. Since the number of dotted edges is $r$, we regard the root as 
the $r+1$-th dotted edge.

Given a ballot path $\lambda$, let $T$ and $T'$ be the trees for a fundamental symmetric 
Dyck tiling and a ballot tiling, respectively.
\begin{prop}
\label{prop:anc}
Let $f, f'$ be edges in $T$ and $T'$ respectively such that $f$ and $f'$ corresponds 
to the same $UD$ pair in $\lambda$.
By definition, the edges $f$ and $f'$ do not have dots.
When the sequence of edges from $f'$ to the root in $T'$ 
contains an edge with an incoming arrow, the ancestor edge with a dot is the $2m$-th 
edge in $T$ with some $m$.
Similarly, if the sequence in $T'$ contains only edges without an incoming arrow, 
the ancestor edge with a dot is $2m-1$-th dotted edge  in $T$ with some $m$.
\end{prop}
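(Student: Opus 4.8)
The plan is to reduce both alternatives in the statement to the parity of a single integer $\rho$, defined as the number of unpaired $U$ steps of $\lambda$ lying strictly to the right of the $UD$ pair that coordinatizes both $f$ and $f'$. Since $f$ and $f'$ carry no dot, the content of the proposition is the equality of two parities: the parity of the index (counted from the leaf) of the lowest dotted ancestor of $f$ in $T$, and the parity recording whether the path from $f'$ to the root in $T'$ meets an edge with an incoming arrow. I would show that the former is congruent to $\rho+1 \pmod 2$ and the latter tracks $\rho \pmod 2$, so the two parities are always opposite, and the dichotomy $2m$ versus $2m-1$ then matches the arrow condition exactly.

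For the tree $T$ I would argue directly from the construction of $\overline{\lambda}=\lambda D^{n'}$ in Section~\ref{sec:treesforsymDT}. The $n'$ appended $D$ steps are matched to the $n'$ unpaired $U$ steps of $\lambda$ in reverse order, so the dotted edges are nested and form the root-to-leaf chain; the rightmost unpaired $U$ yields the innermost (leaf) dotted edge and the leftmost yields the outermost (root) one. Hence the dotted edge of index $j$ corresponds to the $j$-th unpaired $U$ from the right. A dotted edge is an ancestor of $f$ exactly when its unpaired $U$ lies to the left of the pair, because the appended $D$'s all sit to the right of $\lambda$; the lowest such ancestor is the one whose $U$ is closest to the pair, that is, the rightmost unpaired $U$ to the left of $f$. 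Counting from the leaf, this dotted edge is preceded by precisely the $\rho$ dotted edges coming from the unpaired $U$'s to the right of the pair, so its index is $\rho+1$. This settles the $T$ side: the index is even iff $\rho$ is odd.

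For the tree $T'$ I would argue by induction on the recursive construction of the ballot tree (rules (1)--(7) of Section~\ref{sec:TreeBT}), tracking which edges have an incoming-arrow edge on the path to the root. The incoming arrows are produced by the \emph{immediately precede} relation, which chains the maximal edges of the indecomposable Dyck components $x_s,\ldots,x_1$ of a block $z_{2r+1}$ lying to the left of a $UU$-pair / terminal-$U$ block $\lambda''$. The key lemma to isolate is that, reading the unpaired $U$'s from right to left, these arrows flag exactly the Dyck blocks sitting in the odd-numbered gaps; consequently a $UD$-pair edge meets an incoming-arrow edge on its way to the root precisely when an odd number of unpaired $U$'s lie to its right. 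I would prove this lemma by checking how each construction rule for a terminal $U$, a $UU$-pair, and an extra $U$ alters both the set of edges dominated by an incoming arrow and the running parity of unpaired $U$'s, and by verifying that the two changes stay synchronized; the base cases $\lambda=\emptyset$ and $\lambda$ a single chord are immediate.

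Combining the two computations, the lowest dotted ancestor of $f$ in $T$ has index $\equiv \rho+1$ while $f'$ meets an incoming arrow iff $\rho$ is odd, so the arrow case occurs exactly when the index is even ($2m$) and the non-arrow case exactly when it is odd ($2m-1$), as claimed. The main obstacle is the $T'$ analysis: the $UU$-pair and \emph{immediately precede} definitions are intricate, and the real work is the alternation lemma, namely that the incoming-arrow structure of $T'$ reads off the parity of unpaired $U$'s to the right cleanly through all recursive cases, including the interaction of extra $U$'s (which create no edge) with the arrow chain.
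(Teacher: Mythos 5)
Your proposal is correct, and it supplies a genuine argument where the paper offers none: the paper's entire proof of Proposition~\ref{prop:anc} is the sentence ``It is obvious from the definition of a tree for a ballot tiling and that of arrows on a tree.'' Your bridging invariant $\rho$ (the number of unpaired $U$'s strictly to the right of the chord; note no unpaired $U$ can sit between the $U$ and $D$ of the pair, so this is well defined) is exactly the right quantity. On the $T$ side your computation checks out: the appended $D$'s pair with the unpaired $U$'s in reverse order, the dotted edges form a nested chain indexed $1,\dots,r$ from the leaf with index $j$ corresponding to the $j$-th unpaired $U$ from the right, and the lowest dotted ancestor of $f$ has index $\rho+1$ (with the paper's convention that the index is $r+1$ when there is no dotted ancestor, which is the case $\rho=r$). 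On the $T'$ side your alternation lemma is also true: writing $\lambda=z_{n'}Uz_{n'-1}U\cdots Uz_0$ with the displayed $U$'s unpaired, the incoming arrows land precisely on the maximal edges of the indecomposable components of the odd-indexed blocks $z_1,z_3,\dots$, and an edge in $z_j$ reaches the root through its own component's maximal edge and then only through dotted edges (which never receive arrows), so it meets an incoming arrow iff $j=\rho$ is odd. This gives index $\equiv\rho+1$ even exactly in the arrow case, matching the $2m$ versus $2m-1$ dichotomy. The one part you leave as a sketch is the inductive verification of the alternation lemma through the terminal-$U$, $UU$-pair and extra-$U$ rules; when you carry it out, the point to make explicit is that the subtree for each block $z_j$ hangs directly below a dotted edge of the chain (or at the root), so no non-dotted edge outside $f'$'s own indecomposable component ever appears on the path to the root. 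I verified your claims against the paper's worked example $\lambda=UDUUDU$ and they agree.
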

\begin{proof}
It is obvious from the definition of a tree for a ballot tiling and that of arrows on a tree.
\end{proof}

\begin{remark}
\label{remark:btsDT}
In general, two edges occupying the same node in a tree for a ballot tiling 
may not occupy the same node in a tree for a symmetric Dyck tiling.
This occurs due to the existence of arrows in the tree for a ballot tiling.
Note also that there is an obvious bijection between the edges without a dot in
the tree for a ballot tiling and the ones in the tree for a symmetric Dyck tiling.
\end{remark}

\begin{example}
We consider a ballot path $\lambda:=UDUUDU$.
In Figure \ref{fig:twotrees}, we have a tree for a ballot tiling (left picture) 
and a tree for a symmetric Dyck tiling (right picture).  
The middle edge in the left picture has an incoming arrow.
From Proposition \ref{prop:anc}, the corresponding edge in the right picture
has the second ancestor edge.
Similarly, the left-most edge in the left picture has the third ancestor edge 
in the right picture. 
As Remark \ref{remark:btsDT}, although the two edges without dots share the same node 
in the left picture, they do not share the same node in the right picture.
\begin{figure}[ht]
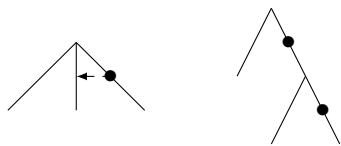

\tikzpic{-0.5}{[scale=0.6]
\coordinate
	child{coordinate(c4)}
	child{coordinate(c3)}
	child{coordinate(c1)
		child[missing]
		child[missing]
		child[missing]};
\node at ($(0,0)!0.5!(c1)$){$\bullet$};
\draw[dashed,-latex]($(0,0)!0.5!(c1)$)--($(0,0)!0.5!(c3)$);
}\qquad
\tikzpic{-0.5}{[scale=0.6]
\coordinate
	child{coordinate(c1)}
	child{coordinate(c2)
		child{coordinate(c3)}
		child{coordinate(c4)}};
\node at ($(0,0)!0.5!(c2)$){$\bullet$};
\node at ($(c2)!0.5!(c4)$){$\bullet$};
}
\caption{Two trees associated with $\lambda=UDUUDU$.}
\label{fig:twotrees}
\end{figure}
\end{example}

We define a natural label $\overline{L}\in\mathcal{L}^{\mathrm{D}}$ on $T$ as follows.
We construct $\overline{L}$ such that we have a bijection between edges without a dot 
in $L^{b}$ and the ones in $\overline{L}$ as in Remark \ref{remark:btsDT}.
We put the label of an edge without a dot in $L^{b}$ on the corresponding 
edge without a dot in $\overline{L}$. 
In this process, if a label in $L^{b}$ has a circle, the corresponding label in $\overline{L}$
also has a circle.
We put the label of the $i$-th (enumerated from the root) dotted edge in $L^{b}$ on the $(2i-1)$-th (resp. $2i$-th) 
dotted edge from the root in $\overline{L}$ (whose underlying tree is $T$)
if a ballot path $\lambda$ (regarded as a path for a ballot tiling) 
does not have (resp. has) the extra $U$.
The natural label $\overline{L}$ may have dotted edges without a label.
Suppose that the $i$-th (enumerated from the root) dotted edge has no label after the above-mentioned operation
and this edge in $\overline{L}$ has a subtree $T''$ just below it.
Let $\min(T'')$ be the minimum label of the subtree $T''$.
We first increase the labels of $\overline{L}$ which are larger than $\min(T'')-1$, and 
put the label $\min(T'')$ on the $i$-th dotted edge in $\overline{L}$.
We perform this procedure until all the dotted edges without a label in $\overline{L}$ have labels.

We perform the following operations on $\overline{L}$ and obtain a new label $L$ on $T$.
Let $E_{i}$ be the label of the dotted edge in $\overline{L}$ enumerated by $1\le i\le r$ 
from the leaf to the root.
We define $E_{r+1}=0$.
We start from $i=r$ and recursively modify $\overline{L}$ up to $i=1$ to obtain a natural label $L$.
We have two cases according to the parity of $i$.
\paragraph{\bf In case of $i$ even}
Let $l$ be the smallest circled label such that $l$ is strictly left to $E_{i}$ and 
$E_{i+1}<l<E_{i}$.
Suppose $c$ be the number of circled labels which are strictly larger than $l-1$ and smaller than $E_{i}$.
We have two cases according to the parity of $c$:
\begin{enumerate}
\item When $c$ is odd. We delete the circle of the label $l$. Then, we change the labels 
$l\le j\le E_{i}-1$ to $j+1$ and $E_{i}$ to $l$ by keeping the circle if it is circled.
\item When $c$ is even. 
We change the labels $l\le j\le E_{i}-1$ to $j+1$ and $E_{i}$ to $l$ by keeping the circle 
if it is circled.
\end{enumerate} 
\paragraph{\bf In case of $i$ odd}
Let $l$ be the smallest circled label such that $l$ is strictly left to $E_{i}$ and 
$E_{i+1}<l<E_{i}$.
Suppose $c$ be the number of circled labels which is strictly larger than $l-1$ and smaller than $E_{i}$.
We have two cases according to the parity of $c$:
\begin{enumerate}
\item When $c$ is even, the labels of $L$ are the same as $\overline{L}$.
\item When $c$ is odd, we put a circle on the maximum label $j$ such that $j$ is smaller than $l$
and $j$ cannot be circled in $L^{b}$. 
\end{enumerate}

In both cases for $i$ even and odd, the number of labels $l$ with a circle 
such that $E_{i+1}<l<E_{i}$ is even.

We denote by $L$ the newly obtained natural label from $\overline{L}$ 
by the above procedures on dotted edges.
\begin{prop}
\label{prop:iotawd}
The map 
$\iota:\mathcal{L}^{\mathrm{b}}(\lambda)\rightarrow
\mathcal{L}^{\mathrm{b}\subset\mathrm{D}}(\lambda)$, $L^{\mathrm{b}}\mapsto L$, 
is well-defined.
That is, there exists $j$ satisfying the conditions for the process ``in case of $i$ odd". 
\end{prop}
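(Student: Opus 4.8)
The plan is to reduce the statement to a single gap inequality and then prove that inequality by a downward induction on $i$ controlled by a parity count. I would first localize where the circle we must add can possibly sit. Since the dotted edges of $T$ carry the decreasing labels $E_{1}>E_{2}>\cdots>E_{r}>E_{r+1}=0$, no dotted edge carries a label strictly between the consecutive values $E_{i+1}$ and $E_{i}$; hence every integer label $j$ with $E_{i+1}<j<E_{i}$ lies on a non-dotted edge of $T$. Through the bijection between non-dotted edges of $T$ and of $T'$ (Remark \ref{remark:btsDT}), such a $j$ is circled in $\overline{L}$ precisely when its partner edge is circled in $L^{b}$, and by the circle rules a non-dotted edge of $T'$ escapes a circle only when it carries an incoming arrow to which rule (3) fails to apply. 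Thus a label in the interval $(E_{i+1},E_{i})$ that ``cannot be circled in $L^{b}$'' is exactly an uncircled arrow-edge label, and by minimality of $l$ every integer in $(E_{i+1},l)$ is of this kind. The proposition is therefore equivalent to the assertion that, whenever $i$ is odd and $c$ is odd, one has $l>E_{i+1}+1$.

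Next I would run the construction in the prescribed order $i=r,r-1,\dots,1$, carrying the invariant that the dotted-edge labels stay strictly decreasing (so that the intervals $(E_{i+1},E_{i})$ remain well defined) and that, once index $i$ has been processed, the number of circled labels strictly between $E_{i+1}$ and $E_{i}$ is even --- exactly the statement recorded right after the procedure. The even-indexed steps relabel: they move $E_{i+1}$ down to its smallest circled label and raise the intervening labels by one, so the interval seen at an odd step has already been reshaped, and the induction must track this. For the odd step the task is to show that an \emph{odd} current value of $c$ is incompatible with $l=E_{i+1}+1$. Here I would invoke Proposition \ref{prop:anc}, which ties odd-indexed dotted edges of $T$ to arrow-free ancestor paths in $T'$ and even-indexed ones to the presence of an incoming arrow; combined with rule (3), this lets me split the circled labels of the interval into those forced by rule (2) and those governed by maximal arrow chains, and identify the parity of $c$ with the parity of the arrow-governed contribution.

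The step I expect to be the real obstacle is this last parity bookkeeping. I would isolate a lemma describing, for a single maximal arrow chain of $T'$ whose labels meet $(E_{i+1},E_{i})$, how many of its edges rule (3) circles and, crucially, where its \emph{uncircled} links fall relative to the smallest circled label $l$: the lowest link of such a chain is always an uncircled arrow edge lying below the labels it governs. Summing over chains, an odd arrow-governed contribution then forces at least one uncircled arrow-edge label strictly below $l$ and above $E_{i+1}$, which is exactly $l>E_{i+1}+1$. The complications are that the even-indexed relabelings displace labels and shift the dotted values between steps, so the chain-by-chain count has to be stated for the current (not the original) labeling; once the lemma is set up in that form, the remaining checks --- that circling this maximal $j$ restores an even count and disturbs neither condition (P1) nor the already-treated intervals --- are routine, and well-definedness of $\iota$ follows.
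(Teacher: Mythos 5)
Your proposal correctly isolates the structural fact that does all the work --- via Proposition \ref{prop:anc} and Remark \ref{remark:btsDT}, the non-dotted edges in the relevant interval carry incoming arrows in the ballot-tree labeling, and the bottom edge of an arrow chain is necessarily uncircled because it carries the smallest label of its chain --- but you then route the argument through machinery that the statement does not require and that you do not complete. The proposition only asks for \emph{some} $j<l$ that cannot be circled in $L^{b}$; it does not ask for $j\in(E_{i+1},l)$. Your claimed ``equivalence'' with the gap inequality $l>E_{i+1}+1$ is therefore a reduction to a strictly stronger assertion, and it is exactly that extra strength which forces you into the chain-by-chain parity summation that you yourself flag as the real obstacle and leave as an unproven lemma (made harder still by the fact, which you note, that the even-indexed relabelings shift the intervals between steps of the downward induction). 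The paper's proof avoids all of this: it applies the circling rule (3) of Section \ref{sec:enumBT} to the single arrow chain through $l$ itself, namely $l\leftarrow\cdots\leftarrow j$ as in (\ref{eq:lj}), and observes that its source $j$ satisfies $j<l$ (since $l$ can only be circled when every label on the chain exceeds $j$) and cannot be circled (it is the minimum of its chain); this $j$ is already a witness for the process ``in case of $i$ odd''. Your own observation that the lowest link of such a chain is an uncircled arrow edge lying below the labels it governs, applied to this one chain, finishes the proof; the downward induction and the parity bookkeeping over all maximal chains are unnecessary, and as written they constitute the genuine gap in your argument.
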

\begin{proof}
When we construct a natural label $\overline{L}\in\mathcal{L}^{D}$ from $L^{b}$, 
we put the same label as the one on a dotted edge in $L^{b}$ 
on the $(2i-1)$-th or $2i$-th dotted edge $E$ in $\overline{L}$ according to the existence 
of an extra $U$.
We denote the label on the edge $E$ by $l(E)$. At this moment, we have no label on the $2i$-th 
or $(2i-1)$-th dotted edges in $\overline{L}$.
Then, we put a label $\min(T')$ on the $2i$-th or $(2i-1)$-th 
dotted edge, respectively, where $T'$ is a subtree below the $2i$-th or $(2i-1)$-th 
dotted edge.

Let $r$ be the total number of dotted edges in $\overline{L}$.
When we have an extra $U$, $r$ is even.
Take the $2i$-th dotted edge $E$ enumerated by $1,2,\ldots,r$ from the root. 
This implies that the parity of $i$ is odd when we put labels on dotted edges without 
a label to construct $L$ from $\overline{L}$.
We first consider the case where the subtree below $2i-1$-th dotted edge 
has a unique dotted edge below its root.
Then, the $(2i-1)$-th dotted edge from the root has the label $\min(T')=l(E)$ and 
the label of the $2i$-th dotted edge is changed to $l(E)+1$.
Note that there exists no circled labels in $[l(E),l(E)+1]$. 
Thus the operation ``in case of $i$ odd" does not do anything on $\overline{L}$.
Secondly, we consider the case the subtree $T'$ below $2i-1$-th dotted edge has 
at least two edges (one edge is a dotted edge) below its root.
From Proposition \ref{prop:anc}, the edges whose ancestor is the $2i-1$-th dotted 
edge have incoming arrows.
Especially, note that the $2i$-th dotted edge has the outgoing arrow.
Let $l$ be the smallest circled label as in ``in case of $i$ odd".
Recall that $E_{r-2i+2}<l<E_{r-2i+1}$.
By the definition of the rules to put a circle on a label of edges in 
the tree for a ballot tiling (see Section \ref{sec:enumBT}), 
there exists a sequence of edges with arrows 
\begin{align}
\label{eq:lj}
l\leftarrow \ldots \leftarrow j,
\end{align}
with $j$ without a circle.
This $j$ cannot be circled due to the fact that there exists a smallest 
label in the sequence (\ref{eq:lj}) which cannot be circled.
Thus, there exists at least one $j$ satisfying the conditions in 
``in case of $i$ odd". 
This implies the map $\iota$ is well-defined.

On the other hand, when we do not have an extra $U$, $r$ is odd.
Let a dotted edge $E$ be the $2i-1$-th dotted edge from the root.
Then, we apply the operation ``in case of $i$ odd" to $E$.
When $i\ge2$, by the same argument as $r$ even case, the operation 
``in case of $i$ odd" is well-defined on $\overline{L}$.
Finally, we consider the $i=1$ case.
Since we do not have an extra $U$, we have a sequence of edges with incoming 
arrows starting from $E$.
Let $l$ be the smallest circled label as in ``in case of $i$ odd".
Recall that $0=E_{r+1}<l<E_{r}$. 
By applying the same argument as $r$ even again, one can easily show 
that the operation is well-defined. 
\end{proof}

\begin{cor}
The map $\iota$ is injective.
\end{cor}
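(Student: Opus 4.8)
The plan is to show that the composite map $\iota : \mathcal{L}^{\mathrm{b}}(\lambda) \to \mathcal{L}^{\mathrm{b}\subset\mathrm{D}}(\lambda)$ can be inverted on its image, which gives injectivity directly. Since $\iota = (L^{b} \mapsto \overline{L}) \circ (\overline{L} \mapsto L)$ is built from two successive constructions, I would establish injectivity of each constituent and then compose. The first map $L^{b}\mapsto\overline{L}$ distributes the labels of $L^{b}$ onto the tree $T$ (for the fundamental symmetric Dyck tiling) via the fixed bijection on non-dotted edges described in Remark \ref{remark:btsDT}, together with a deterministic rule placing each dotted label of $L^{b}$ on a prescribed $(2i-1)$-th or $2i$-th dotted edge of $T$ and then filling the remaining dotted edges by the $\min(T'')$ insertion procedure. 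Each of these steps is reversible: knowing $\overline{L}$, one recovers which dotted edges of $T$ carry ``genuine'' labels inherited from $L^{b}$ versus labels produced by the $\min$-insertion (the latter are exactly those equal to the minimum of the subtree hanging below them), and the non-dotted labels of $L^{b}$ are read off the non-dotted edges of $T$ through the inverse of the fixed bijection. Hence $L^{b}\mapsto\overline{L}$ is injective.

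The more delicate half is showing that the second map $\overline{L}\mapsto L$ is injective, since it modifies labels recursively from $i=r$ down to $i=1$ through the two case analyses (``$i$ even'' and ``$i$ odd''). Here I would argue that each elementary modification is reversible, proceeding in the \emph{reverse} order $i=1$ up to $i=r$. For the ``$i$ even'' step, the operation takes the label $E_i$ and reinserts it at a circled position $l$ strictly to the left while cyclically shifting the intervening labels $l\le j\le E_i-1$ upward by one; this is a conjugation-type relabeling whose inverse is the downward shift, and the parity of the count $c$ (with the possible deletion of a circle when $c$ is odd) records unambiguously how the circles must be restored. For the ``$i$ odd'' step, when $c$ is even nothing changes, and when $c$ is odd a single circle is added to a determined maximal label $j$; this added circle sits at a position that \emph{could not} be circled in $L^{b}$, so in $L$ it is detectable and can be removed to undo the step. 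The key observation enabling reversal is the invariant stated right before the corollary: after processing each $i$, the number of circled labels $l$ with $E_{i+1}<l<E_i$ is even. This parity bookkeeping is precisely what distinguishes the ``inserted'' structures of $L$ from intrinsic features, and lets one identify and reverse each modification canonically.

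The main obstacle I anticipate is verifying that the inverse operations, applied in increasing order of $i$, correctly disentangle the circled labels in each interval $(E_{i+1},E_i)$ without interference between consecutive dotted-edge blocks. Since the intervals $(E_{i+1},E_i)$ are nested/consecutive and the shifting operations move labels across the boundary value $E_i$, one must confirm that after the forward pass the separator labels $E_i$ occupy positions from which the original intervals can be recovered. I would handle this by tracking, as an invariant maintained through the recursion, the set of circled labels in each interval together with its cardinality parity, and showing that this data together with the underlying tree $T$ determines $\overline{L}$ uniquely. Combining the injectivity of both constituent maps, $\iota$ is injective; alternatively, and perhaps more cleanly, one exhibits an explicit inverse $\iota^{-1}$ on $\mathcal{L}^{\mathrm{b}\subset\mathrm{D}}(\lambda)$ realized by running the two constructions backward, which simultaneously reproves that the image lands in $\mathcal{L}^{\mathrm{b}\subset\mathrm{D}}$ as characterized by (P\ref{Enu:P1}) and (P\ref{Enu:P2}).
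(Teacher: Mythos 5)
Your plan is correct in substance, but it is doing far more work than the paper, whose entire proof of this corollary is the single sentence ``It is obvious that if $L,L'\in\mathcal{L}^{b}(\lambda)$ and $L\neq L'$, then $\iota(L)\neq\iota(L')$.'' The paper treats injectivity as immediate from the fact that every stage of the construction of $\iota$ is deterministic and order/structure-preserving; you instead propose to exhibit an explicit inverse by undoing the two stages $L^{b}\mapsto\overline{L}$ and $\overline{L}\mapsto L$ separately. Your first half is unproblematic: the placement of dotted labels is dictated by $\lambda$ alone, the non-dotted labels are transported by the fixed bijection of Remark \ref{remark:btsDT}, and the $\min(T'')$ fill-in is forced, so $L^{b}\mapsto\overline{L}$ is clearly invertible. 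The second half is where your plan earns its keep but also where it is not yet airtight: in the ``$i$ even, $c$ odd'' step a circle is silently deleted from the label $l$, and after the operation the circle count in the interval $(E_{i+1},E_{i})$ is even in \emph{both} parity cases, so recovering whether a deletion occurred requires more than the parity invariant you cite --- you would need to argue (as you gesture at) that the relative order of the non-dotted labels is preserved throughout and that any circles added in the ``$i$ odd, $c$ odd'' step sit on labels that can never be circled in any $L^{b}$, hence are distinguishable and removable. A slightly cleaner packaging of your own idea: the restriction of $\iota(L^{b})$ to non-dotted edges determines the relative order of the non-dotted labels of $L^{b}$ (the cyclic shifts move blocks uniformly), and the circled non-dotted edges of $L^{b}$ are exactly the circled edges of $\iota(L^{b})$ that are eligible to be circled in a ballot-tiling label; together with the forced dotted-label placement this recovers $L^{b}$, giving injectivity without a full inversion of the recursive relabeling. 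Either way, your approach is a valid and more informative substitute for the paper's bare assertion.
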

\begin{proof}
It is obvious that if $L, L'\in\mathcal{L}^{b}(\lambda)$ and $L\neq L'$, then 
$\iota(L)\neq\iota(L')$.
\end{proof}

\begin{theorem}
\label{thrm:symDTSforBT}
Let $L^{b}\in\mathcal{L}^{\mathrm{b}}$ and $L:=\iota(L^{b})\in\mathcal{L}^{\mathrm{b}\subset\mathrm{D}}$.
Then, the application of the symmetric DTS bijection on $L$ gives a ballot tiling.
\end{theorem}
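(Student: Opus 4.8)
The plan is to show that $\mathcal{D} := \mathrm{symDTS}(L)$, where $L = \iota(L^b)$, satisfies the two defining constraints of a ballot tiling from Definition \ref{defn:bt}: that every ballot tile of size $(m,m')$ whose rightmost box lies on the vertical line $x = 2n+n'$ carries a $\ast$, and that the number of ballot tiles of size $(m,m')$ with $m' \geq 1$ is even for odd $m'$ and zero for even $m'$. The key observation driving the argument is that, under the symDTS bijection, ballot tiles of size $(m, m')$ with $m' \geq 1$ are produced precisely from \emph{ballot spreads}, and ballot spreads are performed exactly at the steps (symDTS1) corresponding to \emph{dotted} edges of $T(\lambda)$. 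Thus the two constraints must be translated into statements about how the dotted edges of $L$ interact with circles, and these are exactly the properties (P\ref{Enu:P1}) and (P\ref{Enu:P2}) characterizing $\mathcal{L}^{\mathrm{b}\subset\mathrm{D}}$.

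So first I would recall that, by construction, $L = \iota(L^b)$ lies in $\mathcal{L}^{\mathrm{b}\subset\mathrm{D}}(\lambda)$, and the final sentence preceding Proposition \ref{prop:iotawd} guarantees that for every dotted edge index $i$, the number of circled labels $l$ with $E_{i+1} < l < E_{i}$ is even. The heart of the proof is a careful reading of the recursive symDTS construction to see what tiling data each dotted edge contributes. When we process a dotted edge $E(k)$ (so the label $k$ lies on a dotted edge and hence is not circled, by the circling rule in Section \ref{sec:treesforsymDT}), step (symDTS1) performs a \emph{ballot} spread at $x = p_k$, and step (symDTS3) adds no box with $\ast$ since $k$ is uncircled. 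The ballot spread is precisely the operation that inserts a single up-step $U$ and thereby creates (or extends) ballot tiles of positive second index $m'$. I would track how each such ballot spread, interleaved with the Dyck spreads coming from the uncircled/circled non-dotted edges, builds up the ballot tiles, and in particular verify that the $\ast$-boxes (added at circled edges) land on the rightmost column $x = 2n+n'$ exactly as constraint (1) demands.

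The second constraint—the parity/vanishing condition on the counts of ballot tiles of size $(m,m')$—is where properties (P\ref{Enu:P1}) and (P\ref{Enu:P2}) do their work. Here the plan is to show that the number of ballot tiles squeezed between the images of the $i$-th and $(i+1)$-th dotted edges is governed by the number of circled labels $l$ with $E_{i+1} < l < E_{i}$. For an even-indexed dotted edge $E_i$, property (P\ref{Enu:P1}) forbids any circled label in that open interval, which forces the corresponding ballot tiles to have \emph{even} second index to vanish (the even-$m'$ tiles are excluded); for an odd-indexed dotted edge, property (P\ref{Enu:P2}) makes the number of circled labels in the interval even, which translates into an even number of ballot tiles of odd second index. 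This matches the parity alternation between odd and even dotted edges that was built into $\iota$ precisely so as to reproduce the two-class (odd/even dotted edge) structure of $\mathcal{L}^{\mathrm{b}\subset\mathrm{D}}$.

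The main obstacle I anticipate is the bookkeeping in the second constraint: establishing the exact dictionary between ``number of circled labels in the interval $(E_{i+1}, E_i)$'' and ``number of ballot tiles of a given size $m'$ produced between consecutive ballot spreads.'' A clean way to handle this is to argue by induction on the number of edges, running in parallel with the induction in the proof of Theorem \ref{thrm:bijfsDTS}: when we remove the largest label and undo one spread/strip-growth/$\ast$-addition, I would verify that the ballot-tiling constraints are preserved, using the inductive hypothesis for the smaller tiling $\mathcal{D}'$ over $\lambda'$ and Proposition \ref{prop:anc} to control how an edge's ancestor dotted edge (the $2m$-th versus $(2m-1)$-th) dictates whether a newly-added tile joins an even or odd parity class. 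The delicate point will be the circled-edge case where a box with $\ast$ is added: I must confirm it lands on the line $x=2n+n'$ and pairs off correctly so the parity count stays even, which is exactly what (P\ref{Enu:P2}) encodes.
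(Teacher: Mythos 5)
Your proposal is correct and follows essentially the same route as the paper: both arguments rest on the facts that $\iota(L^{b})$ satisfies (P\ref{Enu:P1}) and (P\ref{Enu:P2}), that ballot spreads occur exactly at dotted edges while $\ast$-boxes arise exactly from circled labels at the rightmost column, and that the parity of circled labels in each interval $(E_{i+1},E_{i})$ therefore translates directly into the two conditions of Definition \ref{defn:bt}. The inductive scaffolding you add is just a more explicit organization of the bookkeeping the paper dismisses as straightforward.
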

\begin{proof}
By construction and the proof of Proposition \ref{prop:iotawd}, 
it is straightforward to show that a natural label $\iota(L^{b})$
satisfies the two properties (P\ref{Enu:P1}) and (P\ref{Enu:P2}).
By the symmetric DTS bijection, we perform a ballot spreads when 
we have a dotted edge. 
Two properties (P\ref{Enu:P1}) and (P\ref{Enu:P2}) 
insure that we spread an even number of ballot tiles at once. 
In the operation ``in case of $i$ even", we change the labels of 
edges for any value $c$.
Let $p$ be the label of the dotted edge $E_{i}$ after the relabeling.
Then, the number of circled labels $l$ satisfying $E_{i+1}<l<p$ 
is zero, which means that, after the application of symmetric DTS bijection,
we have no ballot tiles of size $(2n+n')$ with $n'$ odd (see Section \ref{sec:bt}).
From these observations, we obtain a ballot tiling after the application 
of symmetric DTS bijection on $\iota(L^{b})$.
\end{proof}

\begin{remark}
Theorem \ref{thrm:symDTSforBT} gives a realization of DTS bijection for ballot tilings.
To find a variant of DTS bijection, which maps directly from a natural label for a ballot tiling 
to a ballot tiling  without embedding the set of ballot tilings into the set of symmetric 
Dyck tiling, is an open problem.
\end{remark}

\begin{example}
Let $\lambda=UDUDUDUUU$.
The tree associated with $\lambda$, a natural label with circles $L$, and the modified 
natural label with circles associated with $L$ are depicted in Figure \ref{fig:BTS1}. 
The number of circled labels is the same after the action of ``in case of $i$ even".
\begin{figure}[ht]
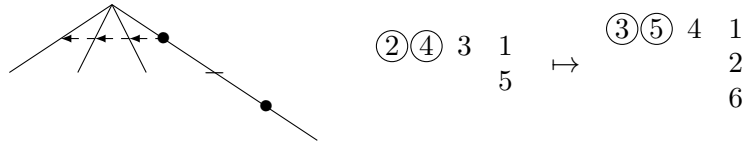

\begin{align*}
\tikzpic{-0.5}{[scale=0.6]
\coordinate
	child{coordinate(c2)}
	child{coordinate(c4)}
	child{coordinate(c3)}
	child{coordinate(c1)
		child[missing]
		child[missing]
		child[missing]
		child{coordinate(c5)}};
\node at(c1){$-$};
\node at ($(0,0)!0.5!(c1)$){$\bullet$};
\node at ($(c1)!0.5!(c5)$){$\bullet$};
\draw[dashed,-latex]($(0,0)!0.5!(c1)$)--($(0,0)!0.5!(c3)$);
\draw[dashed,-latex]($(0,0)!0.5!(c3)$)--($(0,0)!0.5!(c4)$);
\draw[dashed,-latex]($(0,0)!0.5!(c4)$)--($(0,0)!0.5!(c2)$);
}\qquad
\begin{matrix}
\circnum{2} & \circnum{4} & 3 & 1 \\
 & & & 5 
\end{matrix}
\quad
\mapsto
\quad  
\begin{matrix}
\circnum{3} & \circnum{5} & 4 & 1 \\
            &             &   & 2 \\
            &             &   & 6
\end{matrix}
\end{align*}
\caption{A tree, its natural label, and the modified natural label.}
\label{fig:BTS1}
\end{figure}
Then, we obtain the following growth diagram of ballot tilings by the 
symmetric DTS bijection.
\begin{align*}
&
\tikzpic{-0.5}{[x=0.4cm,y=0.4cm]
\draw[very thick](0,0)--(1,1);
}
\quad\rightarrow\quad
\tikzpic{-0.5}{[x=0.4cm,y=0.4cm]
\draw[very thick](0,0)--(2,2);
}
\quad\rightarrow\quad
\tikzpic{-0.5}{[x=0.4cm,y=0.4cm]
\draw[very thick](0,0)--(1,1)--(2,0)--(4,2);
\draw(1,1)--(4,4)--(5,3)--(4,2)(2,2)--(3,1)(3,3)--(4,2);
\node at(4,3){$\ast$};
}
\quad\rightarrow\quad
\tikzpic{-0.5}{[x=0.4cm,y=0.4cm]
\draw[very thick](0,0)--(1,1)--(2,0)--(3,1)--(4,0)--(6,2);
\draw(1,1)--(5,5)--(7,3)--(6,2);
\draw(3,3)--(5,1)(4,4)--(6,2)(4,2)--(6,4);
\node at(6,3){$\ast$};
} \\[12pt]
&\rightarrow\quad
\tikzpic{-0.5}{[x=0.4cm,y=0.4cm]
\draw[very thick](0,0)--(1,1)--(2,0)--(3,1)--(4,0)--(5,1)--(6,0)--(8,2);
\draw(3,3)--(6,6)--(7,5)(5,3)--(8,6)--(9,5)--(8,4)(6,2)--(8,4)--(9,3)--(8,2);
\draw(1,1)--(3,3)--(4,2)--(5,3)--(7,1);
\draw(4,4)--(5,3)(5,5)--(8,2)(7,5)--(8,4);
\node at(8,5){$\ast$};\node at(8,3){$\ast$};
}
\quad\rightarrow\quad
\tikzpic{-0.5}{[x=0.4cm,y=0.4cm]
\draw[very thick](0,0)--(1,1)--(2,0)--(3,1)--(4,0)--(5,1)--(6,0)--(9,3);
\draw(1,1)--(3,3)--(4,2)--(5,3)--(7,1)(3,3)--(6,6)--(8,4)
 	(5,3)--(9,7)--(10,6)--(6,2)(9,5)--(10,4)--(7,1);
\draw(4,4)--(5,3)(5,5)--(8,2);
\node at(9,6){$\ast$};\node at(9,4){$\ast$};
}
\end{align*}
\end{example}

We give another example of the action of the inclusive map on a ballot tiling
to obtain a symmetric Dyck tiling.
\begin{example}
Let $\lambda=UDUDUDUUU$. The tree for $\lambda$ is same as the one in Figure \ref{fig:BTS1}.
We consider the following natural label and its modified natural label.
Note that the number of circled labels is increased by one by the operation in ``in case of $i$ odd".
\begin{align*}
\begin{matrix}
\circnum{3} & 2 & 1 & 4 \\
            &   &   & 5
\end{matrix}
\qquad \mapsto \qquad
\begin{matrix}
\circnum{3} & 2 & \circnum{1} & 4 \\
            &   &   & 5 \\
            &   &   & 6
\end{matrix}
\end{align*}
We obtain the growth diagram of ballot tilings by the symmetric DTS bijection
as in Theorem \ref{thrm:symDTSforBT}.
One can easily verify that these ballot tilings satisfy the conditions in 
Definition \ref{defn:bt}.
\begin{align*}
&
\tikzpic{-0.5}{[x=0.4cm,y=0.4cm]
\draw[very thick](0,0)--(1,1)--(2,0);
\draw(1,1)--(2,2)--(3,1)--(2,0);
\node at(2,1){$\ast$};
}
\quad\rightarrow\quad
\tikzpic{-0.5}{[x=0.4cm,y=0.4cm]
\draw[very thick](0,0)--(1,1)--(2,0)--(3,1)--(4,0);
\draw(1,1)--(3,3)--(5,1)--(4,0)(2,2)--(3,1)--(4,2);
\node at(4,1){$\ast$};
}
\quad\rightarrow\quad
\tikzpic{-0.5}{[x=0.4cm,y=0.4cm]
\draw[very thick](0,0)--(1,1)--(2,0)--(3,1)--(4,0)--(5,1)--(6,0);
\draw(1,1)--(4,4)--(7,1)--(6,0)(2,2)--(3,1)--(6,4)--(7,3)--(6,2)(3,3)--(5,1)--(6,2);
\node at(6,1){$\ast$};\node at(6,3){$\ast$};
}\\[12pt]
&\rightarrow\quad
\tikzpic{-0.5}{[x=0.4cm,y=0.4cm]
\draw[very thick](0,0)--(1,1)--(2,0)--(3,1)--(4,0)--(5,1)--(6,0)--(7,1);
\draw(1,1)--(4,4)--(6,2)(2,2)--(3,1)--(7,5)--(8,4)--(5,1)(3,3)--(5,1);
\draw(7,3)--(8,2)--(7,1);
\node at(7,2){$\ast$};\node at(7,4){$\ast$};
}
\quad\rightarrow\quad
\tikzpic{-0.5}{[x=0.4cm,y=0.4cm]
\draw[very thick](0,0)--(1,1)--(2,0)--(3,1)--(4,0)--(5,1)--(6,0)--(9,3);
\draw(1,1)--(4,4)--(6,2)(2,2)--(3,1)--(9,7)--(10,6)--(9,5)(3,3)--(5,1);
\draw(5,1)--(9,5)--(10,4)--(9,3);
\node at(9,4){$\ast$};\node at(9,6){$\ast$};
}
\end{align*}
\end{example}

\subsection{Top boundary of the symmetric DTS bijection}
In this subsection, we describe the procedure to produce the top 
boundary $\mu$ of a natural label $L$ for a fundamental symmetric 
Dyck tiling with the lower boundary path $\lambda$.

Let $\mathbf{p}:=(p_1,\ldots,p_n)$ be the insertion history of 
a symmetric Dyck tiling with the low boundary $\lambda$.
Recall that the label $i\in[1,n]$ may have a circle in $L$.
We construct a binary word $\sigma(L)$ consisting of $U$ and $D$ from 
$\mathbf{p}$ and $L$ as follows:
\begin{enumerate}[(a)]
\item Set $i=1$ and define $\sigma(L)$ as an empty word.
\item We insert $UD$ at the $p_{i}$-th position of $\sigma(L)$ from left 
when the edge labeled by $i$ in $L$ does not have a dot.
We append $U$ from right when the label $i$ in $L$ has a dot.
\item We move the inserted $D$ in (b) right to the right-most step.
We perform this process if such $D$ exists. 
\item If the edge labeled by $i$ in $L$ has a circle, we replace the right-most 
$D$ by $U$.
\item Increase $i$ by one and go to (b). The algorithm stops when $i=n+1$.
\end{enumerate}

With above notation, we have the following proposition.
\begin{prop}
The word $\sigma(L)$ gives the top boundary $\mu$ of the symmetric
Dyck tiling constructed from the natural label $L$ by the symmetric 
DTS bijection.
\end{prop}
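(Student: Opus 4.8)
The plan is to prove the proposition by induction on the length $i$ of the insertion history $\mathbf{p}=(p_1,\ldots,p_n)$, carrying the invariant that after the first $i$ labels have been processed, the partial word produced by steps (a)--(d) equals the top boundary of the partial symmetric Dyck tiling $D_i$ built by (symDTS1)--(symDTS3) from $(p_1,\ldots,p_i)$, where the top boundary is read only up to the current middle line $x=2n+n'$ (the vertical line carrying the $\ast$-boxes of the fundamental tiling). The base case $i=0$ is immediate: the empty history gives the single dot at the origin, whose top boundary is empty, matching the empty word.

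For the inductive step I would analyze the effect on the top boundary of each of the three constituent operations and match them, in order, with the word operations (b), (c), (d). First, if $E(i)$ carries no dot, a direct computation from the definition of the Dyck spread shows that spreading at $x=p_i$ inserts a peak $UD$ into the top path exactly at the $p_i$-th position, which is precisely step (b); here one also checks that after the spread the middle line moves from $x=2n+n'$ to $x=2(n+1)+n'$. Next, the strip-growth (symDTS2) adds a single box on every up-step lying to the right of $x=p_i$, and I would show that the cumulative effect on the top path is to bubble the just-inserted down-step rightward past all intervening up-steps until it reaches the new middle line; this is exactly step (c), ``move the inserted $D$ to the rightmost position.'' Finally, when the label $i$ is circled, (symDTS3) places a box with $\ast$ on the middle line $x=2(n+1)+n'$, and this box raises the terminal down-step of the top path to an up-step, reproducing step (d).

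The dotted case requires a separate structural argument, which I expect to be the most delicate point. When $E(i)$ has a dot, the ballot spread at $x=p_i$ inserts a single $U$ (again by unwinding the definition), whereas step (b) instead demands that we append $U$ at the right end of the word. To reconcile these I would prove the key lemma that, for a dotted edge $E(i)$, the value $p_i=h_i$ equals the current length $2a_{i-1}+b_{i-1}$ of the top boundary, where $a_{i-1}$ and $b_{i-1}$ count the non-dotted and dotted edges among $1,\ldots,i-1$; consequently inserting $U$ at position $p_i$ coincides with appending $U$. The lemma rests on the construction of $\overline{\lambda}=\lambda D^{n'}$: the added $D$'s all sit at the very right, so no non-dotted edge can lie strictly to the right of a dotted edge, every earlier dotted edge is an ancestor of $E(i)$ (contributing $1$ to $h_i$), and every earlier non-dotted edge lies strictly to its left (contributing $2$). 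Summing these contributions gives $h_i=2a_{i-1}+b_{i-1}$, as required. Since dotted edges carry no circle and the ballot spread inserts no down-step, the word-steps (c) and (d) are vacuous; correspondingly the strip-growth finds no up-step strictly to the right of the new $U$ and no $\ast$-box is added, so the tiling's top boundary also merely gains a $U$.

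The main obstacle is the geometric claim underlying step (c): that the strip-growth transports the inserted down-step all the way to the new middle line. I would establish this by the local move ``filling the valley $DU$ formed by the inserted $D$ and the first up-step to its right turns $DU$ into $UD$,'' iterated along the portion of the top path to the right of $x=p_i$; for the part of the tiling that does not meet the middle line this is identical to the strip-growth step of the non-symmetric DTS bijection of \cite{KMPW12}, so that analysis can be quoted, and the only genuinely new bookkeeping concerns the steps at and beyond the middle line and the single $\ast$-box inserted in (symDTS3). Once the transport and the middle-line bookkeeping are in place, the three matchings above close the induction and prove that $\sigma(L)=\mu$.
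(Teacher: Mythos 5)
Your proposal is correct and follows essentially the same route as the paper: both arguments match the three constituent operations of the symDTS bijection (Dyck/ballot spread, strip addition, addition of the $\ast$-box) with the word operations (b), (c), (d), with the strip addition realized as transporting the newly inserted $D$ to the rightmost position. The one genuine addition in your write-up is the lemma for the dotted case — that for a dotted edge $E(i)$ every earlier dotted edge is an ancestor and every earlier undotted edge lies strictly to its left, so $h_i=2a_{i-1}+b_{i-1}$ equals the current length of the top boundary and the ballot spread at $x=p_i$ is literally an appended $U$; the paper's proof treats only the $UD$-insertion explicitly and leaves this reconciliation implicit, so your lemma is a worthwhile supplement rather than a divergence.
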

\begin{proof}
The symmetric DTS bijection consists of three processes: a spread of a symmetric 
Dyck tile (Dyck spread and ballot spread), addition of a strip, and addition 
of a single box with $\ast$.

When we perform a spread of a tile, we insert a local path $UD$ onto a symmetric 
Dyck tiling. Thus, the process (b) corresponds to this insertion.

Suppose that $\nu$ be the top boundary of length $m$ after the processes (a) and (b),
and $\nu_{i}\in\{U,D\}$ be each step for $i\in[1,m]$.
Since we add a strip on a symmetric Dyck tiling, which starts from the $i$-th column,
we replace $\nu_{i}, \nu_{i+1}, \ldots, \nu_{m}$ by 
$\nu_{i+1}, \nu_{i+2},\ldots, \nu_{m}, \nu_{i}$.
This process is nothing but the process (c).

The addition of a single box with $\ast$ corresponds to replace the right-most 
$D$ which is the right-most element of $\sigma(L)$ by an up step $U$.
By the processes (b) and (c), it is obvious that the right-most element of 
$\sigma(L)$ is a down step. Thus, the process (d) is well-defined and equivalent
to the addition of a box with $\ast$.

Therefore, the word $\sigma(L)$ is equivalent to the top path of the symmetric 
Dyck tiling obtained by the symmetric DTS bijection.
\end{proof}

\section{Hermite histories and perfect matchings}
In this section, we study fundamental symmetric Dyck tilings in details by use of 
Hermite histories and perfect matchings.
These combinatorial objects are compatible with the symmetric DTS bijection 
studied in Section \ref{sec:symDTS}.
In case of (non-symmetric) Dyck tilings of size $n$, we have a corresponding 
Hermite histories and perfect matchings of size $n$ (see \cite{KMPW12}). 
One of the differences between non-symmetric Dyck tilings and fundamental symmetric Dyck 
tilings is that the latter may have circles in natural labels.
To connect symmetric Dyck tilings with Hermite histories and perfect matchings,
we embed symmetric Dyck tilings into symmetric Dyck tilings of larger size whose 
natural labels do not have circles.
Then, we make use of the correspondence between Hermite histories and perfect matchings
and symmetric Dyck tilings without circles.

\subsection{Symmetric Dyck tilings}
\label{sec:HhsymDT}
A (non-symmetric) Dyck tiling of size $n$ is bijective to an {\it Hermite history} of size $n$ 
(see \cite{KMPW12}). 
Further, one can obtain a perfect matching of $\{1,2,\ldots,2n\}$ from 
an Hermite history (see \cite{KMPW12}). 
In this subsection, we study the relations among symmetric Dyck tilings, Hermite histories,
and perfect matchings.
The main difference between non-symmetric Dyck tilings and symmetric Dyck tilings is that 
the size of a symmetric Dyck tiling can be different from the one of an Hermite history.
Actually, we construct a map from a symmetric Dyck tiling to an Hermite history 
of larger size. 
This discrepancy comes from the addition of a single box with $\ast$ in
the symDTS bijection.

Let $\lambda$ be a ballot path of length $(n,n')$, $T(\lambda)$ be a tree with $n+n'$ 
edges associated with $\lambda$, and $L$ be a natural label with circles of $T(\lambda)$.
Let $N$ be the number of circled label in $L$.
We will construct a tree $\overline{T(\lambda)}$ of size $n+n'+N$ and its natural 
label $\overline{L}$ of the tree $\overline{T(\lambda)}$ from $T(\lambda)$ and $L$. 
Note that we will construct a label $\overline{L}$ without circles.

Take a circled label $l$ in $L$ and erase the circle on $l$.
We increase the labels by one, which are bigger than or equal to $l$.
Here, if a label $k\neq l$ and $k> l$ has a circle, the label $k+1$ 
in a new natural label has a circle.

We add an edge with a dot whose label is $l$ as follows.
Let $l'$ be the maximum label on a edge with a dot that is smaller than $l$.
We have two cases according to the relative position of $l'$ with respect 
to the label $l+1$.
We denote by $E(l)$ the edge labeled by $l$ in the tree.
\begin{enumerate}
\item When the edge labeled by $l+1$ is just below the edge labeled by $l'$, 
{\it i.e.}, $E(l+1)\uparrow E(l')$.
We insert a dotted edge with the label $l$ just below the dotted edge with 
the label $l'$ at the same depth as the child edges of $E(l')$.
Here, the depth means the distance of an edge from the root.

\item When the edge labeled by $l+1$ is left to the edge labeled by $l'$, 
{\it i.e.}, $E(l+1)\rightarrow E(l')$.
We insert a dotted edge with the label $l$ just below the dotted edge with the 
label $l'$ and just above the child edges of $E(l')$.
\end{enumerate}
We call the operations (1) and (2) the addition of an edge with a dot.

\begin{remark}
\label{remark:addedge}
In the above procedure, the insertion of a dotted edge with label 
$l$ is well-defined operation on a tree for a symmetric Dyck tiling.
By construction of a tree, dotted edges form a subtree without ramifications, 
that is, the subtree has a single leaf.
Let $E$ be an edge with a dot in a tree. 
There is no edges right to any edge $E$ with a dot in the tree.
\end{remark}

We repeat the above procedure until we visit all circled labels from the smallest
to the largest.
As a result, we obtain a tree with dots of size $n+n'+N$ and 
its natural label without circles.
We denote the new tree and the new label by $\overline{T(\lambda)}$ and $\overline{L}$.

As in Section \ref{sec:symDTS}, we construct an insertion history 
$\overline{\mathbf{p}}:=(p_1,p_2,\ldots,p_{m})$ with $m=n+n'+N$ 
from the natural label $\overline{L}$.
We construct a {\it mini-word} $w$ from $\overline{\mathbf{p}}$, that 
is a $2\times m$ array of integers in $[1,2m]$.
Let $S_{2m}:=[1,2m]$. 
For $1\le i\le m$, we recursively define $a_{i}$ as the $p_{i}+1$-th smallest 
element in $S_{i}$, $b_{i}$ as the maximum integer $S_{i}$, and 
$S_{i-1}:=S_{i}\setminus\{a_{i},b_{i}\}$. 
By definition, we have $a_{m}:=p_{m}+1$ and $b_{m}:=2m$.
Then, the mini-word $w$ is a $2\times m$ array of integers such that 
the first row is $\mathbf{a}:=(a_{1},\ldots,a_{m})$ and the second row 
is $\mathbf{b}:=(b_{1},\ldots, b_{m})$.

We consider a graph with $2m$ nodes which lie in a line from 
left to right.
Given a pair $(a_{i},b_{i})\in w$ for $1\le i\le m$, 
we connect the $a_{i}$-th node and the $b_{i}$-th node by an arch.
A graph with $m$ arches is called a {\it perfect matching}.

For each pair $(a,b)$ of a perfect matching, the number of pairs 
$(c,d)$ such that $c<a<b<d$ is called the {\it nesting number} of $(a,b)$.
We record the nesting number of a pair $(a,b)$ below the $a$-th node 
in the perfect matching. 
Let $\{i_{1},\ldots,i_{N}\}$ be the set of labels with a circle in $L$.
We put a circle on the $i_{j}+j-1$-th nesting number in the perfect 
matching for $1\le j\le N$.

Recall that a symmetric Dyck tiling is a tiling between the lowest 
ballot path $\lambda$ and the top ballot path $\mu$.
We put the nesting numbers on the up steps of $\mu$ and this labeled 
ballot path is called an {\it Hermite history}.
See \cite{KMPW12} for the definition of an Hermite history for a (non-symmetric)
Dyck tiling.

An Hermite history can also be constructed from a symmetric Dyck tiling
as follows.
For a ballot tile $B$, we define the {\it entry} (resp. {\it exit}) 
as the northwest (resp. southeast) edge of the leftmost (resp. rightmost)
box of $B$.
We connect the entry and the exit of the tile $B$ by a line which lies 
inside $B$.
Given a symmetric Dyck tiling, we consider a path starting from an up step 
$u$ of $\mu$. If $u$ is not an entry of a ballot tile, then the label of 
$u$ is zero.
If $u$ is an entry of a ballot tile, we extend the path by passing through 
the ballot tiles and arrive at an edge which is not an entry.
Then, the label of $u$ is the number of tiles which the path starting from $u$
travels. Note that each tile is counted exactly once.
If the path starting from $u$ passes through a ballot tile with $\ast$, 
we put a circle on the label of $u$.

We have two definitions of a Hermite history: one is by the nesting numbers
of a perfect matching, and the other is by the numbers of tiles associated with 
up steps on the top path in a symmetric Dyck tiling.
\begin{theorem}
\label{thrm:Hh}
The above two definitions of a Hermite history coincides with each other.
\end{theorem}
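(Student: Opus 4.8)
The plan is to prove the equality through the symmetric DTS bijection of Theorem \ref{thrm:bijfsDTS}, which is the common engine behind both constructions: the perfect matching, and hence the nesting numbers, is read off from the insertion history $\overline{\mathbf{p}}$, while the tile-path Hermite history is read off from the symmetric Dyck tiling $\mathcal{D}$ that symDTS produces from the very same data. I would therefore set up an induction on the number $m=n+n'+N$ of edges of $\overline{T(\lambda)}$, comparing the tiling $D'$ obtained after processing the first $m-1$ steps with the tiling $D$ obtained after the $m$-th step, and reduce the theorem to a single-step statement.

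First I would reduce to the case of a circle-free natural label. The enlargement $(T(\lambda),L)\mapsto(\overline{T(\lambda)},\overline{L})$ replaces each circled label by a dotted edge, and on the level of tilings this replaces each single box carrying $\ast$ by a genuine ballot tile of the enlarged tiling $\overline{\mathcal{D}}$. Under this replacement a trajectory of $\mathcal{D}$ that runs through an $\ast$-box corresponds to a trajectory of $\overline{\mathcal{D}}$ that runs through the newly created ballot tile; since the nesting numbers are literally computed on the matching of $\overline{\mathcal{D}}$, the theorem for $\mathcal{D}$ follows from the circle-free statement for $\overline{\mathcal{D}}$ once the two circle conventions are matched. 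Concretely, I would track how the circle on label $i_j$ in $L$, after the $j-1$ earlier insertions have shifted the labels, becomes the dotted edge whose arch is the $(i_j+j-1)$-th one, and verify that the trajectory of that arch is exactly the one meeting the former $\ast$-box. This index bookkeeping is routine but must be carried out carefully.

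For the circle-free statement I would show that both assignments obey the same one-step recursion under symDTS and agree on the empty tiling. Comparing $D'$ with $D$, the matching of $D$ on $[1,2m]$ arises from that of $D'$ by inserting two nodes together with the arch $(a_m,b_m)=(p_m+1,2m)$ and reindexing (the recursion past $S_{m-1}$ uses only the order structure, so it is order-isomorphic to the recursion for $D'$); this leaves every old nesting number fixed except that it raises by one the nesting numbers of the old arches over which the new arch nests, namely the arches opening to the right of node $p_m+1$. On the tiling side, the $m$-th step spreads $D'$ at $x=p_m$ (a Dyck spread if $E(m)$ carries no dot, a ballot spread otherwise) and grows a strip on the up steps to the right of $x=p_m$. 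I would check that (i) the spread transports every existing trajectory without changing its tile count; (ii) the new strip is crossed exactly once by the trajectory of each up step to the right of $x=p_m$ that corresponds to an arch nested by $(a_m,b_m)$, raising those counts by one; and (iii) the up step freshly created at $x=p_m$ is the entry of no tile, so it receives the count $0$, matching the nesting number of the new arch. These three facts reproduce the matching-side recursion exactly.

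The main obstacle I anticipate is the ballot-spread case, which has no analogue in the non-symmetric theory of \cite{KMPW12,S19}. Here I must verify that the entry/exit convention for ballot tiles (the northwest edge of the leftmost box and the southeast edge of the rightmost box) makes the trajectory through a ballot tile behave exactly like the arch attached to a dotted edge, including the interaction with the $\ast$-boxes on the symmetry line $x=2n+n'$. I expect step (iii) in the ballot case, together with the circle bookkeeping of the reduction, to demand the most care, since both rest on the dotted-edge/ballot-tile structure that distinguishes the symmetric setting from the type-$A$ one; once these are settled, the Dyck-spread part of the recursion is essentially the argument already established for generic lower paths in \cite{S19}.
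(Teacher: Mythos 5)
Your plan is correct and follows essentially the same route as the paper: an induction over single symDTS steps with a case split on whether the new edge is plain, circled, or dotted, preceded by the reduction of circled labels to dotted edges via the enlarged tree $\overline{T(\lambda)}$ (this is exactly the paper's Proposition \ref{prop:Ltilde}, and your Cases correspond to the paper's Cases 1--3 in Section \ref{sec:prHh}). Be aware, though, that the ``routine bookkeeping'' you defer --- matching the left-to-right order of the up steps of $\mu$ with the order of the opening nodes $a_i$, and deciding which trajectory absorbs each box of the added strip once later spreads merge single boxes into non-trivial Dyck tiles --- is precisely where the paper invests most of its effort (the maps $\phi$ and $\psi$, Lemma \ref{lemma:invpsi}, Lemma \ref{lemma:w'psi} and Proposition \ref{prop:standa}), so that step is the bulk of the proof rather than an afterthought.
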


\begin{example}
We consider the same natural label as in Figure \ref{fig:symDTS}.
See Figure \ref{fig:HhsymDT} for the perfect matching and the Hermite 
history for this label.
Note that nesting numbers with a circle in the perfect matching correspond
to ballot tiles with $\ast$ in the Hermite history.
\begin{figure}[ht]
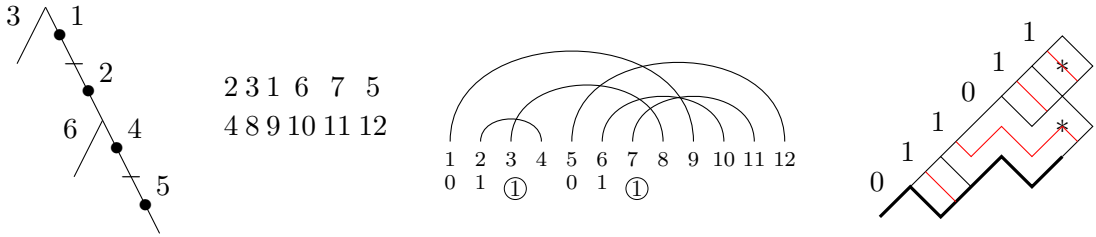

\begin{align*}
\tikzpic{-0.5}{[scale=0.5]
\coordinate
	child{coordinate(c3)}
	child{coordinate(c1)
		child[missing]
		child{coordinate(c2)
			child{coordinate(c6)}
			child{coordinate(c4)
				child[missing]
				child{coordinate(c5)}
			}
		}	
	};
\node at(c1){$-$};\node at(c4){$-$};
\draw[anchor=south east]($(0,0)!.5!(c3)$)node{$3$}($(c2)!.5!(c6)$)node{$6$};
\draw[anchor=south west]($(0,0)!.5!(c1)$)node{$1$}($(c1)!.5!(c2)$)node{$2$}
			($(c2)!.5!(c4)$)node{$4$}($(c4)!.5!(c5)$)node{$5$};
\draw[anchor=center]($(0,0)!.5!(c1)$)node{$\bullet$}($(c1)!.5!(c2)$)node{$\bullet$}
			($(c2)!.5!(c4)$)node{$\bullet$}($(c4)!.5!(c5)$)node{$\bullet$};
}\quad
\genfrac{}{}{0pt}{}{2}{4}
\genfrac{}{}{0pt}{}{3}{8}
\genfrac{}{}{0pt}{}{1}{9}
\genfrac{}{}{0pt}{}{6}{10}
\genfrac{}{}{0pt}{}{7}{11}
\genfrac{}{}{0pt}{}{5}{12}
\quad
\tikzpic{-0.5}{[x=0.4cm,y=0.4cm]
\draw(0,0)..controls(0,4)and(8,4)..(8,0);
\draw(1,0)..controls(1,1)and(3,1)..(3,0);
\draw(2,0)..controls(2,2.5)and(7,2.5)..(7,0);
\draw(4,0)..controls(4,3.5)and(11,3.5)..(11,0);
\draw(5,0)..controls(5,2)and(9,2)..(9,0);
\draw(6,0)..controls(6,2)and(10,2)..(10,0);
\node[anchor=north] at(0,0){$\genfrac{}{}{0pt}{}{1}{0}$};
\node[anchor=north] at(1,0){$\genfrac{}{}{0pt}{}{2}{1}$};
\node[anchor=north] at(2,0){$\genfrac{}{}{0pt}{}{3}{
\scalebox{0.8}{\tikzpic{-0.5}{\node[circle,draw,inner sep=0.5pt]at(0,0){$1$};}}
}$};
\node[anchor=north] at(4,0){$\genfrac{}{}{0pt}{}{5}{0}$};
\node[anchor=north] at(5,0){$\genfrac{}{}{0pt}{}{6}{1}$};
\node[anchor=north] at(6,0){$\genfrac{}{}{0pt}{}{7}{
\scalebox{0.8}{\tikzpic{-0.5}{\node[circle,draw,inner sep=0.5pt]at(0,0){$1$};}}
}$};
\node[anchor=north] at(3,0){$\genfrac{}{}{0pt}{}{4}{}$};
\node[anchor=north] at(7,0){$\genfrac{}{}{0pt}{}{8}{}$};
\node[anchor=north] at(8,0){$\genfrac{}{}{0pt}{}{9}{}$};
\node[anchor=north] at(9,0){$\genfrac{}{}{0pt}{}{10}{}$};
\node[anchor=north] at(10,0){$\genfrac{}{}{0pt}{}{11}{}$};
\node[anchor=north] at(11,0){$\genfrac{}{}{0pt}{}{12}{}$};
}
\quad
\tikzpic{-0.5}{[x=0.4cm,y=0.4cm]
\draw[very thick](0,0)--(1,1)--(2,0)--(4,2)--(5,1)--(6,2);
\draw(1,1)--(2,2)--(3,1);
\draw(2,2)--(4,4)--(5,3)--(6,4)--(7,3)--(6,2);
\draw(4,4)--(6,6)--(7,5)--(6,4)--(5,5);
\node at (6,3){$\ast$};
\node at (6,5){$\ast$};
\node[anchor=south east]at(0.5,0.5){$0$};
\node[anchor=south east]at(1.5,1.5){$1$};
\node[anchor=south east]at(2.5,2.5){$1$};
\node[anchor=south east]at(3.5,3.5){$0$};
\node[anchor=south east]at(4.5,4.5){$1$};
\node[anchor=south east]at(5.5,5.5){$1$};
\draw[red](1.5,1.5)--(2.5,0.5)(2.5,2.5)--(3,2)--(4,3)--(5,2)--(6,3)--(6.5,2.5);
\draw[red](4.5,4.5)--(5.5,3.5)(5.5,5.5)--(6.5,4.5);
}
\end{align*}
\caption{The natural label on $\overline{T(\lambda)}$ 
with $\lambda=UDUUDU$ (the first picture), 
the mini-word (the second picture), the perfect matching of size $12$ 
(the third picture), and the Hermite history (the fourth picture). 
}
\label{fig:HhsymDT}
\end{figure}
\end{example}

Before proceeding to the proof of Theorem \ref{thrm:Hh}, we introduce 
several Propositions in the following two subsections.
We will consider Hermite histories without 
circles in Section \ref{sec:Hwoc}, and Hermite histories with circles 
in Section \ref{sec:Hwc}.
We prove Theorem \ref{thrm:Hh} in Section \ref{sec:prHh}.

\subsection{Hermite histories for trees without circles}
\label{sec:Hwoc}
In this subsection, we consider the trees without circles.
In terms of symmetric Dyck tilings, 
this is equivalent to consider symmetric Dyck tilings 
such that they do not have a ballot tile with $\ast$.

Given a word (or equivalently a permutation) $w:=(w_1,\ldots,w_{n})$ on $[1,n]$, 
we define sequences of non-negative integers 
$\mathrm{inv}(w):=(d_1,\ldots,d_n)$ and $\mathrm{inv'}(w):=(d'_1,\ldots,d'_n)$ by
\begin{align*}
d_{i}&:=\#\{ j | w_{i}>w_{j} \text{ and } w_{j} \text{ is right to } w_{i} \}, \\
d'_{i}&:=\#\{j | w_{j}>w_{i} \text{ and } w_{j} \text{ is left to } w_{i} \}.
\end{align*} 
We call $\mathrm{inv}(w)$ (resp. $\mathrm{inv'}(w)$) the {\it inversion} 
(resp. {\it reverse inversion}) sequence of $w$, 
and denote the sum of the entries of $\mathrm{inv}(w)$ by 
\begin{align*}
\mathrm{Inv}(w)&:=\sum_{i=1}^{n}d_{i}, \\
\mathrm{Inv'}(w)&:=\sum_{i=1}^{n}d'_i.
\end{align*}

It is straightforward from the definition that 
$\mathrm{inv}$ and $\mathrm{inv'}$ satisfy the following relations.
Given an integer sequence $\mathbf{b}:=(b_1,\ldots,b_n)$, we define
$\mathrm{Rev}(\mathbf{b}):=(b_n,\ldots,b_{1})$.
\begin{prop}
\label{prop:Rev}
Let $w:=(w_1,w_2,\ldots,w_{n})$ be a permutation and
$\bar{w}:=(\bar{w}_1,\ldots,\bar{w}_{n})$ be also a permutation 
given by $\bar{w}_{i}=n+1-w_{n+1-i}$ for $i\in[1,n]$.
We have 
\begin{align*}
\mathrm{inv}(w)=\mathrm{Rev}(\mathrm{inv'}(\bar{w})).
\end{align*} 
\end{prop}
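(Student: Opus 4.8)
The plan is to prove the identity entrywise by a direct change of index. The operation $w \mapsto \bar w$ is nothing but the $180^\circ$ rotation of the permutation matrix (it reverses the positions and complements the values simultaneously), so I expect it to interchange the roles of $\mathrm{inv}$ and $\mathrm{inv'}$ while reversing the index. Since $\mathrm{Rev}(\mathrm{inv'}(\bar w))_{i} = (\mathrm{inv'}(\bar w))_{n+1-i}$ by definition of $\mathrm{Rev}$, it suffices to establish
\[
(\mathrm{inv'}(\bar w))_{n+1-i} = d_i \qquad \text{for all } i\in[1,n].
\]

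First I would unfold the left-hand side using the definition of $\mathrm{inv'}$, obtaining
\[
(\mathrm{inv'}(\bar w))_{n+1-i} = \#\{ j \mid \bar w_j > \bar w_{n+1-i} \text{ and } j < n+1-i \}.
\]
Then I would substitute $\bar w_k = n+1-w_{n+1-k}$ into the value inequality: the condition $\bar w_j > \bar w_{n+1-i}$ becomes $n+1-w_{n+1-j} > n+1-w_{i}$, that is, $w_{n+1-j} < w_{i}$.

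The key step is the change of variable $a := n+1-j$. Under this substitution the position condition $j < n+1-i$ turns into $a > i$, the range $1 \le j \le n-i$ turns into $i+1 \le a \le n$, and the value condition becomes $w_a < w_i$. Hence
\[
(\mathrm{inv'}(\bar w))_{n+1-i} = \#\{ a \mid i < a \le n \text{ and } w_a < w_i \},
\]
and the right-hand side is exactly $d_i = \#\{ j \mid w_i > w_j \text{ and } w_j \text{ is right to } w_i\}$, which completes the argument after applying $\mathrm{Rev}$.

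I do not expect a genuine obstacle here; the statement is an index-bookkeeping identity, and the only care needed is to track the two simultaneous reversals (of positions and of values) correctly, so that ``a larger value lying to the left'' for $\bar w$ matches ``a smaller value lying to the right'' for $w$. The symmetric relation $\mathrm{inv'}(w) = \mathrm{Rev}(\mathrm{inv}(\bar w))$ would then follow by the same computation or by applying the identity to $\bar w$ together with $\bar{\bar w} = w$.
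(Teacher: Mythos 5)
Your computation is correct: the change of variable $a=n+1-j$ converts the condition ``$\bar w_j>\bar w_{n+1-i}$ with $j<n+1-i$'' into ``$w_a<w_i$ with $a>i$'', which is exactly $d_i$, so the identity holds entrywise. The paper offers no written proof (it states the relation as ``straightforward from the definition''), and your argument is precisely the intended verification.
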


Given a natural label $L$ of a tree, we denote by $w(L)$ 
the word obtained from $L$ by reading the labels in the pre-order.
Here, the pre-order means that we visit a node before both of its left
and right subtrees.
By construction, we have a permutation $w$ on $[1,n]$.
The following Lemma can be directly obtained from the same argument 
as in the case of non-symmetric Dyck tilings.
Note that since we have no circles in the natural label $L$, the corresponding 
symmetric Dyck tiling is equivalent to the non-symmetric Dyck tiling obtained 
from $L$ by ignoring dots.
\begin{lemma}[Theorem 5 in \cite{KMPW12}]
\label{lemma:wtDT}
Let $\mathcal{D}$ be a symmetric Dyck tiling and $d$ be 
a Dyck tile in $\mathcal{D}$.
We denote by $w(L)$ the pre-order word for the natural label $L$
corresponding to $\mathcal{D}$ by the symmetric Dyck tiling strip.
The weight of $\mathcal{D}$ is written in terms of $\mathrm{Inv}$ by
\begin{align*}
\sum_{d\in \mathcal{D}}\mathrm{wt}(d)=\mathrm{Inv}(w(L)).
\end{align*}
\end{lemma}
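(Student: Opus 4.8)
The plan is to deduce the identity from its non-symmetric counterpart, \cite[Theorem~5]{KMPW12}, by exploiting the remark preceding the lemma: since $L$ carries no circle, the fundamental symmetric Dyck tiling $\mathcal{D}$ produced by the symmetric DTS bijection becomes, upon forgetting the dots of $T(\lambda)$, the ordinary Dyck tiling attached to the (unmarked) labeled tree by the DTS bijection of \cite{KMPW12}. First I would note that the word $w(L)$ is read off from the labels and the underlying plane tree only, so it is unchanged by erasing the dots; hence $\mathrm{Inv}(w(L))$ is exactly the inversion number of the pre-order word of the unmarked tree, and the whole problem is to show that this number equals $\sum_{d\in\mathcal{D}}\mathrm{wt}(d)$, the sum over the Dyck tiles of $\mathcal{D}$.

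I would prove this by induction on the number of edges, removing the edge $E(n)$ with the largest label $n$, which reverses the last step of the symmetric DTS bijection and passes from $\mathcal{D}$ to the tiling $\mathcal{D}'$ for the label $L'$ with $n$ deleted. Because $n$ is maximal it lies on a leaf edge, and in the word $w(L)$ it can only be the left entry of an inversion; deleting it therefore decreases $\mathrm{Inv}$ by the number $c$ of labels appearing to its right in $w(L)$. It remains to check that the single step (symDTS1)--(symDTS3) building $\mathcal{D}$ from $\mathcal{D}'$ raises $\sum_{d}\mathrm{wt}(d)$ by the same $c$. If $E(n)$ carries no dot, one performs a Dyck spread and a strip addition; a Dyck spread carries Dyck tiles to Dyck tiles and ballot tiles to ballot tiles, so it preserves tile types, and the strip growth contributes new Dyck-tile weight equal to $c$ exactly as computed in \cite{KMPW12}. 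If $E(n)$ carries a dot then, recalling that dotted edges form the rightmost branch with labels increasing from the root (so the maximal dotted label sits at the dotted leaf, which is the last vertex in pre-order), we have $c=0$; the operation is then a ballot spread, which I claim adds no Dyck-tile weight and turns no existing Dyck tile into a ballot tile, so the Dyck-tile sum is unchanged and again matches the increment of $\mathrm{Inv}$.

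The step I expect to be the main obstacle is the last claim in the dotted case: controlling what a ballot spread does to the already-placed tiles. One must verify that, at the moment a dotted edge is processed (necessarily the current dotted leaf, since maximal labels are peeled first), its ballot spread is performed to the right of every Dyck tile of $\mathcal{D}'$, so that no Dyck tile straddles the spread line and gets bent into a ballot tile; this is where the positional information carried by the insertion history $h_n$ and Remark~\ref{remark:addedge} enter. Once this compatibility of the ballot spread with the Dyck-tile content is in hand, the remaining arithmetic is verbatim the argument of \cite{KMPW12}, and the induction closes.
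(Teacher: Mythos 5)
Your proposal is correct and, at bottom, follows the same route as the paper: the paper's entire proof consists of the remark that, in the absence of circles, the fundamental symmetric Dyck tiling is ``equivalent to the non-symmetric Dyck tiling obtained from $L$ by ignoring dots,'' followed by the citation of Theorem~5 of \cite{KMPW12}. What you add is the verification that this equivalence actually holds at the level of tile content, which the paper asserts without argument. Your induction on the maximal label is a sound way to do this: for an undotted $E(n)$ the step is literally the computation of \cite{KMPW12}, and for a dotted $E(n)$ you correctly identify that both sides increase by zero --- the dotted edges form the rightmost branch with labels increasing toward the leaf, so the maximal dotted label sits on the last edge in pre-order and contributes no inversions, while its insertion position $h_{n}$ equals the full length of the current lower path (every smaller-labeled dotted edge is an ancestor, every smaller-labeled undotted edge lies strictly to the left), so the ballot spread is performed to the right of every existing tile; no tile can reach the cut line, since a box there would carry $\ast$ and hence require a circled label. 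The one imprecision is attributing the entire weight increment $c$ in the undotted case to the strip growth, whereas part of it comes from the spread enlarging the tiles it straddles; but that is exactly the bookkeeping of \cite{KMPW12} you are invoking, so nothing is lost.
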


Recall that we denote by $E(i)$ the edge with label $i$ in a label $L$.
Here, the label $L$ may be neither a natural label nor a decreasing label. 
We define a set $I'$ of labels as 
\begin{align*}
I':=\{i<n | E(n)\rightarrow E(i) \text{ in }L\},
\end{align*}
and an another set of labels $J$ as 
\begin{align*}
J:=\{j<n | E(j)\rightarrow E(n) \text{ in } L \}.
\end{align*}
Note that there exits no edge labeled by $j$ with a dot such 
that $E(j)\rightarrow E(p)$ and the edge $E(p)$ does not 
have a dot.

\begin{defn}
\label{defn:kappa}
We define an operation $\kappa_{n}$ on a pre-order word of a natural tree
$L$ as follows.
\begin{enumerate}[(a)]
\item Start with $p=1$, a word $w_{0}:=w(L)$, $r=0$ and $I''=I'$.
\item Find the $p$-th largest label in $J$ and denote it by $j_{p}$.
\item Find the largest label $i_{p}\in I''$ such that 
\begin{enumerate}[(i)]
\item $i_{p}<j_{p}$, 
\item there exists no $i'\in I'$ satisfying 
$E(j_p)\rightarrow E(i')\rightarrow E(i_p)$ for $i'<i_p$. 
\end{enumerate}
If such $i_{p}$ exists, replace $I''$ by $I''\setminus\{i_p\}$ 
and go to (d). Otherwise, go to (e).
\item Increase $r$ by one.
\item If we visit all elements in $J$, go to (f).
Otherwise, replace $p$ by $p+1$ and go to (b).

\item 
We move the element $n$ right by $r$ steps in $w_{0}$.
We denote the new word by $w_{1}$. 
\end{enumerate}
We denote by $\kappa_{n}(w):=w_{1}$ the word obtained by the above procedure.
We define $\kappa_{i}(w)$ for $i<n$ similarly for the subtree consisting 
of the labels in $[1,i]$.
\end{defn}

We extend the definition of $\kappa_{n}$ to a map on a pair of a word $w$ 
and a natural label $L$ 
\begin{align*}
\kappa_{i}(w,L):=(\kappa_{i}(w),L),
\end{align*}
where $i\in[1,n]$.
Here, the underlying tree of $L$ is the tree $T$. 

We define an operation $\phi$ on $w$ by successive actions of $\kappa_{i}$
with $1\le i\le n$ as follows.
\begin{defn}
\label{defn:phi}
Let $L_{0}$ be a natural label and $w$ be the pre-order word of $L_{0}$.
Let $\kappa_{i}(w)$ be a map on a pair of a word and a label defined as above.
Then, we define the action of $\phi$ on $w$ as
\begin{align*}
\phi(w):=\kappa_1\circ\kappa_{2}\circ\ldots\circ\kappa_{n}(w,L_{0}).
\end{align*}
We simply denote by $\phi(w)$ the word obtained from $(w,L_{0})$ by 
successive actions of $\kappa_{i}$ for $1\le i\le n$.
\end{defn}

\begin{example}
Figure \ref{fig:exphi} shows an example of a natural label.
\begin{figure}[ht]
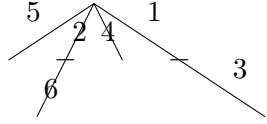

\tikzpic{-0.5}{[scale=0.5]
\coordinate
	child{coordinate(c5)}
	child{coordinate(c2)
		child[missing]
		child{coordinate(c6)}
		child[missing]
		child[missing]}
	child{coordinate(c4)}
	child{coordinate(c1)
		child[missing]
		child[missing]
		child[missing]
		child{coordinate(c3)}};
\node at(c2){$-$};\node at(c1){$-$};
\draw[anchor=south west]($(0,0)!.5!(c1)$)node{$1$}($(c1)!.5!(c3)$)node{$3$};
\draw($(0,0)!.5!(c4)$)node{$4$};
\draw($(0,0)!.5!(c2)$)node{$2$}($(c2)!.5!(c6)$)node{$6$};
\draw[anchor=south east]($(0,0)!.5!(c5)$)node{$5$};
}
\caption{A natural label with the pre-order word $526413$.
}
\label{fig:exphi}
\end{figure}
The actions of $\kappa_{i}$ with $i=1,2,3$ and $5$ are trivial.
Thus we have 
\begin{align*}
526413\rightarrow_{6}524613\rightarrow_{4}521643,
\end{align*}
where $\rightarrow_{i}$ is the action of $\kappa_{i}$.
As for $\kappa_{4}$, we have a pair $(i_1,j_1)=(1,2)$ and $r=1$.
Note that in the process (f), we move $4$ right by $r=1$, but 
we do not move the positions of $5$ and $6$.
Therefore, $\phi(w)=521643$.

Suppose that the tree whose pre-order word $w=526413$ is a tree with $6$ leaves,
{\it i.e.}, the lower path is $(UD)^{6}$.
Then, when $n=6$, we have two pairs of $i_p$ and $j_p$: $(i_1,j_1)=(4,5)$ and 
$(i_2,j_2)=(1,2)$. 
Then, we have $\phi(w)=521463$.
Compare the result in the previous paragraph.
The action of $\phi$ is affected by the tree structure.
\end{example}

\begin{remark}
It is clear from Figure \ref{fig:exphi} 
that labels after the actions of $\kappa_i$ on $w$ may not
be compatible with a natural label on the tree.
\end{remark}

Recall that we have two operations, a Dyck spread and addition of 
a strip, in the symmetric DTS operation.
We associate two sets of labels $S_{SE}(d)$ and $S_{SW}(d)$ 
with a Dyck tile $d$ in the symmetric Dyck tiling. 
Note that we do not need to consider ballot tiles since we restrict 
ourselves to the case of natural labels without circles.
The set $S_{SW}(d)$ is the set of labels $l$ such that 
the size of the Dyck tile $d$ is increased by one by the $l$-th 
Dyck spread or addition of a strip.

Given a Dyck tile $d$, we have a unique path of the Hermite history 
passing through $d$.
Recall that the path of the Hermite history is associated with an up step in the 
top ballot path and also the up step in the lower ballot path.
This up step in the lower ballot path 
corresponds to an edge $E$ in the natural label $L$.
We define $S_{SE}(d)$ is the set of a single label of $E$ in $L$.
Since a path of the Hermite history passes through a Dyck tile $d$ 
exactly once, the cardinality of $S_{SE}(d)$ is one, {\it i.e.,}
$|S_{SE}(d)|=1$. 
Figure \ref{fig:Hh} gives an example of a symmetric DTS bijection 
and $S_{SW}$ and $S_{SE}$ for a few Dyck tiles.

\begin{figure}[ht]
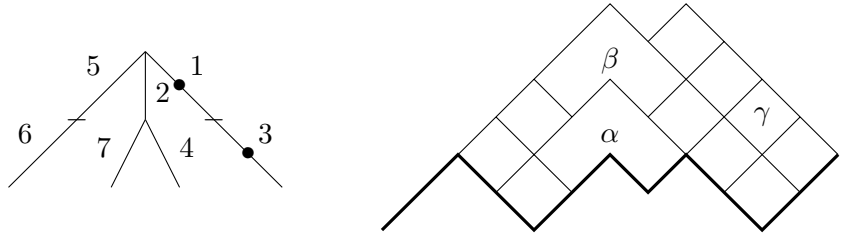

\tikzpic{-0.5}{[scale=0.6]
\coordinate
	child{coordinate(c5)
		child{coordinate(c6)}
		child[missing]
		child[missing]}
	child{coordinate(c2)
		child{coordinate(c7)}
		child{coordinate(c4)}}
	child{coordinate(c1)
		child[missing]
		child[missing]
		child{coordinate(c3)}};
\draw[anchor=south west]($(0,0)!.5!(c1)$)node{$1$}($(c1)!.5!(c3)$)node{$3$};
\draw[anchor=west]($(0,0)!.6!(c2)$)node{$2$};
\draw[anchor=south east]($(c2)!.7!(c7)$)node{$7$};
\draw[anchor=south west]($(c2)!.7!(c4)$)node{$4$};
\draw[anchor=south east]($(0,0)!.5!(c5)$)node{$5$}($(c5)!.5!(c6)$)node{$6$};
\draw[anchor=center]($(0,0)!.5!(c1)$)node{$\bullet$}($(c1)!.5!(c3)$)node{$\bullet$};
\node at(c1){$-$};\node at(c5){$-$};
}
\qquad
\tikzpic{-0.5}{[scale=0.5]
\draw[very thick](0,0)--(2,2)--(4,0)--(6,2)--(7,1)--(8,2)--(10,0)--(12,2);
\draw(2,2)--(6,6)--(11,1)(3,3)--(5,1)(4,4)--(5,3)(3,1)--(6,4)--(8,2);
\draw(7,5)--(8,6)--(12,2)(9,5)--(7,3)(10,4)--(8,2)(11,3)--(9,1);
\draw(6,2.5)node{$\alpha$}(6,4.5)node{$\beta$}(10,3)node{$\gamma$};
}
\caption{An example of the symmetric DTS bijection. 
For Dyck tiles $\alpha,\beta$ and $\gamma$, 
we have $S_{SW}(\alpha)=\{5,7\}, S_{SE}(\alpha)=\{4\}$, 
$S_{SW}(\beta)=\{6,7\}, S_{SE}(\beta)=\{1\}$, 
$S_{SW}(\gamma)=\{5\}$ and $S_{SE}(\gamma)=\{3\}$.
}
\label{fig:Hh}
\end{figure}

Given two Dyck tiles $d_1$ and $d_2$,
let $S_{SW}(d_1):=\{i_1<i_2<\ldots<i_{n}\}$ and 
$S_{SW}(d_2):=\{j_1<j_2<\ldots<j_{m}\}$.
By definition and construction, we have no pair of $(d_1,d_2)$ 
such that $S_{SW}(d_1)\subseteq S_{SW}(d_2)$ as sets
when $d_1$ and $d_2$ are not single boxes as Dyck tiles.
Then, we introduce a lexicographic order by 
\begin{align*}
S_{SW}(d_1)<S_{SW}(d_2), 
\end{align*}
if and only if 
\begin{align*}
i_{k}=j_{k} \text{ and } i_{k+1}<j_{k+1},
\end{align*}
for some $1\le k$.

We define an operation $\psi$ on the pre-order word $w$ as follows.
The algorithm consists of five steps:
\begin{enumerate}[(a)]
\item Set $p=1$.
\item Find a set $S_{SW}$ satisfying $|S_{SW}|\ge2$ 
such that $S_{SW}$ is the $p$-th smallest among them in the lexicographic order.
\item Set $\widetilde{S_{SW}}=S_{SW}\setminus\{\min(S_{SW})\}$.
Find the left-most $w_{q}$ right to all elements of $S_{SW}$ in $w$ 
such that $w_{q}<s$ for all $s\in\widetilde{S_{SW}}$.
\item Move $w_{q}$ left in $w$ by $|\widetilde{S_{SW}}|$.
\item Replace $p$ by $p+1$. Then, go to (b). The algorithm stops when 
we visit all the sets $S_{SW}$ with $|S_{SW}|\ge2$.
\end{enumerate}

\begin{defn}
\label{defn:psi}
We denote by $\psi(w)$ the word obtained from the pre-order word $w(L)$ for 
a natural label $L$ by the procedures (a) to (e).
\end{defn}

\begin{example}
We consider the same example as Figure \ref{fig:Hh}.
We have two Dyck tiles of size $2$, and $S_{SW}(\alpha)<S_{SW}(\beta)$ 
since $\min(S_{SW}(\alpha))<\min(S_{SW}(\beta))$ .
Thus, we have a sequence of words starting from 
the pre-order word $5627413$:
\begin{align*}
5627413\quad \rightarrow_{\alpha} \quad 5624713 \quad \rightarrow_{\beta} \quad 5624173.
\end{align*}
where $\rightarrow_{X}$ indicates the operation with respect to $S_{SW}(X)$ for $X=\alpha,\beta$.
Therefore, we have $\psi(w)=5624173$. 
\end{example}

Recall that given a Dyck tile $D$, $S_{SE}(D)$ consists of a single label. 
Then, we have the following lemma.
\begin{lemma}
\label{lemma:psic}
In the step (c) just above Definition \ref{defn:psi}, we have $w_{q}=S_{SE}$.
\end{lemma}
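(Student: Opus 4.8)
The plan is to read off both $S_{SW}(d)$ and $S_{SE}(d)$ from the symmetric DTS construction of the tiling $\mathcal{D}$ associated with $L$, and to show that the selection in step~(c) of $\psi$ (Definition~\ref{defn:psi}) reproduces the endpoint of the Hermite path leaving $d$. Write $S_{SW}(d)=\{i_1<i_2<\cdots<i_m\}$ with $m\ge 2$ and $\widetilde{S_{SW}}(d)=\{i_2,\dots,i_m\}$. First I would recall the meaning of these labels in the construction: the single box of $d$ is created at the birth step $i_1=\min S_{SW}(d)$ by an addition of a strip, and each later $i_k$ is a step whose Dyck spread threads through $d$ and enlarges it by one box, so that $d$ ends as a Dyck tile of size $m$ (this is also the content of Lemma~\ref{lemma:wtDT}, since $|S_{SW}(d)|$ equals the weight of $d$). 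The unique Hermite path through $d$ leaves $d$ at its exit, the southeast edge of the rightmost box, and descends to the lower boundary $\lambda$, landing on the up step that corresponds to the edge $E$ with $l(E)=S_{SE}(d)$.

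Next I would identify $E$ in terms of the pre-order word $w(L)$. Two features of $E$ should be read off from the construction. On the one hand, the column of the landing up step lies to the lower right of every spread and strip column that built $d$, which forces the label $S_{SE}(d)$ to occur to the right of all of $E(i_1),\dots,E(i_m)$ in $w(L)$. On the other hand, $E$ was already present before the first enlargement of $d$, so the path runs over $E$ rather than being deflected by it; this gives $l(E)<i_2=\min\widetilde{S_{SW}}(d)$. Thus $S_{SE}(d)$ is always one of the candidates allowed in step~(c). What remains is to prove that, at the moment $d$ is processed, it is the \emph{left-most} such candidate; this is genuinely a statement about the partially modified word and not about $w(L)$ itself, since in $w(L)$ an intervening label belonging to another tile stacked over $d$ may precede $S_{SE}(d)$.

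The remaining and most delicate point is therefore to control the earlier iterations of $\psi$. Because $\psi$ visits the sets $S_{SW}$ in increasing lexicographic order, I would argue by induction on the iteration index $p$, maintaining the invariant that when $d$ is reached, $S_{SE}(d)$ is the left-most candidate satisfying the conditions of step~(c) in the current word. The key is that each earlier move~(d), performed for a lexicographically smaller tile $d'$, slides the label $S_{SE}(d')$ left by $|\widetilde{S_{SW}}(d')|$: when $d'$ lies on a Hermite path landing strictly to the left of that of $d$ this removes a candidate that was blocking $S_{SE}(d)$, while when $d'$ shares the path of $d$ it slides the common label $S_{SE}(d')=S_{SE}(d)$ consistently across the enlargement columns the two tiles share, keeping it the correct $w_q$ at each such step. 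The main obstacle is exactly this bookkeeping: one must verify that every blocking label is removed and none spuriously, that a label shared by several tiles of one path is moved coherently, and that $S_{SE}(d)$ is never displaced out of candidate position before $d$ itself is processed. I would settle these by comparing, for two tiles $d'<_{\mathrm{lex}} d$, the relative order of their enlargement columns and of $S_{SE}(d')$ and $S_{SE}(d)$, using the ribbon structure of stacked Dyck tiles in $\mathcal{D}$; granting the invariant, step~(c) returns $w_q=S_{SE}(d)$, which is the assertion of the lemma.
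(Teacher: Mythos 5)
The first half of your argument matches the paper: you establish, from the way $d$ is built by successive Dyck spreads, that $E(i_1)\rightarrow E(i_l)\rightarrow E(j)$ for $l\ge 2$ where $\{j\}=S_{SE}(d)$, so that $j$ is indeed one of the candidates admissible in step~(c). The gap is in the second half. Everything hinges on showing that $j$ is the \emph{left-most} admissible candidate at the moment $d$ is processed, and your text does not prove this: it announces an induction on the iteration index with an invariant, lists the verifications that would be needed (``every blocking label is removed and none spuriously,'' ``a label shared by several tiles \dots is moved coherently,'' ``$S_{SE}(d)$ is never displaced out of candidate position''), and then concludes ``granting the invariant.'' That is a plan, not a proof; the entire content of the lemma sits inside the granted invariant, and the vague appeal to ``the ribbon structure of stacked Dyck tiles'' does not discharge it.

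You also never invoke the structural fact that the paper's proof actually runs on, namely that no two non-trivial Dyck tiles $d,d'$ of $\mathcal{D}$ satisfy $S_{SW}(d)\subseteq S_{SW}(d')$ (stated just before the lexicographic order is introduced). The paper argues by contradiction: if the left-most candidate $w_q$ were different from $j$, then $w_q<i_l$ and the pre-order position of $w_q$ force $E(i_l)\rightarrow E(w_q)$ for all $l$, which by the DTS construction would produce a tile $d'$ with $S_{SW}(d')\supseteq S_{SW}(d)$ and $S_{SE}(d')=\{w_q\}$ — impossible. Any completion of your approach would either have to rediscover this non-containment property or replace it by an equally strong statement controlling which labels can sit between $S_{SW}(d)$ and $j$; without it, the induction you sketch cannot close. (Your observation that the claim genuinely concerns the partially modified word rather than $w(L)$ — compare the tile $\beta$ in the paper's running example, where the label $4$ is an admissible candidate in the original word and must first be moved by the iteration for $\alpha$ — is a legitimate subtlety, and one the paper's own write-up treats lightly; but identifying the difficulty is not the same as resolving it.)
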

\begin{proof}
Let $d$ be a Dyck tile of size $s\ge2$ and 
$S_{SW}(d):=\{i_1<i_2<\ldots<i_{s}\}$.
By construction we have $|S_{SW}(d)|\ge2$, $|S_{SE}(d)|=1$,
and $S_{SE}(d):=\{j\}<\min(S_{SW}(d))$.

We denote by $E(k)$ the edge with the label $k$ in $L$.
When we perform a Dyck spread on a symmetric Dyck tiling, 
there exist three labels $a<b<c$ such that 
$E(b)\rightarrow E(c)\rightarrow E(a)$ in the natural label $L$.

By an addition of a strip in the symmetric DTS, 
we add a single boxes $d$ on an up step in a Dyck tiling. 
Then, the two sets $S_{SW}(d):=\{i_1\}$ and $S_{SE}(d):=\{j\}$ consist of 
a single label, and it is obvious $i_1>j$.
For a non-trivial Dyck tile $d$, we add labels $i_{l}>i_1$ with $l\ge2$ to 
$S_{SW}(d)$ since we enlarge a single box by a Dyck spread.
Further, this implies that the edge $E(i_{l})$ with $l\ge2$ is right to the 
edge $E(i_1)$ and is left to the edge $E(j)$ in the natural label. 
By summarizing the above observations, we have, in the natural label $L$, 
\begin{align*}
E(i_1)\rightarrow E(i_l)\rightarrow E(j), \quad \forall l\in[2,s].
\end{align*}
Thus, the label $j$ is right to the labels $i_l$ for $l\in[2,s]$ 
in the labeled tree $L$.

Suppose that $w_{q}\neq j$.
From the condition in (c), $w_q<i_{l}$ for all $l\in[1,s]$.
Since the word $w$ is read from $L$ by use of the pre-order,
we have 
\begin{align}
\label{eqn:SW}
E(i_l)\rightarrow E(w_q) \quad \forall l\in[1,s].
\end{align}
It is obvious from the definition of the symmetric DTS bijection,
Eqn. (\ref{eqn:SW}) implies that there exists a Dyck tile $d'$ 
such that $S_{SW}(d')\supseteq\{i_1<\ldots<i_s\}$ and $S_{SE}(d')=\{w_q\}$.
However, this contradicts the fact that there exists no pair 
of Dyck tiles $(d,d')$ such that $S_{SW}(d)\subseteq S_{SW}(d')$.
Thus we have $w_{q}=S_{SE}(d)$.
\end{proof}

\begin{lemma}
\label{lemma:phipsi}
We have $\phi(w)=\psi(w)$.
\end{lemma}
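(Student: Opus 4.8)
The plan is to show that $\phi$ and $\psi$ rearrange the pre-order word $w=w(L)$ in exactly the same way, by comparing the pairs of entries whose relative order each operation reverses. Since $w$ is a permutation, two rearrangements of it are equal if and only if every pair of entries occurs in the same relative order in both; equivalently, it suffices to prove that $\phi$ and $\psi$ reverse the same set of pairs of entries of $w$. I would therefore compute the ``flip set'' of each operation and check that they coincide.

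First I would determine the flip set of $\psi$. Fix a non-trivial Dyck tile $d$ with $S_{SW}(d)=\{i_1<i_2<\cdots<i_s\}$ ($s\ge2$) and $S_{SE}(d)=\{j\}$. By Lemma~\ref{lemma:psic} and the analysis in its proof, $j<i_l$ for $l\in[2,s]$ together with the positional chain $E(i_1)\rightarrow E(i_l)\rightarrow E(j)$, so in $w$ the label $j$ lies to the right of $i_2,\dots,i_s$. Step (d) moves $w_q=j$ left by $|\widetilde{S_{SW}(d)}|=s-1$, and I would argue that the $s-1$ entries it crosses are precisely $i_2,\dots,i_s$; hence the move reverses exactly the pairs $(i_l,j)$ for $l=2,\dots,s$. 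Because each such pair is attached to the unique tile through whose interior the Hermite-history path joining the corresponding entry and exit passes, the no-containment property $S_{SW}(d)\not\subseteq S_{SW}(d')$ guarantees that these pair-sets are disjoint over distinct tiles and that no pair is reversed twice. Thus the flip set of $\psi$ is $\bigcup_{d}\{(i_l,j):2\le l\le s\}$, the union taken over all non-trivial Dyck tiles.

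Next I would compute the flip set of $\phi$. Since a natural label increases from the root to the leaves, in the subtree on $[1,k]$ the label $k$ is a leaf and every entry to its right in the word belongs to $I'$; hence $\kappa_k$ reverses only pairs of the form $(k,j')$ with $k$ crossing some $j'$ to its right. The heart of the argument is to show that $\kappa_k$ moves $k$ past exactly those $j'$ that are the southeast entries $S_{SE}(d)=\{j'\}$ of tiles $d$ with $k\in\widetilde{S_{SW}(d)}$. Concretely, I would match the procedure of Definition~\ref{defn:kappa} to the tile structure: the count $r$ equals the number of matched pairs $(j_p,i_p)$, and I would identify $j_p$ with a southeast entry that $k$ must cross and $i_p$ with the minimal spread label of the same tile, so that the ``no intermediate'' condition~(ii) becomes exactly the translation of the cover-inclusive, non-containment constraint that separates genuine tile growths from spurious ones. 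Granting this identification, $\kappa_k$ reverses precisely the pairs $(k,j')$ with $k$ a non-minimal spread label and $j'$ the southeast entry of the tile it grows, so summing over $k$ yields the flip set $\bigcup_{d}\{(i_l,j):2\le l\le s\}$, the same as for $\psi$.

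The main obstacle is the identification in the previous paragraph: translating the purely combinatorial matching in the definition of $\kappa_k$ (the choice of the largest admissible $i_p$ and, above all, condition~(ii) forbidding an intermediate $i'$) into the geometry of the symmetric DTS growth, and verifying that the labels crossed by $k$ are exactly the entries $S_{SE}(d)$ of the tiles having $k$ as a non-minimal element of $S_{SW}(d)$. I expect to establish this by tracking, through the relations $E(i_1)\rightarrow E(i_l)\rightarrow E(j)$ supplied by Lemma~\ref{lemma:psic}, how a Dyck spread at step $k$ enlarges a tile and how the resulting left/right relations in the tree force the matching in $\kappa_k$. The fact that the individual $\kappa_k$-moves and $\psi$-moves need not occur in compatible orders is harmless once one works at the level of flip sets, so the whole comparison reduces, via the criterion of the first paragraph, to this local correspondence; once the two flip sets are shown equal, $\phi(w)=\psi(w)$ follows at once.
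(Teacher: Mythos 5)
Your proposal follows essentially the same route as the paper: both reduce the lemma to a bijection between the matched pairs $(i_p,j_p)$ arising in the $\kappa_k$'s and the pairs (non-minimal element of $S_{SW}(d)$, $S_{SE}(d)$) arising in $\psi$, via the relations $E(i_1)\rightarrow E(i_l)\rightarrow E(j)$ supplied by Lemma \ref{lemma:psic}, and both leave that identification at the level of a sketch; your flip-set framing only makes explicit why matching the elementary moves suffices. One small correction: since $j_p\in J$ lies to the \emph{left} of $E(k)$ while $i_p\in I'$ lies to its \emph{right}, it is $j_p$ that must be identified with $\min(S_{SW}(d))$ and $i_p$ with $S_{SE}(d)$, not the other way around as written in your third paragraph.
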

\begin{proof}
The action of $\kappa_{m}$ for $1\le m\le n$ on a pair of a word and 
a label $(w,L)$ is not trivial when there exists at least one 
pair of $(i_p,j_p)$ satisfying the step (c) in Definition \ref{defn:kappa}.
In the language of a symmetric Dyck tiling, we have a Dyck tile
of size larger than one.
The choice of $i_p$ and $j_p$ in the step (c) in Definition \ref{defn:kappa} 
corresponds to $\min(S_{SW}(d))$ and $S_{SE}(d)$ for some Dyck tile $d$.
We move $m$ right by one element in $w$ in both cases.
We decrease $m$ one by one starting from $m=n$. 
This implies we visit all elements $S_{SW}(d)\setminus\{\min(S_{SW}(d))\}$
and on the other hand we visit all pairs $(i_p,j_p)$.
It is obvious that there is a one-to-one correspondence between them.
Thus, the actions of $\phi$ and $\psi$ on $w$ should coincide with each 
other.
\end{proof}

\begin{prop}
Let $\mathcal{D}$ be a symmetric Dyck tiling and $w$ be pre-order word
for a natural label $L$ corresponding to $\mathcal{D}$.
We have 
\begin{align*}
\mathrm{tiles}(\mathcal{D})&=\mathrm{Inv}(\phi(w)), \\
&=\mathrm{Inv}(\psi(w)),
\end{align*}
where $\mathrm{tiles}(\mathcal{D})$ is the number of Dyck tiles 
in $\mathcal{D}$.
\end{prop}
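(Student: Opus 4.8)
The plan is to prove the identity for $\psi$ and then deduce the one for $\phi$ from Lemma~\ref{lemma:phipsi}, which already gives $\phi(w)=\psi(w)$. Since the natural label $L$ carries no circles, the tiling $\mathcal{D}$ consists only of Dyck tiles, and the weight of a Dyck tile equals its size; hence Lemma~\ref{lemma:wtDT} reads
\begin{align*}
\mathrm{Inv}(w)=\sum_{d\in\mathcal{D}}\mathrm{wt}(d).
\end{align*}
Because $\mathrm{tiles}(\mathcal{D})=\sum_{d\in\mathcal{D}}1$, the claim $\mathrm{tiles}(\mathcal{D})=\mathrm{Inv}(\psi(w))$ is equivalent to showing that the entire run of $\psi$ lowers $\mathrm{Inv}$ by exactly $\sum_{d\in\mathcal{D}}(\mathrm{wt}(d)-1)$, where only the non-trivial tiles contribute since single boxes have weight $1$ and are never touched by the algorithm.

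The heart of the argument is the analysis of a single iteration of Definition~\ref{defn:psi}. Fix a Dyck tile $d$ of size $s\ge2$ with $S_{SW}(d)=\{i_1<\cdots<i_s\}$, so that $\widetilde{S_{SW}}=\{i_2,\ldots,i_s\}$ has cardinality $s-1$. By Lemma~\ref{lemma:psic} the entry $w_q$ chosen in step (c) is exactly $S_{SE}(d)=\{j\}$, and the structural facts established in its proof give $j<i_1<\cdots<i_s$ together with $E(i_1)\rightarrow E(i_l)\rightarrow E(j)$ for all $l\in[2,s]$; thus $j$ lies to the right of $i_2,\ldots,i_s$ in the current word while being strictly smaller than each of them in value. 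I would first pin down that the $s-1$ entries lying immediately to the left of $w_q$ are precisely the members of $\widetilde{S_{SW}}$, namely the labels produced by the successive Dyck spreads that enlarge the single box into $d$. Step (d) then slides $w_q=j$ left past exactly these $s-1$ values, each of which exceeds $j$; every such crossing destroys one inversion and creates none, so this iteration decreases $\mathrm{Inv}$ by exactly $s-1=\mathrm{wt}(d)-1$.

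Running $\psi$ visits the non-trivial tiles once each, in lexicographic order of their $S_{SW}$ sets, and each move is analyzed in the word as it currently stands, so the decrements are additive and telescope:
\begin{align*}
\mathrm{Inv}(\psi(w))=\mathrm{Inv}(w)-\sum_{d\in\mathcal{D}}(\mathrm{wt}(d)-1)=\sum_{d\in\mathcal{D}}1=\mathrm{tiles}(\mathcal{D}).
\end{align*}
Combining this with $\phi(w)=\psi(w)$ yields both equalities in the statement.

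I expect the main obstacle to be the local bookkeeping in the middle paragraph: verifying that, at the moment tile $d$ is processed, the $s-1$ entries passed over are exactly $\widetilde{S_{SW}}$ and each is larger than $w_q$, so that every crossing is inversion-removing and none is inversion-creating. This rests on the explicit geometry of how a Dyck tile is assembled from a single box by Dyck spreads and additions of strips, and on checking that the moves made for lexicographically earlier tiles do not disturb the adjacency of $\widetilde{S_{SW}}$ to $w_q$ needed for the current one; here the non-containment property that forbids $S_{SW}(d)\subseteq S_{SW}(d')$ for distinct non-trivial tiles is what rules out spurious interference.
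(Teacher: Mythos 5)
Your proposal is correct and follows essentially the same route as the paper: both arguments rest on Lemma \ref{lemma:wtDT} to identify $\mathrm{Inv}(w)$ with $\sum_{d}\mathrm{wt}(d)$, show that the word-rewriting map removes exactly $\mathrm{wt}(d)-1$ inversions per non-trivial tile so that the counts telescope to $\sum_{d}1=\mathrm{tiles}(\mathcal{D})$, and invoke Lemma \ref{lemma:phipsi} to transfer the identity between $\phi$ and $\psi$. The only difference is the direction of that transfer --- you analyze $\psi$ directly (using Lemma \ref{lemma:psic} to identify $w_q$ with $S_{SE}(d)$ and to see that each leftward crossing destroys one inversion) and deduce the statement for $\phi$, whereas the paper analyzes the $\kappa_i$ moves of $\phi$ and deduces the statement for $\psi$; given $\phi(w)=\psi(w)$ this is immaterial.
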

\begin{proof}
From Lemma \ref{lemma:phipsi}, it is enough to show that 
$\mathrm{tiles}(\mathcal{D})=\mathrm{Inv}(\phi(w))$.

Recall that the map $\phi$ is a composition of $\kappa_{i}$ 
for $1\le i\le n$.
By the action of $\kappa_{i}$ on a pair of a word $w$ and a label $L$,
we move $i$ right by $r$ steps in $w$ where $r$ is defined in step (d) 
in Definition \ref{defn:kappa}.
This operation corresponds to an exchange of labels in $L$ and decrease 
$\mathrm{inv}(w)$ by $r$ and equivalently decrease the size of $r$ Dyck 
tiles of size $s>1$ by one.
It is obvious that this operation stops when the size of the corresponding 
Dyck tile becomes one.
From Lemma \ref{lemma:wtDT}, $\mathrm{Inv}(w)$ is the sum of the sizes of 
all Dyck tiles in a symmetric Dyck tiling.
Therefore, we have $\mathrm{tiles}(\mathcal{D})=\mathrm{Inv}(\phi(w))$.
\end{proof}

\begin{remark}
Given a labeled tree $L$, we have the pre-order word $w$.
By the action of $\phi$ or $\psi$, we have the word $\phi(w)=\psi(w)$.
Then, the sequence of non-negative integers 
$\mathrm{inv}(\psi(w)):=(d_1,\ldots,d_n)$ counts the numbers of 
boxes added in the addition of a strip at the $i$-th steps in the 
symmetric DTS bijection. 
More precisely, the integer $d_{i}$ is given by the number of 
the set $S_{SW}$ such that $\min(S_{SW})=i$, {\it i.e.},
\begin{align*}
d_i=\#\{ d | \min(S_{SW}(d))=i \text{ for a Dyck tile } d \}.
\end{align*}  
\end{remark}

Let $w$ be the pre-order word for a natural label $L$.
\begin{lemma}
\label{lemma:invpsi}
The reverse inversion sequence $\mathrm{inv'}(\psi(w)):=(d'_1,\ldots,d'_n)$ gives 
the numbers of Dyck tiles associated with the $i$-th up step of the top boundary 
$\mu$. 
\end{lemma}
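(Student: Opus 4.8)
The plan is to show that $\psi(w)$ is the word whose positions, read left to right, correspond to the up steps of the top boundary $\mu$, and whose reverse inversions record the Hermite-history labels on those up steps. I would first recall the relevant structure of the Hermite history: starting from an up step $u$ of $\mu$, the path threads through a chain of Dyck tiles $t_1,\dots,t_k$ (each exit coinciding with the next entry) and terminates at an up step of the lower path $\lambda$, so that the label attached to $u$ is the chain length $k$. Since every Dyck tile lies on exactly one such path, the chains partition the tiles of $\mathcal{D}$, and all tiles of a single chain share the same terminal up step, i.e. the same value $S_{SE}$. Consequently the number of Dyck tiles associated with the $i$-th up step of $\mu$ is exactly the length of the chain that begins there, and the goal becomes the identification $d'_i=k$ for that up step.

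The next step is to pin down the correspondence between positions of $\psi(w)$ and up steps of $\mu$. Using the explicit description of the top boundary through $\sigma(L)$ together with Lemma \ref{lemma:psic} (which identifies the element moved by $\psi$ for a tile $d$ with $S_{SE}(d)$), I would verify that the $\psi$-moves, namely pulling each $S_{SE}(d)$ leftward by $|\widetilde{S_{SW}}|$, are precisely the rearrangement converting the pre-order (lower-boundary) reading into the left-to-right order of the up steps of $\mu$. With this identification in hand, the value $(\psi w)_i$ at position $i$ is the terminal lower up step $S_{SE}$ of the chain beginning at the $i$-th up step of $\mu$ (and is simply the edge itself, with empty chain, when the label is $0$).

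The core step is then to prove that $d'_i=\#\{j<i:(\psi w)_j>(\psi w)_i\}$ equals this chain length. I would carry this out by mirroring the argument behind the preceding Proposition and Remark, which established the dual lower-boundary statement $\mathrm{inv}(\psi w)_i=\#\{d\mid \min S_{SW}(d)=i\}$ by matching each inversion (a smaller value to the right) to a tile seeded at step $i$. Dually, I would match each reverse inversion at position $i$ (a larger value to the left) bijectively to a tile of the chain that starts at the $i$-th up step of $\mu$, reading off the nesting produced by the Dyck spreads in the symDTS construction. The main obstacle will be exactly this bijection: one must check that the larger-to-the-left entries counted by $d'_i$ are in one-to-one correspondence with the threaded tiles $t_1,\dots,t_k$, that every tile of the chain contributes exactly one such entry, and that tiles belonging to other chains never contribute. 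As an independent consistency check, Proposition \ref{prop:Rev} gives $\mathrm{inv'}(\psi w)=\mathrm{Rev}(\mathrm{inv}(\overline{\psi w}))$, so the claim is the image of the already-proven lower-boundary count under the complement–reversal symmetry; one could instead try to identify $\overline{\psi w}$ with the $\psi$-word of the left–right reflected tiling, but proving that $\psi$ intertwines with reflection appears to cost as much as the direct bijection, so I would rely on the latter.
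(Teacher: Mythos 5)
Your outline follows the paper's own argument: the paper likewise derives the lemma from the observation that $\psi$ moves each $S_{SE}(d)$ leftward by $|\widetilde{S_{SW}}(d)|$, so that the reverse inversions of $\psi(w)$ count the tiles threaded by each Hermite-history path (the chain) rather than their total size, with the position-to-up-step correspondence supplied by the mini-word machinery. The step you single out as the main obstacle --- matching the larger-to-the-left entries at position $i$ bijectively with the tiles of the chain starting at the $i$-th up step of $\mu$ --- is precisely the step the paper also treats as immediate from the construction of $\psi$ and the strip addition in the symDTS bijection, so your plan is correct and sits at essentially the same level of detail as the published proof.
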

\begin{proof}
The word $\psi(w)$ is constructed from $w$ by moving $S_{SE}(d)$ to left in $w$ 
corresponding to a Dyck tile of size $s\ge2$.
Thus, the number $d'_{i}$ counts the number of Dyck tiles added by the addition 
of a strip after $i$-th process of the symmetric DTS bijection.
This implies that $d'_{i}$ counts the number of Dyck tiles associated with 
the edge labeled by $i$ in $\lambda$ at one hand, and equivalently the number 
of Dyck tiles associated with the $i$-th up step of the top boundary $\mu$.
\end{proof}

Let $\mathbf{p}_{n}=(p_1,\ldots,p_n)$ be an insertion history for a natural
label $L$.
For a word $w:=(w_1,\ldots,w_n)$ of length $n$, we call the left of $w_1$ is the 
zero-th blank, the space between $w_{i}$ and $w_{i+1}$ the $i$-th blank
and the right of $w_{n}$ is the $n$-th blank. 

We construct a word $w'(\mathbf{p})$ with $\ast$ from $\mathbf{p}$ by the following 
recursive procedure. 

\begin{enumerate}[(a)]
\item $w'(\mathbf{p})=1$ if $p_1=0$ and $n=1$.
\item Let  $w$ be a word for the insertion history $\mathbf{p}_{n-1}$.
We put $n$ in $w$ at the $p_n$-th blank from left if $p_{n}\le n-1$.
If $p_{n}\ge n$, we put $p_{n}-n$ $*$'s right to $w$ and put $n$ right to 
the right-most $*$.
\end{enumerate}

\begin{lemma}
\label{lemma:w'psi}
Let $w'(\mathbf{p})$ be a word as above. Then, the word 
obtained from $w'(\mathbf{p})$ by removing $*$'s is nothing but
$\psi(w_{0})$ where $w_{0}$ is the pre-order word for 
the natural label $L$.
\end{lemma}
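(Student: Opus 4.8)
The plan is to argue by induction on the number \(n\) of edges of \(L\), peeling off the edge carrying the largest label. Because labels increase from the root to the leaves, the edge labelled \(n\) is a leaf edge; deleting it produces a natural label \(L^{-}\) of a smaller tree whose insertion history is \(\mathbf{p}_{n-1}=(p_{1},\dots,p_{n-1})\) — the numbers \(h_{i}\) for \(i\le n-1\) do not see the edge \(n\) — and whose pre-order word \(w_{0}^{-}\) is obtained from \(w_{0}\) by erasing the letter \(n\). This peeling is exactly the recursion of clause (b): \(w'(\mathbf{p}_{n})\) is built from \(w'(\mathbf{p}_{n-1})\) by inserting the single letter \(n\), and on the tiling side it is the final step of the symmetric DTS bijection, namely a Dyck spread at \(x=p_{n}\) followed by the addition of a strip. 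The base case \(n=1\) is immediate, since \(w'(\mathbf{p})=1=\psi(w_{0})\).

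First I would strengthen the inductive hypothesis so that it retains the \(*\)'s, not merely the permutation left after deleting them. The strengthened claim is that \(w'(\mathbf{p}_{n-1})\), read together with its \(*\)'s, encodes the top path of the symmetric Dyck tiling produced after \(n-1\) steps: the genuine letters spell out \(\psi(w_{0}^{-})\), while each \(*\) marks a blank through which a subsequent spread must still travel. With this reading, inserting \(n\) at the \(p_{n}\)-th blank — padding with \(*\)'s when \(p_{n}\) exceeds the current length — is precisely the word-level shadow of the spread at \(x=p_{n}\): the quantity \(p_{n}=2\,|\{j<n\mid E(j)\leftarrow E(n)\}|+|\{j<n\mid E(n)\uparrow E(j)\}|\) measures, in doubled lattice coordinates, how far to the right the new peak is inserted, and the \(*\)'s already present are exactly what converts this doubled coordinate into the correct number of genuine letters lying to the left of the new entry \(n\).

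The heart of the matter is to show that this insertion-then-deletion has the same effect on the underlying permutation as passing from \(\psi(w_{0}^{-})\) to \(\psi(w_{0})\). Here I would lean on the descriptions of \(\psi\) already established: by Lemma \ref{lemma:phipsi} one may replace \(\psi\) by \(\phi\); the inversion data of \(\psi(w_{0})\) is governed by the sets \(S_{SW}\) (the remark preceding Lemma \ref{lemma:invpsi}), and by Lemma \ref{lemma:invpsi} the reverse-inversion sequence of \(\psi(w_{0})\) records the number of tiles sitting on each up-step of the top boundary \(\mu\). Reading off which up-steps of the current top path lie strictly to the right of \(x=p_{n}\) identifies exactly the smaller letters over which \(n\) must jump, whereas the \(*\)'s account for the up-steps that belong to the enlargement of an already existing tile rather than to the birth of a new one. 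The step I expect to be the main obstacle is precisely this \(*\)-bookkeeping: one must prove the invariant that throughout the construction a \(*\) always records the growth of an old tile — contributing no inversion among the genuine letters — while a genuine letter records the birth of a new tile, and in particular that a spread landing to the right of every current peak (the padding case \(p_{n}\ge n\)) creates no inversion among the genuine letters. Once this invariant is shown to be preserved by every later insertion, so that the doubled coordinate \(p_{i}\) is correctly discounted by the \(*\)'s at each stage, the equality of \(w'(\mathbf{p})\) with its \(*\)'s removed and \(\psi(w_{0})\) follows from the matching recursions.
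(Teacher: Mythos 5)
Your overall strategy---induction on $n$, peeling off the edge with the largest label, and matching the recursion of clause (b) against a hoped-for recursion for $\psi$---is a genuinely different route from the paper's, but as written the inductive step does not close. The recursion on the left-hand side is fine: $w'(\mathbf{p})$ is by definition obtained from $w'(\mathbf{p}_{n-1})$ by inserting the single letter $n$. What you never establish is the matching recursion on the right-hand side, namely that $\psi(w_{0})$ with the letter $n$ deleted equals $\psi(w_{0}^{-})$. This is not automatic: $\psi$ operates tile by tile, and every tile $d$ with $n\in S_{SW}(d)$ and $|S_{SW}(d)|\ge 2$ has its element $S_{SE}(d)$ moved one further step to the left in $\psi(w_{0})$ than in $\psi(w_{0}^{-})$ (where $d$ carries $S_{SW}(d)\setminus\{n\}$). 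For the induction to work you must show that each of these extra leftward moves jumps over the letter $n$ only, so the relative order of $1,\dots,n-1$ is preserved, and simultaneously that the genuine letters and $*$'s to the left of the $p_{n}$-th blank correctly discount the doubled coordinate $p_{n}=2\,|\{j<n\mid E(j)\leftarrow E(n)\}|+|\{j<n\mid E(n)\uparrow E(j)\}|$. You explicitly name this ``$*$-bookkeeping invariant'' as the main obstacle and then assume it; since that invariant is essentially equivalent to the lemma itself, the proposal has a genuine gap rather than a complete proof.

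For contrast, the paper avoids the induction entirely. It introduces the complementary history $\mathbf{p'}:=(0,1,\dots,n-1)-\mathbf{p}$, whose entries satisfy $p'_{i}=\#\{j<i\mid E(i)\rightarrow E(j)\}-\#\{j<i\mid E(j)\rightarrow E(i)\}$, shows that inserting each $i$ at the $p'_{i}$-th blank from the \emph{right} reproduces $w'(\mathbf{p})$ (Claim \ref{claim:w''}), and then reads off directly from the description of $\phi=\kappa_{1}\circ\cdots\circ\kappa_{n}$ in Definition \ref{defn:kappa} that, after all pairs $(i_{p},j_{p})$ have been processed, the number of letters to the right of $i$ is $p'_{i}$ when $p'_{i}\ge 0$ and zero (with $*$'s appended) otherwise; Lemma \ref{lemma:phipsi} then converts $\phi$ into $\psi$. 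If you want to rescue your induction, the statement you would have to prove for the inductive step is precisely this single-letter position formula---at which point the induction becomes superfluous and you may as well argue letter by letter as the paper does.
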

\begin{proof}
Let $\mathbf{p'}$ be an integer sequence defined by 
$\mathbf{p'}:=(p'_1,\ldots,p'_{n})=\mathbf{p}_{0}-\mathbf{p}$ where 
we define $\mathbf{p}_{0}:=(0,1,2,\ldots,n-1)$.
Then, $p'_{i}$ is given by 
\begin{align*}
p'_{i}=\#\{j<i|E(i)\rightarrow E(j)\}-\#\{j<i|E(j)\rightarrow E(i)\},
\end{align*}
where $E(j)$ is the edge labeled by $j$ in the natural label $L$.
Note that the element of $\mathbf{p'}$ may be a negative integer though 
all the elements of $\mathbf{p}$ are non-negative integers.
We construct a word $w''(\mathbf{p})$ from $\mathbf{p'}$ by the following 
recursive procedures.
\begin{enumerate}[(a')]
\item $w''(\mathbf{p})=1$ if $p'_1=0$ and $n=1$.
\item Let $y$ be a word for $(p'_1,\ldots,p'_{n-1})$. 
Then, we put $n$ in $y$ at the $p'_{n}$-th blank from right. 
If $p'_{n}$ is a negative integer, we put $*$'s left to $n$ if necessary.
And we redefine the right blank of $n$ as the $p'_{n}$-th blank if 
$p'_{n}$ is negative.
\end{enumerate}

\begin{claim}
We have $w'(\mathbf{p})=w''(\mathbf{p})$.
\label{claim:w''}
\end{claim}
\begin{proof}[Proof of Claim \ref{claim:w''}]
By construction, we have $\mathbf{p}+\mathbf{p'}=(0,1,2,\ldots,n-1)$.
We insert an integer $i$ according to $\mathbf{p}$ (resp. $\mathbf{p'}$)
from left (resp. right).
The positions of $i$ inserted by the steps (b) and (b') are the same.
Therefore, the word constructed from $\mathbf{p}$ coincides with the one 
constructed from $\mathbf{p'}$.
\end{proof}

It is enough to show that $w''(\mathbf{p})$ is the same (up to ignoring of $\ast$'s) 
as $\phi(w_{0})$ from Lemma \ref{lemma:phipsi}.
Recall that $\phi$ is defined by the compositions of $\kappa_{i}$ 
for $1\le i\le n$ (see Definition \ref{defn:phi}).
For each $\kappa_{i}$,
we visit all the pairs $(i_p,j_p)$ for $i_p\in I'$ and $j_p\in J$ satisfying 
the condition (c) in Definition \ref{defn:kappa}.
Further, we delete an element of $I''$ when there exists a pair of 
$(i_p,j_p)$ for $i_p\in I'$ and $j\in J$ satisfying the condition 
in the step (c).
After we visit all the pairs $(i_p,j_p)$ associated with $I'$ and $J$, 
the number of elements right to $i$ is equal to $p'_{i}$ if $p'_{i}$ 
is non-negative and to zero if $p'_{i}$ is negative.
When $p'_{i}$ is negative, we add $\ast$'s for $w''(\mathbf{p})$ and 
append $i$ at the right-most position.
From these observations, it is easy to see that $w''(\mathbf{p})$ gives 
the same word as $\phi(w_0)$ by ignoring $\ast$'s.
This completes the proof of Lemma \ref{lemma:w'psi}.
\end{proof}

\begin{example}
We consider the same example as Figure \ref{fig:Hh}.
The insertion history is given by $\mathbf{p}=(0,0,3,1,0,1,5)$ and 
$\mathbf{p'}=(0,1,-1,2,4,4,1)$.
Thus, $\mathbf{p}$ and $\mathbf{p}'$ yields 
\begin{align*}
1\rightarrow 21 \rightarrow 21*3\rightarrow 241*3 \rightarrow 5241*3
\rightarrow 56241*3 \rightarrow 562417*3.
\end{align*}
Note that we count the insertion positions from left for $\mathbf{p}$, 
and from right for $\mathbf{p'}$.
Then, if we ignore $*$, we have $5624173$.
\end{example}

Recall that one can construct a mini-word ($2\times n$ array 
of integers) from a labeled tree through its insertion history, 
and the first row is denoted by $\mathbf{a}$. 
We consider the standardization of $\mathbf{a}$, 
denoted by $\mathrm{stand}(\mathbf{a})$.
Here, standardization means that we replace $a_{i}$ by $j\in[1,s+1]$ 
if the entry $a_{i}$ is the $j$-th smallest elements in $\mathbf{a'}$.
For example, we have $\mathrm{stand}(\mathbf{a})=(1,3,4,2)$ 
when $\mathbf{a}=(1,4,5,3)$.

Given a standardized word $w$, we denote by $w^{-1}$ the 
inverse of $w$ as a permutation.
We have the following proposition.

Let $w'$ be a word with $*$'s. We define the inverse of $w'$ as a word 
$w'^{-1}:=(t_1,\ldots,t_n)$ such that 
$t_i$ is the position of $i$ in $w'$ from left.

The first row $\mathbf{a}$ of the mini-word and the word 
constructed from the insertion history $\mathbf{p}$ are 
related in the following manner.
\begin{prop}
\label{prop:standa}
Let $w':=w'(\mathbf{p})$ be a word defined as above.
Then, $\mathbf{a}=w'^{-1}$.
\end{prop}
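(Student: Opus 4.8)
The plan is to prove the identity $\mathbf{a}=w'^{-1}$ by induction on $n$, by showing that the two constructions---the first row of the mini-word and the position function of the word $w'(\mathbf{p})$---obey exactly the same update rule when the last letter $p_n$ of the insertion history is appended. First I would record the base case: since $h_1\in[0,0]$ we have $p_1=0$, the mini-word over $S_1=[1,2]$ gives $a_1=p_1+1=1$, while $w'(\mathbf{p})=1$ has $1$ in position $1$, so $a_1=t_1$.

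For the inductive step I would treat the two sides separately. On the mini-word side, write $\mathbf{a}^{(n)}$ for the first row built from $\mathbf{p}_n=(\mathbf{p}_{n-1},p_n)$ over $S^{(n)}=[1,2n]$, and use tildes for the standalone construction from $\mathbf{p}_{n-1}$ over $[1,2n-2]$. The top level of the $n$-construction gives $a_n=p_n+1$ and $b_n=2n$, hence $S^{(n-1)}=[1,2n]\setminus\{p_n+1,2n\}$. The key structural observation is that the recursion defining $a_i,b_i$ for $i\le n-1$ uses only the relative order of the elements of $S^{(n-1)}$, since $a_i$ is the $(p_i+1)$-th smallest and $b_i$ the maximum. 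Letting $\beta\colon[1,2n-2]\to S^{(n-1)}$ be the order-preserving bijection, a short downward induction on $i$ shows $\beta(\tilde S^{(i)})=S^{(i)}$, $\beta(\tilde a_i)=a_i$ and $\beta(\tilde b_i)=b_i$. Explicitly $\beta(v)=v$ for $v\le p_n$ and $\beta(v)=v+1$ for $v\ge p_n+1$; the deletion of the global maximum $2n$ is invisible to the lower levels because every $\tilde a_i\le 2n-2$ forces $\beta(\tilde a_i)\le 2n-1$. Thus $a_i^{(n)}=\beta(a_i^{(n-1)})$ for $i\le n-1$, together with $a_n^{(n)}=p_n+1$.

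On the word side, $w'(\mathbf{p}_n)$ is obtained from $w'(\mathbf{p}_{n-1})$ by inserting $n$ at the $p_n$-th blank from the left (padding with $\ast$'s when $p_n$ exceeds the current length). Inserting at blank $p_n$ places $n$ in position $p_n+1$, leaves every letter in a position $\le p_n$ fixed, and shifts every letter in a position $\ge p_n+1$ by one; in the padded case all previous integers automatically lie in positions $\le p_n$, so the very same rule holds verbatim. Writing $\mathrm{pos}_n(i)$ for the position of $i$ in $w'(\mathbf{p}_n)$, this says $\mathrm{pos}_n(i)=\beta(\mathrm{pos}_{n-1}(i))$ for $i\le n-1$ and $\mathrm{pos}_n(n)=p_n+1$. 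Comparing with the previous paragraph, the update maps for $\mathbf{a}$ and for $\mathrm{pos}$ are literally the same map $\beta$ together with the same new value $p_n+1$, so invoking the inductive hypothesis $a_i^{(n-1)}=\mathrm{pos}_{n-1}(i)$ yields $a_i^{(n)}=\mathrm{pos}_n(i)$ for all $i\le n$, which is $\mathbf{a}=w'^{-1}$.

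I expect the only genuinely delicate point to be the order-invariance lemma $\beta(\tilde a_i)=a_i$: one must verify that replacing the ground set $[1,2n-2]$ by the order-isomorphic subset $S^{(n-1)}\subset[1,2n]$ commutes with the entire ``extract the $(p_i+1)$-th smallest and the maximum, then delete both'' recursion, and in particular that deleting the top element $2n$ never interferes with the lower levels. The asterisk bookkeeping on the word side is the other place to be careful, but since the proposition only tracks the positions of the genuine letters $1,\dots,n$ (the $\ast$'s fall in positions that correspond to certain $b_i$), the padding does not affect the argument.
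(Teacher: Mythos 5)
Your proof is correct and follows the same inductive strategy as the paper's: both arguments rest on the identity $a_n=p_n+1=\mathrm{pos}(n)$ in $w'(\mathbf{p})$ combined with the induction hypothesis for $\mathbf{p}^{\vee}=(p_1,\ldots,p_{n-1})$. Your explicit check that a single order-preserving bijection $\beta$ (from $[1,2n-2]$ onto $[1,2n]\setminus\{p_n+1,2n\}$) simultaneously governs how the entries $a_1,\ldots,a_{n-1}$ change under the ground-set shift and how the letters of $w'(\mathbf{p}^{\vee})$ move under the insertion of $n$ is exactly the step the paper compresses into ``combining these observations \ldots it is obvious,'' so your write-up supplies the detail rather than deviating from the method.
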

\begin{proof}
We prove the statement by induction.
We first consider $n=1$ case.
Since $\mathbf{p}=\mathbf{p'}=\mathbf{a}=(1)$, 
we have $\mathrm{stand}(\mathbf{a})=w'(\mathbf{p})^{-1}=1$.

We assume that the statement is true for $n-1$.
Let $\mathbf{p}:=(p_1,\ldots,p_n)$ be the insertion history for a natural label
$L$ of size $n$ and $\mathbf{p}^{\vee}:=(p_1,\ldots,p_{n-1})$ the insertion 
history for the size $n-1$.
By induction assumption, $\mathbf{a}$ for $n-1$ is the inverse of $w'(\mathbf{p}^{\vee})$.
Recall that we have $a_{n}=p_{n}+1$ by the definition of a mini-word.
On the other hand, we insert $n$ at the $p_{n}+1$-th position in $w'(\mathbf{p}^{\vee})$.
Thus, for the integer $n$, $\mathbf{a}$ and $w'(\mathbf{p}^{\vee})$ are inverse with each 
other.
Combining these observations with the induction assumption, it is 
obvious that $\mathbf{a}$ is the inverse of $w'(\mathbf{p})$.
\end{proof}

\begin{example}
When $w'(\mathbf{p})=562417*3$, we have $\mathbf{a}=(5,3,8,4,1,2,6)$.
\end{example}

\begin{cor}
Let $\phi$ and $\psi$ be maps defined in Definitions \ref{defn:phi} and \ref{defn:psi}.
We denote by $w_{0}$ the pre-order word for a natural label $L$. 
Then, we have 
\begin{align}
\label{eqn:stanaphipsi}
\begin{aligned}
\mathrm{stand}(\mathbf{a})&=\phi(w_{0})^{-1}\\
&=\psi(w_{0})^{-1}.
\end{aligned}
\end{align}
\end{cor}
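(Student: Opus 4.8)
The plan is to collapse the two asserted equalities into a single one and then assemble the three preceding results. First I would invoke Lemma \ref{lemma:phipsi}, which gives $\phi(w_{0})=\psi(w_{0})$; consequently $\phi(w_{0})^{-1}=\psi(w_{0})^{-1}$, and the two right-hand sides of Eqn. (\ref{eqn:stanaphipsi}) coincide. Thus it suffices to prove the single identity $\mathrm{stand}(\mathbf{a})=\psi(w_{0})^{-1}$, and the whole corollary reduces to a positional bookkeeping statement about the word $w':=w'(\mathbf{p})$.

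Next I would unfold the two objects in terms of $w'$. By Proposition \ref{prop:standa} we have $\mathbf{a}=w'^{-1}$, so $a_{i}$ is precisely the position of the integer $i$ in $w'$, counted from the left, where the $*$'s occupy certain positions. By Lemma \ref{lemma:w'psi}, the permutation $v:=\psi(w_{0})$ of $[1,n]$ is obtained from $w'$ by deleting all the $*$'s; hence $v^{-1}_{i}$ is the position of $i$ in the $*$-free word $v$. The goal is therefore to show that, for every $i$, the rank of $a_{i}$ among $\{a_{1},\ldots,a_{n}\}$ equals the position of $i$ in $v$.

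The core step is the observation that the values $\{a_{1},\ldots,a_{n}\}$ are exactly the positions in $w'$ occupied by the integers $1,\ldots,n$, the complementary positions being filled by $*$'s. Consequently the rank of $a_{i}$ in this set is one plus the number of integers lying strictly to the left of $i$ in $w'$. Since deleting the $*$'s preserves the relative order of the integers, this count is unchanged, and it equals one plus the number of integers to the left of $i$ in $v$, i.e. the position of $i$ in $v$. Therefore $\mathrm{stand}(\mathbf{a})_{i}=v^{-1}_{i}$ for all $i$, which is the desired equality $\mathrm{stand}(\mathbf{a})=\psi(w_{0})^{-1}$, and combined with Lemma \ref{lemma:phipsi} it also equals $\phi(w_{0})^{-1}$.

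I expect the main (albeit routine) obstacle to be this last bookkeeping identification, namely verifying that standardization of $\mathbf{a}$ exactly undoes the positional shift introduced by the $*$'s. I would make this precise by tracking a single integer $i$ through both constructions and checking that inserting or deleting a $*$ to the left of $i$ decrements the raw position $a_{i}$ and its rank by the same amount, so that the rank of $a_{i}$ coincides with its position after all $*$'s are removed. Everything else follows directly from the cited lemmas without further computation.
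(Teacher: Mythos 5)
Your proposal is correct and follows essentially the same route as the paper: it combines Proposition \ref{prop:standa}, Lemma \ref{lemma:w'psi} and Lemma \ref{lemma:phipsi}, with the only addition being that you spell out explicitly the (routine but worthwhile) bookkeeping that standardizing $\mathbf{a}$ exactly compensates for the positions occupied by the $\ast$'s in $w'(\mathbf{p})$.
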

\begin{proof}
From Proposition \ref{prop:standa}, we have $\mathbf{a}=w'(\mathbf{p})$.
From Lemma \ref{lemma:w'psi} and Lemma \ref{lemma:phipsi}, 
we have $\mathrm{stand}(w'(\mathbf{p}))=\psi(w_{0})=\phi(w_0)$.
Combining these observations, we have Eqn. (\ref{eqn:stanaphipsi}).
\end{proof}

Given a symmetric Dyck tiling, we have the Hermite history consisting of paths 
which connect an up step in the top path $\mu$ and an up step in the lower path
$\lambda$.

Similarly, we have the Hermite history consisting of paths which connect 
a down step in the lower path $\lambda$ and a down step in the top path $\mu$.
We say that a Dyck tile $D$ is associated with a down step $d$ in $\lambda$ 
if the path which goes through in $D$ starts from $d$.

When a Dyck tiling consists of only single boxes and its lower path
is a zig-zag path $(UD)^{n}$, the above mentioned two Hermite histories are related as follows.
\begin{prop}
Let $D$ be a Dyck tiling consisting of only single boxes and its lower path is a zig-zag path.
Let $\mathrm{inv}(\mathbf{a}_{0}):=(r_1,\ldots,r_{n})$ be a sequence of non-negative integers 
for a standardized word $\mathbf{a}_{0}:=\mathrm{stand}(\mathbf{a})$ for the tiling $D$. 
The integer $r_{i}$ counts the number of tiles associated with the down step corresponding 
to the edge with the label $i$ in $\lambda$.

\end{prop}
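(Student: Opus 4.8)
The plan is to deduce the statement from the up-step version already proved in Lemma \ref{lemma:invpsi}, bridged by the inversion identity of Proposition \ref{prop:Rev}, after trivializing the maps $\phi,\psi$.

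First I would exploit the hypothesis that $D$ consists only of single boxes. Then no Dyck tile has size $\ge 2$, so there is no set $S_{SW}$ with $|S_{SW}|\ge 2$ and the algorithm of Definition \ref{defn:psi} does nothing; hence $\psi(w_0)=w_0$, and $\phi(w_0)=w_0$ by Lemma \ref{lemma:phipsi}. Substituting into the corollary (Eqn.~(\ref{eqn:stanaphipsi})) gives $\mathbf{a}_0=\mathrm{stand}(\mathbf{a})=w_0^{-1}$, so $\mathrm{inv}(\mathbf{a}_0)=\mathrm{inv}(w_0^{-1})$. It therefore suffices to prove that the $i$-th entry of $\mathrm{inv}(w_0^{-1})$ equals the number of tiles whose down-trajectory begins at the down step carried by the edge labelled $i$.

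Next I would relate the down-step Hermite history to the up-step one. Reflecting the tiling in a vertical line exchanges $U$ and $D$ steps, preserves the zig-zag lower path, and carries single boxes to single boxes; under it the down-step history of $D$ becomes the up-step history of the reflected tiling, with edges listed in the opposite (left–right reversed) order. I expect the pre-order word of the reflected tiling to be the reverse–complement $\overline{w_0^{-1}}$ in the notation of Proposition \ref{prop:Rev}: the complement and reversal come from the geometric reflection, while the inverse records that the down-history reads the right endpoints $\mathbf{b}$ of the perfect matching where the up-history reads the left endpoints $\mathbf{a}$. Applying Lemma \ref{lemma:invpsi} to the reflected tiling then identifies its up-step counts with $\mathrm{inv}'(\overline{w_0^{-1}})$, and undoing the order reversal turns this list into $\mathrm{Rev}\big(\mathrm{inv}'(\overline{w_0^{-1}})\big)$ as the vector of down-step counts of $D$.

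Finally I would close the loop with Proposition \ref{prop:Rev} applied to $u=w_0^{-1}$, which gives $\mathrm{inv}(w_0^{-1})=\mathrm{Rev}\big(\mathrm{inv}'(\overline{w_0^{-1}})\big)$; combined with the previous paragraph this is exactly the asserted equality $r_i=\mathrm{inv}(\mathbf{a}_0)_i$. The routine steps are the collapse $\psi=\mathrm{id}$ and the formal use of Proposition \ref{prop:Rev}. The main obstacle is the middle paragraph: making precise how the natural label, and hence the pre-order word, transforms under reflection, and checking that the left–right reversal intrinsic to reflection is literally the $\mathrm{Rev}$ supplied by Proposition \ref{prop:Rev} and that the passage from $\mathbf{a}$ to $\mathbf{b}$ is responsible for the inverse. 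Even the zig-zag of size two shows that the two indexings in play—the ordinal position of a down step versus the label of the edge that carries it—must be tracked carefully, and it is precisely here that the argument is most delicate.
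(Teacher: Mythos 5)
Your proposal is correct and follows essentially the same route as the paper: reduce $\mathbf{a}_{0}$ to $w^{-1}$ via the triviality of $\psi$ in the single-box case, apply Proposition \ref{prop:Rev}, and interpret the resulting reverse-complement word through the mirror image of $D$ together with Lemma \ref{lemma:invpsi}. The one step you flag as delicate --- that the reflected tiling is the DTS image of $\overline{w_{0}^{-1}}=\bar{w}^{-1}$ --- is precisely what the paper supplies, using the fact that for single-box tilings the pre-order word decomposes around the position of $n$ and then inducting on $n$.
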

\begin{proof}
Let $w$ be the pre-order word for the natural label $L$ corresponding to $\mathbf{a}$.
From Lemma \ref{lemma:invpsi}, we have $\mathrm{inv'}(\psi(w))=\mathrm{inv'}(w)$ which counts the number 
of Dyck tiles associated with the $i$-th step of the top boundary $\mu$.
From Lemma \ref{lemma:w'psi} and Proposition \ref{prop:standa}, we have 
$\mathrm{inv'}(w)=\mathrm{inv'}(\mathbf{a}^{-1})$.

Let $w':=\bar{w}^{-1}$ where $\bar{w}$ is defined in Proposition \ref{prop:Rev}.
To prove the propostion, from Proposition \ref{prop:Rev}, it is enough to show that 
$\mathrm{inv'}(w')$ counts the number of Dyck tiles associated with the 
$i$-th up step of the top boundary in the Dyck tiling which is obtained 
from $D$ by a mirror image.

Since all Dyck tiles are single boxes, the Dyck tiling $D$ is the same tiling
obtained from $w$ by the DTS bijection.
Further, if $w_{i}=n$, we have that $(w_1,\ldots,w_{i-1})$ is a permutation in 
$[1,i-1]$ and $(w_{i+1},\ldots,w_{n})$ is in $[i,n-1]$.
This is because if we have a pair of $p<q$ such that $w_{i_1}=q$, $w_{i_2}=p$
and $i_1<i<i_2$, we have a non-trivial Dyck tile by the DTS bijection.
Thus, there is no such pair of $(p,q)$.

In $w'$, we have $w'_{1}=n+1-i$, $(w'_{2},\ldots,w'_{n+1-i})$ is a permutation 
in $[1,n-i]$, and $(w'_{n+2-i},\ldots, w'_{n})$ is in $[n+2-i,n]$.
Note that if $w_{1}=n$, we have $w'_{1}=n$. If $w_i=n$, $i>1$, the Dyck tiling
constructed from $w'$ by the DTS bijection can be obtained by gluing two Dyck tilings, 
which are obtained from $(w'_1,\ldots,w'_{n+2-i})$ and $(w'_{n+1-i},\ldots,w'_{n})$ 
by the DTS bijection.
Then, by induction on $n$, it is easy to show that 
$\mathrm{inv'}(w')$ counts the number of Dyck tiles associated with 
the $i$-th up step of the top boundary of the Dyck tiling, which is a mirror image of $D$.
The Hermite history in a mirror image of $D$ gives an Hermite history whose paths are starting 
from down steps in the lower boundary path in $D$.
This completes the proof.
\end{proof}

\begin{example}
Let $\mathbf{a}^{-1}=4132$. We have $\mathrm{inv'}(\mathbf{a}^{-1})=(0,1,1,2)$.
The left picture in Figure \ref{fig:Hh2} is an example of a Dyck tiling 
with the top path $\mu=UUDUUDDD$ and the lower path $\lambda=(UD)^{4}$.
The Hermite history associated with the top path $\mu$ is $(0,1,1,2)$.

Similarly, we have $\mathbf{a}=2431$ and $\mathrm{inv}(\mathbf{a})=(1,2,1,0)$.
The right picture in Figure \ref{fig:Hh2} is the Hermite history associated 
with the lower path $\lambda$.
\begin{figure}[ht]
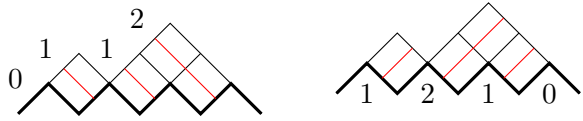

\tikzpic{-0.5}{[scale=0.4]
\draw(0,0)--(2,2)--(3,1)--(5,3)--(8,0);
\draw[very thick](0,0)--(1,1)--(2,0)--(3,1)--(4,0)--(5,1)--(6,0)--(7,1)--(8,0);
\draw(4,2)--(5,1)--(6,2);
\draw[red](1.5,1.5)--(2.5,0.5)(3.5,1.5)--(4.5,0.5)(4.5,2.5)--(6.5,0.5);
\draw(0.5,0.5)node[anchor=south east]{$0$};
\draw(1.5,1.5)node[anchor=south east]{$1$};
\draw(3.5,1.5)node[anchor=south east]{$1$};
\draw(4.5,2.5)node[anchor=south east]{$2$};
}
\quad
\tikzpic{-0.5}{[scale=0.4]
\draw(0,0)--(2,2)--(3,1)--(5,3)--(8,0);
\draw[very thick](0,0)--(1,1)--(2,0)--(3,1)--(4,0)--(5,1)--(6,0)--(7,1)--(8,0);
\draw(4,2)--(5,1)--(6,2);
\draw[red](1.5,0.5)--(2.5,1.5)(3.5,0.5)--(5.5,2.5)(5.5,0.5)--(6.5,1.5);
\draw(1,0)node{$1$};
\draw(3,0)node{$2$};
\draw(5,0)node{$1$};
\draw(7,0)node{$0$};
}
\caption{A Dyck tiling consisting of single boxes.}
\label{fig:Hh2}
\end{figure}

\end{example}

\subsection{Hermite histories for trees with circles}
\label{sec:Hwc}
Let $L$ be a natural label of the tree of size $n$ with circles.
We embed the natural label $L$ of size $n$ into a natural label $\widetilde{L}$ of a tree
without circles of larger size.
We note that the actions of the symmetric DTS bijections on $L$ and $\widetilde{L}$ give 
different symmetric Dyck tilings, however, the nesting numbers of the perfect matching 
of $\widetilde{L}$ give the number of tiles in the paths of the Hermite history of the 
symmetric Dyck tiling $L$.

In this subsection, we only consider a natural label $L$ of the tree for a symmetric Dyck tiling 
such that the labels from $1$ to $n-1$ are not circled and the label $n$
is circled.
By definition, the label $n$ is not on the edge with a dot.
We define a natural label $\widetilde{L}$ of size $n+1$ from $L$ by
adding an edge with a dot as follows.
\begin{enumerate}[(a)]
\item Change the label $n$ in $L$ to $n+1$.
\item We add the dotted edge with the label $n$ to $L$:
\begin{enumerate}[(i)]
\item if $L$ has at least one edges with dots, we append the new edge $E(n)$ with a dot just below 
the edge with a dot and with the largest label.
\item if $L$ has no edge with a dot, we append the new edge $E(n)$ with a dot 
at the root such that $E(i)\rightarrow E(n)$ for all $i\neq n$.
\end{enumerate}
\item We denote by $\widetilde{L}$ the new natural label of size $n+1$.
\end{enumerate}

Note that the newly added edge labeled by $n$ with a dot in $\widetilde{L}$ is connected 
to the right-most leaf in $\widetilde{L}$ in both cases.

We denote by $\mathbf{p}:=(p_1,\ldots,p_n)$ 
(resp. $\widetilde{\mathbf{p}}:=(\widetilde{p}_1,\ldots,\widetilde{p}_{n+1})$) the insertion history
for the natural label $L$ (resp. $\widetilde{L}$), and 
by $\mathbf{a}$ (resp. $\widetilde{\mathbf{a}}$) the first row of the mini-word
constructed from $\mathbf{p}$ (resp. $\widetilde{\mathbf{p}}$).
By construction, we have $p_{i}=\widetilde{p}_{i}$ for $1\le i\le n-1$ and 
$p_{n}=\widetilde{p}_{n+1}$ since the edge with a dot and labeled by $n$ is always 
right to the edge with the label $n+1$.
The edge with a dot and with the label $n$ is connected to the right-most leaf in $\widetilde{L}$.
Thus, in the language of $\widetilde{\mathbf{a}}:=(\widetilde{a}_1,\ldots,\widetilde{a}_{n+1})$, 
we have $\widetilde{a}_{i}<\widetilde{a}_{n}$ for $1\le i\le n-1$.
Since we have $E(n+1)\rightarrow E(n)$ in the natural label $\widetilde{L}$,
we have $\widetilde{a}_{n+1}<\widetilde{a}_{n}$.
From these observations, the integer $\widetilde{a}_{n}$ is the largest 
in $\widetilde{\mathbf{a}}$.
Thus, the nesting number of the $n$-th arch of the perfect matching for $\widetilde{L}$
is zero.
By construction, even if we remove $\widetilde{p}_{n}$ from $\widetilde{\mathbf{p}}$,
the relative positions of the arches except $n$-th in the perfect matching for 
$\widetilde{L}$ are the same as those of the arches in the perfect matching for 
$L$.
Thus, the nesting number of the $n+1$-th arch of the perfect matching for $\widetilde{L}$
is one plus the nesting number of the $n$-th arch of the perfect matching for $L$.
We put a circle on the $n$-th arch for $\widetilde{L}$. 
This circle corresponds to a ballot tile with $\ast$.

The above observations are summarized in the following proposition.
\begin{prop}
\label{prop:Ltilde}
Let $L$ and $\widetilde{L}$ be natural labels as above. 
The nesting numbers of the perfect matching for $\widetilde{L}$ give 
the Hermite history for $L$.
\end{prop}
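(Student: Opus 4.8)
The plan is to prove Proposition \ref{prop:Ltilde} by directly comparing the perfect matching constructed from $\widetilde{L}$ with the nesting-number data that defines the Hermite history for $L$, building on the structural observations already established in the discussion preceding the statement. The goal is to show two things: first, that the nesting numbers of all arches of the perfect matching for $\widetilde{L}$ \emph{other} than the newly inserted $n$-th arch reproduce exactly the nesting numbers one obtains for $L$ (suitably reindexed), and second, that the newly inserted dotted edge correctly accounts for the circle on the label $n$ in $L$, i.e.\ for the ballot tile carrying $\ast$.

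First I would set up the comparison at the level of insertion histories and mini-words, since these are the combinatorial bridge between a labeled tree and its perfect matching. Using the relations already derived, namely $p_i=\widetilde p_i$ for $1\le i\le n-1$ and $p_n=\widetilde p_{n+1}$, I would argue that the pairs $(a_i,b_i)$ of the mini-word for $L$ and the corresponding pairs $(\widetilde a_j,\widetilde b_j)$ for $\widetilde L$ differ only by the insertion of one extra pair, the $n$-th, and by a global relabeling of the ground set from $[1,2n]$ to $[1,2(n+1)]$. The key numerical input, already established in the text, is that $\widetilde a_n$ is the largest entry of $\widetilde{\mathbf a}$, so the $n$-th arch has nesting number zero and sits to the right of everything it could nest with; removing $\widetilde p_n$ from $\widetilde{\mathbf p}$ leaves the relative positions of all other arches unchanged. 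From this I would conclude that, arch by arch, the nesting numbers carried by the arches $1,\dots,n-1$ of $\widetilde L$ equal those of the arches $1,\dots,n-1$ of $L$, while the $(n+1)$-th arch of $\widetilde L$ has nesting number exactly one more than the $n$-th arch of $L$, because the newly added $n$-th arch of $\widetilde L$ nests over it and contributes precisely one extra crossing-from-outside.

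Next I would translate this arch-wise statement into the language of the Hermite history. Recall that the Hermite history records nesting numbers on the up steps of the top path $\mu$, and that in the construction for a tree with circles one places a circle on a designated nesting number to mark the ballot tile carrying $\ast$. The shift by one in the $(n+1)$-th nesting number is exactly the data encoded by the circle on the label $n$ in $L$: the circled edge signals the addition of a single box with $\ast$ in the symDTS bijection, which is what promotes a Dyck tile to a ballot tile and increments the corresponding path's tile count by one. I would verify that the circle-placement rule (circling the $n$-th arch for $\widetilde L$) is consistent with the rule stated in Section \ref{sec:HhsymDT} for marking the $i_j+j-1$-th nesting number, specialized to the single circled label $n$.

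The main obstacle I anticipate is the bookkeeping of the reindexing: one must be careful that inserting a dotted edge at the right-most leaf (as specified in cases (b)(i) and (b)(ii) of the construction of $\widetilde L$) really does produce an arch that is rightmost in the sense required, and that the ``one plus'' increment lands on the correct up step of $\mu$ rather than on a neighboring one. Establishing that the added dotted edge is always the right-most edge — which the text already asserts — is what guarantees $\widetilde a_n$ is maximal and hence that the perturbation to the perfect matching is localized to exactly one extra non-nesting arch plus a $+1$ shift on a single neighbor. Once this localization is pinned down, the proposition follows by combining the arch-wise nesting comparison with the definition of the Hermite history. I would therefore structure the argument as: (1) reduce to the mini-word comparison via the established identities on $\mathbf p$ and $\widetilde{\mathbf p}$; (2) use the maximality of $\widetilde a_n$ to localize the change in the perfect matching; (3) read off the nesting numbers and match them, with the circle, against the Hermite history for $L$.
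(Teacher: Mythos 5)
Your plan follows the same route as the paper: the paper's proof of this proposition is precisely the discussion preceding it (the identities $p_i=\widetilde{p}_i$ and $p_n=\widetilde{p}_{n+1}$, the maximality of $\widetilde{a}_n$, the localization of the change in the perfect matching to one extra arch, and the circle bookkeeping), and your steps (1)--(3) organize exactly those observations. So in outline the two arguments coincide.

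There is, however, one step whose stated mechanism is backwards and should be repaired. You assert that the $(n+1)$-th arch gains one unit of nesting ``because the newly added $n$-th arch of $\widetilde{L}$ nests over it.'' This contradicts the maximality you invoke just before: since $\widetilde{a}_n$ is the largest left endpoint and $\widetilde{b}_{n+1}=2(n+1)$ is the largest right endpoint, we have $\widetilde{a}_{n+1}<\widetilde{a}_n<\widetilde{b}_n<\widetilde{b}_{n+1}$, so the new arch is nested \emph{inside} the $(n+1)$-th arch, not over it. With the paper's definition of the nesting number of $(a,b)$ as the number of pairs $(c,d)$ with $c<a<b<d$, the arch $(\widetilde{a}_{n+1},2(n+1))$ necessarily has nesting number $0$ (no pair can contain it), while it is the newly inserted $n$-th arch that has nesting number exactly $1=1+0$, i.e.\ one plus the nesting number of the $n$-th arch of the matching for $L$ (which also ends at the global maximum $2n$ and hence has nesting number $0$). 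This is what matches the Hermite history: the circle is placed on the $n$-th arch, whose left endpoint $\widetilde{a}_n$ is rightmost and therefore lands on the up step of $\mu$ created by the box with $\ast$, and the path entering there traverses exactly one tile; the up step corresponding to the edge relabeled $n+1$ carries a $0$. Once you swap these two roles -- and note that arches $1,\dots,n-1$ are untouched because the new arch, having maximal left endpoint, can never contain any other arch -- the localization argument and the circle placement close the proof as intended.
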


\subsection{Proof of Theorem \ref{thrm:Hh}}
\label{sec:prHh}
We prove Theorem \ref{thrm:Hh} by induction.
We assume that the two descriptions of an Hermite history coincide with 
each other for the tree $\mathrm{Tree}(\lambda)$ where 
$\lambda$ is of size $n$.
We add one edge to the tree $\mathrm{Tree}(\lambda)$ and denote by $T'$ the tree of size $n+1$.
We have three cases:
1) the edge $E(n+1)$ is without a circle and without a dot, 
2) the edge $E(n+1)$ is with a circle and without a dot,
and 
3) the edge $E(n+1)$ has a dot.

\paragraph{\bf Case 1}

Let $\mathbf{p}$ (resp. $\mathbf{p'}$) be an insertion history for the label 
of $\mathrm{Tree}(\lambda)$ (resp. $T'$). 
Since $E(n+1)$ does not have a circle, when the size of $\mathbf{p}$ is $s$, 
then the size of $\mathbf{p'}$ is $s+1$.
We have $p'_{i}=p_{i}$ for $1\le i\le s$ by the definitions of $\mathbf{p}$ and 
$\mathbf{p'}$.
We denote by $\mathbf{a}:=(a_1,\ldots,a_{s})$ and $\mathbf{b}:=(b_1,\ldots,b_{s})$ 
the first and second rows of $2\times s$ array of integers, and similarly 
define $\mathbf{a'}$ and $\mathbf{b'}$ for $s+1$.
By definition, we have $a'_{s+1}=p'_{s+1}+1$ and $b'_{s+1}=2(s+1)$.

Let $E'$ be the edge strictly right to the edge $E(n+1)$ in $T'$.
If the label of $E'$ is smaller than $n+1$, we add a box by the symmetric DTS bijection.
This operation implies that we increase the number of ballot tiles associated with an up step 
of the top path $\mu$ by one when we modify the top path from $\mu$ to $\mu'$.
Further, the number of ballot tiles associated with the up step corresponding to the edge 
$E(n+1)$ is zero.

We have two cases for the tree $T'$: (a) $T'$ has no edge with a circle, and 
(b) $T'$ has edges with a circle.

\paragraph{\bf Case 1a}
We define $I$ be the set of labels as
\begin{align*}
I:=\{ i<n+1 |\ i \text{ is right to } n+1 \text{ in } w'(\mathbf{p'})\}.
\end{align*}
From Proposition \ref{prop:standa}, we have $\mathbf{a'}=w'(\mathbf{p'})^{-1}$.

\begin{claim}
\label{Claim:522}
Let $i\in I$. Then, the $i$-th arch in the $2\times (s+1)$ array
is inside the arch corresponding to the edge $E(n+1)$.
\end{claim}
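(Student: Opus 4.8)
The plan is to unfold the definition of an arch lying \emph{inside} another and reduce the claim to two inequalities, each of which follows directly from Proposition \ref{prop:standa} and the recursive construction of the mini-word. Recall that the $i$-th arch joins the nodes $a'_{i}$ and $b'_{i}$, so that it lies inside the $(s+1)$-th arch (the one attached to $E(n+1)$) precisely when
\[
a'_{s+1}<a'_{i}<b'_{i}<b'_{s+1}.
\]
Thus the whole of Claim \ref{Claim:522} reduces to checking $a'_{i}>a'_{s+1}$ and $b'_{i}<b'_{s+1}$ for every $i\in I$ (the middle inequality $a'_{i}<b'_{i}$ being automatic, since $b'_{i}=\max S_{i}$ while $a'_{i}$ is the $(p'_{i}+1)$-th smallest element of $S_{i}$, and these are distinct).

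For the right endpoints I would first read off from the recursion $S_{i-1}=S_{i}\setminus\{a'_{i},b'_{i}\}$ that the sequence $\mathbf{b'}$ is strictly increasing: at each step we delete $b'_{i}=\max S_{i}$, so $b'_{1}<b'_{2}<\cdots<b'_{s+1}=2(s+1)$. In particular $b'_{i}<b'_{s+1}$ for every $i<s+1$, and this settles the right endpoints with no reference to $I$ whatsoever. For the left endpoints I would invoke Proposition \ref{prop:standa}, which gives $\mathbf{a'}=w'(\mathbf{p'})^{-1}$; by the definition of the inverse of a word, $a'_{j}$ is exactly the position of $j$ in $w'(\mathbf{p'})$. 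The defining property of $I$ is that $i$ stands to the right of $n+1$ in $w'(\mathbf{p'})$, i.e.\ the position of $i$ exceeds that of $n+1$, which is literally $a'_{i}>a'_{s+1}$. Combining the two inequalities yields the nesting $a'_{s+1}<a'_{i}<b'_{i}<b'_{s+1}$, as desired.

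The step that needs the most care is the bookkeeping around the index $s+1$: one must confirm that the arch attached to $E(n+1)$ really is the $(s+1)$-th arch and that its right endpoint $b'_{s+1}=2(s+1)$ is the global maximum node. Both facts are immediate from the mini-word recursion, which starts at $i=s+1$ with $S_{s+1}=[1,2(s+1)]$ and $b'_{s+1}=2(s+1)$, but they are the hinge of the whole argument, since it is precisely the maximality of $b'_{s+1}$ that frees the right-endpoint inequality from any hypothesis on $i$. Once these are pinned down, the remainder is just a matter of reading off the definitions, so I expect no genuine obstacle beyond this indexing verification.
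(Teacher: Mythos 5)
Your proposal is correct and follows essentially the same route as the paper's proof: the left-endpoint inequality $a'_{i}>a'_{s+1}$ comes from Proposition \ref{prop:standa} (reading $a'_{j}$ as the position of $j$ in $w'(\mathbf{p'})$ and using the definition of $I$), and the right-endpoint inequality comes from $b'_{s+1}=2(s+1)$ being the maximal node. Your extra remarks on the monotonicity of $\mathbf{b'}$ and the indexing of the $(s+1)$-th arch are just a more explicit rendering of what the paper leaves implicit.
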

\begin{proof}[Proof of Claim \ref{Claim:522}]
Since the sequence $\mathbf{a'}$ is given by the inverse of $w'(\mathbf{p'})$, 
we have $a_{i}>a_{s+1}$ if $i\in I$.
By construction, we have an arch connecting $a_{s+1}$ and $2(s+1)$ in 
the perfect matching corresponding to the mini-word.
The condition $a_{i}>a_{s+1}$ implies 
\begin{align*}
a_{s+1}<a_{i}<b_{i}<2(s+1).
\end{align*}
This means that the $i$-th arch is inside the arch for the edge $E(n+1)$.
\end{proof}

From Claim \ref{Claim:522}, it is clear that the nesting number of the 
$i$-th arch for $i\in I$ is increased by one by the insertion of $n+1$ 
in $w'(\mathbf{p'})$.
Combining this with Lemma \ref{lemma:invpsi}, the sequence of nesting 
numbers of the perfect matching gives the number of Dyck tiles associated 
with the $i$-th up step in the top boundary $\mu$.

\paragraph{\bf Case 1b}

We introduce a set of labels $I$ from $T'$ 
to state which arches in a perfect matching constructed 
from $\overline{L}$ are inside the arch corresponding to the edge $E(n+1)$ in 
$\overline{L}$.

Let $N$ be the number of circled labels strictly left to the edge $E(n+1)$ in 
the tree $T'$.
We denote by $i_1,i_2,\ldots,i_{M}$ the circled labels right to or above the edge 
$E(n+1)$ in $T'$. 
We perform an operation to remove a circle from the label $i_{j}, 1\le j\le M$,
such that we increase the labels larger than or equal to $i_{j}$ by one and 
add a dotted edge with the label $i_{j}$ by the addition of a dotted edge.
Here, we adopt the same algorithm to add a dotted edge as the steps (1) and (2)
as the operations just above Remark \ref{remark:addedge}
We denote by $\widetilde{T}$ the obtained tree by performing the above operations 
starting from $i_1$ and ending with $i_{M}$.
Let $I'$ be a set of labels strictly right to the edge $E(n+M+1)$ in $\widetilde{T}$.
We denote by $I$ the set of labels such that we add $N$ to an each element of $I'$.

Recall that we have a perfect matching obtained from the insertion history $\mathbf{p'}$
of size $s+1$.
We have the following claim.

\begin{claim}
\label{Claim:523}
Let $i\in I$. The $i$-th arch in the $2\times(s+1)$ array is inside the arch 
corresponding to the edge $E(n+1)$. 
\end{claim}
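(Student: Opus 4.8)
The plan is to run the argument of Claim \ref{Claim:522} on the fully embedded, circle-free tree $\overline{T'}$ of size $s+1$, whose perfect matching is exactly the $2\times(s+1)$ array in the statement, and then to identify the tree-defined index set $I$ with the word-defined set that appeared in the circle-free case. First I would record that in $\overline{T'}$ the edge $E(n+1)$ carries the maximal label $s+1$: since $E(n+1)$ is the last, hence largest-labelled, edge and is uncircled in Case 1, every circle of $T'$ sits on a label strictly below $n+1$, so converting all $M+N=s-n$ circles to dots raises the label $n+1$ exactly to $s+1$. Consequently its arch joins $a'_{s+1}$ to $b'_{s+1}=2(s+1)$, the right-most node of the matching. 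Exactly as in Claim \ref{Claim:522}, maximality of $b'_{s+1}$ forces $b'_i<b'_{s+1}$ and $a'_i<b'_i$ for every other arch, so arch $i$ is nested inside the arch of $E(n+1)$ if and only if $a'_{s+1}<a'_i$; by Proposition \ref{prop:standa} (that is, $\mathbf{a'}=w'(\mathbf{p'})^{-1}$) this is equivalent to $i$ lying to the right of $s+1$ in $w'(\mathbf{p'})$. Thus the entire content of the claim is the identity between this word-theoretic set and the tree-theoretic set $I$.

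To prove that identity I would factor the embedding $T'\rightsquigarrow\overline{T'}$ into the two stages built into the definition of $I$. In the first stage I convert only the $M$ circles lying right of or above $E(n+1)$, using the addition-of-a-dotted-edge operations (the steps (1),(2) recalled just above Remark \ref{remark:addedge}); the result is $\widetilde{T}$, in which $E(n+1)$ has been relabelled to $E(n+M+1)$ and in which, by definition, the labels strictly to its right are $I'$. These are precisely the circles whose arches can end up nested inside the arch of $E(n+1)$, and converting them makes their positions explicit, while the ancestor (``above'') conversions are correctly excluded from $I'$. In the second stage I convert the remaining $N$ circles, all lying strictly left of $E(n+1)$; each such conversion inserts a dotted edge onto the left spine and raises every label at least as large as the converted one by one. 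Since these $N$ circles carry labels smaller than every label in $I'$, and since the inserted dotted edges stay on the left spine and never move to the right of $E(n+1)$, the net effect on $I'$ is the uniform upward shift $I'\mapsto I'+N=I$. After both stages we are in $\overline{T'}$, and $I$ is exactly the set of labels to the right of the label-$(s+1)$ edge $E(n+1)$, matching the word-theoretic description of the previous paragraph.

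The step I expect to be the main obstacle is this positional bookkeeping across the two-stage embedding. I must verify that $E(n+1)$ really becomes $E(n+M+1)$ in $\widetilde{T}$ and then $E(s+1)$ in $\overline{T'}$, so that the target arch is genuinely the $(s+1)$-th; that the two branches (i),(ii) of the addition-of-a-dotted-edge procedure both preserve the left/right position of each surviving edge relative to $E(n+1)$, so that converting the right-or-above circles yields exactly the set $I'$ while converting the left circles contributes only the $+N$ shift and no spurious right-hand labels; and, as in Remark \ref{remark:addedge} and Proposition \ref{prop:Ltilde}, that every inserted dotted edge joins the dotted spine without ramification so that $\widetilde{T}$ and $\overline{T'}$ are legitimate trees for symmetric Dyck tilings with the expected arch placements. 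Once these positional facts are checked, the nesting assertion follows formally from the left-endpoint comparison established in the first paragraph.
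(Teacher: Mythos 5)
Your first paragraph is sound: every circled label of $T'$ is strictly smaller than $n+1$, so $E(n+1)$ does acquire the maximal label $s+1$ in the circle-free tree, its arch is $(a'_{s+1},2(s+1))$, and nesting of the $i$-th arch is equivalent to $a'_{s+1}<a'_i$, i.e.\ to $i$ lying to the right of $s+1$ in $w'(\mathbf{p'})$ by Proposition \ref{prop:standa}. The gap is in the second paragraph, where you identify $I$ with that word-theoretic set by positional bookkeeping in the tree, and the bookkeeping fails on two counts. First, the addition of a dotted edge never places the new edge ``on the left spine'': by the construction above Remark \ref{remark:addedge}, dotted edges always form the rightmost spine of the tree, so each of the $N$ circles lying strictly left of $E(n+1)$ produces a new dotted edge strictly to the \emph{right} of $E(n+1)$ in $\overline{T'}$; moreover those $N$ circled labels need not be smaller than the labels in $I'$, so passing from $\widetilde T$ to $\overline{T'}$ is not a uniform shift of $I'$ by $N$. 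Second, even the corrected tree-theoretic set (labels tree-right of $E(s+1)$ in $\overline{T'}$) is not the word-theoretic set: $w'(\mathbf{p'})$ is $\psi$ of the pre-order word (Lemma \ref{lemma:w'psi}), and $\psi$ moves labels leftward past $s+1$ whenever a nontrivial Dyck tile is present, so tree-rightness does not imply word-rightness.

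A concrete instance: let $n=2$ and let $T'$ have three root edges, from left to right labelled $2$ (circled), $3=n+1$, and $1$. Then $N=1$, $M=0$, $I'=\{1\}$, $I=\{2\}$. In $\overline{T'}$ the new dotted edge (label $2$) is appended to the right of everything, the insertion history is $\mathbf{p'}=(0,2,0,2)$, and the arches are $1\mapsto(2,4)$, $2\mapsto(5,6)$, $3\mapsto(1,7)$, $4\mapsto(3,8)$. The labels tree-right of $E(4)$ are $\{1,2\}$, yet only arch $2$ is nested inside $(3,8)$: arch $1=(2,4)$ is not, because $1$ ends up to the left of $4$ in $w'(\mathbf{p'})=314{\ast}2$ even though $E(1)$ is tree-right of $E(4)$. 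So your characterization of $I$ would certify a non-nested arch as nested, i.e.\ it proves a false statement. The paper sidesteps this by peeling off one circle at a time, applying Proposition \ref{prop:Ltilde} recursively to the subtrees on the label sets $[1,i]$; to salvage your route you would have to prove directly that the two-stage set $I=I'+N$ coincides with the set of labels to the right of $s+1$ in $w'(\mathbf{p'})$, which is essentially that recursion in disguise rather than a consequence of positional bookkeeping in $\overline{T'}$.
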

\begin{proof}[Proof of Claim \ref{Claim:523}]
First, we consider a natural label $L$ of size $n$ such that the label $n$ 
has a circle and other labels do not have circles.
Then, Claim holds from Proposition \ref{prop:Ltilde}.
So, we consider a natural label $L$ of size $n$ such that 
the label $n$ does not have a circle, and at least one of the other 
labels has a circle.
A map from $L$ to $\overline{L}$ yields a label without circles.
Further, by applying Proposition \ref{prop:Ltilde} recursively the subtree 
consisting of the edges in $[1,i]$ for $1\le i\le n$, 
it is clear that the $i$-th arch for $i\in I$ is inside the arch 
for the edge $E(n+1)$. 
\end{proof}
By a similar argument to Case 1a, the nesting numbers of the perfect matching
constructed from $\overline{L}$ gives the number of ballot tiles associated 
with an up step in $\mu$.

\paragraph{\bf Case 2}
Since we consider the case where the edge with the label $n$ is circled,
we divide the proof into two parts.
The first part is the case where $n$ is not circled.
This case is reduced to the Case 1b.
Then, the second part is to put a circle on the label $n$.
This situation is nothing but Proposition \ref{prop:Ltilde}, since 
the map from $L$ to $\overline{L}$ maps the subtree consisting of labels $[1,n-1]$
to a tree without circles.
Therefore, the statements for this case hold due to Case 1b and 
Proposition \ref{prop:Ltilde}.

\paragraph{\bf Case 3}
We first consider the Hermite history by the number of tiles in a symmetric
Dyck tiling. 
We perform a symmetric DTS bijection on 
the symmetric Dyck tiling for the label of $\mathrm{Tree}(\lambda)$.
Since the edge $E(n+1)$ has a dot, the sizes of ballot tiles with $\ast$
are increased by one, and other tiles are the same before and after the 
action of symmetric DTS bijection.
Let $\mathbf{H}:=(h_1,h_2,\ldots,h_{s})$ be a sequence of non-negative integers
such that $h_{i}$ is equal to the number of ballot tiles associated with the $i$-th up step
of the top $\mu$.
We similarly define $\mathbf{H'}:=(h'_1,h'_2,\ldots,h'_{s+1})$ for the label of $T'$.
From the above observations, we have 
\begin{align*}
&h'_{i}=h_{i}, \quad \text{ for } 1\le i\le s, \\
&h'_{s+1}=0.
\end{align*}

By induction assumption, the nesting numbers of the perfect matching is equal to 
$\mathbf{H}$ for the label of $\mathrm{Tree}(\lambda)$.
We will show that $\mathbf{H'}$ gives the nesting numbers of the perfect matching
for a label of $T'$.
Let $\mathbf{p}:=(p_1,p_2,\ldots,p_{s})$ and $\mathbf{p'}:=(p'_1,p'_2,\ldots,p'_{s+1})$ 
be insertion histories for the trees of larger size constructed from 
$\mathrm{Tree}(\lambda)$ and $T'$ as in Section \ref{sec:Hwc}.
It is obvious that we have $p'_{i}=p_{i}$ for $1\le i\le s$.
Recall that we construct from $\mathbf{p}$ a $2\times s$ array of integers whose 
first and second rows are given by $(a_1,\ldots,a_{s})$ and $(b_1,\ldots,b_{s})$.
Similarly, we have $(a'_1,\ldots,a'_{s+1})$ and $(b'_1,\ldots,b'_{s+1})$ from $\mathbf{p'}$.
Since there is no edge $E$ satisfying $E(n+1)\rightarrow E$, 
we have $a_{i}<p'_{s+1}+1$ for all $1\le i\le s$.
This means that the nesting numbers of $(a_{i},b_{i})$ is equal to the nesting number of 
$(a'_{i},b'_{i})$ for $1\le i\le s$, 
and the nesting number of $(a'_{s+1},b'_{s+1})$ is equal to zero.
Further, the pair $(a'_{s+1},b'_{s+1})$ is the rightmost pair, which implies that 
the positions of the circled nesting numbers are the same for $(a_i,b_i)$ and 
$(a'_{i},b'_{i})$ for $i\in[1,s]$. 
From the induction assumption and the above observations, it is easy to show
that $\mathbf{H'}$ gives the nesting number of the perfect matching for a label of $T'$.

These observations complete the proof of Theorem \ref{thrm:Hh}.

\subsection{Ballot tilings}
\label{sec:btHh}
In Section \ref{sec:symDTSbt}, we have the map from a natural label 
in $\mathcal{L}^{b}$ to a natural label in $\mathcal{L}^{\mathrm{b}\subset\mathrm{D}}$.
Note that the ballot tiling constructed from the natural label 
$L\in\mathcal{L}^{\mathrm{b}\subset\mathrm{D}}$ by the symmetric DTS bijection 
satisfies the conditions in Definition \ref{defn:bt}.
In Section \ref{sec:HhsymDT}, we have a correspondence between a natural label 
in $\mathcal{L}^{\mathrm{D}}$ and a perfect matching.
Since $\mathcal{L}^{\mathrm{b}\subset\mathrm{D}}\subset\mathcal{L}^{\mathrm{D}}$, 
we can have a natural correspondence between a natural label in $\mathcal{L}^{b}$
and a perfect matching. 

\subsection{The top path of an Hermite hisotry}
From Section \ref{sec:HhsymDT} to Section \ref{sec:btHh}, we consider 
an Hermite history associated with the top path of a symmetric Dyck tiling.
In this subsection, we consider an Hermite history associated with the 
lower boundary path of a symmetric Dyck tiling.
As observed in Section 6.4 in \cite{S19}, we have a Dyck tiling 
through its Hermite history if a decreasing label $L_{\mathrm{dec}}$ 
is given.
We generalize this correspondence to the case of symmetric Dyck tilings.

Let $\mathcal{D}$ be a symmetric Dyck tiling and $L$ be its natural label of 
the tree $T$ of size $n$ obtained by the symmetric Dyck tiling strip.
We define the decreasing label $L_{\mathrm{dec}}$ from $L$ by replacing 
the label $i$ on $L$ by $n+1-i$.
Since $L$ is increasing from the root to leaves, $L_{\mathrm{dec}}$ is 
obviously decreasing from the root to leaves. 
Further, a label may have a circle in $L_{\mathrm{dec}}$ 
if it is not on the edge with a dot.

We construct a fundamental symmetric Dyck tiling from $L_{\mathrm{dec}}$ 
by the following three steps.

\paragraph{\bf Step 1}
Let $\lambda$ be a ballot path for the tree $T$.
We attach a label of $L_{\mathrm{dec}}$ to a down step in $\lambda$ corresponding to 
an edge without a dot in $L_{\mathrm{dec}}$.
Similarly, we attach a label to an up step in $\lambda$ 
corresponding to an edge with a dot.
In Section \ref{sec:HhsymDT}, we construct a symmetric Dyck tiling from 
labels on the top path. However, in case of decreasing labels, Dyck tilings
are obtained from labels on the lower ballot path $\lambda$.
We define the entry and exit of a ballot tile in Section \ref{sec:HhsymDT}.
Here, we define the entry as the southwest edge of the leftmost box and 
the exit as the northeast edge of the rightmost box in a ballot tile.
By ignoring the circles in $L_{\mathrm{dec}}$, we have a fundamental symmetric 
Dyck tiling with the lower path $\lambda$ as follows.
Suppose that a down step $d$ in $\lambda$ has a label $l$.
Let $m(l)$ be the number of down steps right to $d$ such that their 
labels are smaller than $l$.
We attach Dyck tiles at the down step $d$ such that the sum of weights 
of them is equal to $m(l)$.

\paragraph{\bf Step 2}
In Step 1, we add Dyck tiles from a down step corresponding to the edge $E$.
Note that this down step is an entry of the Dyck tiles.
Since we connect the entries and the exits of Dyck tiles by a path, we have a 
rightmost exit of the path starting from the down step.
We say that this rightmost edge is the exit edge associated with $E$.
When an edge $E$ in $L_{\mathrm{dec}}$ has an circle, we attach a ribbon 
from the exit edge associated with $E$ to a box with $\ast$.
In the addition of ribbons, we start from the edge with the maximal label and end 
at the edge with the minimal label.
When $E$ has a label $l(E)$, we say that the added ribbon has a label $l(E)$.

\paragraph{\bf Step 3}
Given a ballot path $\lambda$, we say its peak as the partial path $UD$ or 
the right-most up step $U$ which has no down step $D$ at its right and corresponds to
the edge with a dot.
Suppose we have a ribbon with a label $l(E)$ above a peak $p$. 
The peak in $\lambda$ corresponds to an edge in $T$ connected to a leaf.
Note that we have a unique sequence of the edges from the leaf to the root.
Let $T'$ be a unique subtree in $T$ which has a single edge connected to the root and 
contains the sequence of edges.
Suppose that we have $m$ edges without dots and $m'$ edges with dots in 
the $T'$ such that their labels are smaller than $l(E)$. 
Since we add ribbons in Step 2, we have single boxes above the peak $p$.
We transform single boxes to a non-trivial ballot tile of length $(m,m')$ with 
$m+m'\ge1$ above the peak $p$ in the path $\lambda$.
If two different ballot tiles share a single box in the above construction,
we merge these ballot tiles into a ballot tile of larger size.

\begin{example}
\label{ex:decDT}
In Figure \ref{fig:decDT}, we consider a decreasing label and its fundamental 
symmetric Dyck tiling.  
The right-most picture is the corresponding to the decreasing label.
\begin{figure}[ht]
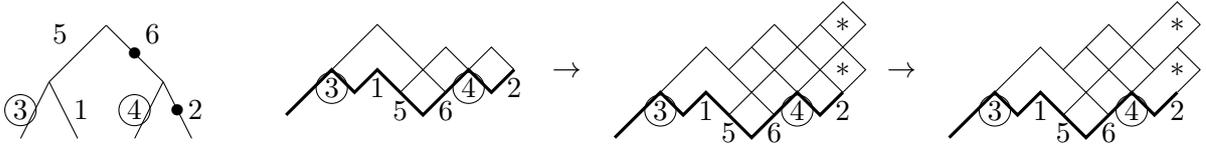

\tikzpic{-0.5}{[scale=0.5]
\coordinate
	child{coordinate(c2)
		child{coordinate(c4)}
		child{coordinate(c6)}
	}
	child[missing]
	child{coordinate(c1)
		child{coordinate(c3)}
		child{coordinate(c5)}
	};
\node[anchor=south east] at($(0,0)!0.5!(c2)$){$5$};
\node[anchor=east] at($(c2)!0.5!(c4)$){$\circnum{3}$};
\node[anchor=west] at($(c2)!0.5!(c6)$){$1$};
\node[anchor=south west] at($(0,0)!0.5!(c1)$){$6$};
\node[anchor=east] at($(c1)!0.5!(c3)$){$\circnum{4}$};
\node[anchor=west] at($(c1)!0.5!(c5)$){$2$};
\node at($(0,0)!0.5!(c1)$){$\bullet$};
\node at($(c1)!0.5!(c5)$){$\bullet$};
}
\quad 
\tikzpic{-0.5}{[scale=0.3]
\draw[very thick](0,0)--(2,2)--(3,1)--(4,2)--(6,0)--(8,2)--(9,1)--(10,2);
\draw(2,2)--(4,4)--(7,1)(5,1)--(7,3)--(8,2)(8,2)--(9,3)--(10,2);
\node at(2,1.2){$\circnum{3}$};
\node at(4,1.2){$1$};
\node at(5,0.2){$5$};
\node at(7,0.2){$6$};
\node at(8,1.2){$\circnum{4}$};
\node at(10,1.2){$2$};
}
$\rightarrow$
\tikzpic{-0.5}{[scale=0.3]
\draw[very thick](0,0)--(2,2)--(3,1)--(4,2)--(6,0)--(8,2)--(9,1)--(10,2);
\draw(2,2)--(4,4)--(7,1)(5,1)--(7,3)--(8,2)(8,2)--(9,3)--(10,2);
\draw(5,3)--(7,5)--(9,3)(6,4)--(7,3)--(10,6)--(11,5)--(9,3);
\draw(9,5)--(11,3)--(10,2);
\node at(2,1.2){$\circnum{3}$};
\node at(4,1.2){$1$};
\node at(5,0.2){$5$};
\node at(7,0.2){$6$};
\node at(8,1.2){$\circnum{4}$};
\node at(10,1.2){$2$};
\node at(10,3){$\ast$};
\node at(10,5){$\ast$};
}
$\rightarrow$
\tikzpic{-0.5}{[scale=0.3]
\draw[very thick](0,0)--(2,2)--(3,1)--(4,2)--(6,0)--(8,2)--(9,1)--(10,2);
\draw(2,2)--(4,4)--(7,1)(5,1)--(7,3)--(8,2)(8,2)--(9,3);
\draw(5,3)--(7,5)--(9,3)(6,4)--(7,3)--(10,6)--(11,5)--(9,3);
\draw(10,4)--(11,3)--(10,2);
\node at(2,1.2){$\circnum{3}$};
\node at(4,1.2){$1$};
\node at(5,0.2){$5$};
\node at(7,0.2){$6$};
\node at(8,1.2){$\circnum{4}$};
\node at(10,1.2){$2$};
\node at(10,3){$\ast$};
\node at(10,5){$\ast$};
}
\caption{A decreasing label and Dyck tilings after Step 1, 2, and 3 from left to right.}
\label{fig:decDT}
\end{figure}
\end{example}

\begin{prop}
A map from decreasing labels with circles to a fundamental symmetric 
Dyck tilings gives a bijection between them.
\end{prop}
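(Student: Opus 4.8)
The plan is to prove the statement in three stages: well-definedness of the three-step construction, injectivity, and then bijectivity by a cardinality comparison with the symDTS bijection of Theorem \ref{thrm:bijfsDTS}.

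First I would verify that the construction in Steps 1--3 is well-defined, that is, that it lands inside $\mathcal{D}^{\mathrm{sym}}(\lambda)$ for the ballot path $\lambda$ attached to the tree $T$. For Step 1 I would check that attaching, at each down step $d$ with label $l$, Dyck tiles of total weight equal to the number of down steps to the right of $d$ carrying a label smaller than $l$ yields a cover-inclusive Dyck tiling; this is precisely the lower-path Hermite-history construction generalizing Section 6.4 in \cite{S19}, so the Dyck part is automatically valid. For Step 2 I would check that each circled edge produces a ribbon terminating in a box carrying $\ast$ on the symmetry line, and that processing the circles from the largest label down to the smallest keeps the intermediate tilings cover-inclusive. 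For Step 3 I would verify that replacing single boxes above peaks by non-trivial ballot tiles, together with the merging of tiles that share a box, respects the vertical symmetry and the defining skew shape, so that the output is genuinely a fundamental symmetric Dyck tiling over $\lambda$.

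Second, for injectivity I would describe how to reconstruct $L_{\mathrm{dec}}$ from the resulting tiling $\mathcal{D}$ by reversing the three steps. The key point is that the weights read off the Dyck-tile part over each down step recover the numbers $m(l)$, which in turn determine the relative order of the labels sitting on edges without dots, exactly as in the lower-path Hermite-history correspondence; the presence and extent of the ribbons ending in $\ast$-boxes record which edges are circled and the positions of their labels relative to the dotted edges; and the decomposition of each non-trivial ballot tile back into single boxes above a peak, dictated by the subtree $T'$ of $T$ used in Step 3, is uniquely determined. Hence distinct decreasing labels with circles produce distinct tilings.

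Finally I would conclude bijectivity by counting. The replacement $i\mapsto n+1-i$ is a bijection between natural labels with circles of $T$ and decreasing labels with circles of $T$, so these two sets have equal cardinality; by Theorem \ref{thrm:bijfsDTS} the former is in bijection with $\mathcal{D}^{\mathrm{sym}}(\lambda)$. Thus the set of decreasing labels with circles and $\mathcal{D}^{\mathrm{sym}}(\lambda)$ are finite sets of equal cardinality, and an injective map between them is automatically surjective. The step I expect to be the main obstacle is Step 3: one must show that merging ballot tiles that share a box does not destroy the information needed to recover the labels and their circles, so that the reconstruction in the injectivity argument is genuinely unambiguous; verifying that a merged ballot tile decomposes uniquely according to the tree data is the delicate point.
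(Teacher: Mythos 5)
Your proposal follows essentially the same route as the paper's own proof: both use the correspondence $i\mapsto n+1-i$ between natural and decreasing labels together with the symDTS bijection of Theorem \ref{thrm:bijfsDTS} to equate cardinalities, and then reduce the claim to injectivity of the three-step construction, which the paper asserts directly and you elaborate via reconstruction. Your additional attention to well-definedness and to the ambiguity potentially introduced by merging ballot tiles in Step 3 fills in details the paper leaves implicit, but the argument is the same in structure and substance.
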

\begin{proof}
We have an obvious bijection between natural labels and decreasing labels, that is,
$i\mapsto n+1-i$, $i\in[1,n]$ where $n$ is the number of edges.
An edge without a dot may have a circle in both natural and decreasing labels.
This implies the total number of labels are the same.
In case of natural labels, they are bijective to symmetric Dyck tilings by 
the symmetric DTS defined in Section \ref{sec:symDTS}.
Thus, it is enough the map $\nu$ from decreasing labels to symmetric Dyck 
tilings is injective.
Let $L_{1}$ and $L_{2}$ be two distinct decreasing labels.
By the processes from Step 1 to Step 3, it is obvious that 
$\nu(L_{1})$ and $\nu(L_{2})$ are symmetric Dyck tilings and 
they are distinct.
This completes the proof.
\end{proof}

Let $n$ be the number of edges without a dot in $T$ and $n'$ be the number of 
dotted edges in $T$. We denote $N=2n+n'$.
Let $(k_1, k_2, \ldots, k_{N})$ be a bi-word consisting 
of $U$ and $D$ of length $N$ and of an integer sequence in $[1,n+n']$.
Thus, $k_{i}=\begin{bmatrix}X \\ p\end{bmatrix}$ with $X=U$ or $D$ and $p\in[1,n+n']$.
Then, a ballot bi-word $\mathbf{K}_{N}:=(k_1,k_2,\ldots,k_{N})$ is defined as follows.
The first word is the word in the first row of $\mathbf{K}_{N}$ consisting of a ballot 
word of $U$'s and $D$'s. 
The second word is the word in the second row of $\mathbf{K}_{N}$ consisting of positive
integers in $[1,n+n']$.

We define 
\begin{align*}
\mathbf{K'}_{1}
:=\begin{cases}
\begin{bmatrix}
U & D \\
1 & 1
\end{bmatrix}, & h_{1} \text{ does not have a box}, \\[12pt]
\begin{bmatrix}
U \\
1 
\end{bmatrix}, & h_{1} \text{ has a box}. \\[12pt] 
\end{cases}
\end{align*}
We recursively construct $\mathbf{K'}_{m}$ with $1\le m\le N$ 
by using the insertion history $\mathbf{h}$ associated with the natural
label following \cite{S19} with small modifications. 
Then, we will construct a ballot bi-word $\mathbf{K}_{N}$ from $\mathbf{K'}_{N}$.
We have the following algorithms:
\begin{enumerate}
\item We define $\mathbf{K}_{1}:=\mathbf{K'}_{1}$.
\item Increase all integers in the second word of $\mathbf{K'}_{m-1}$ by $1$
and denote it by $\mathbf{K}''_{m-1}$. 
\item Find the $h_{m}$-th position in $\mathbf{K''}_{m-1}$ and insert 
$\mathbf{K'}_{1}$ there. 
Note that $\mathbf{K'}_{1}$ depends on whether $h_{m}$ has a box or not.
\item If $\begin{matrix}D \\ a\end{matrix}$ with $a\ge2$ is left to $\begin{matrix}U \\ 1\end{matrix}$,
we move $\begin{matrix}U \\ 1\end{matrix}$ left to the leftmost $\begin{matrix}D \\ b\end{matrix}$ with 
$b\ge2$.
As a sequence, we have 
\begin{align*}
\cdots
\begin{matrix}
D  \\
b 
\end{matrix}
\cdots
\begin{matrix}
D  \\
a 
\end{matrix}
\cdots
\begin{matrix}
U \\
1 
\end{matrix}\cdots
\longrightarrow
\cdots
\begin{matrix}
U & D \\
1 & b 
\end{matrix}
\cdots
\begin{matrix}
D\\
a
\end{matrix}
\cdots,
\end{align*}
where the dotted parts are unchanged.
If such $D$ does not exist, we do not change the words.
We denote by $\mathbf{K'}_{m}$ the bi-words obtained from $\mathbf{K''}_{m}$ 
by the above procedure.
\item 
If a label $N+1-l$, $l\in[1,N]$, in $\mathbf{L}_{\mathrm{dec}}$ has a circle,
we change 
$\begin{matrix}D \\ l\end{matrix}$ to $\begin{matrix}U \\ l\end{matrix}$ 
in $\mathbf{K'}_{N}$.
We denote by $\mathbf{K}_{N}$ the new ballot bi-word.
\end{enumerate}

\begin{remark}
In the step (5), we replace $D$'s by $U$'s for some cases.
When $h_{N+1-l}$ in $\mathbf{h}$ has a circle, this means that the value $h_{N+1-l}$ 
does not have a box. Therefore, we have 
$\begin{matrix}D \\ l\end{matrix}$ somewhere in $\mathbf{K'}_{N}$ 
since we insert $\mathbf{K'}_{1}$ at somewhere in $\mathbf{K'}_{m}$, $m\le N$, 
at the step (3).
\end{remark}

\begin{prop}
Let $L_{\mathrm{dec}}$ be a decreasing label, $\mathbf{h}$ be its insertion history,
and $\mathbf{K}_{n}$ be a ballot bi-word associated with $\mathbf{h}$.
Then, the top path of the Hermite history of $\mathbf{L}_{dec}$ is the 
first ballot word of $\mathbf{K}_{n}$.
\end{prop}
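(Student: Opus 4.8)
The plan is to prove the identity by induction on the length $m$ of the processed part of the insertion history, showing that the recursively built bi-word $\mathbf{K'}_m$ (that is, $\mathbf{K}_N$ before the circle-corrections of step~(5)) has as its first row exactly the top boundary of the partial fundamental symmetric Dyck tiling produced by the first $m$ insertions of the decreasing-label construction of Step~1--Step~3, the second-row integers serving to record, for each boundary step, the insertion, and hence the tile, to which it belongs. The base case is immediate: $\mathbf{K'}_1$ is the pair $UD$ with labels $(1,1)$ when $h_1$ carries no box and the single step $U$ with label $1$ when it does, matching the two kinds of spread --- a single box for a Dyck tile, and a ballot tile arising from a terminal $U$.

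For the inductive step I would match the bi-word operations of steps~(2)--(4) to the geometric moves of the symmetric DTS growth, just as in the earlier analysis of the word $\sigma(L)$, but in mirror form, since $L_{\mathrm{dec}}$ is the reversal $i\mapsto n+1-i$ of the natural label; this reversal is the source of the left/right flip between the ``move $D$ right'' of the $\sigma(L)$ algorithm and the ``move $U$ left'' of step~(4), and it is governed by Proposition~\ref{prop:Rev}. Steps~(2) and~(3) together insert a local path at the $h_m$-th position of the boundary; by the definition of the insertion history in Section~\ref{sec:treesforsymDT}, this position is precisely the column of the $m$-th spread, and a box on $h_m$ forces the single-$U$ insertion of a ballot spread while its absence forces the $UD$ insertion of a Dyck spread. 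Step~(4), sliding the freshly inserted $U$ leftward to just before the leftmost down step carrying a label $\ge2$, is the addition of a strip: the Hermite path threaded by the new up step runs over the exits of the older tiles lying to its left and enlarges them, and on the boundary this is recorded exactly by moving the up step past those exit down steps. The down steps of label $\ge2$ are the exit edges of the earlier tiles produced in Step~1--Step~2, so the counts obtained this way agree with the tile numbers of the Hermite history, in the same spirit as Lemma~\ref{lemma:invpsi}.

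Finally, step~(5) accounts for the circled labels of $L_{\mathrm{dec}}$. Replacing a down step of second-row label $l$ by an up step is the effect of the ribbon ending in a box with $\ast$ attached in Step~2--Step~3 of the decreasing-label construction: such a box converts the exit down step of the tile labelled $l$ into an up step, turning a Dyck exit into the exit of a ballot tile carrying $\ast$. The Remark preceding the statement guarantees that this down step is indeed present in $\mathbf{K'}_N$, so the replacement is always well defined; identifying circled labels with $\ast$-boxes exactly as in the proof of Theorem~\ref{thrm:Hh} then gives that the first row of $\mathbf{K}_N$ is the top path $\mu$, which completes the induction.

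I expect the main obstacle to be the dotted-edge, that is ballot-tile, analysis inside step~(4). A ballot spread inserts a lone $U$, so the ensuing leftward slide must both reproduce the threading of an Hermite path through ballot tiles and correctly realise the merging, in Step~3 of the decreasing-label construction, of two ballot tiles that share a box into one larger tile. I would handle this by following the unique leftmost down step of label $\ge2$ selected in step~(4) and checking, according to whether the slide reaches the ancestor dotted edge, that the resulting boundary is exactly the one produced by the merged ballot tile; the purely Dyck-tile portion can be cross-checked against the analysis of $\sigma(L)$ and the mirror identity of Proposition~\ref{prop:Rev}.
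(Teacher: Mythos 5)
Your base case, your reading of steps (2)--(3) as the insertion of $UD$ or $U$ at position $h_m$, and your treatment of step (5) as the $\ast$-ribbon corrections all agree with the paper. But the paper's proof is much shorter than what you propose: it first notes that Step 3 (merging single boxes into larger ballot tiles) is a re-tiling of a fixed region and therefore never moves the top boundary, so only Steps 1 and 2 are relevant; it then cites Proposition 6.28 of \cite{S19} for the fact that processes (1)--(4) yield the top path after Step 1, and identifies process (5) with the ballot ribbons of Step 2. You instead fold the Step-3 merging into your step-(4) analysis as a difficulty to be overcome, which is unnecessary.

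The genuine gap is the object you anchor your induction to. You claim the bi-word operations mirror ``the geometric moves of the symmetric DTS growth'' and that Proposition \ref{prop:Rev} governs the left/right flip. But the tiling built from $L_{\mathrm{dec}}$ by Steps 1--3 is the Hermite history read from the \emph{lower} boundary path; in general it is neither the symDTS tiling of $L$ nor its mirror image, and their top paths differ. Concretely, for the tree with two root edges and natural label $(1,2)$ read left to right, symDTS gives the empty tiling with top path $UDUD$ (and indeed $\sigma(L)=UDUD$), whereas the decreasing label $(2,1)$ hangs one single box on the first down step, giving top path $UUDD$ --- which is exactly what the bi-word produces from $\mathbf{h}=(0,2)$ after step (4). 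So step (4) is not ``the addition of a strip'': the strip addition is recorded in $\sigma(L)$ by moving the new $D$ all the way to the right, while step (4) moves the new $U$ left only as far as the leftmost $D$ of label $\ge 2$; these are different operations applied to different tilings, not mirror images of one another. Moreover, Proposition \ref{prop:Rev} and the accompanying down-step Hermite-history proposition are established in the paper only for tilings consisting of single boxes over a zig-zag path, so they cannot carry the general case with non-trivial tiles, dotted edges and circles. To repair the argument you would have to run your induction directly against the lower-path Hermite history of Step 1 (the content of Proposition 6.28 in \cite{S19}), rather than against a mirrored symDTS.
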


\begin{proof}
Recall that we have three steps from Step 1 to Step 3 to obtain 
a symmetric Dyck tiling from a decreasing label.
Since Step 3 does not change the top path of the Dyck tiling,
it is enough to show that the first word of $\mathbf{K}_{N}$ 
is the top path of the Dyck tiling obtained by up to Step 2.

By the same argument as Proposition 6.28 in \cite{S19}, 
the top path of a symmetric Dyck tiling after Step 1 can be 
obtained by the processes from (1) to (4).
The addition of a ballot ribbon to a symmetric Dyck tiling
is nothing but the operation to change a down step to a up path.
This is realized by changing a down step corresponding to an edge 
with a circle in the natural label to an up step.
Therefore, we obtain the top path of the symmetric Dyck tiling
after the process (5).
\end{proof}

\begin{example}
We consider the same example as \ref{ex:decDT}.
The ballot bi-word $\mathbf{K}_{6}$ is given by
\begin{align*}
&\left[\begin{matrix}
U \\
1
\end{matrix}\right]
\rightarrow
\left[\begin{matrix}
U&D&U \\
1&1&2
\end{matrix}\right]
\rightarrow
\left[\begin{matrix}
U&D&U&U&D \\
2&2&3&1&1
\end{matrix}\right]
\rightsquigarrow
\left[\begin{matrix}
U&U&D&U&D \\
2&1&2&3&1
\end{matrix}\right]\\
&\rightarrow
\left[\begin{matrix}
U&U&D&U&D&U&D \\
3&1&1&2&3&4&2
\end{matrix}\right] \\
&\rightarrow
\left[\begin{matrix}
U&U&D&U&D&U&D&U \\
4&2&2&3&4&5&3&1
\end{matrix}\right]
\rightsquigarrow
\left[\begin{matrix}
U&U&U&D&U&D&U&D \\
4&2&1&2&3&4&5&3
\end{matrix}\right]\\
&\rightarrow
\left[\begin{matrix}
U&U&U&U&D&D&U&D&U&D \\
5&3&2&1&1&3&4&5&6&4
\end{matrix}\right]\\
&\Rightarrow
\left[\begin{matrix}
U&U&U&U&D&U&U&D&U&U \\
5&3&2&1&1&3&4&5&6&4
\end{matrix}\right],
\end{align*}
where $\rightsquigarrow$ indicates the step (4) and $\Rightarrow$ 
indicates the step (5).
The top path of the symmetric Dyck tiling in Example \ref{ex:decDT}
is $UUUUDUUDUU$.
\end{example}

\section{Symmetric Dyck tiling ribbon}
\subsection{Symmetric Dyck tilings}
In Section \ref{sec:symDTS}, we define the symDTS bijection.
In this subsection, we introduce another bijection that we call 
{\it symmetric Dyck tiling ribbon} (symDTR for short).
Similar to the symDTS bijection, the symDTR bijection consists 
of two operations: spread of a symmetric Dyck tiling, and 
addition of a ribbon.
Spread of a symmetric tiling for the symDTR bijection is the same as 
the one for the symDTS bijection.

Below, we define the addition of a ribbon on a symmetric Dyck tiling.
Ribbons are classified into two types: Dyck ribbons and ballot ribbons.

As in the case of (non-symmetric) Dyck tilings, we introduce the notion 
of {\it special column} of a Dyck tiling (see \cite{KMPW12}). 
Given a fundamental symmetric Dyck tiling $D$ in the skew shape $\lambda/\mu$,
a column $t$ is {\it eligible} if: 
\begin{enumerate}
\item The top path $\mu$ contains an up step that ends $x=t$.
\item The top path $\mu$ at $x=t$ is the top corner of a ballot tile of length 
$(n,n')$ with $(n,n')\neq(0,0)$, or coincides with the lower boundary path $\lambda$ at $x=t$.
\end{enumerate}
Note that there is at least one eligible column of a fundamental symmetric Dyck 
tiling since the leftmost step in $\mu$ is the up step and it is eligible.
The special column is defined to be the rightmost eligible column.

After the spread of a symmetric Dyck tiling at $x=s$, we perform the 
addition of a Dyck or a ballot ribbon on the tiling.

\paragraph{\bf Addition of a Dyck ribbon}
We add a ribbon of single boxes from $x=s$ to the special column $t$ 
which is right to the column $s$.
In other words, a partial path of $\mu$ from $x=s$ to $x=t$ is starting from 
a down step $d$ and ending with a up step $u$. 
The addition of a Dyck ribbon is to exchange the down step $d$ to an up step 
and the up step $u$ to a down step, and fill the added region with single boxes.

\paragraph{\bf Addition of a ballot ribbon}
We add a ribbon of single boxes from $x=s$ to the rightmost column such that 
the rightmost box of the ribbon has $\ast$. 
In other words, a partial path of $\mu$ right to $x=s$ is starting from 
an down step $d$. We change this down step $d$ to an up step, and fill the 
added region with single boxes.

\begin{remark}
A ribbon is said to be a ``Dyck" ribbon when the ribbon does not contain 
a box with $\ast$.
On the other hand, a ribbon is said to be a ``ballot" ribbon when it 
contains a box with $\ast$ as its rightmost box. 
\end{remark}

We define the symDTR bijection from natural labels with circles to symmetric 
Dyck tilings as follows. 
We follow the notations in Section \ref{sec:symDTS}.
Let $T$ be a labeled tree of size $n$ and $T'$ be the partial tree of size $n-1$.
The tree $T'$ is obtained from $T$ by deleting the edge with the label $n$.
Suppose we have a correspondence between the labeled tree $T'$ of size $n-1$ 
and a symmetric Dyck tiling $D$ of size $n-1$.
We construct a symmetric Dyck tiling of size $n$ from $D$ and $T'$ as follows.
The first procedure is the same as (symDTS\ref{symDTS1}).
We add a ribbon after the operation (symDTS1): 
\begin{list}{}{}
\item[(symDTR2)] 
Suppose that the special column $t$ is right to the column $s$.
We perform the addition of a Dyck ribbon from $x=s$ to $x=t$ 
if the label $n$ is not circled.
\item[(symDTR3)]
We perform the addition of a ballot ribbon from $x=s$
if the label $n$ is circled.
\end{list}

\begin{remark}
By construction, the number of $\ast$ in a symmetric Dyck tiling 
is equal to the number of circled labels in the natural label.
\end{remark}

\begin{example}
\label{ex:symDTR}
We consider the following natural label:
\begin{center}
\tikzpic{-0.5}{[scale=0.6]
\coordinate
	child{coordinate(c2)
		child{coordinate(c4)}
		child{coordinate(c6)}
	}
	child[missing]
	child{coordinate(c1)
		child{coordinate(c3)}
		child{coordinate(c5)}
	};
\node[anchor=south east] at($(0,0)!0.5!(c2)$){$2$};
\node[anchor=east] at($(c2)!0.5!(c4)$){$\circnum{4}$};
\node[anchor=west] at($(c2)!0.5!(c6)$){$6$};
\node[anchor=south west] at($(0,0)!0.5!(c1)$){$1$};
\node[anchor=east] at($(c1)!0.5!(c3)$){$\circnum{3}$};
\node[anchor=west] at($(c1)!0.5!(c5)$){$5$};
\node at($(0,0)!0.5!(c1)$){$\bullet$};
\node at($(c1)!0.5!(c5)$){$\bullet$};
}
\end{center}
The growth of a fundamental symmetric Dyck tiling is shown in 
Figure \ref{fig:symDTR}
\begin{figure}[ht]
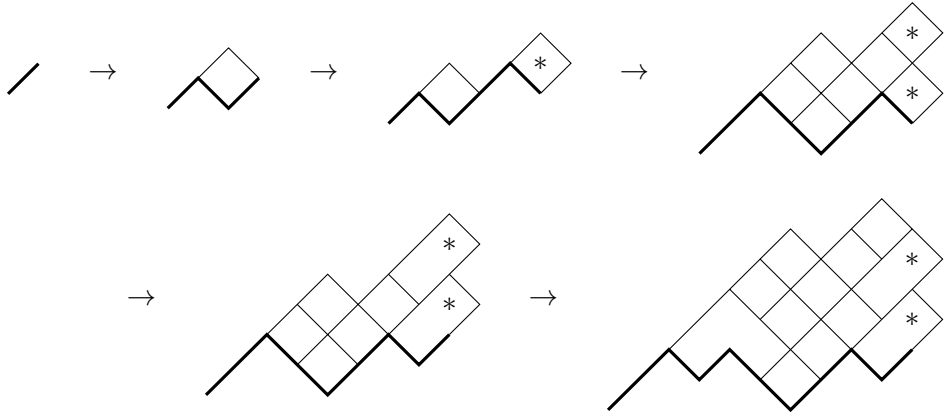

\begin{align*}
\tikzpic{-0.5}{[x=0.4cm,y=0.4cm]
\draw[very thick](0,0)--(1,1);
}
\quad\rightarrow\quad
\tikzpic{-0.5}{[x=0.4cm,y=0.4cm]
\draw[very thick](0,0)--(1,1)--(2,0)--(3,1);
\draw(1,1)--(2,2)--(3,1);
}
\quad\rightarrow\quad
\tikzpic{-0.5}{[x=0.4cm,y=0.4cm]
\draw[very thick](0,0)--(1,1)--(2,0)--(4,2)--(5,1);
\draw(1,1)--(2,2)--(3,1)(4,2)--(5,3)--(6,2)--(5,1);
\node at(5,2){$\ast$};
}
\quad\rightarrow\quad
\tikzpic{-0.5}{[x=0.4cm,y=0.4cm]
\draw[very thick](0,0)--(2,2)--(4,0)--(6,2)--(7,1);
\draw(2,2)--(4,4)--(6,2)(3,3)--(5,1)(3,1)--(7,5)--(8,4)--(6,2);
\draw(6,4)--(8,2)--(7,1);
\node at(7,2){$\ast$};\node at(7,4){$\ast$};
}\\[12pt]
\rightarrow\quad
\tikzpic{-0.5}{[x=0.4cm,y=0.4cm]
\draw[very thick](0,0)--(2,2)--(4,0)--(6,2)--(7,1)--(8,2);
\draw(2,2)--(4,4)--(6,2)(3,3)--(5,1)(3,1)--(8,6)--(9,5)--(6,2);
\draw(6,4)--(7,3)(8,4)--(9,3)--(8,2);
\node at(8,3){$\ast$};\node at(8,5){$\ast$};
}\quad
\rightarrow\quad
\tikzpic{-0.5}{[x=0.4cm,y=0.4cm]
\draw[very thick](0,0)--(2,2)--(3,1)--(4,2)--(6,0)--(8,2)--(9,1)--(10,2);
\draw(2,2)--(6,6)--(9,3)(4,4)--(7,1)(5,5)--(8,2);
\draw(5,3)--(9,7)--(11,5)--(8,2)(6,2)--(10,6);
\draw(8,6)--(9,5)(10,4)--(11,3)--(10,2)(5,1)--(6,2);
\node at(10,3){$\ast$};\node at(10,5){$\ast$};
}
\end{align*}
\caption{A growth of a fundamental symmetric Dyck tiling by the symDTR bijection.} 
\label{fig:symDTR}
\end{figure}
\end{example}

\begin{theorem}
Let $L$ be a natural label of the tree $\mathrm{Tree}(\lambda)$. 
The map symmetric DTR gives a bijection between symmetric Dyck tilings 
above $\lambda$ and natural labels $L$.
\end{theorem}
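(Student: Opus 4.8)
The plan is to follow the proof of Theorem~\ref{thrm:bijfsDTS} almost verbatim, since the symDTR bijection differs from the symDTS bijection only in its last operation, where the addition of a strip is replaced by the addition of a Dyck or ballot ribbon, while the spread (symDTS1) is identical. As in that proof, the map symmetric DTR sends a natural label of $\mathrm{Tree}(\lambda)$ to a symmetric Dyck tiling above $\lambda$ by construction, and injectivity is immediate from the deterministic nature of the operations (symDTS1), (symDTR2) and (symDTR3): two distinct natural labels never produce the same tiling. Hence the whole content is to prove surjectivity, which I would establish by induction on the number of edges $N=n+n'$ of $\mathrm{Tree}(\lambda)$.

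For the inductive step, let $\mathcal{D}$ be a symmetric Dyck tiling above $\lambda$. I would identify and remove the ribbon corresponding to the largest label $N$, recovering a symmetric Dyck tiling $\mathcal{D}'$ above a ballot path $\lambda'$ whose tree is $\mathrm{Tree}(\lambda)$ with the edge $E(N)$ deleted. The reconstruction of the last step requires three pieces of data read off from $\mathcal{D}$: the spread line $x=p_N$; whether the spread at (symDTS1) was a Dyck or a ballot spread, equivalently whether $E(N)$ carries a dot; and whether the label $N$ is circled, equivalently whether the Dyck ribbon (symDTR2) or the ballot ribbon (symDTR3) was added. Since by the induction hypothesis $\mathcal{D}'$ corresponds to a unique natural label $L'$ of the tree for $\lambda'$, restoring the edge $E(N)$ with these data yields the unique natural label $L$ with $\mathrm{symDTR}(L)=\mathcal{D}$, completing the induction.

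The main obstacle is the reversal of the ribbon addition, namely locating the ribbon attached last and the spread line $x=p_N$. Here I would use the notion of special column exactly as in the non-symmetric case of \cite{KMPW12}: a Dyck ribbon terminates at the special (rightmost eligible) column, so running the exchange of the up and down steps backward from that column uncovers the down step at $x=p_N$ and turns the ribbon back into single boxes. A ballot ribbon is distinguished from a Dyck ribbon by the fact that its rightmost box carries $\ast$ and lies in the rightmost column of $\mathcal{D}$; removing it and turning its leading up step back into a down step both recovers $\mathcal{D}'$ and forces the label $N$ to be circled, which is consistent with the rule that a dotted edge is never circled. The point requiring care is that exactly one ribbon qualifies as the last added one and that its removal leaves a legitimate symmetric Dyck tiling above a ballot path with one fewer edge; this holds because the ribbon attached to $E(N)$ always reaches the current top boundary and therefore never lies underneath the tiles carrying smaller labels.
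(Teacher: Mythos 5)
Your proposal is correct and follows essentially the same route as the paper: the paper also reduces to the argument of Theorem~\ref{thrm:bijfsDTS}, proving surjectivity by induction after reversing (symDTS1), (symDTR2) and (symDTR3) to extract the label $L'$ of the smaller tiling, the spread position, and whether the last label is circled. Your discussion of how to locate the last ribbon via the special column and the $\ast$-marked box merely fills in details the paper leaves implicit.
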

\begin{proof}
The proof is similar to the proof of Thereom \ref{thrm:bijfsDTS}.
By reversing the procedures (symDTS1), (symDTR2) and (symDTR3), 
we can have the following three information: 
a natural tree $L'$ for a fundamental symmetric Dyck 
tiling $\mathcal{D}'$, special column $x=s$, and whether the label $n$
is circled or not.
The rest of the proof remains the same as Theorem \ref{thrm:bijfsDTS}.
\end{proof}

\subsection{Ballot tilings}
Recall the map $\iota$ introduced in Section \ref{sec:symDTSbt}.
By the correspondence between natural labels in $\mathcal{L}^{\mathrm{b}}$ to 
natural labels in $\mathcal{L}^{\mathrm{b}\subset\mathrm{D}}$, and the symDTR 
bijection, we define the ribbon growth of a ballot tiling as the symDTR bijection
on a natural label in $\mathcal{L}^{\mathrm{b}\subset\mathrm{D}}$.

\begin{theorem}
A symDTR bijection on $\mathcal{L}^{\mathrm{b}\subset\mathrm{D}}(\lambda)$ gives 
a ballot tiling above $\lambda$, which satisfies the conditions in Definition \ref{defn:bt}.
\end{theorem}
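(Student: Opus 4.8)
The plan is to follow the proof of Theorem~\ref{thrm:symDTSforBT} almost verbatim, exploiting the fact that the first step of the symDTR bijection, namely the spread performed in (symDTS1), is literally the same operation used by the symDTS bijection; only the growth step---addition of a Dyck or a ballot ribbon in place of the addition of a strip and of a single box with $\ast$---must be reconsidered. First I would recall that every natural label $L\in\mathcal{L}^{\mathrm{b}\subset\mathrm{D}}(\lambda)$ satisfies the two properties (P\ref{Enu:P1}) and (P\ref{Enu:P2}); these are precisely the defining conditions of the subset $\mathcal{L}^{\mathrm{b}\subset\mathrm{D}}$ (see Proposition~\ref{prop:iotawd} and the discussion preceding it), and since they are statements about the label alone they remain available regardless of the growth rule chosen.

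I would then check the two conditions of Definition~\ref{defn:bt} in turn. Condition~(1) follows from the definition of the addition of a ballot ribbon: the only boxes carrying $\ast$ are the terminal boxes of ballot ribbons, which are grown only when the current label is circled and which by construction sit on the rightmost column $x=2n+n'$. I would check that, conversely, a Dyck ribbon (grown for a non-circled label) stops at the special column and that no spread can push the rightmost box of a $\ast$-free tile onto $x=2n+n'$, so that every ballot tile whose rightmost box meets that line does carry a $\ast$.

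The substance is condition~(2), and here the parity bookkeeping of Theorem~\ref{thrm:symDTSforBT} carries over because the ballot tiles arise entirely from the spread step, which is shared with the symDTS bijection. A ballot spread is performed exactly at a dotted edge, and the parameter $m'$ of each affected ballot tile is incremented in the same way as in the symDTS setting. Property~(P\ref{Enu:P2}), applied at an odd dotted edge, guarantees an even number of circled labels $l$ with $E_{s+1}<l<E_{s}$, which forces the number of ballot tiles of odd $m'$ to be even; property~(P\ref{Enu:P1}), applied at an even dotted edge, forbids any circled label in the corresponding range and thereby prevents any ballot tile of even $m'$ from occurring. Together these two statements yield condition~(2).

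The step I expect to be the main obstacle is verifying that replacing strips by ribbons does not disturb this parity count. Unlike a strip, a ballot ribbon can later be lengthened by a subsequent spread or merged with boxes contributed by another ribbon, so one must confirm that the $m'$-parity of a finished ballot tile is determined solely by the spreads crossing it and not by these merges. I would settle this by showing that each ballot spread raises $m'$ by the same fixed amount on every ballot tile it meets, while every ribbon addition either seeds a new $\ast$-tile or extends an existing one without altering the parity contributed by the spreads; the counting supplied by (P\ref{Enu:P1}) and (P\ref{Enu:P2}) then closes the argument exactly as in the symDTS case, and the conclusion that the output is a ballot tiling above $\lambda$ follows.
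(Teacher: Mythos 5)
Your proposal is correct and follows essentially the same route as the paper's own proof: both reduce the claim to the properties (P\ref{Enu:P1}) and (P\ref{Enu:P2}) of labels in $\mathcal{L}^{\mathrm{b}\subset\mathrm{D}}(\lambda)$ and reuse the parity argument of Theorem \ref{thrm:symDTSforBT}, after observing that circled labels correspond exactly to ballot ribbons terminating in a $\ast$-box on the rightmost column. Your explicit check that replacing strips by ribbons does not disturb the $m'$-parity count is a point the paper passes over silently, but it does not change the argument's structure.
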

\begin{proof}
A natural label in $\mathcal{L}^{\mathrm{b}\subset\mathrm{D}}(\lambda)$ 
has even number of circled labels.
The circled label corresponds to an addition of a ballot ribbon. 
This ribbon ends at the 
right-most column.
By symmetric DTR, we put a labeled box at the right-most column when we have a dotted
edge. 
By the same argument as the proof of Theorem \ref{thrm:symDTSforBT}, 
we have only ballot tiles of size $2n+n'$ with $n'$ odd and the number of such 
tiles is always even.
These observations imply that we have a ballot tiling after the symmetric 
DTR bijection, which satisfies the conditions in Definition \ref{defn:bt}.
\end{proof}

\section{Ballot tableaux}
\label{sec:BTab}
In this section, we introduce the notion of ballot tableaux.
Ballot tableaux are a generalization of Dyck tableaux, which are 
bijective to Dyck tilings obtained from the DTR bijection.
See \cite{ABDH11} for Dyck tableaux with a zig-zag Dyck path and  
\cite{S19} for Dyck tableaux with a general lower boundary path.

\subsection{Ballot tableaux for symmetric Dyck tilings}
\label{sec:BTsDT}
Let $\lambda$ be a ballot path of length $(n,n')$ and $T(\lambda)$ be the tree for 
symmetric Dyck tilings associated with $\lambda$.
An edge without a dot in $T(\lambda)$ consists of a pair of an up step $U_1$
and a down step $D_1$ in $\lambda$. 
There exists a unique box below $\lambda$ that is in the south-east direction
from $U_1$ and in the south-west direction from $D_1$.
A dotted edge of $T(\lambda)$ consists of an up step $U_{2}$ in $\lambda$.
There exists a unique box below $\lambda$ that is in the south-east direction 
from $U_{2}$ and is on the line $x=2n+n'$.
We call these unique boxes associated with edges in $T(\lambda)$ {\it anchor} boxes.
Especially, the boxes at $x=2n+n'$ are called {\it boundary anchor} boxes.

Let $\lambda_{i}$ for $1\le i\le m-1$ be Dyck paths and $\lambda_{m}$ be a 
ballot path such that $\lambda_{i}$ cannot be written as a concatenation of 
ballot paths. 
We denote a ballot path $\lambda$ by $\lambda:=\lambda_1\circ\cdots\circ\lambda_{m}$.
The path $\lambda_{i}$ contains a unique edge connected to the root.

We define a path $\underline{\lambda}$ by
\begin{align*}
\underline{\lambda}
:=
\vee_{|\lambda_{1}|}\circ\cdots\circ\vee_{|\lambda_{m-1}|}\circ \diagdown_{|\lambda_{m}|}.
\end{align*}
where $\vee_{n}$ and $\diagdown_{n}$ are the following paths:
\begin{align*}
\vee_{n}&:=D^{n}U^{n}, \\
\diagdown_{n}&:=D^{n}.
\end{align*}
We call the region surrounded by $\lambda$, $\underline{\lambda}$ and the line $x=2n+n'$
the {\it frozen region} associated with $\lambda$.
We have $n+n'$ anchor boxes in the frozen region of $\lambda$.
We say that these anchor boxes in the {\it zeroth floor}.
When we translate an anchor box in the zeroth floor upward by $(0,2m)$,
we call the new box the anchor box in the {\it $m$-th floor}.

Let $a$ be an anchor box in the zeroth floor, that is associated with an edge without a dot.
This anchor box corresponds to a pair of an up step $u$ and a down step $d$.
We consider the partial path $\nu$ starting from $u$ and ending with $d$.
Then, we have a partial frozen region $R_{f}$ surrounded by $\nu$ and $\underline{\nu}$.
This partial frozen region $R_{f}$ is said to be associated with the anchor box $a$.
Similarly, let $b$ be a boundary anchor box in the zeroth floor, that is associated with a dotted 
edge in $T(\lambda)$.
This anchor box $b$ corresponds to an up step $u'$.
We consider the partial path $\nu'$ starting from this $u'$.
Then, we have a partial frozen region $R_{f'}$ surrounded by $\nu$, $\underline{\nu}$ and 
the line $x=2n+n'$.
This partial frozen region $R_{f'}$ is said to be associated with the boundary anchor box $b$.

We introduce five classes of boxes which appear in ballot tableaux.
The first four types of boxes are the same as the classification of boxes for generalized 
Dyck tableaux introduced in \cite{S19}.
\begin{enumerate}
\item An empty box.
\item A box with a lable $i\in[1,n+n']$.
\item A parallel box. A line passes through from its north-west edge to its south-east edge or
from its south-west edge to its north-east edge.
\item A turn box. A $\vee$-turn (resp. $\wedge$-turn) box is a box with a line passing through from the
north-west (resp. south-west) edge to the north-east (resp. south-east) edge.
\item A terminal box. A line comes from the north-west or south-west edge and ends at the center of the box.
\end{enumerate}
The terminal boxes are newly introduced to define a ballot tableau.
Five classes of boxes are depicted in Figure \ref{fig:fiveboxes}.
\begin{figure}[ht]
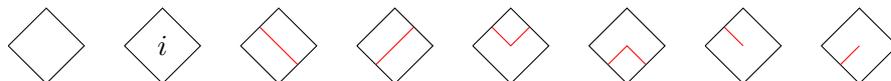

\tikzpic{-0.5}{[x=0.5cm,y=0.5cm]
\draw(0,0)--(1,1)--(2,0)--(1,-1)--(0,0);
}
\tikzpic{-0.5}{[x=0.5cm,y=0.5cm]
\draw(0,0)--(1,1)--(2,0)--(1,-1)--(0,0);
\node at (1,0){$i$};
}
\tikzpic{-0.5}{[x=0.5cm,y=0.5cm]
\draw(0,0)--(1,1)--(2,0)--(1,-1)--(0,0);
\draw[red](0.5,0.5)--(1.5,-0.5);
}
\tikzpic{-0.5}{[x=0.5cm,y=0.5cm]
\draw(0,0)--(1,1)--(2,0)--(1,-1)--(0,0);
\draw[red](0.5,-0.5)--(1.5,0.5);
}
\tikzpic{-0.5}{[x=0.5cm,y=0.5cm]
\draw(0,0)--(1,1)--(2,0)--(1,-1)--(0,0);
\draw[red](0.5,0.5)--(1,0)--(1.5,0.5);
}
\tikzpic{-0.5}{[x=0.5cm,y=0.5cm]
\draw(0,0)--(1,1)--(2,0)--(1,-1)--(0,0);
\draw[red](0.5,-0.5)--(1,0)--(1.5,-0.5);
}
\tikzpic{-0.5}{[x=0.5cm,y=0.5cm]
\draw(0,0)--(1,1)--(2,0)--(1,-1)--(0,0);
\draw[red](0.5,0.5)--(1,0);
}
\tikzpic{-0.5}{[x=0.5cm,y=0.5cm]
\draw(0,0)--(1,1)--(2,0)--(1,-1)--(0,0);
\draw[red](0.5,-0.5)--(1,0);
}
\caption{Five classes of boxes. An empty box (the first picture), 
a box with the label $i$ (the second picture), parallel boxes (the third and the fourth picture), 
turn boxes (the fifth and the sixth picture), and terminal boxes (the seventh and eighth picture).}
\label{fig:fiveboxes}
\end{figure}

Let $\mu$ be a ballot path such that $\mu\ge\lambda$, and 
$R_{0}$ be the region surrounded by $\underline{\lambda}$, $\mu$ and 
the line $x=2n+n'$.
We put integers $[1,n+n']$ in the region $R_{0}$, and obtain a ballot 
tableau for a symmetric Dyck tiling.
The algorithm to produce a ballot tableau is as follows.
Let $L$ be a natural label with circles for a fundamental symmetric Dyck tiling.
We construct a ballot tableau by the following algorithm.
\begin{enumerate}
\item Set $i=1$.
\item 
\label{enu:BT2}
Find an anchor box $B$ in the zeroth floor corresponding to the edge of $L$
with the label $i$.
\item If the anchor boxes up to the $p-1$-th floor are occupied by boxes with a line,
or if the anchor box just below $B$ in the $p$-th floor, we put the label $i$ on 
the anchor box (that corresponds to the edge with the label $i$) in the $p$-th floor. 
\item If the edge with the label $i-1$ is strictly right to the edge with the label $i$
in $L$ and the label $i$ is not circled in $L$, 
then we connect the anchor boxes with the labels $i-1$ and 
$i$ by a line in the following way:
\begin{enumerate}
\item 
\label{cond:Dtab1}
The line starts from the north-east edge of the anchor box with the label $i$ and ends with 
the north-west edge of the anchor box with the label $i-1$.
The line consists of parallel boxes and turn boxes.
\item
\label{cond:Dtab2}
The line is above the anchor boxes with labels smaller than $i$.
\item When an anchor box in the $q$-th floor is labeled with the integer $1\le k\le i-1$,
we translated the partial frozen region associated with this anchor box (at the zeroth floor) 
upward by $(0,2q)$. Then, we redefine the frozen region as a union of the translated frozen region
and the original frozen region.
\item 
\label{cond:Dtab4}
The line can pass through a box (that is not an anchor box) 
in the frozen region as a parallel box, that is, from the south-west edge to 
the north-east edge, or from the north-west edge to the south-east edge.
The line passes through an anchor box at the $q+1$-th floor as a turn box or a parallel box. 
\item The line is the lowest path satisfying from (4-a) to (4-d).
\end{enumerate}
\item If the label $i$ is circled in $L$, then:
\begin{enumerate}
\item The line starts from the north-east edge of the anchor box with the label $i$
and ends with the center of a boundary anchor box, that is, a terminal box.
\item The line is the lowest path satisfying the conditions (5-a) and from (4-b) to (4-e).
\end{enumerate}
\item Increase $i$ by one. If $i\in[1,n+n']$, then go to (\ref{enu:BT2}).
Otherwise, the algorithm stops.
\end{enumerate}

\begin{example}
\label{ex:symDTab1}
We consider the same natural label as Example \ref{ex:symDTR}.
The ballot tableau for the fundamental symmetric Dyck tiling is depicted 
as below.	
\begin{center}
\tikzpic{-0.5}{[x=0.4cm,y=0.4cm]
\draw[very thick](0,0)--(2,2)--(3,1)--(4,2)--(6,0)--(8,2)--(9,1)--(10,2);
\draw(0,0)--(3,-3)--(6,0)--(10,-4)--(11,-3)(2,2)--(6,6)(4,2)--(9,7)--(11,5)--(8,2);
\draw(1,1)--(4,-2)(3,1)--(5,-1)(3,3)--(4,2)(4,4)--(7,1)(5,5)--(8,2)(6,6)--(11,1);
\draw(1,-1)--(3,1)(2,-2)--(10,6)(8,6)--(11,3)--(10,2);
\draw(7,-1)--(9,1)(8,-2)--(11,1)(9,-3)--(11,-1)--(9,1)(11,-3)--(7,1);
\draw(10,-3)node{$1$}(3,-2)node{$2$}(8,1)node{$\circnum{3}$}(2,1)node{$\circnum{4}$};
\draw(10,5)node{$5$}(4,3)node{$6$};
\draw[red](3.5,-1.5)--(6,1)--(9.5,-2.5)(8.5,1.5)--(9,2)--(10,1)
	(2.5,1.5)--(3,2)--(4,1)--(6,3)--(7,2)--(9,4)--(10,3)
	(4.5,3.5)--(6,5)--(7,4)--(9,6)--(9.5,5.5);
}
\end{center}
\end{example}

\begin{remark}
\label{remark:terminalbox}
A fundamental symmetric Dyck tiling is equivalent to a symmetric 
Dyck tiling. 
A ballot tableau $B$ is also equivalent to a symmetric Dyck tableau $D$, which 
is obtained from $B$ by gluing two ballot tableaux $B$ and $\widetilde{B}$ where 
$\widetilde{B}$ is a mirror image of $B$.
Then, a terminal box in $B$ corresponds to a turn box in $D$, which 
means that we have no terminal boxes in $D$.
\end{remark}

\subsection{Insertion procedure for ballot tableaux}
The insertion procedure for a ballot tableau 
is the process to insert a labeled box into a fundamental 
symmetric Dyck tableau.
The insertion procedure consists of two operations: addition of 
a labeled box and addition of a ribbon.
As in the case of symDTR, a ribbon added to a ballot tableau 
is classified into two types: a Dyck ribbon, and a ballot ribbon.

Let $\lambda$ be a ballot path and $L$ be a natural label of tree $T(\lambda)$.
Let $\mathbf{h}:=(h_1,\ldots,h_{n+n'})$ be the insertion history for $L$, 
where $n$ (resp. $n'$) is the numbers of edges without (resp. with) a dot.

We will construct a ballot tableau from an insertion history $\mathbf{h}$.
Suppose that we have a ballot tableau for $\mathbf{h}:=(h_{1},\ldots,h_{n+n'})$. 
Then, we want to construct a ballot tableau for $\mathbf{h}':=(h_{1},\ldots,h_{n+n'},h_{n+n'+1})$. 

The insertion procedure for addition of a labeled box is given as follows.
\begin{enumerate}
\item We divide the ballot tableau for $\mathbf{h}$ into two pieces along the vertical
line $x=h_{n+n'+1}$. Then, we translated the right piece right by $(2,0)$ (resp. $(1,1)$) 
if $h_{n+n'+1}$ does not have a box (resp. $h_{n+n'+1}$ has a box).
This process is independent of the existence of a circle on $h_{n+n'+1}$.

\item We connect the top paths of the two pieces by the Dyck path $UD$ if $h_{n+n'+1}$ does 
not have a box, or by the ballot path $U$ if $h_{n+n'+1}$ has a box.
Then, we put a box with the label $n+n'+1$ on the top box at $x=h_{n+n'+1}$.
Similarly, since the ballot path $\lambda$ is cut into two pieces, we connect the two paths 
by the Dyck path $UD$ if $h_{n+n'+1}$ does not have a box, or by the ballot path $U$ 
if $h_{n+n'+1}$ has a box.	
We denote by $\lambda_{\mathrm{new}}$ the new ballot path.

\item 
Let $B$ be a box with label smaller than $n+n'+1$. 
Suppose that a labeled box $B$ is at the $m$-th floor, and let $B_{0}$ be an anchor 
box at the zeroth floor below $B$.
This anchor box $B_0$ is characterized by a pair of an up step $s_{u}$ and a 
down step $s_{d}$ or by a single up step $s_{u}'$.
We have two cases according to the existence of a box on $h_{n+n'+1}$.

\begin{enumerate}
\item The entry $h_{n+n'+1}$ does not have a box.
If the line $x=h_{n+n'+1}$ is placed between $s_{u}$ and $s_{d}$, we move the labeled box 
$B$ by $(1,-1)$.
If the both $s_{u}$ and $s_{d}$ are right to the line $x=h_{n+n'+1}$, we move $B$ by $(2,0)$.
If $s_{u}'$ is left to the line $x=h_{n+n'+1}$, we move $B$ by $(2,-2)$. 
If $s_{u}'$ is right to the line $x=h_{n+n'+1}$, we move $B$ by $(2,0)$.
Otherwise, we do not change the position of a labeled box.
\item The entry $h_{n+n'+1}$ has a box.
If $B$ is characterized by the up step $s_{u}'$, we move $B$ by $(1,-1)$.
Otherwise, we do not change the position of a labeled box.
\end{enumerate}
\item When we divide the ballot tableau into two pieces in the step (1), 
we also divide ribbons into two pieces. 
Recall that a ribbon consists of parallel and turn boxes and we have a 
line inside these boxes. 
When a ribbon connects two boxes labeled by $i$ and $i+1$, the new ribbon 
also connects the boxes labeled by $i$ and $i+1$.
We connect the divided lines by a new line such that we have a turn box, 
a parallel box, a labeled box, or a terminal box below the box labeled by $n+n'+1$. 
The new path is as high as possible in the tableau.
\item We change the bottom path of a ballot tableau from $\underline{\lambda}$ 
to $\underline{\lambda_{\mathrm{new}}}$ ($\lambda_{\mathrm{new}}$ is defined 
in the step (2)).
We have fixed the new path $\lambda_{\mathrm{new}}$, the top path, and the positions of the labeled boxes.
Thus, we put single boxes in the remaining region, which results in a ballot tableau.
\end{enumerate}

\begin{remark}
The definition of ballot tableaux in Section \ref{sec:BTsDT} 
gives the same ballot tableaux obtained by the insertion procedures.
The introduction of frozen regions in Section \ref{sec:BTsDT} may seem to be artificial.
However, in the insertion procedure, we connect the divided lines representing a ribbon
by a turn box or a parallel box below a labeled box in the step (4). 
This construction of a ribbon is compatible with the introduction of frozen regions.
\end{remark}

\begin{example}
We consider the same natural label as Example \ref{ex:symDTR}.
We list up ballot tableaux up to the fifth step (see Figure \ref{fig:symDTab}).
We obtain the same diagram as in Example \ref{ex:symDTab1} after the 
sixth insertion.
\begin{figure}[ht]
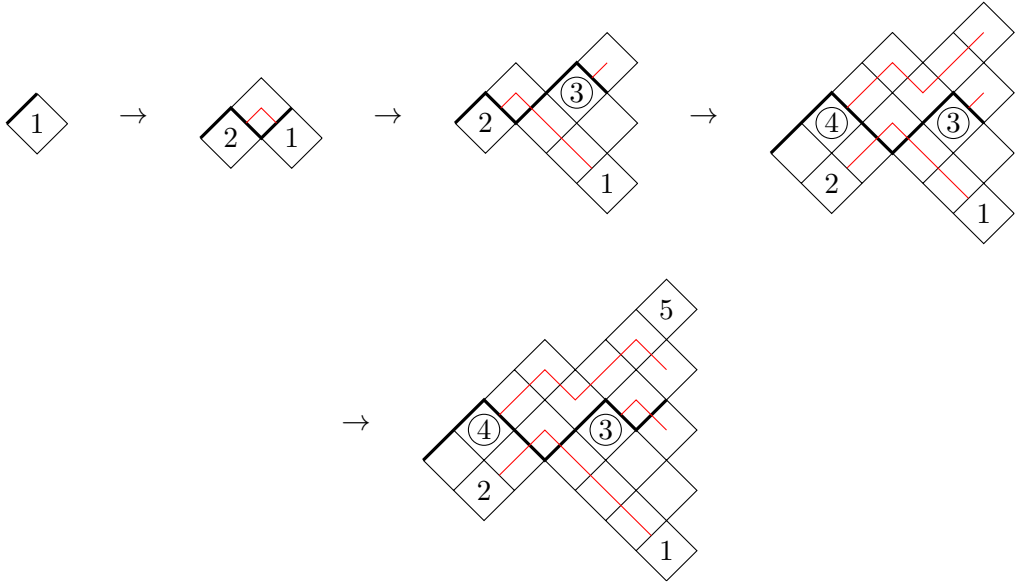

\tikzpic{-0.5}{[x=0.4cm,y=0.4cm]
\draw[very thick](0,0)--(1,1);
\draw(0,0)--(1,-1)--(2,0)(1,1)--(2,0);
\draw(1,0)node{$1$};
}
\quad$\rightarrow$\quad
\tikzpic{-0.5}{[x=0.4cm,y=0.4cm]
\draw[very thick](0,0)--(1,1)--(2,0)--(3,1);
\draw(1,1)--(2,2)--(3,1);
\draw(0,0)--(1,-1)--(2,0)--(3,-1)--(4,0)(3,1)--(4,0);
\draw(1,0)node{$2$};\draw(3,0)node{$1$};
\draw[red](1.5,0.5)--(2,1)--(2.5,0.5);
}
\quad$\rightarrow$\quad
\tikzpic{-0.5}{[x=0.4cm,y=0.4cm]
\draw[very thick](0,0)--(1,1)--(2,0)--(4,2)--(5,1);
\draw(1,1)--(2,2)--(3,1);
\draw(0,0)--(1,-1)--(2,0)--(5,-3)--(6,-2)(3,1)--(6,-2);
\draw(5,1)--(6,0)--(4,-2)(3,-1)--(5,1)(4,2)--(5,3)--(6,2)--(5,1);
\draw(1,0)node{$2$}(4,1)node{$\circnum{3}$}(5,-2)node{$1$};
\draw[red](1.5,0.5)--(2,1)--(4.5,-1.5)(4.5,1.5)--(5,2);
}
\quad$\rightarrow$\quad
\tikzpic{-0.5}{[x=0.4cm,y=0.4cm]
\draw[very thick](0,0)--(2,2)--(4,0)--(6,2)--(7,1);
\draw(1,1)--(3,-1)(3,3)--(8,-2)(4,4)--(6,2)(7,1)--(8,0)
     (2,2)--(4,4)(5,3)--(7,5)--(8,4)--(6,2)(6,4)--(8,2)--(7,1)
     (1,-1)--(5,3)(5,-1)--(7,1)(6,-2)--(8,0)
     (0,0)--(2,-2)--(4,0)--(7,-3)--(8,-2);
\draw(2,-1)node{$2$}(2,1)node{$\circnum{4}$}(7,-2)node{$1$}(6,1)node{$\circnum{3}$};
\draw[red](2.5,-0.5)--(4,1)--(6.5,-1.5)(6.5,1.5)--(7,2)(2.5,1.5)--(4,3)--(5,2)--(7,4);
}\\[12pt]
\quad$\rightarrow$\quad
\tikzpic{-0.5}{[x=0.4cm,y=0.4cm]
\draw[very thick](0,0)--(2,2)--(4,0)--(6,2)--(7,1)--(8,2);
\draw(2,2)--(4,4)--(9,-1)--(7,-3)(1,-1)--(8,6)--(9,5)--(8,4)
     (3,3)--(9,-3)--(8,-4)(0,0)--(2,-2)--(4,0)--(8,-4)(1,1)--(3,-1)
     (6,4)--(9,1)--(6,-2)(7,5)--(9,3)--(5,-1)(6,2)--(8,4);
\draw(2,-1)node{$2$}(2,1)node{$\circnum{4}$}(8,-3)node{$1$}
     (6,1)node{$\circnum{3}$}(8,5)node{$5$};
\draw[red](2.5,-0.5)--(4,1)--(7.5,-2.5)(6.5,1.5)--(7,2)--(8,1)
          (2.5,1.5)--(4,3)--(5,2)--(7,4)--(8,3);
}
\caption{An example of the insertion procedure.}
\label{fig:symDTab}
\end{figure}
\end{example}

We list all ballot tableaux of size $2$ in Figure \ref{fig:BTn2}.
\begin{figure}[ht]
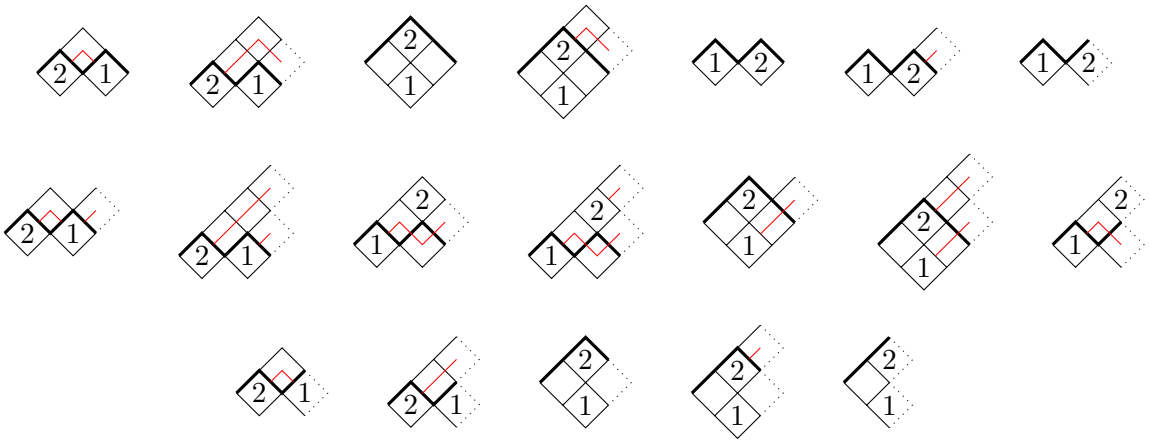

\tikzpic{-0.5}{[x=0.3cm,y=0.3cm]
\draw[very thick](0,0)--(1,1)--(2,0)--(3,1)--(4,0);
\draw(0,0)--(1,-1)--(2,0)--(3,-1)--(4,0)(1,1)--(2,2)--(3,1);
\draw[red](1.5,0.5)--(2,1)--(2.5,0.5);
\draw(1,0)node{$2$}(3,0)node{$1$};
}\quad
\tikzpic{-0.5}{[x=0.3cm,y=0.3cm]
\draw[very thick](0,0)--(1,1)--(2,0)--(3,1)--(4,0);
\draw(0,0)--(1,-1)--(2,0)--(3,-1)--(4,0)(1,1)--(3,3)--(4,2)(2,2)--(3,1)--(4,2);
\draw[dotted](4,2)--(5,1)--(4,0);
\draw[red](1.5,0.5)--(3,2)--(4,1);
\draw(1,0)node{$2$}(3,0)node{$1$};
}\quad
\tikzpic{-0.5}{[x=0.3cm,y=0.3cm]
\draw[very thick](0,0)--(2,2)--(4,0);
\draw(1,1)--(3,-1)(1,-1)--(3,1)(0,0)--(2,-2)--(4,0);
\draw(2,1)node{$2$}(2,-1)node{$1$};
}\quad
\tikzpic{-0.5}{[x=0.3cm,y=0.3cm]
\draw[very thick](0,0)--(2,2)--(4,0);
\draw(1,1)--(3,-1)(1,-1)--(3,1)(0,0)--(2,-2)--(4,0)(2,2)--(3,3)--(4,2)(3,1)--(4,2);
\draw[dotted](4,2)--(5,1)--(4,0);
\draw[red](2.5,1.5)--(3,2)--(4,1);
\draw(2,1)node{$2$}(2,-1)node{$1$};
}\quad
\tikzpic{-0.5}{[x=0.3cm,y=0.3cm]
\draw[very thick](0,0)--(1,1)--(2,0)--(3,1)--(4,0);
\draw(0,0)--(1,-1)--(2,0)--(3,-1)--(4,0);
\draw(1,0)node{$1$}(3,0)node{$2$};
}\quad
\tikzpic{-0.5}{[x=0.3cm,y=0.3cm]
\draw[very thick](0,0)--(1,1)--(2,0)--(3,1)--(4,0);
\draw(0,0)--(1,-1)--(2,0)--(3,-1)--(4,0)(3,1)--(4,2);
\draw[dotted](4,2)--(5,1)--(4,0);
\draw[red](3.5,0.5)--(4,1);
\draw(1,0)node{$1$}(3,0)node{$2$};
}\quad
\tikzpic{-0.5}{[x=0.3cm,y=0.3cm]
\draw[very thick](0,0)--(1,1)--(2,0)--(3,1);
\draw(0,0)--(1,-1)--(2,0)--(3,-1);
\draw[dotted](3,1)--(4,0)--(3,-1);
\draw(1,0)node{$1$}(3,0)node{$2$};
}\\[12pt]
\tikzpic{-0.5}{[x=0.3cm,y=0.3cm]
\draw[very thick](0,0)--(1,1)--(2,0)--(3,1)--(4,0);
\draw(0,0)--(1,-1)--(3,1)(2,0)--(3,-1)--(4,0)(1,1)--(2,2)--(3,1)--(4,2);
\draw[dotted](4,2)--(5,1)--(4,0);
\draw[red](1.5,0.5)--(2,1)--(2.5,0.5)(3.5,0.5)--(4,1);
\draw(1,0)node{$2$}(3,0)node{$1$};
}\quad
\tikzpic{-0.5}{[x=0.3cm,y=0.3cm]
\draw[very thick](0,0)--(1,1)--(2,0)--(3,1)--(4,0);
\draw(0,0)--(1,-1)--(2,0)--(3,-1)--(4,0)(1,1)--(4,4)(2,2)--(3,1)--(4,2)--(3,3);
\draw[dotted](4,4)--(5,3)--(4,2)--(5,1)--(4,0);
\draw[red](1.5,0.5)--(4,3)(3.5,0.5)--(4,1);
\draw(1,0)node{$2$}(3,0)node{$1$};
}\quad
\tikzpic{-0.5}{[x=0.3cm,y=0.3cm]
\draw[very thick](0,0)--(1,1)--(2,0)--(3,1)--(4,0);
\draw(0,0)--(1,-1)--(2,0)--(3,-1)--(4,0)(1,1)--(3,3)--(4,2)(2,2)--(3,1)--(4,2);
\draw[dotted](4,2)--(5,1)--(4,0);
\draw[red](1.5,0.5)--(2,1)--(3,0)--(4,1);
\draw(1,0)node{$1$}(3,2)node{$2$};
}\quad
\tikzpic{-0.5}{[x=0.3cm,y=0.3cm]
\draw[very thick](0,0)--(1,1)--(2,0)--(3,1)--(4,0);
\draw(0,0)--(1,-1)--(2,0)--(3,-1)--(4,0)(1,1)--(3,3)--(4,2)(2,2)--(3,1)--(4,2)
(3,3)--(4,4);
\draw[dotted](4,4)--(5,3)--(4,2)--(5,1)--(4,0);
\draw[red](1.5,0.5)--(2,1)--(3,0)--(4,1)(3.5,2.5)--(4,3);
\draw(1,0)node{$1$}(3,2)node{$2$};
}\quad
\tikzpic{-0.5}{[x=0.3cm,y=0.3cm]
\draw[very thick](0,0)--(2,2)--(4,0);
\draw(0,0)--(2,-2)--(4,0)(1,-1)--(4,2)(1,1)--(3,-1);
\draw[dotted](4,2)--(5,1)--(4,0);
\draw[red](2.5,-0.5)--(4,1);
\draw(2,-1)node{$1$}(2,1)node{$2$};
}\quad
\tikzpic{-0.5}{[x=0.3cm,y=0.3cm]
\draw[very thick](0,0)--(2,2)--(4,0);
\draw(0,0)--(2,-2)--(4,0)(1,-1)--(4,2)(1,1)--(3,-1)(2,2)--(4,4)(3,3)--(4,2);
\draw[dotted](4,4)--(5,3)--(4,2)--(5,1)--(4,0);
\draw[red](2.5,-0.5)--(4,1)(2.5,1.5)--(4,3);
\draw(2,-1)node{$1$}(2,1)node{$2$};
}\quad
\tikzpic{-0.5}{[x=0.3cm,y=0.3cm]
\draw[very thick](0,0)--(1,1)--(2,0)--(3,1);
\draw(0,0)--(1,-1)--(2,0)--(3,-1)(1,1)--(3,3)(2,2)--(3,1);
\draw[dotted](3,3)--(4,2)--(3,1)--(4,0)--(3,-1);
\draw[red](1.5,0.5)--(2,1)--(3,0);
\draw(1,0)node{$1$}(3,2)node{$2$};
}\\[12pt]
\tikzpic{-0.5}{[x=0.3cm,y=0.3cm]
\draw[very thick](0,0)--(1,1)--(2,0)--(3,1);
\draw(1,1)--(2,2)--(3,1)(0,0)--(1,-1)--(2,0)--(3,-1);
\draw[dotted](3,1)--(4,0)--(3,-1);
\draw[red](1.5,0.5)--(2,1)--(2.5,0.5);
\draw(1,0)node{$2$}(3,0)node{$1$};
}\quad
\tikzpic{-0.5}{[x=0.3cm,y=0.3cm]
\draw[very thick](0,0)--(1,1)--(2,0)--(3,1);
\draw(0,0)--(1,-1)--(2,0)--(3,-1)(1,1)--(3,3)(2,2)--(3,1);
\draw[dotted](3,3)--(4,2)--(3,1)--(4,0)--(3,-1);
\draw[red](1.5,0.5)--(3,2);
\draw(1,0)node{$2$}(3,0)node{$1$};
}\quad
\tikzpic{-0.5}{[x=0.3cm,y=0.3cm]
\draw[very thick](0,0)--(2,2)--(3,1);
\draw(0,0)--(2,-2)--(3,-1)(1,1)--(3,-1)(1,-1)--(3,1);
\draw[dotted](3,1)--(4,0)--(3,-1);
\draw(2,-1)node{$1$}(2,1)node{$2$};
}\quad
\tikzpic{-0.5}{[x=0.3cm,y=0.3cm]
\draw[very thick](0,0)--(2,2)--(3,1);
\draw(0,0)--(2,-2)--(3,-1)(1,1)--(3,-1)(1,-1)--(3,1)(2,2)--(3,3);
\draw[dotted](3,3)--(4,2)--(3,1)--(4,0)--(3,-1);
\draw[red](2.5,1.5)--(3,2);
\draw(2,-1)node{$1$}(2,1)node{$2$};
}\quad
\tikzpic{-0.5}{[x=0.3cm,y=0.3cm]
\draw[very thick](0,0)--(2,2);
\draw(0,0)--(2,-2)(1,1)--(2,0)--(1,-1);
\draw[dotted](2,2)--(3,1)--(2,0)--(3,-1)--(2,-2);
\draw[red];
\draw(2,-1)node{$1$}(2,1)node{$2$};
}
\caption{19 ballot tableaux of size $2$.}
\label{fig:BTn2}
\end{figure}

\subsection{Ballot tableaux and symDTR bijection}
In \cite{ABDH11}, a Dyck tableau of order $n$ is bijective to a cover-inclusive 
Dyck tiling above $\lambda=(UD)^{n}$.
In \cite{S19}, the notion of Dyck tableaux are generalized to Dyck tilings above 
general lower Dyck paths.
In this subsection, we generalize the above bijection to fundamental symmetric
Dyck tilings above a general ballot path $\lambda$.
Let $L$ be a natural label of the tree $T(\lambda)$, which may have circles on labels 
and dotted edges.
If we restricted $\lambda$ to Dyck paths and natural labels $L$ without circles and dots,
the bijection coincides with the one constructed in \cite{S19}.

Let $\lambda$ be a ballot path, $L$ a natural label of the tree $T(\lambda)$,
$\mathrm{BTab}(L)$ the ballot tableaux for the natural label $L$, and $\mu$ the 
top path of $\mathrm{BTab}(L)$.
We will construct a correspondence between $\mathrm{BTab}(L)$ and a cover-inclusive fundamental 
symmetric Dyck tiling whose boundaries are $\lambda$ and $\mu$.

Given a natural label $L$ of a tree $T$ (for a fundamental symmetric Dyck 
tiling, not for a ballot tiling), we will construct a map from a ballot tableau 
$\mathrm{BTab}(L)$ to a fundamental symmetric Dyck tiling $\mathcal{D}$.
Recall that an anchor box in the zeroth floor corresponds to a 
pair of an up step and a down step in a ballot path $\lambda$, or 
to an up step, which corresponds to a dotted edge in $L$, in a ballot 
path $\lambda$.
Suppose that an anchor box $a$ corresponds to an edge without a dot in $L$.
When the anchor box $a$ in $\mathrm{BTab}(L)$ is in the $p$-th floor with $p\ge1$,
we have $p$ non-trivial Dyck tiles above the corresponding 
pair of the up and down steps.
Similarly, suppose that a boundary anchor box $a^{\ast}$ corresponds to a dotted edge 
in $L$. 
Then, when the anchor box $a^{\ast}$ is in the $p$-th floor with $p\ge1$,
we have $p$ non-trivial ballot tiles above the corresponding up step.
\begin{figure}[ht]
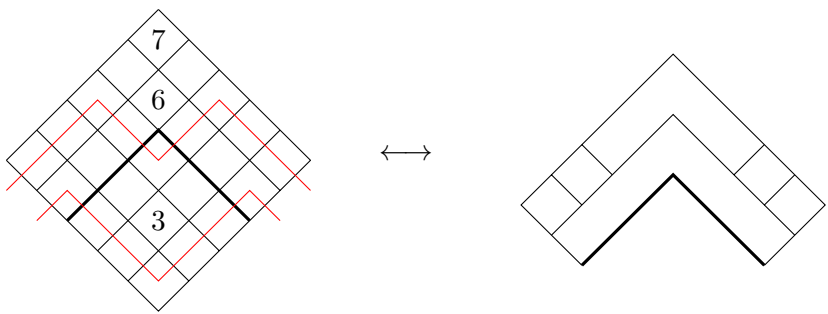

\tikzpic{-0.5}{[x=0.4cm,y=0.4cm]
\draw[very thick](0,0)--(3,3)--(6,0);
\draw(-2,2)--(3,-3)--(8,2)(-1,3)--(4,-2)(0,4)--(5,-1)(1,5)--(3,3)
(2,6)--(7,1)(3,7)--(8,2);
\draw(-2,2)--(3,7)(-1,1)--(4,6)(3,3)--(5,5)(1,-1)--(6,4)(2,-2)--(7,3);
\draw[red](-1,0)--(0,1)--(3,-2)--(6,1)--(7,0);
\draw[red](-2,1)--(1,4)--(2,3)--(3,2)--(5,4)--(6,3)--(8,1);
\draw(3,0)node{$3$}(3,4)node{$6$}(3,6)node{$7$};
}\qquad
$\longleftrightarrow$
\qquad
\tikzpic{-0.5}{[x=0.4cm,y=0.4cm]
\draw[very thick](0,0)--(3,3)--(6,0);
\draw(0,0)--(-2,2)--(3,7)--(8,2)--(6,0)(-1,3)--(0,2)(0,4)--(1,3);
\draw(-1,1)--(3,5)--(7,1)(5,3)--(6,4)(6,2)--(7,3);
}
\caption{A partial ballot tableau and a cover-inclusive Dyck tiling.}
\label{fig:BTtoDT}
\end{figure}
Figure \ref{fig:BTtoDT} is an example of partial correspondence between 
a ballot tableau and a cover-inclusive Dyck tiling.
The anchor box with label $3$ is in the first floor. So, we have one 
Dyck tile of size $4$. The anchor boxes with labels $6$ and $7$ are 
in the second floor. So we have two Dyck tiles of size $3$. 
However, one of the Dyck tiles of size $3$ is inside of the Dyck tile 
of size $4$. Therefore, in total, we have two Dyck tiles, whose sizes
are $3$ and $4$.

The lower boundary of $\mathcal{D}$ is $\lambda$ and the top boundary of 
$\mathcal{D}$ is the same as the one of $\mathrm{BTab}(L)$, that is, $\mu$.
The floor where an anchor box lies fixes the positions of 
non-trivial Dyck or ballot tiles in $\mathcal{D}$.
Note that once top and lower boundaries are fixed and the positions of 
non-trivial Dyck and ballot tiles are fixed, one can reconstruct a fundamental 
symmetric Dyck tiling by putting single boxes in the remaining region. 
In this way, we obtain a fundamental symmetric Dyck tiling $\mathcal{D}$ 
from a ballot tableau $\mathrm{BTab}(L)$.

Summarizing the above map, we have the following theorem (see also Theorem 4.18 in \cite{S19}
for Dyck tableaux and Dyck tilings).
\begin{theorem}
There is a bijection between the cover-inclusive fundamental symmetric Dyck tilings
whose lower path is $\lambda$ of length $(n,n')$ and the ballot tableaux associated 
with $\lambda$ with $n+n'$ labeled boxes.
\end{theorem}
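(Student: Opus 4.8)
The plan is to factor the asserted correspondence through natural labels and reduce it to the symDTR bijection, which is already known to be a bijection between natural labels of $T(\lambda)$ and fundamental symmetric Dyck tilings above $\lambda$. The construction preceding the theorem assigns to each ballot tableau $\mathrm{BTab}(L)$ a fundamental symmetric Dyck tiling $\mathcal{D}$ by recording, for every anchor box, the floor in which it lies. I would therefore establish the theorem by proving (i) that $L\mapsto\mathrm{BTab}(L)$ is a bijection between natural labels and ballot tableaux associated with $\lambda$, and (ii) that the floor map $\mathrm{BTab}(L)\mapsto\mathcal{D}$ agrees with the image of $L$ under the symDTR bijection. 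Granting these, the floor map is the composite of a bijection with the inverse of another bijection, hence is itself a bijection.

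For step (i), I would argue that the insertion procedure for ballot tableaux is deterministic and reversible. Given the insertion history $\mathbf{h}=(h_1,\ldots,h_{n+n'})$ of $L$, the tableau is built one labeled box at a time, and each step (splitting along $x=h_m$, inserting $UD$ or $U$ according to whether $h_m$ carries a box, relocating the existing labeled boxes, and reconnecting the severed ribbon lines) is forced once $h_m$ and the circle/box decoration of the $m$-th edge are fixed. Injectivity is then immediate. For surjectivity one reverses the last insertion: the box labeled $n+n'$ locates the column $x=h_{n+n'+1}$ and records whether that entry carried a box (a terminal $U$) or a $UD$ pair, and whether the label was circled; removing it and contracting the tableau returns a ballot tableau of smaller size, so by induction every ballot tableau arises from a unique natural label.

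For step (ii), I would match the insertion procedure for ballot tableaux against the symDTR operations step by step, exactly as the symDTS insertion was matched against Hermite histories in Section \ref{sec:HhsymDT}. The addition of a labeled box at $x=h_m$ corresponds to the Dyck or ballot spread of (symDTS\ref{symDTS1}), and the reconnection of the severed ribbon line below the new labeled box corresponds to the addition of a Dyck ribbon (when the label is not circled) or of a ballot ribbon terminating in a box with $\ast$ (when it is circled). The content to be verified is the floor count: that an anchor box associated with an edge without a dot sits in the $p$-th floor precisely when symDTR places $p$ nested non-trivial Dyck tiles above the corresponding $UD$ pair, and likewise that a boundary anchor box sits in the $p$-th floor precisely when symDTR places $p$ non-trivial ballot tiles above the corresponding terminal $U$. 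Since the top path $\mu$ of $\mathrm{BTab}(L)$ is by construction the top path produced by the symmetric growth, and since the floors fix the positions of all non-trivial tiles while the remaining region is filled by single boxes in both constructions, the two fundamental symmetric Dyck tilings coincide.

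The main obstacle is step (ii), and within it the bookkeeping of the frozen regions and the $\ast$-boxes. When a line in the tableau passes through already-labeled anchor boxes on lower floors, the frozen region must be translated and enlarged (conditions (4-c) and (4-d) of the tableau algorithm), and one must check that this matches how a newly grown ribbon in symDTR nests inside and around the previously grown tiles; the phenomenon visible in Figure \ref{fig:BTtoDT}, where a size-$3$ tile lies inside a size-$4$ tile so that a naive floor count would overcount, is exactly what this correspondence must reconcile. The ballot case requires extra care: a terminal box in $\mathrm{BTab}(L)$ corresponds to a turn box of the associated genuine symmetric tableau (Remark \ref{remark:terminalbox}), and the structure of the $\ast$-boxes along $x=2n+n'$ must be tracked so that the ballot tiles carrying $\ast$ are produced with the correct multiplicities. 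Once these local correspondences are verified, the global identification of the floor map with symDTR, and hence the bijection, follows.
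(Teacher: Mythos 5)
Your proposal is correct and follows essentially the same route as the paper: the paper also obtains the correspondence by indexing both ballot tableaux and fundamental symmetric Dyck tilings by natural labels of $T(\lambda)$, identifying the top path of $\mathrm{BTab}(L)$ with that of the tiling, and reading the floor of each anchor box as the number of non-trivial Dyck (resp.\ ballot) tiles over the corresponding $UD$ pair (resp.\ terminal $U$), with single boxes filling the rest. The paper's own argument is in fact terser than yours, so the verifications you flag in step (ii) --- the nesting of tiles as in Figure \ref{fig:BTtoDT} and the terminal-box/$\ast$ bookkeeping --- are exactly the details it leaves implicit.
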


We give an example of the correspondence between a ballot tableau and 
a cover-inclusive fundamental symmetric Dyck tiling in Figure \ref{fig:bijBTfsDT}.

\begin{figure}[ht]
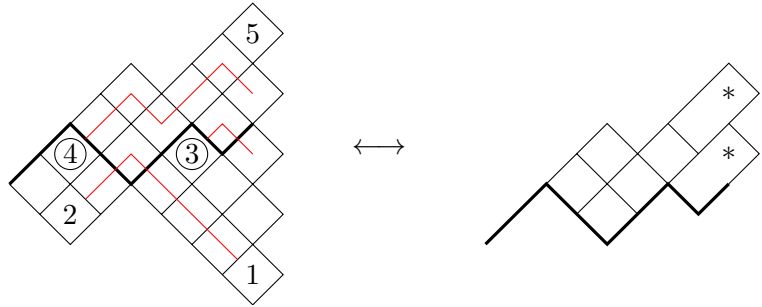

\tikzpic{-0.5}{[x=0.4cm,y=0.4cm]
\draw[very thick](0,0)--(2,2)--(4,0)--(6,2)--(7,1)--(8,2);
\draw(2,2)--(4,4)--(9,-1)--(7,-3)(1,-1)--(8,6)--(9,5)--(8,4)
     (3,3)--(9,-3)--(8,-4)(0,0)--(2,-2)--(4,0)--(8,-4)(1,1)--(3,-1)
     (6,4)--(9,1)--(6,-2)(7,5)--(9,3)--(5,-1)(6,2)--(8,4);
\draw(2,-1)node{$2$}(2,1)node{$\circnum{4}$}(8,-3)node{$1$}
     (6,1)node{$\circnum{3}$}(8,5)node{$5$};
\draw[red](2.5,-0.5)--(4,1)--(7.5,-2.5)(6.5,1.5)--(7,2)--(8,1)
          (2.5,1.5)--(4,3)--(5,2)--(7,4)--(8,3);
}\qquad $\longleftrightarrow$\qquad
\tikzpic{-0.5}{[x=0.4cm,y=0.4cm]
\draw[very thick](0,0)--(2,2)--(4,0)--(6,2)--(7,1)--(8,2);
\draw(2,2)--(4,4)--(6,2)(3,1)--(5,3)(3,3)--(5,1)
(5,3)--(8,6)--(9,5)--(8,4)--(9,3)--(8,2);
\draw(6,2)--(8,4)(7,3)--(6,4);
\draw(8,5)node{$\ast$}(8,3)node{$\ast$};
}
\caption{A bijection between a ballot tableau and a cover-inclusive 
fundamental symmetric Dyck tiling}
\label{fig:bijBTfsDT}
\end{figure}

\subsection{Generalized patterns and shadow and 
clear boxes of ballot tableaux}

In \cite{ABDH11}, they study the generalized patterns of a permutation and 
its relation a Dyck tableau whose lower boundary is a zig-zag path.
It was shown that a pattern is bijective to an object in a Dyck tableau
such as a ribbon, a shadow or clear box.
In \cite{S19}, the generalized patterns are defined in terms of 
trees which contain permutations as special cases. 
By introducing the notion of proper shadow and proper clear boxes in 
a Dyck tableau, a generalized pattern in a tree is shown to be bijective to 
an object in a generalized Dyck tableau.
In this subsection, we study a connection between a generalized pattern 
in a tree associated with a ballot path, and an object in a ballot 
tableau such as a ribbon, a proper shadow or a proper clear box. 
The case of Dyck tableaux associated with a general tree is included 
in the case of ballot tableaux since we have a generalized Dyck tableau
by restricting ourselves to the case of ballot tableaux with neither dotted 
edges nor circled edges.

\begin{defn}
Given a ballot tableau, let $a$ be a box which is either a parallel, turn or terminal box,
and $b$ be a labeled box.
We call $a$ a shadow (resp. clear) box if $a$ is above (resp. below) $b$ such that 
there is no labeled box between $a$ and $b$.
\end{defn}

Let $B=\mathrm{BTab}(L)$ be a ballot tableau for a natural label $L$.
Recall that $B$ consists of labeled boxes, paths connecting two labeled boxes
and empty boxes.
If we have a $\wedge$-turn box $t$ such that a box below $t$ is an empty 
boxes, we locally transform the path containing $t$ as Figure \ref{fig:lmove}.
\begin{figure}[ht]
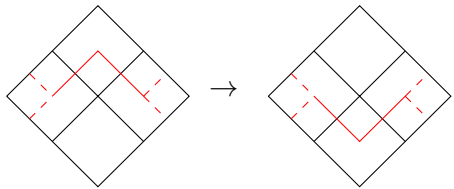

\tikzpic{-0.5}{[scale=0.6]
\draw(0,0)--(2,2)--(4,0)--(2,-2)--(0,0);
\draw(1,1)--(3,-1)(1,-1)--(3,1);
\draw[red](1,0)--(2,1)--(3,0);
\draw[red,dashed](0.5,0.5)--(1,0)(0.5,-0.5)--(1,0);
\draw[red,dashed](3,0)--(3.5,0.5)(3,0)--(3.5,-0.5);
}
$\rightarrow$
\tikzpic{-0.5}{[scale=0.6]
\draw(0,0)--(2,2)--(4,0)--(2,-2)--(0,0);
\draw(1,1)--(3,-1)(1,-1)--(3,1);
\draw[red](1,0)--(2,-1)--(3,0);
\draw[red,dashed](0.5,0.5)--(1,0)(0.5,-0.5)--(1,0);
\draw[red,dashed](3,0)--(3.5,0.5)(3,0)--(3.5,-0.5);
}
\caption{A local move for a path}
\label{fig:lmove}
\end{figure}
In the process of a local move, we never move the positions of 
labeled boxes.
We denote by $B'$ the tableau obtained from $B$ by local moves such that 
we can not apply a local move to any $\wedge$-turn box in $B'$.
This means that one cannot lower paths in $B'$.

Note that a shadow (resp. clear) box in $B$ is obviously bijective 
to a shadow (resp. clear) box in $B'$.

\begin{defn}
\label{defn:pshadow}
Let $B'$ be the tableau obtained from $B$ as above.
Let $s$ be a shadow (resp. clear) box above (resp. below) a labeled box 
$b$ in a ballot tableau $B$. 
We have two cases: 1) $s$ is not a terminal box, and 2) $s$ is a terminal box.
\begin{enumerate}
\item
We call $s$ a proper shadow (resp. clear) box if there is neither empty 
boxes nor a labeled box below (resp. above) $s$ and above (resp. below) 
$b$ in $B'$.
\item 
All terminal boxes which are shadow (resp. clear) boxes are 
proper shadow (resp. clear) boxes.  
\end{enumerate}
\end{defn}

\begin{remark}
In Definition \ref{defn:pshadow}, all shadow or clear terminal boxes are classified as 
proper shadow or proper clear boxes, or both. 
The number of proper shadow boxes associated with terminal boxes 
is less than or equal to the number of circles in the natural label.
\end{remark}

An example of shadow boxes and proper shadow boxes in a ballot tableau is 
shown in Figure \ref{fig:sbBT}.
\begin{figure}[ht]
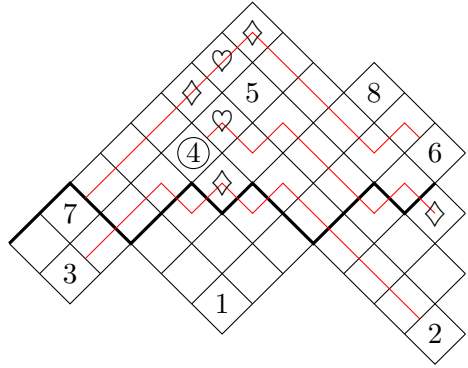

\tikzpic{-0.5}{[scale=0.4]
\draw[very thick](0,0)--(2,2)--(4,0)--(6,2)--(7,1)--(8,2)--(10,0)--(12,2)--(13,1)--(14,2);
\draw(0,0)--(2,-2)--(4,0)--(7,-3)--(10,0)--(14,-4)--(15,-3);
\draw(2,2)--(8,8)--(12,4)--(13,5)--(15,3);
\draw(1,1)--(3,-1)(3,3)--(8,-2)(4,4)--(9,-1)(5,5)--(10,0)(6,6)--(15,-3)(7,7)--(15,-1)
     (12,4)--(15,1);
\draw(1,-1)--(9,7)(4,0)--(10,6)(5,-1)--(11,5)(6,-2)--(12,4)(12,2)--(14,4);
\draw(11,-1)--(15,3)(12,-2)--(15,1)(13,-3)--(15,-1);
\draw(11,5)--(12,6)--(13,5);
\draw(2,1)node{$7$}(2,-1)node{$3$}(5.9,3)node{\circnum{4}}(7,-2)node{$1$}(8,5)node{$5$}
     (14,-3)node{$2$}(14,3)node{$6$}(12,5)node{$8$};
\draw[red](2.5,1.5)--(8,7)--(12,3)--(13,4)--(13.5,3.5)
          (6.5,3.5)--(7,4)--(8,3)--(9,4)--(12,1)--(13,2)--(14,1)
          (2.5,-0.5)--(5,2)--(6,1)--(7,2)--(8,1)--(9,2)--(13.5,-2.5);
\draw(6,5)node{$\diamondsuit$}(8,7)node{$\diamondsuit$}(7,2)node{$\diamondsuit$}(14,1)node{$\diamondsuit$};
\draw(7,4)node{$\heartsuit$}(7,6)node{$\heartsuit$};
}
\caption{Shadow boxes for a ballot tableau.
Boxes with $\diamondsuit$ and $\heartsuit$ are shadow boxes.
Boxes with $\diamondsuit$ are proper shadow boxes.}
\label{fig:sbBT}
\end{figure}

Let $\lambda$ be a ballot path.
To define the generalized patterns, we introduce the notion of a position tree.
Let $\mathcal{E}_{\bullet}$ be the set of edges with dots in the tree $\mathrm{Tree}(\lambda)$.
\begin{defn}[Position tree]
A tree $\mathrm{PosTree}(\lambda)$ is a tree such that its shape 
is $\mathrm{Tree}(\lambda)$ and its edge $E$ has the label $\mathrm{Pos}(E)$
given by 
\begin{align*}
\mathrm{Pos}(E):=&2\cdot\#\{E'|E'\leftarrow E\}+\#\{E'| E\uparrow E'\}+\#\{E'| E'\uparrow E\} \\
&+\#\{E'| E\in\mathcal{E}_{\bullet}, E'\not\in\mathcal{E}_{\bullet}, E'\uparrow E\}+1.
\end{align*}
We call $\mathrm{PosTree}(\lambda)$ the position tree for a ballot path $\lambda$.
\end{defn}

Figure \ref{fig:postree} is an example of a position tree. 
Compare this with the ballot tableau in Figure \ref{fig:bijBTfsDT}.
The position tree encodes the column number of the labeled boxes. 

\begin{figure}
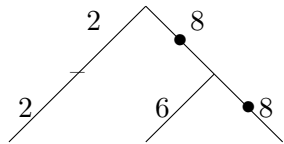

\tikzpic{0.5}{[scale=0.6]
\coordinate
	child{coordinate(c2)
		child{coordinate(c4)}
		child[missing]
		child[missing]	
	}
	child[missing]
	child{coordinate(c1)
		child{coordinate(c3)}
		child[missing]
		child{coordinate(c5)}
	};
\node[anchor=south east] at($(0,0)!0.5!(c2)$){$2$};
\node[anchor=east] at($(c2)!0.5!(c4)$){$2$};
\node[anchor=south west] at($(0,0)!0.5!(c1)$){$8$};
\node[anchor=east] at($(c1)!0.5!(c3)$){$6$};
\node[anchor=west] at($(c1)!0.5!(c5)$){$8$};
\node at($(0,0)!0.5!(c1)$){$\bullet$};
\node at($(c1)!0.5!(c5)$){$\bullet$};
\node at($(c2)$){--};
}
\caption{A position tree for a ballot path $UUDDUUDU$.}
\label{fig:postree}
\end{figure}

Recall that an edge $E$ without a dot in a tree $\mathrm{Tree}(\lambda)$ has a corresponding 
up and down steps in a ballot tableau. 
Similarly an edge $E$ with a dot has a corresponding up step in the ballot tableau.
Then, the label $\mathrm{Pos}(E)$ in $\mathrm{Tree}(\lambda)$ indicates the position of the label 
from left in a ballot tableau corresponding to the edge $E$.
In general, the labels of a position tree is neither increasing nor decreasing from the root 
to a leaf.

We extend the notions of patterns $2^{+}2$ and $1^{+}21$ for Dyck tableaux 
to the ones for ballot tableaux.
Recall that we can obtain a symmetric Dyck tableau by gluing a ballot tableau and 
its mirror image.
Then, when a label $l$ in a tree has a circle, we have a line from the north-east 
edge of the box labeled by $l$ ends at the terminal box. 
By gluing a ballot tableau with the mirror image, we have a $2^{+}2$ pattern 
in the symmetric Dyck tiling. 
This observation gives the following definitions of patterns $2^{+}2$, $2^{+}12$,
and $1^{+}21$ for the natural label $L$ for a fundamental symmetric Dyck tiling.
See \cite{ABN11} for the definitions of patterns used in the study of Dyck tableaux.
The definitions of patterns for ballot tableaux contain those for Dyck tableaux.

Let $E(i)$ be the edge labeled by $i$ in the natural label $L$, and $\mathrm{Pos}(E)$
be the label of an edge $E$ in a position tree associated with $L$.

A pattern $2^{+}2$ of a natural label $L$ is a relation of two edges $E(a)$ and $E(b)$
such that $b=a+1$ and $E(a)\rightarrow E(b)$, or a condition on a single edge $E(a)$ such that 
$E(a)$ has a circle.

A pattern $2^{+}12$ of a natural label $L$ is a relation among three edges $E(a)$, $E(b)$ 
and $E(c)$ such that 
\begin{enumerate}
\item $b<c$ and $a=c+1$,
\item $\mathrm{Pos}(E(a))<\mathrm{Pos}(E(b))<\mathrm{Pos}(E(c))$, and  
\item there is no $b'$ such that $\mathrm{Pos}(E(b))=\mathrm{Pos}(E(b'))$ and 
$b<b'<c$,
\end{enumerate}
or a relation between two edges $E(a)$ and $E(b)$ such that 
\begin{enumerate}
\item $E(a)$ has a circle and $b<a$,
\item $\mathrm{Pos}(E(a))<\mathrm{Pos}(E(b))$, and  
\item there is no $b'$ such that $\mathrm{Pos}(E(b))=\mathrm{Pos}(E(b'))$ 
and $b<b'<a$.
\end{enumerate}

A pattern $1^{+}21$ of a natural label $L$ is a relation among 
three edges $E(a)$, $E(b)$ and $E(c)$ such that 
\begin{enumerate}
\item $b>a$ and $a=c+1$,
\item $\mathrm{Pos}(E(a))<\mathrm{Pos}(E(b))<\mathrm{Pos}(E(c))$, and  
\item there is no $b'$ such that $\mathrm{Pos}(E(b))=\mathrm{Pos}(E(b'))$ and 
$b>b'>a$,
\end{enumerate}
or a relation between two edges $E(a)$ and $E(b)$ such that
\begin{enumerate}
\item $E(a)$ has a circle and $b>a$,
\item $\mathrm{Pos}(E(a))<\mathrm{Pos}(E(b))$, and  
\item there is no $b'$ such that $\mathrm{Pos}(E(b))=\mathrm{Pos}(E(b'))$ 
and $b>b'>a$.
\end{enumerate}

\begin{prop}
An added ribbon of a ballot tableau is bijective to the patterns $2^{+}2$ in $L$.
We have two case:
\begin{enumerate}
\item
Suppose that a ribbon connects two boxes with a label.
In a ballot tableau, if we read from left to right the labels of boxes that 
are connected by a ribbon, we get the pattern $2^{+}2$ in $L$.
\item
Suppose that a ribbon connects a labeled box and a terminal box.
If we read the label of the labeled box, we get the pattern $2^{+}2$ in $L$.
\end{enumerate}
\end{prop}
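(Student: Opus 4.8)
The plan is to read the correspondence directly off the construction of $\mathrm{BTab}(L)$ given in Section~\ref{sec:BTsDT}. A ribbon is created only at step~(4) or step~(5) of that algorithm, and exactly one such creation event occurs each time a label $i$ is processed: step~(4) draws a line between the labeled boxes $i-1$ and $i$, while step~(5) draws a line from the box $i$ to a boundary anchor (terminal) box. Thus every ribbon of the tableau is indexed by the label $i$ at which it is introduced, and conversely. First I would argue that the trigger conditions of these two steps are precisely the two defining clauses of a $2^{+}2$ pattern: step~(5) fires exactly when the edge $E(i)$ carries a circle, which is the single-edge clause of the pattern; step~(4) fires exactly when $E(i-1)$ and $E(i)$ sit in the prescribed horizontal relation with $i$ uncircled, which is the two-edge clause $b=a+1,\ E(a)\rightarrow E(b)$. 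Matching creation events with pattern occurrences in this way produces a map in both directions, and the bijection will follow once I check it is well defined and onto.

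For Case~(1) I would take a ribbon joining two labeled boxes and note that its endpoints carry consecutive labels $i-1,i$, which is immediate since step~(4) only ever connects $i$ to $i-1$. To see that reading the endpoints from left to right yields the pattern, I would use the position tree $\mathrm{PosTree}(\lambda)$: the horizontal order of labeled boxes in the tableau realizes the left-to-right order $\rightarrow$ of the corresponding edges in $L$, so the left-to-right reading of the two labels records the relation $E(a)\rightarrow E(b)$ with $a=i-1,\ b=i$, which is the first form of $2^{+}2$. The converse is forced: any pair of edges with consecutive labels in the required relation makes step~(4) connect them, producing a single ribbon. For Case~(2) I would observe that a ballot ribbon terminates at a terminal box exactly when step~(5) applies, i.e.\ exactly when its unique labeled endpoint $E(i)$ is circled; reading that single label returns the circled edge, which is the second form of $2^{+}2$, and again the converse is immediate since each circled edge forces exactly one ballot ribbon ending in a terminal box.

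The main obstacle will be the geometric-to-combinatorial translation underlying the left-to-right reading, together with injectivity and surjectivity. Concretely, I must verify that the horizontal positions of the anchor boxes produced by steps~(2)--(3) genuinely realize the edge order of $L$ encoded by $\mathrm{Pos}$, so that no ribbon ever joins a non-consecutive pair or records the wrong relation, and that a ribbon routed through a frozen region or over lower labeled boxes, as permitted by the subsidiary conditions of step~(4), still has the two prescribed boxes as the only labeled boxes it connects. I would handle this by induction on the insertion procedure: the addition of the box labeled $n+n'+1$ shifts the existing tableau rigidly (step~(1) of the insertion), preserving the relative horizontal order of all earlier boxes and hence of all earlier ribbon endpoints, while the single newly created ribbon is precisely the one indexed by $n+n'+1$. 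This simultaneously yields injectivity (distinct creation events give ribbons with distinct endpoint-label data) and surjectivity (every $2^{+}2$ occurrence is realized at the step carrying its larger label), which completes the bijection.
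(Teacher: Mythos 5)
Your proposal is correct and follows essentially the same route as the paper: for a circled label the paper argues, exactly as you do, that a ribbon ending at the right-most (terminal) column is created immediately after the box labeled $a$ is placed if and only if $E(a)$ is circled, and for the two-edge case the paper simply defers to the proof of Proposition 4.22 in \cite{S19}, which is the DTR-style creation-event argument you spell out inline. One caution: when you identify the trigger of step (4) (namely $E(i-1)$ strictly right of $E(i)$, i.e.\ $E(i)\rightarrow E(i-1)$) with the clause ``$b=a+1$ and $E(a)\rightarrow E(b)$'', the orientations are literally opposite --- this traces back to an apparent sign inconsistency between the paper's definition of the pattern $2^{+}2$ and its tableau algorithm rather than to a flaw in your argument, but you should state explicitly which orientation convention you are using so that the left-to-right reading of the ribbon's endpoints (larger label first) matches the pattern as you define it.
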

\begin{proof}
In case where two labeled edges $E(a)$ and $E(b)$ satisfy the pattern $2^{+}2$, 
the statement follows from the proof of Proposition 4.22 in \cite{S19}.

Below, we consider the case where $E(a)$ has a circle.
Since $E(a)$ has a circle, we add a ribbon from the box labeled by $a$ 
to the right-most column immediately after we put the box labeled by $a$
on a ballot tableau.
Conversely, suppose that the box labeled by $a$ is connected by a ribbon 
to the right-most column. 
By the inverse of the insertion algorithm, we remove the box labeled by 
$a$ immediately after the ribbon is removed.
This means that $E(a)$ has a circle.
\end{proof}

\begin{prop}
\label{prop:shadow}
A shadow box of $L$ is bijective to a pattern $2^{+}12$.
A clear box of $L$ is bijective to a pattern $1^{+}21$.
\end{prop}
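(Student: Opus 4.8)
The plan is to adapt the strategy used for the analogous statement about generalized Dyck tableaux in \cite{S19}, organizing each of the two claims according to the type of ribbon on which the shadow or clear box lies. A shadow or clear box is by definition a parallel, turn, or terminal box, so it always lies on the line of some ribbon, and by the preceding proposition every ribbon realizes a pattern $2^{+}2$: it either joins two boxes with consecutive labels $c$ and $a=c+1$, or it issues from a circled edge $E(a)$ and ends at a terminal box. The bijection I want to produce sends a shadow (resp. clear) box, which is specified by the ribbon it lies on together with the labeled box $b$ it lies above (resp. below), to the triple $\{E(a),E(b),E(c)\}$ or the circled pair $\{E(a),E(b)\}$ forming the corresponding pattern.

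First I would treat a shadow box on an ordinary ribbon joining the boxes labeled $c$ and $a=c+1$. Using the insertion procedure and the ``lowest path'' rule, I would show that the labeled box $b$ sitting directly below it occupies a column strictly between those of the boxes labeled $a$ and $c$; rewriting column numbers as labels of the position tree, this is $\mathrm{Pos}(E(a))<\mathrm{Pos}(E(b))<\mathrm{Pos}(E(c))$. The defining requirement that no labeled box separate the shadow box from $b$ translates to the nonexistence of a label $b'$ sharing the column of $b$ with $b<b'<c$, which is clause (3) of the pattern $2^{+}12$. Since the ribbon is drawn at step $a$, the box $b$ must already be present, forcing $b<c$ and hence the three-edge form of $2^{+}12$. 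The clear-box case is identical except that $b$ is inserted after the ribbon, so $b>a$ and one obtains the three-edge form of $1^{+}21$; this part is exactly the content of \cite{S19}.

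Next I would handle the terminal boxes, which are the genuinely new ingredient. A shadow box that is a terminal box lies on a ballot ribbon issuing from a circled edge $E(a)$, so the associated pattern involves only $E(a)$ and the labeled box $b$ below it. The same column-and-insertion analysis shows that the terminal box lies above $b$ with no labeled box in between exactly when $\mathrm{Pos}(E(a))<\mathrm{Pos}(E(b))$, no $b'$ with $b<b'<a$ shares the column of $b$, and $b$ precedes step $a$, i.e. $b<a$; these are precisely the three clauses of the two-edge form of $2^{+}12$, and reversing the insertion order gives the two-edge form of $1^{+}21$ for terminal clear boxes. Alternatively, one may pass to the symmetric Dyck tableau obtained by gluing with the mirror image as in Remark \ref{remark:terminalbox}, under which a terminal box becomes an interior turn box; the terminal cases then reduce to the ordinary cases already settled.

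The step I expect to be the main obstacle is the bookkeeping identifying the geometric notions ``directly below'' and ``no labeled box in between'' with the position-tree inequalities and clause (3). Concretely, I must verify that $\mathrm{Pos}(E)$ really records the column of the box attached to $E$ throughout the insertion, addition-of-ribbon, and local-move steps, so that the equality $\mathrm{Pos}(E(b))=\mathrm{Pos}(E(b'))$ is equivalent to two labeled boxes lying in the same column, and that the range $b<b'<c$ (resp. $b>b'>a$) exactly captures those $b'$ that could be inserted between $b$ and the ribbon. Once this dictionary between insertion order, floor, and column is pinned down, each of the four cases reduces to a direct verification.
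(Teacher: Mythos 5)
Your proposal is correct and follows essentially the same route as the paper: the three-edge case is handled by reduction to the proof of Proposition 4.23 in \cite{S19}, and the circled/terminal-box case is settled by gluing the ballot tableau with its mirror image so that the terminal box becomes an ordinary turn box of the symmetric Dyck tableau, which is exactly the paper's argument (your direct column-by-column verification for the terminal case is an additional, compatible elaboration the paper does not spell out).
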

\begin{proof}
We consider the case where three boxes labeled by $a,b$ and $c$ 
satisfy the pattern $2^{+}12$.
The statement follows from the proof of Proposition 4.23 in \cite{S19}. 

Suppose the edges $E(a)$ and $E(b)$ satisfy the pattern $2^{+}12$.
Recall that a ballot tableau $B$ is obtained from a symmetric Dyck tableau 
$D$ by cutting it in the middle.
The conditions for $E(a)$ and $E(b)$ in the ballot tableau $B$ correspond 
to a condition for $E(a), E(b)$ and $E(c)$ satisfying the generalized 
pattern $2^{+}12$ in the symmetric Dyck tableau $D$.
A ribbon in $B$ connected to the right-most column corresponds to 
a ribbon in $D$ connecting the box labeled by $a$ and the 
box labeled by $c=a+1$.
Therefore, this case is equivalent to the case of the pattern $2^{+}12$
in the symmetric Dyck tableau.
Again by the proof of Proposition 4.23 in \cite{S19}, the edges $E(a)$
and $E(b)$ satisfying the patterns $2^{+}12$ is bijective to 
a shadow box.

The proof is the same for the pattern $1^{+}21$.
\end{proof}

\subsection{The shape of a ballot tableau}
In this subsection, we study the top path of a ballot tableau.
By the explicit construction of the insertion process, the top
path is characterized by the tree structure of a natural label.
See \cite{ABDH11,S19} in case of Dyck tableaux.

Let $\lambda$ and $\mu$ be ballot paths of length $(n,n')$ 
satisfying $\lambda\le\mu$, and $T_{1}$ be a label of the tree $\mathrm{Tree}(\lambda)$.
Recall that given an anchor box at the zeroth floor, we have a corresponding 
pair of $U$ and $D$ steps, or a corresponding $U$ step in $\lambda$.
In former case, let $i_{U}$ and $i_{D}$ be the positions of these $U$ and $D$ steps in $\lambda$
from left. In the latter case, let $i_{U}$ be the position of the $U$ step in $\lambda$.
The pair of $U$ and $D$ steps corresponds to an edge $E$ without a dot in $\mathrm{Tree}(\lambda)$ and 
a single $U$ step corresponds to an edge $E$ with a dot in $\mathrm{Tree}(\lambda)$.
We denote by $T_{1}(E)$ the label of the edge $E$ in $T_{1}$ and by $E^{+}$ (resp. $E^{-}$) 
the edge whose label in $T_{1}$ is given by $T_{1}(E)+1$ (resp. $T_{1}(E)-1$).
We denote $\mathrm{lb}(E)$ (resp. $\mathrm{rb}(E)$) the step of the top path $\mu$ of the ballot 
tableau at the position $i_{U}$ (resp. $i_{D}$).
We call $\mathrm{lb}(E)$ (resp. $\mathrm{rb}(E)$) the left border (resp. right border) for the edge $E$.
\begin{prop}
The left border for the edge $E$ in $\mathrm{Tree}(\lambda)$ is give by 
\begin{align*}
\mathrm{lb}(E)=
\begin{cases}
U & \text{if } T_{1}(E)=n+n', \\
U & \text{if } E\rightarrow E^{+},\text{ or } E^{+}\uparrow E, \\
U & \text{if } E^{+}\rightarrow E \text{ and }  E^{+} \text{ is circled}, \\
D & \text{if } E^{+}\rightarrow E \text{ and } E^{+} \text{ is not circled}.
\end{cases}
\end{align*}
Suppose that there exists a right border for the edge $E$. Then,
the right border for the edge $E$ is given by 
\begin{align*}
\mathrm{rb}(E)=
\begin{cases}
D & \text{if } T_{1}(E)=1, \\
D & \text{if } E \text{ is not circled and either } E^{-}\rightarrow E \text{ or } E\uparrow E^{-},\\
U & \text{if } E\rightarrow E^{-} \text{ or } E \text{ is circled}. \\
\end{cases}
\end{align*}
\end{prop}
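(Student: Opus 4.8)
The plan is to read off both borders directly from the symmetric DTR insertion procedure of Section 6.1, tracking the single step of the evolving top path that sits above the up step $U_{1}$ (for the left border) and above the down step $D_{1}$ (for the right border) of the edge $E$. Write $k:=T_{1}(E)$ and recall that the labeled boxes are inserted in increasing order of label, so that the box attached to $E^{-}$ is inserted immediately before, and the box attached to $E^{+}$ immediately after, the box attached to $E$.

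First I would establish the two local facts that make the whole argument work. (i) At the moment the box with label $k$ is inserted, the step of the provisional top path lying above the up step of $E$ is an up step — it is the up step of the local path $UD$ produced by the Dyck spread, or the single $U$ produced by the ballot spread when $E$ is dotted — while the step above the down step of $E$ (in the non-dotted case, when an $D_{1}$ step and hence a right border exists) is a down step. (ii) A later addition of a ribbon alters the top path only at the down step where the ribbon begins, which becomes $U$, and, for a Dyck ribbon, at the up step of the special column where it ends, which becomes $D$; every intermediate step keeps its type. The key structural input, inherited from the DTR mechanism of \cite{KMPW12}, is that the start of the $m$-th ribbon is the freshly spread down step of the box with label $m$, and its special column is the up step associated with the box with label $m-1$. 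Consequently the up step of $E$ can be converted into a down step by one and only one later operation, namely the Dyck ribbon added when $E^{+}$ is inserted, and this occurs exactly when $E^{+}\rightarrow E$ and $E^{+}$ is not circled; in every other configuration — $T_{1}(E)=n+n'$ so that no $E^{+}$ exists, or $E\rightarrow E^{+}$, or $E^{+}\uparrow E$, or $E^{+}$ circled so that a ballot ribbon towards the rightmost column is added instead — the up step of $E$ survives and $\mathrm{lb}(E)=U$. This yields the four cases of the left-border formula.

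For the right border I would argue symmetrically that the down step of $E$ can only be flipped to an up step during the insertion of $E$ itself: if $E$ is circled a ballot ribbon starts at this down step, and if $E$ is not circled a Dyck ribbon starts there precisely when its special column lies strictly to the right of the spread, which by the special-column description means $E\rightarrow E^{-}$. When instead $E^{-}\rightarrow E$ or $E\uparrow E^{-}$, or $T_{1}(E)=1$ so that no $E^{-}$ exists and the ribbon is empty, the down step of $E$ is neither the start nor the endpoint of any ribbon and remains a down step, giving $\mathrm{rb}(E)=D$. That no subsequent insertion disturbs this step follows again from fact (ii) together with the structural input, since for $m>k$ neither the start nor the special column of the $m$-th ribbon can coincide with the down step of $E$.

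Finally, for the sub-cases with neither dots nor circles the statement is exactly the shape proposition for generalized Dyck tableaux in \cite{S19}, so I would invoke it and concentrate the new work on the operations absent there: the ballot spreads attached to dotted edges and the ballot ribbons attached to circled edges, together with the terminal boxes they create. The main obstacle I anticipate is the bookkeeping of horizontal shifts: each spread relabels the columns, so one must phrase everything in terms of the steps of the successive top paths rather than fixed column indices, and must verify carefully that the special column associated with $E^{+}$ is genuinely eligible in the sense of Section 6.1 even when $E$ is a dotted edge, so that the flip underlying the left-border analysis is neither spuriously blocked nor spuriously created by the eligibility condition defining the special column.
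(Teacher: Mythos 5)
Your proposal is correct and follows essentially the same route as the paper: both arguments track the single steps of the evolving top path above $i_{U}$ and $i_{D}$, observe that a Dyck ribbon added at the insertion of label $m$ flips exactly the down step of the box $m$ (to $U$) and the up step at the special column of the box $m-1$ (to $D$), while a ballot ribbon for a circled $m$ flips only the down step of box $m$, and conclude that only the insertions of $E$ and $E^{+}$ can affect the borders of $E$. The paper states this reduction as ``it is enough to consider the three entries $E^{-}$, $E$, $E^{+}$'' and then runs the same case analysis you outline.
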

\begin{proof}
In the insertion procedure of a ballot tableau, we may add a ribbon between the box 
labeled by $j$ and the box labeled by $j+1$ when $j+1$ is not circled.
In this case, we change the left border for the edge with the label $j$ from $U$ to $D$
and the right border for the edge with the label $j+1$ from $D$ to $U$. 
When $j+1$ is circled in $\mathrm{Tree}(\lambda)$, we add a ribbon between the box 
labeled by $j+1$ and a box at $x=2n+n'$. 
This means that we do not change the left and right borders for the edge with the label $j$
and the position of the box labeled by $j$ is irrelevant. 
From these observations, it is enough to consider the three entries $E^{-}, E$ and $E^{+}$. 

If $T_{1}(E)=1$, there is no ribbon starting at the position $i_{D}$. 
We have $\mathrm{rb}(E)=D$ if the right border exists.

Suppose $T_{1}(E)\in[2,n+n']$ and there exists a right border for the edge $E$. 
The right border for the edge $E$ depends on 
whether there is a ribbon starting from $E$:
\begin{enumerate}
\item if $E\rightarrow E^{-}$ or $E$ is circled, 
we have a ribbon between the box labeled by $T_{1}(E)$ 
and the box labeled by $T_{1}(E)-1$ or a right-most box.
This means $\mathrm{rb}(E)=U$.
\item if $E$ is not circled and either $E^{-}\rightarrow E$ or $E\uparrow E^{-}$, 
there is no ribbon which starts from the box labeled by $T_{1}(E)$.
This means that $\mathrm{rb}(E)=D$.
\end{enumerate}

If $T_{1}(E)=n+n'$, there is no ribbon ending at $i_{U}$. 
This means that $\mathrm{lb}(E)=U$.

If $T_{1}(E)\in[1,n+n'-1]$, the left border for the edge $E$ depends on 
whether there is a ribbon ending at $E$:
\begin{enumerate}
\item if $E\rightarrow E^{+}$, $E^{+}\uparrow E$, 
we have no ribbon ending at $E$.
This means $\mathrm{lb}(E)=U$.
\item if $E^{+}\rightarrow E$ and $E^{+}$ is circled, we have a ribbon 
connecting the box labeled by $E^{+}$ and a boundary anchor box. 
This means $\mathrm{lb}(E)=U$.
\item if $E^{+}\rightarrow E$ and $E^{+}$ is not circled, we have a 
ribbon between the box labeled by $T_{1}(E)+1$ and the box labeled 
by $T_{1}(E)$.
This means $\mathrm{lb}(E)=D$.
\end{enumerate}
\end{proof}

\subsection{The (LR/RL)-(minima/maxima) of a fundamental symmetric Dyck tiling}
In this subsection, we introduce (LR/RL)-(minima/maxima) of a natural label $L$
for a fundamental symmetric Dyck tiling, and study its relation to a ballot 
tableau.
See \cite{ABDH11} in case of Dyck tableaux whose lower boundary paths
are zig-zag paths, and \cite{S19} for general lower boundary paths.

The notion of (LR/RL)-(minima/maxima) of $L$ is defined as follows.
\begin{defn}[Section 4.7 in \cite{S19}]
Let $L(E)$ be the label of the edge $E$ in $L$.   
\begin{enumerate}
\item 
$L(E)$ is a {\it right-to-left minimum} (RL-minima) 
if and only if $E$ is connected to the root and 
such that $E\rightarrow E'$ $\Rightarrow$ $L(E)<L(E')$,
\item 
$L(E)$ is a {\it right-to-left maximum} (RL-maxima) 
if and only if  $E$ is connected to a leaf and 
such that $E\rightarrow E'$ $\Rightarrow$ $L(E)>L(E')$,
\item 
$L(E)$ is a {\it left-to-right minimum} (LR-minima) 
if and only if $E$ is connected to the root and 
such that $E'\leftarrow E$ $\Rightarrow$ $L(E)<L(E')$,

\item
$L(E)$ is a {\it left-to-right maximum} (LR-maxima) 
if and only if $E$ is connected to a leaf and 
such that $E'\leftarrow E$ $\Rightarrow$ $L(E)>L(E')$,
\end{enumerate}
\end{defn}

\begin{remark}
If we restrict ourselves to Dyck tilings whose lower boundary 
is a zig-zag Dyck path, the definitions of (LR/RL)-(minima/maxima) 
are equivalent to the standard form considered in \cite{ABDH11}.
In this case, the condition that the edge $E$ is connected to the 
root is equivalent to the condition that $E$ is connected to a leaf.
\end{remark}

\begin{remark}
\label{remark:RL}
Given a natural label $L$ of a fundamental symmetric Dyck tiling, we have 
a unique path from the root to a leaf which consists of only dotted edges of $L$.
By definition, the label on a dotted edge connected to the root is a RL-minima,
and the label on a dotted edge connected to a leaf is a RL-maxima.
\end{remark}

In the following propositions, one can show the statement by the same 
arguments as the proofs from Propositions 4.25 to 4.29 in Section 4.7 in \cite{S19}.

We denote by $\mathrm{BTab}(L)$ the ballot tableau for a natural label $L$, 
and by $n$ the number of labeled boxes in $\mathrm{BTab}(L)$.

From Remark \ref{remark:RL}, we have an obvious RL-minima on a dotted edge in $L$.
We have the following proposition for other RL-minimas which are on edges 
without a dot in $L$.

We say that a labeled box $b$ in a ballot tableau is at the maximal (resp. minimal) 
height if there is no boxes above (resp. below) $b$.
We denote by $\mathrm{Lab}(b)$ the label of a labeled box $b$.
\begin{prop}
\label{prop:RLmin}
A RL-minima of $L$ is bijective to a labeled box $b$ in $\mathrm{BTab}(L)$ such that
\begin{enumerate}
\item it is at the minimal height 
\item if $\mathrm{Lab}(b)$ does not have a circle in $L$, the right border of $b$ is equal to $D$, 
\item if $\mathrm{Lab}(b)$ has a circle in $L$, the ribbon starting from $b$ does not contain shadow 
boxes. 
\end{enumerate}
\end{prop}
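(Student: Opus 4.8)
The plan is to establish, for a non-dotted edge $E$ of $L$ with associated labeled box $b$ (so $\mathrm{Lab}(b)=L(E)$), the equivalence between $L(E)$ being an RL-minima and $b$ satisfying (1)--(3); the dotted RL-minima are already accounted for by Remark \ref{remark:RL} and are excluded here. Since labels are in bijection with boxes, this equivalence immediately yields the asserted bijection. The argument naturally splits according to whether $L(E)$ carries a circle, because a non-circled edge is joined (if at all) by an ordinary Dyck ribbon ending at a turn box, while a circled edge emits a ballot ribbon ending at a terminal box, and these two situations are controlled by different clauses of the border proposition above.

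For the non-circled case the whole configuration --- the anchor boxes, the Dyck ribbons, and the left/right borders --- agrees with the generalized Dyck tableau of \cite{S19}, because circles and dots are precisely the features absent here. I would therefore transcribe the proof of Proposition 4.25 of \cite{S19}: there one shows that $b$ lies at minimal height exactly when $E$ is connected to the root (no ancestor edge contributes an anchor box stacked below $b$), and that, for such an $E$, the right border reads $\mathrm{rb}(E)=D$ exactly when $L(E)<L(E')$ holds for every $E\rightarrow E'$. The latter is read off the border proposition, whose non-circled clauses give $\mathrm{rb}(E)=D$ iff $L(E)=1$, $E^{-}\rightarrow E$, or $E\uparrow E^{-}$ (where $E^{-}$ carries $L(E)-1$), i.e. iff $E$ is not sent strictly to the right of the edge labelled $L(E)-1$. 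Combining the two statements gives conditions (1) and (2).

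The new case is $L(E)$ circled, and this is where the main work lies. For a circled edge the border proposition forces $\mathrm{rb}(E)=U$, so condition (2) is inapplicable and is replaced by (3). My plan is to pass to the symmetric Dyck tableau obtained by gluing $\mathrm{BTab}(L)$ to its mirror image, as in Remark \ref{remark:terminalbox}: there the ballot ribbon running from $b$ to its terminal box becomes a ribbon joining $a:=L(E)$ to its mirror label, realizing the $2^{+}2$-configuration exploited in the proof of Proposition \ref{prop:shadow}. Under this identification, Proposition \ref{prop:shadow} shows that a shadow box sitting on the ribbon from $b$ corresponds to a pattern $2^{+}12$ built from the circled edge $E(a)$, that is, to an edge $E(b')$ with $b'<a$, $\mathrm{Pos}(E(a))<\mathrm{Pos}(E(b'))$ and no intervening edge of equal position. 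Hence condition (3) --- no shadow box on the ribbon --- is equivalent to the absence of every such pattern.

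It then remains to match this pattern-free condition, together with minimal height, to the RL-minima condition for the circled edge. Using that $b$ at minimal height again forces $E$ connected to the root, I would check that for such an $E$ the nonexistence of a $2^{+}12$ pattern of the above shape is equivalent to $L(E)<L(E')$ for all $E\rightarrow E'$, while an edge failing either hypothesis produces an ancestor anchor box below $b$ (violating (1)) or a shadow box on its ribbon (violating (3)). I expect this last matching to be the principal obstacle: it requires carefully reconciling the position-tree order $\mathrm{Pos}$ with the ambient left--right order on the tree and correctly treating the terminal box at the right boundary, whereas the non-circled case and the minimal-height bookkeeping are direct adaptations of \cite{S19}.
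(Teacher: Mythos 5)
The paper offers essentially no written proof of this statement: it declares just above Proposition \ref{prop:RLmin} that the claims follow ``by the same arguments as the proofs from Propositions 4.25 to 4.29 in \cite{S19}'', and then adds only the two-sentence gloss that condition (1) forces $E$ to be connected to the root while (2)--(3) force $L(E)<L(E')$ whenever $E\rightarrow E'$. Your overall architecture --- reduce the non-circled case to \cite{S19} via the border proposition, and handle the circled case through the glued symmetric tableau, Proposition \ref{prop:shadow} and the $2^{+}12$ patterns --- is therefore consistent with what the paper intends, and you are right to flag the $\mathrm{Pos}$-versus-left-right reconciliation in the circled case as the genuinely new work.

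There is, however, a concrete gap in the non-circled case: the two separate equivalences you attribute to \cite{S19} are both false for this construction, and only their conjunction is true. The border proposition tests \emph{only} the position of the single edge $E^{-}$ carrying the label $L(E)-1$; it gives $\mathrm{rb}(E)=D$ iff $E^{-}$ is not strictly to the right of $E$, which is strictly weaker than ``$L(E)<L(E')$ for every $E\rightarrow E'$''. Take $\lambda=(UD)^{3}$ with the three root edges labeled, from left to right, $2,3,1$. Then $E(3)$ is connected to the root and $E(3)^{-}=E(2)$ lies to its left, so $\mathrm{rb}(E(3))=D$; yet $E(3)\rightarrow E(1)$ with $1<3$, so $3$ is not an RL-minimum. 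What rules this box out is condition (1), not condition (2): the ribbon joining box $2$ to box $1$ straddles the column of $E(3)$ and occupies its zeroth-floor anchor box (equivalently, the symDTR spread at the insertion of $3$ creates a non-trivial Dyck tile above the middle $UD$ pair), so box $3$ is pushed to the first floor and is not at minimal height. Hence ``minimal height'' is \emph{not} equivalent to ``connected to the root''; it also detects every smaller label $j<L(E)-1$ sitting to the right of $E$, through the ribbon from $j+1$ to $j$ passing beneath $b$. A correct proof must establish the equivalence of the conjunction (1)$\wedge$(2) with the conjunction ``root-connected and all edges to the right carry larger labels'', tracking exactly which ribbons of smaller labels cross the column of $b$; as written, your transcription proves two lemmas that are individually false, so the non-circled case does not go through, and the same bookkeeping must be redone before your circled-case reduction to Proposition \ref{prop:shadow} can be completed.
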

In Proposition \ref{prop:RLmin}, the first condition 
insures that the edge corresponding to the anchor box $b$ is connected to 
the root.
The second and third conditions ensure that if $E\rightarrow E'$ we have 
$L(E)<L(E')$.

\begin{prop}
\label{prop:LRmax}
A LR-maxima of $L$ is bijective to a labeled box $b$ in $\mathrm{BTab}(L)$ such that
\begin{enumerate}
\item it is at the maximal height,
\item  the left border of $b$ is equal to $U$.
\end{enumerate}
\end{prop}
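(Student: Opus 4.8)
The plan is to argue in exact parallel with Proposition \ref{prop:RLmin}, whose statement for right-to-left minima is the mirror image of the one at hand, following the inductive scheme of Propositions 4.25--4.29 in \cite{S19}. First I would fix the tautological correspondence $b\leftrightarrow E$ between a labeled box $b$ of $\mathrm{BTab}(L)$ and the edge $E$ of $\mathrm{Tree}(\lambda)$ with $L(E)=\mathrm{Lab}(b)$. Since a left-to-right maximum is by definition carried by a leaf edge, and every labeled box sits above a unique anchor box, it suffices to show that $b$ satisfies conditions (1) and (2) if and only if $E$ is connected to a leaf and $L(E)>L(E')$ for every $E'$ with $E'\leftarrow E$. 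Both collections being subsets of the leaf edges, the content is to match the two geometric conditions with the two defining conditions of an LR-maximum.

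Next I would analyze condition (1). Using the bijection between ballot tableaux and fundamental symmetric Dyck tilings, under which the floor of an anchor box records the number of nontrivial tiles lying over the corresponding step as in Figure \ref{fig:BTtoDT}, together with the insertion procedure, I would show that $b$ is at maximal height precisely when $E$ is connected to a leaf and no ribbon box lies directly above $b$. The decisive point is to translate ``a ribbon passes over the column of $b$'' into a statement about labels: such a configuration is of the type controlled by the shadow boxes of Proposition \ref{prop:shadow}, so it corresponds to an edge to the left of $E$ whose label exceeds $L(E)$. Thus maximal height already encodes the inequality $L(E)>L(E')$ for those $E'\leftarrow E$ that are nested above $E$, which is the geometric half of the LR-maximum condition.

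Then I would treat condition (2) by invoking the preceding proposition on left and right borders. It gives $\mathrm{lb}(E)=U$ unless $E^{+}\rightarrow E$ with $E^{+}$ uncircled, where $E^{+}$ carries the label $L(E)+1$. Reading this together with condition (1), the left neighbours $E'\leftarrow E$ with $L(E')>L(E)$ that escape the maximal-height test are exactly those detected by the border at $E^{+}$; combining the two conditions therefore rules out every $E'\leftarrow E$ with $L(E')>L(E)$ and yields the full LR-maximum property, while conversely an LR-maximum forces both $\mathrm{lb}(E)=U$ and maximal height. I would run the three cases separately -- $E$ a plain edge with a consecutive neighbour, $E$ a dotted edge, and $E$ (or $E^{+}$) circled -- in each case copying the corresponding step of Propositions 4.25--4.29 in \cite{S19}.

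The main obstacle is the interaction of the features absent in \cite{S19}, namely circled edges, which create ballot ribbons terminating in terminal boxes on the line $x=2n+n'$, and dotted edges. I expect the delicate verification to be that a terminal box or a ballot ribbon neither spuriously creates a maximal-height box nor alters a left border in a way that breaks the count; in particular the clause ``$E^{+}$ circled $\Rightarrow\mathrm{lb}(E)=U$'' of the border proposition must be reconciled with condition (1), so that (1) and (2) \emph{jointly}, rather than either one alone, cut out precisely the LR-maxima. Checking this compatibility, via the position tree $\mathrm{PosTree}(\lambda)$ which governs the horizontal placement of the labeled boxes, is where the real work lies.
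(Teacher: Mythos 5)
Your plan coincides with the paper's own treatment: the paper likewise defers the details to the arguments of Propositions 4.25--4.29 in \cite{S19} and justifies the statement exactly as you propose, using the left-border proposition to force the box labeled $\mathrm{Lab}(b)+1$ to the right of $b$ when $\mathrm{Lab}(b)+1$ is uncircled, and the maximal-height condition (via the ballot ribbon from a circled label to a terminal box, which would otherwise pass over $b$) when it is circled, then iterating the same reasoning for all larger labels. I see no gap relative to what the paper records.
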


In Proposition \ref{prop:LRmax}, the second condition 
insures that the box labeled by $\mathrm{Lab}(b)+1$ is right to 
the box $b$ if $\mathrm{Lab}(b)+1$ does not have a circle in $L$.
When $\mathrm{Lab}(b)+1$ has a circle in $L$, the first condition 
insures that the box labeled by $\mathrm{Lab}(b)+1$ is right to 
the box $b$.
In both cases, the box labeled by $\mathrm{Lab}(b)+1$ is right to 
the box $b$.
By similar reasoning, the box labeled by larger integer than $\mathrm{Lab}(b)$ 
is always right to the box $b$.

\begin{prop}
The box with the label $n$ in $\mathrm{BTab}(L)$ corresponds to 
the rightmost labeled box such that it is at the maximal height 
and its left border is equal to $U$. 
The box with the label $1$ in $\mathrm{BTab}(L)$ corresponds to the leftmost 
labeled box such that it is at the minimal height and its right border equal to $D$,
or the lowest boundary anchor box at the minimal height.
\end{prop}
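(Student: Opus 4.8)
The plan is to obtain this proposition as an immediate corollary of Propositions \ref{prop:RLmin} and \ref{prop:LRmax}, by recognizing the extreme labels $n$ and $1$ as the distinguished LR-maxima and RL-minima and then invoking the monotonicity already established for those families. First I would fix the easy structural facts: since $L$ is a natural label, its labels increase from the root to the leaves, so the maximal label $n$ sits on an edge with no child (a child would carry a larger label) and the minimal label $1$ sits on an edge connected to the root (an ancestor would carry a smaller label).

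For the box labeled $n$, the edge $E(n)$ is connected to a leaf by the remark above, and because $n$ is the global maximum the implication $E'\leftarrow E(n)\Rightarrow L(E(n))>L(E')$ holds vacuously, so $E(n)$ is an LR-maxima. Proposition \ref{prop:LRmax} then identifies its box as a labeled box at the maximal height with left border $U$, and conversely realizes every such box as an LR-maxima. To see that the box labeled $n$ is the rightmost among these, I would use the discussion following Proposition \ref{prop:LRmax}: for any LR-maxima $b$ with $\mathrm{Lab}(b)<n$, the box carrying the larger label $n$ lies strictly to the right of $b$. As labels are distinct, this forces the box labeled $n$ to be the rightmost labeled box at maximal height with left border $U$.

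For the box labeled $1$, I would split according to whether $E(1)$ carries a dot. If $E(1)$ has no dot, then minimality of $1$ makes $E(1)\rightarrow E'\Rightarrow L(E(1))<L(E')$ vacuously true, so $E(1)$ is a (dotless) RL-minima and Proposition \ref{prop:RLmin} applies: its box lies at the minimal height, with right border $D$ coming from the clause $T_{1}(E(1))=1$ of the border proposition. Leftmost-ness then follows from the reasoning symmetric to that after Proposition \ref{prop:LRmax}, namely that for any RL-minima $b$ the smaller-labeled box $1$ lies to its left; hence the box labeled $1$ is the leftmost labeled box at minimal height with right border $D$. If instead $E(1)$ carries a dot, then $E(1)$ is the unique dotted edge attached to the root, since the dotted edges form a root-to-leaf chain with increasing labels and the smallest dotted label must sit at the root; by Remark \ref{remark:RL} it is again an RL-minima, its anchor box is a boundary anchor box on $x=2n+n'$, and being inserted first it sits at the minimal height and is the lowest such boundary box. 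This yields the second alternative in the statement.

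The delicate points, which I would verify carefully rather than wave through, are the extremality claims and the treatment of a possible circle on $E(1)$. Both extremality assertions reduce to the single monotonicity principle that, for an LR-maxima (respectively RL-minima), boxes carrying larger (respectively smaller) labels appear to its right (respectively left); the LR-maxima half is stated after Proposition \ref{prop:LRmax}, and the RL-minima half I expect to extract from the proof of Proposition \ref{prop:RLmin} by the evident mirror argument. The circled subcase of the box labeled $1$ is where I anticipate the main obstacle: here a ballot ribbon is attached, so I must confirm that the characterization through the right border (or, via clause (3) of Proposition \ref{prop:RLmin}, through the absence of shadow boxes on the emanating ribbon) remains consistent with $T_{1}(E(1))=1$ and still singles out the leftmost box, so that both alternatives of the proposition close off every case.
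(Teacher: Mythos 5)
Your route is the one the paper intends: the paper gives no written proof of this proposition (it is covered by the blanket remark before Proposition \ref{prop:RLmin} deferring to the arguments of Propositions 4.25--4.29 in \cite{S19}), and the natural reconstruction is exactly what you do --- observe that $n$ is automatically an LR-maxima and $1$ an RL-minima, apply Propositions \ref{prop:LRmax} and \ref{prop:RLmin}, and extract the extremality from the monotonicity remarks following those propositions (every label larger than that of an LR-maxima sits to its right, every label smaller than that of an RL-minima sits to its left). The split on whether $E(1)$ is dotted, with the dotted case handled via Remark \ref{remark:RL} and the boundary anchor box, is also correct.

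The one point you flag as delicate is indeed a genuine gap --- but it is a gap in the statement as written, not merely in your argument. If the label $1$ is circled, a ballot ribbon emanates from box $1$ to a terminal box, and by the proof of the border proposition (``$\mathrm{rb}(E)=U$ if \dots $E$ is circled'') the right border of box $1$ is then $U$, not $D$; since $E(1)$ is also not a dotted edge, box $1$ falls under neither alternative of the proposition. The first clause of the border proposition ($\mathrm{rb}(E)=D$ when $T_1(E)=1$) is itself only valid for an uncircled label $1$. So to close off every case you must either add a third alternative characterizing box $1$ via clause (3) of Proposition \ref{prop:RLmin} (the ribbon from $b$ contains no shadow boxes) together with the same leftmost-ness argument, or note explicitly that the proposition is stated for the uncircled/dotted cases only. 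Apart from this, your proof is complete.
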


From Remark \ref{remark:RL}, we have an obvious RL-maxima on a dotted edge in $L$.
We have the following proposition for the other RL-maximas which are on edges 
without a dot in $L$.
\begin{prop}
\label{prop:LRmaxima}
A RL-maxima $j<n$ of $L$ is bijective to a labeled box $b$ in $\mathrm{BTab}(L)$ 
such that 
\begin{enumerate}
\item it is at the maximal height, 
\item if $\mathrm{Lab}(b)+1$ does not have a circle in $L$, the left border of $b$ is equal to $D$,
\item $b$ is right to the box labeled by $n$.
\end{enumerate}
\end{prop}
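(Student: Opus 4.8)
The plan is to follow the structure of the proofs of Propositions~4.25--4.29 in \cite{S19}, translating the three defining conditions of an RL-maxima into statements about the labeled box $b$ in $\mathrm{BTab}(L)$ by means of the left-border formula and the characterization of the box labeled by $n$ obtained just above. By Remark~\ref{remark:RL} the RL-maxima sitting on the dotted path is already accounted for, so throughout I restrict to an edge $E$ without a dot with $L(E)=j<n$. The first step is to record the dictionary coming from the ballot-tableau/tiling bijection: an edge connected to a leaf corresponds to a labeled box with no labeled box above it, that is, a box at maximal height, which matches condition~(1).

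For the forward direction, suppose $j$ is an RL-maxima, so $E$ is a leaf edge and every edge strictly right of $E$ carries a smaller label. First I analyze the edge $E^{+}$ labeled by $j+1$. Since $j+1>j$, the edge $E^{+}$ cannot lie strictly right of $E$ (that would force $L(E^{+})<j$), it cannot be an ancestor of $E$ (ancestors carry smaller labels in a natural label), and it cannot be a descendant of $E$ (as $E$ is a leaf edge); hence $E^{+}\rightarrow E$. Feeding this into the left-border formula gives $\mathrm{lb}(E)=D$ exactly when $E^{+}$ is not circled, which is condition~(2). For condition~(3), the edge labeled by $n$ carries the maximal label, so the same three-way case analysis places it strictly left of $E$; since the left-to-right order of maximal-height boxes reproduces the left-to-right order of the corresponding leaf edges, the box labeled by $n$ lies left of $b$, i.e. $b$ is right of it.

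For the converse, I start from a box $b$ satisfying (1)--(3) and recover the RL-maxima property, namely $L(E)>L(E')$ for every $E'$ with $E\rightarrow E'$. The key input is that, by the proposition proved just above, the box labeled by $n$ is the rightmost maximal-height box whose left border is $U$; combined with condition~(3) this forces every maximal-height box right of $b$ to carry a strictly smaller label, and condition~(2) fixes the left border of $b$ in the non-circled case so that the border formula is consistent with $E^{+}\rightarrow E$. Reading these constraints back through the position tree $\mathrm{PosTree}(\lambda)$ yields that no edge strictly right of $E$ can exceed $j$, which is precisely the RL-maxima condition.

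The main obstacle I anticipate is the bookkeeping around circled edges, the genuinely new feature relative to \cite{S19}. A circled label produces a ballot ribbon running to the boundary column together with a boundary anchor box carrying $\ast$, and these ballot ribbons shift the columns of boxes during the insertion procedure. I therefore expect the delicate point to be verifying that the left-to-right order of maximal-height boxes still matches the tree order of leaf edges after the ballot-ribbon additions, and that condition~(2), which constrains the border only when $j+1$ is not circled, correctly isolates the RL-maxima among all maximal-height boxes lying right of the box labeled by $n$. This will require invoking the frozen-region structure built from $\vee_{n}$ and $\diagdown_{n}$ together with the symDTR description of ballot ribbons to control how circled labels interact with the borders.
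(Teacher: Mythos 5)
Your forward direction is sound and takes the same route the paper intends: the three-way case analysis placing $E^{+}$ and the edge labeled $n$ strictly to the left of $E$, fed through the left-border formula and the leaf-edge/maximal-height correspondence, yields conditions (1)--(3) exactly in the style of Propositions 4.25--4.29 of \cite{S19}. (The dotted-edge worry is harmless here: if the edge labeled $n$ were dotted, every non-dotted edge would be strictly left of it, so there would be no non-dotted RL-maxima with $j<n$ and the statement would be vacuous.)

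The converse, however, has a genuine gap. You claim that the characterization of the box labeled by $n$ as the rightmost maximal-height box with left border $U$, together with condition (3), ``forces every maximal-height box right of $b$ to carry a strictly smaller label.'' It does not: that characterization only says that each maximal-height box strictly right of the box labeled $n$ has left border $D$, i.e.\ that for such a label $k$ the box labeled $k+1$ lies somewhere to its left; it gives no comparison between $k$ and $j=\mathrm{Lab}(b)$. The mechanism the paper actually uses rests on condition (1) directly: if some edge strictly right of $E$ carried a label $k>j$, then, since the box labeled $n$ is left of $b$ by (3), there would exist $i$ with $j<i<n$ such that the box labeled $i$ is right of $b$ while the box labeled $i+1$ is left of $b$ (or $i+1$ is circled and its ballot ribbon runs to the boundary column). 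In either case the ribbon emanating from the box labeled $i+1$ spans the column of $b$ and, by the requirement in the ballot-tableau construction that a ribbon lie above all anchor boxes with smaller labels, sits above $b$ --- contradicting that $b$ is at maximal height. This ribbon-crossing argument is the essential content of the converse and is absent from your sketch; the difficulties you flag (column shifts from ballot ribbons, frozen regions) are peripheral by comparison.
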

In Proposition \ref{prop:LRmaxima}, the second condition implies that 
the box labeled by $\mathrm{Lab}(b)+1$ is left to the box $b$ if $\mathrm{Lab}(b)+1$ does 
not have a circle.
From the third condition, one can assume that there exists $i$ such that 
the box labeled by $i$ is right to the box $b$ and the box labeled by $i+1$
is left to the box $b$. 
However, the first condition insures that such $i$ does not exist since 
there is a ribbon from the box labeled by $i+1$.

A box labeled by $1$ in $\mathrm{BTab}(L)$ is obviously  a LR-minima.
We have the following proposition for the other LR-minimas which are on edges 
without a dot in $L$.
\begin{prop}
\label{prop:LRminima}
A LR-minima $j>1$ of $L$ is bijective to a labeled box $b$ in $\mathrm{BTab}(L)$ 
such that 
\begin{enumerate}
\item it is at the minimal height,
\item if $\mathrm{Lab}(b)$ does not have a circle in $L$, the right border of $b$ is equal to $U$,
\item $b$ is left to the box labeled by $1$.
\end{enumerate}
\end{prop}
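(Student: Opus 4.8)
The plan is to prove the two implications of the claimed bijection $E\mapsto b$ by the same induction on the insertion procedure used for Propositions 4.25--4.29 of \cite{S19}, adding the bookkeeping for dotted and circled edges exactly as in the neighbouring Propositions \ref{prop:RLmin} and \ref{prop:LRmax}. Fix the edge $E=E(j)$ carrying the label $j>1$ and its labelled box $b$ in $\mathrm{BTab}(L)$. The definition of an LR-minima has two ingredients: $E$ is connected to the root, and $L(E)<L(E')$ for every edge $E'$ strictly to the left of $E$. Because labels increase from the root to the leaves, an edge of label smaller than $j$ can be neither a descendant of $E$ (its label would exceed $j$) nor, once $E$ is root-connected, an ancestor of $E$; hence left-minimality is equivalent to requiring that every edge labelled $1,\dots,j-1$ lie strictly to the right of $E$. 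First I would record the two cheap consequences of this. Taking the label $j-1$, the edge $E^{-}=E(j-1)$ lies to the right of $E$, so by the border proposition of the preceding subsection $\mathrm{rb}(E)=U$ when $j$ is not circled, which is condition (2); taking the label $1$, the edge $E(1)$ lies to the right of $E$, which is exactly condition (3) that $b$ is left of the box labelled $1$.

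The non-trivial content lies in condition (1) and in the converse, and here I would transcribe the insertion induction of \cite{S19}. That induction analyses how one step of the insertion procedure of the previous subsection moves the labelled boxes, their borders, and their heights, and shows that being a left-to-right minimum is both preserved by, and reconstructed from, this step together with the three tableau conditions; in particular it is what certifies that an LR-minima sits at minimal height and, conversely, that minimal height jointly with (2) and (3) forbids any smaller label from lying to the left of $E$. Concretely, for the converse I would assume (1)--(3), use the remark following Proposition \ref{prop:RLmin} to get that $E$ is root-connected, and then appeal to the induction to conclude that $j$ is a left-to-right minimum. Dotted edges need no separate treatment, since by Remark \ref{remark:RL} they carry the evident RL-minima and RL-maxima while the edges treated here are undotted; one only verifies, through the position tree, that ``strictly left/right of $E$'' in $L$ corresponds to ``left/right of $b$'' in $\mathrm{BTab}(L)$, which is the translation used throughout Section \ref{sec:BTab}. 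Injectivity and surjectivity of $E\mapsto b$ are then immediate, distinct edges giving distinct labelled boxes.

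The main obstacle is the circled case, which is exactly the feature absent from \cite{S19}. When $j$ is circled, condition (2) is vacuous --- indeed the border proposition already yields $\mathrm{rb}(E)=U$ whenever $E$ is circled --- and the ribbon issuing from $b$ runs to a terminal box at the right-most column rather than to the box labelled $j-1$. I would therefore have to re-run the minimal-height step with this boundary ribbon in place, showing that ``minimal height'' and ``$b$ left of the box labelled $1$'' still encode left-minimality, and treating the $\ast$-boxes in the same manner as the circled subcase of Proposition \ref{prop:RLmin}. Once this circled subcase is settled, every remaining step is parallel to the proofs of Propositions 4.25--4.29 in \cite{S19}, and the proposition follows.
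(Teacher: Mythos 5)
Your proposal is correct and follows essentially the same route as the paper: the paper likewise reduces the statement to the arguments of Propositions 4.25--4.29 in \cite{S19}, reads off condition (2) from the border proposition (the box labeled $j-1$ lying to the right of $b$), reads off condition (3) from the position of the box labeled $1$, and uses the minimal-height condition (1) to exclude an intermediate label $i<j$ sitting to the left of $b$ whose ribbon (to the box labeled $i-1$ or to a terminal box, in the circled case) would pass over $b$. Your extra observation that left-minimality is equivalent to all smaller labels lying strictly to the right of $E$, and your explicit flagging of the circled/terminal-box subcase, are consistent with and slightly more detailed than the paper's treatment.
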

In Proposition \ref{prop:LRminima}, the second condition is equivalent to that 
the box labeled by $\mathrm{Lab}(b)-1$ is right to the box $b$ if $\mathrm{Lab}(b)$ does not 
have a circle.
From the third condition, one can assume that there exist $i<\mathrm{Lab}(b)$ such that the 
box labeled by $i$ is left to the box $b$, and $i$ is circled in $L$, or 
$i-1$ is right to the box $b$. 
However, the first condition insures that there is no such $i$.

\section{Tree-like tableaux of shifted shapes}
\label{sec:TlTshifted}
\subsection{Insertion procedure to produce a tree-like tableau for a symmetric Dyck tiling}
A tree-like tableau is a Ferrers diagram with labels (or dots) and with some conditions on 
the positions of dots \cite{ABN11}.
Let $T$ be a tree-like tableau.
When the number of labels in $T$ is $n$, we say that $T$ is of size $n$.

In this subsection, we consider symmetric tree-like tableaux, which are symmetric with respect 
to the main diagonal line.
In \cite{ABN11}, symmetric tableaux are defined and the total number of them is studied. 
They realize symmetric tableaux as the set of symmetric tree-like tableaux of size $2n+1$. 
This class of symmetric tree-like tableaux corresponds to the symmetric Dyck tilings with 
a zig-zag path as the lower boundary path.
We generalize symmetric tree-like tableaux in \cite{ABN11} to symmetric tableaux for symmetric Dyck 
tilings whose lower boundary paths are general ballot paths.

Let $\lambda$ be a ballot path of length $(n,n')$. 
In this paper, we consider symmetric tableaux of size $2n+n'+1$ associated with $\lambda$.
As we will see in Section \ref{sec:tltss}, cutting a symmetric tree-like tableau along 
the main diagonal gives a tree-like tableau of a shifted shape and of size $n+n'+1$.

\paragraph{\bf Row, column and diagonal insertion}
Let $F$ be a Ferrers diagram such that it is symmetric along the diagonal line, and 
$F_{i}$ boxes in the $i$-th row.
The {\it half-perimeter} of $F$ is the sum of the numbers of rows and columns.
We introduce four types of insertions: symmetric row insertion, symmetric column insertion, 
symmetric diagonal insertion on a non-diagonal vertex and diagonal insertion on a diagonal vertex.
{\it Boundary edges} of $F$ are the edges that are on the south-east border 
of $F$.

We first define non-symmetric row, column and diagonal insertions following \cite{ABN11,S19}.
Let $e$ be a boundary edge of $F$ that is on the boundary box in the $p$-th row and the $q$-th column.
When $e$ is a horizontal edge, we define the insertion of a row at $e$ by 
$F'_{p+1}=q$, $F'_{i+1}=F_{i}$ for $i\ge p+1$ and $F'_{j}=F_{j}$ for $j\le p$.
We call this insertion a row insertion.
Similarly, when $e$ is a vertical edge, we define the insertion of a column at $e$
by $F'_{i}=F_{i}+1$ for $1\le i\le p$ and $F'_{j}=F_{j}$ for $j\ge p+1$.
We call this insertion a column insertion.
Let $v$ be a vertex on the boundary box at the coordinate $(p,q)$ with $p\neq0$ and $q\neq0$.
We define the insertions of a row and a column at $v$ by 
$F'_{p+1}=q+1$, $F'_{i}=F_{i}+1$ for $1\le i\le p$ and $F'_{j+1}=F_{j}$ for $j\ge p+1$. 
We call this insertion a diagonal insertion.
The definition of a diagonal insertion is independent of whether the vertex is 
on the main diagonal of $F$ or not.

Let $F$ be a Ferrers diagram which is symmetric along the main diagonal.
We enumerate the boundary edges by $1,2,\ldots,N$ from the south-west edge to the north-east 
edge. Here, $N$ is the half-perimeter of the diagram $F$.
Let $e$ be the $i$-th boundary edge and $e'$ be the $N+1-i$-th edge.
We define the symmetric row insertion to be the row insertion at $e$ and the column insertion 
at $e'$.
Similarly, we define the symmetric column insertion to be the column insertion at $e$ and 
the row insertion at $e'$.
Let $v$ be a vertex on the boundary box at $(p,q)$ with $p\neq0$, $q\neq0$ and 
$v'$ be a vertex at $(q,p)$. 
We have two cases: $p\neq q$ and $p=q$.
For the first case, we define the symmetric diagonal insertion on a non-diagonal 
vertex to be the diagonal insertions at $v$ and $v'$.
For the second case we define the symmetric diagonal insertion on the diagonal 
vertex to be the diagonal insertions at $v$.
Note that the diagonal vertex on the main diagonal is unique in $F$.

We introduce the notion of {\it special point} following \cite{ABN11}.
\begin{defn}[Special point]
Let $T$ be a symmetric tree-like tableau.
The special point of $T$ is the northeast-most labeled box that is placed at the bottom
of a column and below the main diagonal of $T$.
\end{defn}

We recursively define the insertion procedure for symmetric tree-like tableaux
starting from a single box with the label $0$ associated with the insertion history 
$\mathbf{h}=\emptyset$.
Let $\mathbf{h}':=(h_1,\ldots,h_{N-1})$ and $\mathbf{h}:=(h_1,\ldots,h_{N})$
with $N=n+n'$ be insertion histories. 
Recall that $h_{i}$ corresponds to an edge $e$ of the tree $T(\lambda)$ and the edge $e$ of $T(\lambda)$
corresponds to a pair of an up step and a down step, or to an up step. 
We say $h_{i}$ is associated with $\lambda_{k}$ if $e$ is associated with the step $\lambda_{k}$.

Before proceeding to the definition of symmetric tree-like tableaux,
we introduce some definitions and a lemma with respect to a ballot
path $\lambda$.
\begin{defn}
\label{defn:edashed}
We define the integer sequence $\mathbf{e}':=(e'_{1},\ldots,e'_{m})$ for a ballot
path $\lambda$ of length $(n,n')$, where $m=2n+n'$ if the tree $T(\lambda)$ contains an edge without a dot and 
connected to the root, and $m=2n+n'+1$ otherwise.

We construct $\mathbf{e}'$ starting from $e'_1$ by the following algorithm.
\begin{enumerate}
\item Set $e'_{1}:=0$.
\item Set $e'_{p+1}:=e'_{p}+1$, 
\begin{enumerate}
\item if $\lambda_{p+1}=\lambda_{p}$, and both or neither of $h_{k}$ 
and $h_{k'}$ associated with $\lambda_{p}$ and $\lambda_{p+1}$ respectively are boxed for $1\le p\le n-1$,
or 
\item
if $(\lambda_{p},\lambda_{p+1})=(D,U)$, $h_{k}$ associated with $\lambda_{p}$ is not boxed, 
and $h_{k'}$ associated with $\lambda_{p+1}$ is boxed.
\end{enumerate}
\item 
We define $e'_{p+1}:=e'_{p}$ if $p$ satisfies the following three conditions:
\begin{enumerate}
\item 
Let $q<p$ be a maximum integer such that $\lambda_{q}=D$. 
If such down step $\lambda_{q}=D$ does not exist, then we set $q=0$. 
\item $\lambda_{i}=U$ and $h_{k}$ associated with $\lambda_{i}$ is boxed for $q+1\le i\le p$.
\item $\lambda_{p+1}=U$ and $h_{k}$ associated with $\lambda_{p+1}$ is not boxed.
\end{enumerate}
\item We define $e'_{p+1}=e'_{p}+2$ if $\lambda_{p}=U$ and $\lambda_{p+1}=D$ for $1\le p\le n-1$.
\end{enumerate}
When the tree $T(\lambda)$ does not contain an edge without a dot and connected to the root,
we increase each element of $\mathbf{e}'$ by one and redefine $\mathbf{e}'$ as a concatenation
of $(0)$ and increased $\mathbf{e}'$.
\end{defn}

\begin{lemma}
\label{lemma:edashed}
In $\mathbf{e}'$, there is no $p\le m-2$ such that 
$e'_{p}=e'_{p+1}=e'_{p+2}$.
\end{lemma}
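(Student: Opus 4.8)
The plan is to exploit the single mechanism by which the sequence $\mathbf{e}'$ can stay constant, together with the fact that this mechanism cannot fire twice in a row. First I would record the basic trichotomy behind Definition \ref{defn:edashed}: for each admissible $p$ the entry $e'_{p+1}$ is determined from $e'_p$ by exactly one of the clauses (2), (3), (4). Clause (2) adds $1$, clause (4) adds $2$, and only clause (3) leaves the value unchanged. Hence $e'_{p+1}=e'_p$ holds if and only if clause (3) is applied at index $p$, while in every other case $e'_{p+1}>e'_p$. This equivalence is the observation on which the whole argument rests.

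Next I would argue by contradiction. Suppose $e'_p=e'_{p+1}=e'_{p+2}$ for some $p\le m-2$. By the observation above, clause (3) must be applied both at index $p$ (yielding $e'_{p+1}=e'_p$) and at index $p+1$ (yielding $e'_{p+2}=e'_{p+1}$). The application at $p$ forces, through its condition (c), that $\lambda_{p+1}=U$ and that the insertion-history entry $h_k$ associated with $\lambda_{p+1}$ is \emph{not} boxed. On the other hand, the application at $p+1$ requires its condition (b): writing $q'$ for the largest index smaller than $p+1$ with $\lambda_{q'}=D$ (or $q'=0$ if none exists), every step $\lambda_i$ with $q'+1\le i\le p+1$ must be an up step whose associated entry is boxed. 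Since $q'<p+1$ gives $q'\le p$, the index $i=p+1$ lies inside the range $[q'+1,p+1]$, so condition (b) at $p+1$ demands precisely that the entry associated with $\lambda_{p+1}$ be boxed. This contradicts condition (c) of the application at $p$. The decisive point is thus entirely transparent: clause (3) at index $p$ closes a maximal run of boxed up steps with an \emph{unboxed} up step at position $p+1$, whereas clause (3) at index $p+1$ would require that very step to be boxed.

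Finally I would dispose of the boundary convention at the end of Definition \ref{defn:edashed}. When $T(\lambda)$ contains a dotless edge attached to the root, $\mathbf{e}'$ is exactly the sequence produced by clauses (1)--(4), so the argument above applies verbatim and $m=2n+n'$. Otherwise $\mathbf{e}'$ is redefined as $(0)$ concatenated with the sequence obtained by adding $1$ to each entry. Adding a constant to all entries preserves the absence of three equal consecutive values, and the prepended $0$ is immediately followed by $e'_1+1=1\ne 0$, so it cannot start a constant triple; hence the conclusion survives the redefinition.

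I do not anticipate any real obstacle, since the entire content is the incompatibility between the terminal condition (c) of clause (3) and the interior condition (b) of a subsequent clause (3). The only point demanding care is the bookkeeping of which history entry each step $\lambda_{p+1}$ is associated with, and the verification that $i=p+1$ genuinely falls inside the range $[q'+1,p+1]$ controlled by condition (b); both become immediate once the ranges are written out explicitly.
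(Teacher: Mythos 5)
Your proof is correct. The paper establishes the same lemma by a case analysis on the three-step pattern $(\lambda_{p},\lambda_{p+1},\lambda_{p+2})$: clauses (4) and (2a) rule out any pattern containing $UD$ or $DD$, leaving only $DUU$ and $UUU$, and in each of these the paper runs through the possible boxed/unboxed configurations — invoking the nesting structure of a ballot path (an up step immediately following an unboxed up step is itself unboxed) — to show that clause (2a) or (2b) must fire at index $p$ or $p+1$ and produce an increment. Your argument bypasses the step-pattern case analysis entirely: you isolate clause (3) as the unique source of a repeated value and then observe that its terminal condition (c) at index $p$ (the step $\lambda_{p+1}$ is an unboxed $U$) is flatly incompatible with its interior condition (b) at index $p+1$ (every step of the run ending at $\lambda_{p+1}$, in particular $\lambda_{p+1}$ itself, is a boxed $U$). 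This is shorter and requires no structural facts about ballot paths; what it costs is that you must assert up front that exactly one of clauses (2)--(4) applies at each index and that only clause (3) leaves the value unchanged — a well-definedness statement about Definition \ref{defn:edashed} that both proofs ultimately presuppose, but on which you lean more explicitly. Your separate treatment of the boundary convention (the prepended $0$ and the global shift by one) addresses a point the paper's proof passes over in silence.
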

\begin{proof}
Let $\lambda:=(\lambda_p,\lambda_{p+1},\lambda_{p+2})$ be a partial steps in $\lambda$.
Since $\lambda\in\{U,D\}^{3}$, we have eight partial paths.
Suppose that $e'_{p}=e'_{p+1}=e'_{p+2}$.
From (4) in Definition \ref{defn:edashed}, $\lambda$ does not contain the partial 
path $UD$. From (2a) in Definition \ref{defn:edashed}, $\lambda$ does not contain 
the partial path $DD$.
The remaining cases for $\lambda$ are 1) $DUU$ and 2) $UUU$.

Let $h_{k}$, $h_{k'}$ and $h_{k''}$ be an element of the insertion history
associated with $\lambda_{p}$, $\lambda_{p+1}$ and $\lambda_{p+2}$ respectively.

In case of $DUU$. From (2b) in Definition \ref{defn:edashed}, $h_{k'}$ associated with $\lambda_{p+1}$ is not boxed.
Since $\lambda_{p+2}$ is placed between $\lambda_{p+1}=U$ and $D$, $h_{k''}$ associated 
with $\lambda_{p+2}$ is not boxed.
Since $\lambda_{p+1}=\lambda_{p+2}$, and $h_{k'}$ and $h_{k''}$ are not boxed,
we have $e'_{p+2}=e'_{p+1}+1$ by (2a) in Definition \ref{defn:edashed}. This contradicts the assumption.
 
In case of $UUU$.
We have  three cases: 1) all $h_{i}$, $i\in\{k,k',k''\}$ are simultaneously either boxed or unboxed,
2) $h_{k}$ boxed and $h_{k'}$ and $h_{k''}$ unboxed, and 
3) $h_{k}$ and $h_{k'}$ boxed and $h_{k''}$ unboxed.
In all cases, we have $e'_{q+1}=e'_{q}+1$ for $q=p$ or $q=p+1$ from (2a) 
in Definition \ref{defn:edashed}. This contradicts the assumption.

Therefore, there is no $p$ satisfying $e'_{p}=e'_{p+1}=e'_{p+2}$.
\end{proof}

From Lemma \ref{lemma:edashed}, an integer appears in $\mathbf{e}'$ at most twice.
Then, we define an integer sequence $\mathbf{e}$ from $\mathbf{e}'$ 
as follows. 
Especially, $\mathbf{e}$ is an increasing sequence.

\begin{defn}
\label{defn:e}
The integer sequence $\mathbf{e}$ for $\lambda$ is defined from the integer sequence 
$\mathbf{e'}$ by deleting one of duplicated integers.
\end{defn}

\begin{remark}
Some remarks are in order.
\begin{enumerate}
\item 
Entries in $\mathbf{e}$ are all distinct by construction.
\item
When a tree consists of only $n'$ edges with dots, the length of the 
integer sequence $\mathbf{e}$ is $n'+1$.
\item 
If a tree $T$ contains only $n$ edges without a dot and $n$ leaves, that is,
the underlying lower boundary path for $T$ is a zig-zag path, then 
the length of $\mathbf{e}$ is $n+1$. 
\end{enumerate}
\end{remark}

\begin{example}
We consider the same natural label as in Example \ref{ex:symDTR}.
The ballot path for this natural label is 
\begin{align*}
UUDUDDUUDU,\qquad \leftrightarrow \qquad 
\tikzpic{-0.5}{[x=0.4cm,y=0.4cm]
\draw[very thick](0,0)--(2,2)--(3,1)--(4,2)--(6,0)--(8,2)--(9,1)--(10,2);
\draw(1,1)node{$-$}(5,1)node{$-$}(7,1)node{$-$};
}.
\end{align*}
We have $\mathbf{e}'=(0,1,3,3,5,6,7,7,9,10)$ and obtain 
$\mathbf{e}=(0,1,3,5,6,7,9,10)$.
\end{example}

Let $\mathbf{e}:=(e_1,\ldots,e_{m})$ be an integer sequence defined in Definition \ref{defn:e}.
\begin{defn}[Valid vertices]
\label{defn:vv}
Let $F$ be a symmetric Ferrers diagram with the half-perimeter $2m$,
where $m$ is the length of $\mathbf{e}$.
We put a label $e_{i}$ on the $i$-th and $(2m+1-i)$-the boundary edges.
When the difference of the labels on the $i$-th  (resp. $2m+1-i$-th) and $(i+1)$-the 
(resp. $2m-i$-th) boundary edges is two,
we put a circle on the vertex between the two edges and put the label $e_{i}+1$ on the 
vertex if it is placed below (resp. above) the diagonal line.
We call these circled vertices valid vertices.
\end{defn}

We are ready to introduce symmetric tree-like tableaux from insertion histories.
Recall that an insertion history is an integer sequence possibly with circles and 
with boxes.
\begin{defn}[Insertion procedure for symmetric tree-like tableaux]
\label{defn:IPTTab}
Let $T'$ be a symmetric tree-like tableau of size $N-1$ associated with $\mathbf{h}'$.
We construct a symmetric tree-like tableau $T$ of size $N$ associated with $\mathbf{h}$ by 
the following operations.
\begin{enumerate}
\item When $\mathbf{h}=\emptyset$, the symmetric tree-like tableau is defined as 
a single box labeled by zero.
\item We perform an insertion of two boxes with label $N$ on $T'$: 
\begin{enumerate}
\item If $h_{N}$ does not have a box, take the boundary edge $e$ or the valid vertex $v$ 
labeled by $h_{N}$.
\begin{enumerate}
\item If we have $e$ which is horizontal (resp. vertical), we perform a symmetric 
row (resp. column) insertion at $e$.
\item If we have $v$, we perform a symmetric diagonal insertion at $v$.
\end{enumerate}
We put a label $N$ at the two boxes which are added in the insertion process and 
southeast-most boxes below and above the diagonal line.
\item If $h_{N}$ is boxed, take the boundary vertex $v$ on the main diagonal. 
We perform a diagonal insertion at $v$.
We put a label $N$ on the box which is added as a boundary box on the main diagonal line.
\end{enumerate}
We denote by $\widetilde{T}$ the new symmetric tree-like tableau. 
\item We add a ribbon on $\widetilde{T}$ if $h_{N}$ does not have a box, and $h_{N-1}>h_{N}$
or $h_{N}$ has a circle.
\begin{enumerate}
\item In case of $h_{N}$ without a circle.
If there is the special point $s$ right to the box $b$ labeled by $N$ below the diagonal line,
we add a ribbon starting from the east edge of $b$ and ending at the south edge of $s$.
Similarly, we add a ribbon starting from the south edge of the box labeled by $N$ and ending at 
the east edge of the box labeled by $N-1$ above the diagonal line.
\item In case  of $h_{N}$ with a circle.
We add a ribbon starting from the east edge of the box labeled by $N$ below the main diagonal and 
ending at the south edge of the box labeled by $N$ above the main diagonal. 
\end{enumerate}
\item We denote by $T$ the new symmetric tree-like tableau obtained from $\widetilde{T}$.
\end{enumerate}
\end{defn}

\begin{prop}
\label{prop:BTperi}
Let $\mathbf{e}$ be an integer sequence constructed from $\mathbf{h}:=(h_1,\ldots,h_{n+n'})$ 
with $n+n'\ge1$, and $T$ be the symmetric tableau for $\mathbf{h}$.
Then, Definition \ref{defn:IPTTab} is well-defined.
In other words, the length of $\mathbf{e}$ coincides with the half of the half-perimeter of 
$T$.
\end{prop}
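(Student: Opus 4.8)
The plan is to prove the identity by induction on $N=n+n'$, after first reducing the statement to a count of rows. Since $T$ is a symmetric Ferrers diagram, it is self-conjugate, so its number of rows equals its number of columns; hence the half-perimeter equals $2\cdot(\text{number of rows})$ and the half of the half-perimeter is exactly the number of rows of $T$. Thus it suffices to show that the length of $\mathbf{e}$ equals the number of rows of $T$. I would also record at the outset that the ribbons added in step (3) of Definition \ref{defn:IPTTab} are paths drawn through boxes already present in the diagram and do not enlarge the underlying Ferrers shape; consequently both the number of rows of $T$ and the sequence $\mathbf{e}$ depend only on the box-markings of $\mathbf{h}$ (equivalently on $\lambda$) and not on the circles.

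For the base case $N=1$ I would check the two possibilities directly. If the unique edge carries no dot, then $\lambda=UD$, Definition \ref{defn:edashed} gives $\mathbf{e}'=(0,2)$ so that $\mathbf{e}=(0,2)$ has length $2$, while the insertion of the label $1$ is a symmetric row (or column) insertion on the single initial box, producing a diagram with two rows. If the edge carries a dot, then $\lambda=U$, the sequence $\mathbf{e}$ again has length $2$, and the boxed diagonal insertion of Definition \ref{defn:IPTTab}(2b) again yields two rows.

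For the inductive step I would remove the edge carrying the maximal label $N$, which is necessarily at a leaf since a natural label increases from the root to the leaves. Writing $\lambda'$ for the resulting ballot path and $T'$ for the symmetric tableau built from $\mathbf{h}'=(h_1,\dots,h_{N-1})$, the induction hypothesis gives that the length of $\mathbf{e}(\lambda')$ equals the number of rows of $T'$, and it remains to match the two increments. On the tableau side, reading off Definition \ref{defn:IPTTab}: a symmetric row or column insertion at a boundary edge, a diagonal insertion at the main-diagonal vertex, and a boxed diagonal insertion each add exactly one row, whereas a symmetric diagonal insertion at a non-diagonal valid vertex, performed at both $v$ and $v'$, adds two rows. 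On the $\mathbf{e}$ side, passing from $\lambda'$ to $\lambda$ changes $\mathbf{e}'$ only locally around the reinserted step, and by Definition \ref{defn:edashed} the running value of $\mathbf{e}'$ advances by $+1$ (cases (2a),(2b)), stays fixed (case (3)), or jumps by $+2$ (case (4)); via Definition \ref{defn:e} the length of $\mathbf{e}$ therefore increases by one for each newly created distinct value, and a $+2$ jump corresponds exactly to a valid vertex of Definition \ref{defn:vv}. I would then organize the verification into the three cases used elsewhere in the paper, namely $E(N)$ without a dot and without a circle, $E(N)$ without a dot but circled, and $E(N)$ with a dot, and within each case compare the local neighborhood of the reinserted step in $\lambda$ with the type of insertion performed.

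The main obstacle is the case producing a non-diagonal valid vertex: here I must show that the appearance of a $+2$ gap in $\mathbf{e}$ (a valid vertex lying off the main diagonal) is matched precisely by a symmetric diagonal insertion at $v$ and $v'$, which is the only kind of step contributing $+2$ to the row count, and conversely that every off-diagonal valid vertex arises in this way. Lemma \ref{lemma:edashed} is essential at this point: because no value occurs three times in $\mathbf{e}'$, every integer of $\mathbf{e}'$ is duplicated at most once, so the sequence $\mathbf{e}$ of Definition \ref{defn:e} is genuinely increasing and the counting of distinct values above is exact, with each duplicated value accounting for one symmetric edge insertion and each $+2$ jump for one non-diagonal valid vertex. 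Matching these counts against the row increments in all three cases closes the induction and establishes that the length of $\mathbf{e}$ equals the half of the half-perimeter of $T$, so that Definition \ref{defn:IPTTab} is well-defined.
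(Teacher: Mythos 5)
Your proof follows essentially the same route as the paper's: induction on the number of inserted labels, with a case analysis matching the increment of the half-perimeter under each type of insertion (symmetric row/column, symmetric diagonal at a non-diagonal vertex, diagonal at the main-diagonal vertex) against the increment of the length of $\mathbf{e}$; your reduction to counting rows via self-conjugacy and your removal of the maximal label are just reformulations of the paper's recursion. One correction is needed: your preliminary claim that the ribbons of step (3) of Definition \ref{defn:IPTTab} ``do not enlarge the underlying Ferrers shape'' is false --- in Figure \ref{fig:TTab} the $\triangle$-marked ribbon boxes visibly extend the diagram beyond what the bare row or column insertion alone would produce. What is true, and what your reduction actually requires, is that a ribbon connects two boxes already present in the tableau and hence lies entirely within existing rows and columns, so it changes neither the number of rows nor the half-perimeter; with that repair the rest of your induction goes through exactly as in the paper.
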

\begin{proof}
We prove the proposition by induction.
When $\mathbf{h}=\emptyset$, $T$ is a single box labeled by zero.
We have three cases when the length of $\mathbf{h}$ is one:
$\mathbf{h}=(0)$, $\mathbf{h}=(\ \circnum{0}\ )$, and 
$\mathbf{h}=(\boxed{0})$.
From Definition \ref{defn:IPTTab}, the symmetric tree-like tableaux 
for these three cases are
\begin{align*}
\begin{matrix}
\tikzpic{-0.5}{[x=0.4cm,y=0.4cm]
\draw(0,0)--(2,0)(0,-1)--(2,-1)(0,-2)--(1,-2)(0,0)--(0,-2)(1,0)--(1,-2)(2,0)--(2,-1);
\draw(0.5,-0.5)node{$0$}(0.5,-1.5)node{$1$}(1.5,-0.5)node{$1$};
} & 
\tikzpic{-0.5}{[x=0.4cm,y=0.4cm]
\draw(0,0)--(2,0)(0,-1)--(2,-1)(0,-2)--(2,-2)(0,0)--(0,-2)(1,0)--(1,-2)(2,0)--(2,-2);
\draw(0.5,-0.5)node{$0$}(0.5,-1.5)node{$1$}(1.5,-0.5)node{$1$};
} & 
\tikzpic{-0.5}{[x=0.4cm,y=0.4cm]
\draw(0,0)--(2,0)(0,-1)--(2,-1)(0,-2)--(2,-2)(0,0)--(0,-2)(1,0)--(1,-2)(2,0)--(2,-2);
\draw(0.5,-0.5)node{$0$}(1.5,-1.5)node{$1$};
}
\\[20pt]
\mathbf{h}=(0) & \mathbf{h}=(\ \circnum{0}\ ) & \mathbf{h}=(\boxed{0}) \\
\mathbf{e}=(0,2) & \mathbf{e}=(0,2) & \mathbf{e}=(0,1)
\end{matrix}
\end{align*}
The theorem is true for $n+n'=1$.
Suppose that the theorem is true up to $n+n'=N-1$.
We have three cases: 1) a symmetric row or column insertion, 2) a symmetric diagonal insertion,
and 3) a diagonal insertion on the main diagonal.
In the first and third cases, the half-perimeter of $T$ is increased by two and the 
length of $\mathbf{e}$ is increased by one.
In the second case, the half-perimeter of $T$ is increased by four and the length of 
$\mathbf{e}$ is increased by two.
In all cases, the half of the half-perimeter of $T$ is equal to the length of $\mathbf{e}$.
\end{proof}

From Proposition \ref{prop:BTperi}, the half of the half-perimeter of a symmetric 
tree-like tableau $T$ is equal to the length of the integer sequence $\mathbf{e}$.
We enumerate the boundary edges below the main diagonal in $T$ 
from southwest to northeast by $1,2\ldots,m$, where $m$ is the length of $\mathbf{e}$.
Then, we put a circle on the valid vertices along the boundary edges in $T$ 
as in Definition \ref{defn:vv}.
Similarly, we put circles on the valid vertices above the main diagonal.

\begin{example}
We consider the same natural label as Example \ref{ex:symDTR}.
The insertion history for this label is $\mathbf{h}=(\boxed{0},0,\ \circnum{3}\ ,\ \circnum{1}\ ,\boxed{7},3)$.
The tree-like tableaux for $\mathbf{h}$ are shown in Figure \ref{fig:TTab}.
\begin{figure}[ht]
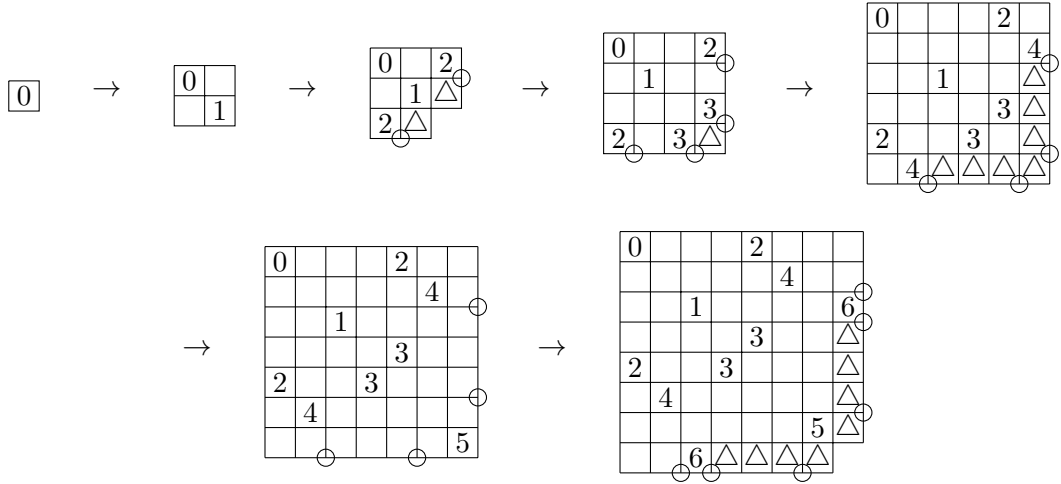

\tikzpic{-0.5}{[x=0.4cm,y=0.4cm]
\draw(0,0)--(1,0)--(1,-1)--(0,-1)--(0,0);
\draw(0.5,-0.5)node{$0$};
}
\quad$\rightarrow$\quad
\tikzpic{-0.5}{[x=0.4cm,y=0.4cm]
\draw(0,0)--(2,0)--(2,-2)--(0,-2)--(0,0);
\draw(0,-1)--(2,-1)(1,0)--(1,-2);
\draw(0.5,-0.5)node{$0$}(1.5,-1.5)node{$1$};
}
\quad$\rightarrow$\quad
\tikzpic{-0.5}{[x=0.4cm,y=0.4cm]
\draw(0,0)--(3,0)--(3,-2)--(2,-2)--(2,-3)--(0,-3)--(0,0);
\draw(0,-1)--(3,-1)(0,-2)--(2,-2)(1,0)--(1,-3)(2,0)--(2,-2);
\draw(0.5,-0.5)node{$0$}(1.5,-1.5)node{$1$}(0.5,-2.5)node{$2$}(2.5,-0.5)node{$2$};
\draw(1,-3)node[circle,inner sep=0.8mm,draw]{}(3,-1)node[circle,inner sep=0.8mm,draw]{};
\draw(1.5,-2.5)node{$\triangle$}(2.5,-1.5)node{$\triangle$};
}
\quad$\rightarrow$\quad
\tikzpic{-0.5}{[x=0.4cm,y=0.4cm]
\draw(0,0)--(4,0)--(4,-4)--(0,-4)--(0,0);
\draw(0,-1)--(4,-1)(0,-2)--(4,-2)(0,-3)--(4,-3);
\draw(1,0)--(1,-4)(2,0)--(2,-4)(3,0)--(3,-4);
\draw(0.5,-0.5)node{$0$}(1.5,-1.5)node{$1$}(0.5,-3.5)node{$2$}(3.5,-0.5)node{$2$}
	(2.5,-3.5)node{$3$}(3.5,-2.5)node{$3$};
\draw(1,-4)node[circle,inner sep=0.8mm,draw]{}(4,-1)node[circle,inner sep=0.8mm,draw]{};
\draw(3,-4)node[circle,inner sep=0.8mm,draw]{}(4,-3)node[circle,inner sep=0.8mm,draw]{};
\draw(3.5,-3.5)node{$\triangle$};
}
\quad$\rightarrow$\quad
\tikzpic{-0.5}{[x=0.4cm,y=0.4cm]
\foreach \x in{0,1,2,3,4,5,6}{\draw(0,-\x)--(6,-\x)(\x,0)--(\x,-6);}
\draw(0.5,-0.5)node{$0$}(2.5,-2.5)node{$1$}(0.5,-4.5)node{$2$}(4.5,-0.5)node{$2$}
	(3.5,-4.5)node{$3$}(4.5,-3.5)node{$3$}(1.5,-5.5)node{$4$}(5.5,-1.5)node{$4$};
\draw(2,-6)node[circle,inner sep=0.8mm,draw]{}(6,-2)node[circle,inner sep=0.8mm,draw]{};
\draw(5,-6)node[circle,inner sep=0.8mm,draw]{}(6,-5)node[circle,inner sep=0.8mm,draw]{};
\draw(2.5,-5.5)node{$\triangle$}(3.5,-5.5)node{$\triangle$}(5.5,-2.5)node{$\triangle$}
     (5.5,-3.5)node{$\triangle$}(4.5,-5.5)node{$\triangle$}(5.5,-4.5)node{$\triangle$}
     (5.5,-5.5)node{$\triangle$};
}\\[12pt]
$\rightarrow$\quad
\tikzpic{-0.5}{[x=0.4cm,y=0.4cm]
\foreach \x in {0,-1,-2,-3,-4,-5,-6,-7} {\draw(0,\x)--(7,\x)(-\x,0)--(-\x,-7);}
\draw(0.5,-0.5)node{$0$}(2.5,-2.5)node{$1$}(0.5,-4.5)node{$2$}(4.5,-0.5)node{$2$}
	(3.5,-4.5)node{$3$}(4.5,-3.5)node{$3$}(1.5,-5.5)node{$4$}(5.5,-1.5)node{$4$}
	(6.5,-6.5)node{$5$};
\draw(2,-7)node[circle,inner sep=0.8mm,draw]{}(7,-2)node[circle,inner sep=0.8mm,draw]{};
\draw(5,-7)node[circle,inner sep=0.8mm,draw]{}(7,-5)node[circle,inner sep=0.8mm,draw]{};
}
\quad$\rightarrow$\quad
\tikzpic{-0.5}{[x=0.4cm,y=0.4cm]
\foreach \x in {0,-1,-2,-3,-4,-5,-6,-7} {\draw(0,\x)--(8,\x)(-\x,0)--(-\x,-8);}
\draw(8,0)--(8,-7)(0,-8)--(7,-8);
\draw(0.5,-0.5)node{$0$}(2.5,-2.5)node{$1$}(0.5,-4.5)node{$2$}(4.5,-0.5)node{$2$}
	(3.5,-4.5)node{$3$}(4.5,-3.5)node{$3$}(1.5,-5.5)node{$4$}(5.5,-1.5)node{$4$}
	(6.5,-6.5)node{$5$}(2.5,-7.5)node{$6$}(7.5,-2.5)node{$6$};
\draw(2,-8)node[circle,inner sep=0.8mm,draw]{}(8,-2)node[circle,inner sep=0.8mm,draw]{};
\draw(3,-8)node[circle,inner sep=0.8mm,draw]{}(8,-3)node[circle,inner sep=0.8mm,draw]{};
\draw(6,-8)node[circle,inner sep=0.8mm,draw]{}(8,-6)node[circle,inner sep=0.8mm,draw]{};
\draw(3.5,-7.5)node{$\triangle$}(4.5,-7.5)node{$\triangle$}(5.5,-7.5)node{$\triangle$}(6.5,-7.5)node{$\triangle$}
     (7.5,-3.5)node{$\triangle$}(7.5,-4.5)node{$\triangle$}(7.5,-5.5)node{$\triangle$}(7.5,-6.5)node{$\triangle$};
}
\caption{An example of growth of tree-like tableaux.}
\label{fig:TTab}
\end{figure}
The boxes with $\triangle$ are the ribbon 
added during the insertion procedure.
We put a circle on valid vertices on the boundary edges of a symmetric tree-like 
tableau.
\end{example}

The unique tree-like tableau of size one is a single box labeled by zero.
We have three tree-like tableaux with labels in $[0,1]$ (see Proof of Proposition \ref{prop:BTperi}).
We list up all 19 symmetric tree-like tableaux with labels in $[0,2]$ in Figure \ref{fig:BT2}.
Each tree-like tableau in Figure \ref{fig:BT2} corresponds to the ballot tableau
in Figure \ref{fig:BTn2}.
\begin{figure}[ht]
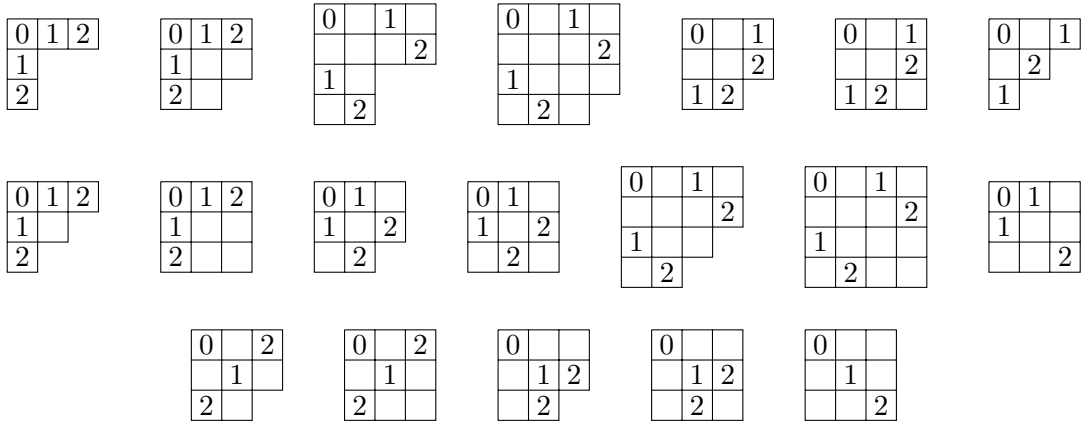

\tikzpic{-0.5}{[x=0.4cm,y=0.4cm]
\draw(0,0)--(3,0)--(3,-1)--(0,-1)(0,0)--(0,-3)--(1,-3)--(1,0)(0,-2)--(1,-2)(2,0)--(2,-1);
\draw(0.5,-0.5)node{$0$}(0.5,-1.5)node{$1$}(0.5,-2.5)node{$2$};
\draw(1.5,-0.5)node{$1$}(2.5,-0.5)node{$2$};
}\quad
\tikzpic{-0.5}{[x=0.4cm,y=0.4cm]
\draw(0,0)--(3,0)--(3,-2)--(0,-2)(0,0)--(0,-3)--(2,-3)--(2,0)(0,-1)--(3,-1)(1,0)--(1,-3);
\draw(0.5,-0.5)node{$0$}(0.5,-1.5)node{$1$}(0.5,-2.5)node{$2$};
\draw(1.5,-0.5)node{$1$}(2.5,-0.5)node{$2$};
}\quad
\tikzpic{-0.5}{[x=0.4cm,y=0.4cm]
\draw(0,0)--(4,0)--(4,-2)--(0,-2)(0,0)--(0,-4)--(2,-4)--(2,0)(3,0)--(3,-2)(1,0)--(1,-4)
(0,-1)--(4,-1)(0,-3)--(2,-3);
\draw(0.5,-0.5)node{$0$}(2.5,-0.5)node{$1$}(0.5,-2.5)node{$1$}(3.5,-1.5)node{$2$}(1.5,-3.5)node{$2$};
}\quad
\tikzpic{-0.5}{[x=0.4cm,y=0.4cm]
\draw(0,0)--(4,0)--(4,-3)--(0,-3)(0,0)--(0,-4)--(3,-4)--(3,0)(0,-1)--(4,-1)(0,-2)--(4,-2)
(1,0)--(1,-4)(2,0)--(2,-4);
\draw(0.5,-0.5)node{$0$}(2.5,-0.5)node{$1$}(0.5,-2.5)node{$1$}(3.5,-1.5)node{$2$}(1.5,-3.5)node{$2$};
}\quad
\tikzpic{-0.5}{[x=0.4cm,y=0.4cm]
\draw(0,0)--(3,0)--(3,-2)--(0,-2)(0,0)--(0,-3)--(2,-3)--(2,0)(0,-1)--(3,-1)(1,0)--(1,-3);
\draw(0.5,-0.5)node{$0$}(2.5,-0.5)node{$1$}(0.5,-2.5)node{$1$}(1.5,-2.5)node{$2$}(2.5,-1.5)node{$2$};
}\quad
\tikzpic{-0.5}{[x=0.4cm,y=0.4cm]
\draw(0,0)--(3,0)--(3,-3)--(0,-3)--(0,0)(0,-2)--(3,-2)(2,0)--(2,-3)(0,-1)--(3,-1)(1,0)--(1,-3);
\draw(0.5,-0.5)node{$0$}(2.5,-0.5)node{$1$}(0.5,-2.5)node{$1$}(1.5,-2.5)node{$2$}(2.5,-1.5)node{$2$};
}\quad
\tikzpic{-0.5}{[x=0.4cm,y=0.4cm]
\draw(0,0)--(3,0)--(3,-1)--(0,-1)(2,0)--(2,-2)--(0,-2)(1,0)--(1,-3)--(0,-3)--(0,0);
\draw(0.5,-0.5)node{$0$}(2.5,-0.5)node{$1$}(0.5,-2.5)node{$1$}(1.5,-1.5)node{$2$};
}\\[12pt]
\tikzpic{-0.5}{[x=0.4cm,y=0.4cm]
\draw(0,0)--(3,0)--(3,-1)--(0,-1)(2,0)--(2,-2)--(0,-2)(1,0)--(1,-3)--(0,-3)--(0,0);
\draw(0.5,-0.5)node{$0$}(1.5,-0.5)node{$1$}(0.5,-1.5)node{$1$}(0.5,-2.5)node{$2$}(2.5,-0.5)node{$2$};
}\quad
\tikzpic{-0.5}{[x=0.4cm,y=0.4cm]
\draw(0,0)--(3,0)--(3,-3)--(0,-3)--(0,0)(1,0)--(1,-3)(2,0)--(2,-3)(0,-1)--(3,-1)(0,-2)--(3,-2);
\draw(0.5,-0.5)node{$0$}(1.5,-0.5)node{$1$}(0.5,-1.5)node{$1$}(0.5,-2.5)node{$2$}(2.5,-0.5)node{$2$};
}\quad
\tikzpic{-0.5}{[x=0.4cm,y=0.4cm]
\draw(0,0)--(3,0)--(3,-2)--(0,-2)(0,0)--(0,-3)--(2,-3)--(2,0)(1,0)--(1,-3)(0,-1)--(3,-1);
\draw(0.5,-0.5)node{$0$}(0.5,-1.5)node{$1$}(1.5,-0.5)node{$1$}(2.5,-1.5)node{$2$}(1.5,-2.5)node{$2$};
}\quad
\tikzpic{-0.5}{[x=0.4cm,y=0.4cm]
\draw(0,0)--(3,0)--(3,-3)--(0,-3)--(0,0)(1,0)--(1,-3)(2,0)--(2,-3)(0,-1)--(3,-1)(0,-2)--(3,-2);
\draw(0.5,-0.5)node{$0$}(0.5,-1.5)node{$1$}(1.5,-0.5)node{$1$}(2.5,-1.5)node{$2$}(1.5,-2.5)node{$2$};
}\quad
\tikzpic{-0.5}{[x=0.4cm,y=0.4cm]
\draw(0,0)--(4,0)--(4,-2)--(0,-2)(0,-1)--(4,-1)(0,0)--(0,-4)--(2,-4)--(2,0)(1,0)--(1,-4)
(3,0)--(3,-2)(0,-3)--(2,-3)(3,-2)--(3,-3)--(2,-3);
\draw(0.5,-0.5)node{$0$}(2.5,-0.5)node{$1$}(0.5,-2.5)node{$1$}(3.5,-1.5)node{$2$}(1.5,-3.5)node{$2$};
}\quad
\tikzpic{-0.5}{[x=0.4cm,y=0.4cm]
\draw(0,0)--(4,0)--(4,-4)--(0,-4)--(0,0)(0,-1)--(4,-1)(0,-2)--(4,-2)(0,-3)--(4,-3)
(1,0)--(1,-4)(2,0)--(2,-4)(3,0)--(3,-4);
\draw(0.5,-0.5)node{$0$}(2.5,-0.5)node{$1$}(0.5,-2.5)node{$1$}(3.5,-1.5)node{$2$}(1.5,-3.5)node{$2$};
}\quad
\tikzpic{-0.5}{[x=0.4cm,y=0.4cm]
\draw(0,0)--(3,0)--(3,-3)--(0,-3)--(0,0)(1,0)--(1,-3)(2,0)--(2,-3)(0,-1)--(3,-1)(0,-2)--(3,-2);
\draw(0.5,-0.5)node{$0$}(1.5,-0.5)node{$1$}(0.5,-1.5)node{$1$}(2.5,-2.5)node{$2$};
}\\[12pt]
\tikzpic{-0.5}{[x=0.4cm,y=0.4cm]
\draw(0,0)--(3,0)--(3,-2)--(0,-2)(0,0)--(0,-3)--(2,-3)--(2,0)(1,0)--(1,-3)(0,-1)--(3,-1);
\draw(0.5,-0.5)node{$0$}(1.5,-1.5)node{$1$}(0.5,-2.5)node{$2$}(2.5,-0.5)node{$2$};
}\quad
\tikzpic{-0.5}{[x=0.4cm,y=0.4cm]
\draw(0,0)--(3,0)--(3,-3)--(0,-3)--(0,0)(1,0)--(1,-3)(2,0)--(2,-3)(0,-1)--(3,-1)(0,-2)--(3,-2);
\draw(0.5,-0.5)node{$0$}(1.5,-1.5)node{$1$}(0.5,-2.5)node{$2$}(2.5,-0.5)node{$2$};
}\quad
\tikzpic{-0.5}{[x=0.4cm,y=0.4cm]
\draw(0,0)--(3,0)--(3,-2)--(0,-2)(0,0)--(0,-3)--(2,-3)--(2,0)(1,0)--(1,-3)(0,-1)--(3,-1);
\draw(0.5,-0.5)node{$0$}(1.5,-1.5)node{$1$}(1.5,-2.5)node{$2$}(2.5,-1.5)node{$2$};
}\quad
\tikzpic{-0.5}{[x=0.4cm,y=0.4cm]
\draw(0,0)--(3,0)--(3,-3)--(0,-3)--(0,0)(1,0)--(1,-3)(2,0)--(2,-3)(0,-1)--(3,-1)(0,-2)--(3,-2);
\draw(0.5,-0.5)node{$0$}(1.5,-1.5)node{$1$}(1.5,-2.5)node{$2$}(2.5,-1.5)node{$2$};
}\quad
\tikzpic{-0.5}{[x=0.4cm,y=0.4cm]
\draw(0,0)--(3,0)--(3,-3)--(0,-3)--(0,0)(1,0)--(1,-3)(2,0)--(2,-3)(0,-1)--(3,-1)(0,-2)--(3,-2);
\draw(0.5,-0.5)node{$0$}(1.5,-1.5)node{$1$}(2.5,-2.5)node{$2$};
}
\caption{19 symmetric tree-like tableaux with labels in $[0,2]$ for symmetric Dyck tilings}
\label{fig:BT2}
\end{figure}

Symmetric tree-like tableaux of size $2n+1$ studied in \cite{ABN11} form a subset 
in our symmetric tree-like tableaux with general lower ballot path. 
They correspond to the trees which have no dots, have $n$ leaves and possibly 
have circles on labels.
The condition that a tree has $n$ leaves means that the lower path of 
a symmetric Dyck tiling is a zig-zag path, {\it i.e.,} $(UD)^{n}$.

In terms of tree-like tableaux, the above conditions are equivalent to 
the following conditions on symmetric tree-like tableaux:
\begin{enumerate}
\item we have two boxes labeled by $i$ for $i\in[1,n]$.
\item the half-perimeter of a tableau is $2(n+1)$.
\end{enumerate}

Let $B$ be a symmetric tree-like tableau of half-perimeter $2p$ with $p\ge n+1$.
Then, the number of diagonal insertions, which are not diagonal insertions on the 
main diagonal in $B$, is $p-(n+1)$.
The number of the dotted edges in a tree corresponding to $B$ 
is the number of labels which appear only once in $B$.
Further, if and only if a label appears once in $B$, this label is on a box 
on the main diagonal.

\subsection{Tree-like tableaux of shifted shapes}
\label{sec:tltss}
Since a tree-like tableau $T$ for a symmetric Dyck tiling is symmetric
along the diagonal line, we obtain a tree-like tableau of shifted 
shape by cutting $T$ along the diagonal line.

Each non-negative integers appears exactly once in a tree-like tableau 
of a shifted shape. 
When the labels of a tree-like tableau of a shifted shape are in $[0,n]$, 
we say that the tree-like tableau of a shifted shape is of size $n$.
\begin{example}
We consider the same example as Example \ref{fig:TTab}.
Figure \ref{fig:TTabshifted} shows a tree-like tableau of a shifted shape.
\begin{figure}[ht]
\tikzpic{-0.5}{[x=0.4cm,y=0.4cm]
\foreach \x in{0,-1,-2,-3,-4,-5,-6} {\draw(0,\x)--(-\x+1,\x);}
\draw(0,0)--(0,-8)(0,-7)--(7,-7);
\foreach \x in{1,2,3,4,5,6}{\draw(\x,-\x+1)--(\x,-8);}
\draw(0,-8)--(7,-8)(7,-6)--(7,-8);
\draw(0.5,-0.5)node{$0$}(2.5,-2.5)node{$1$}(0.5,-4.5)node{$2$}
	(3.5,-4.5)node{$3$}(1.5,-5.5)node{$4$}
	(6.5,-6.5)node{$5$}(2.5,-7.5)node{$6$};
}
\caption{A tree-like tableau of a shifted shape}
\label{fig:TTabshifted}
\end{figure}
\end{example}

\begin{remark}
Let $L$ be a natural label such that any edge is connected to both the root and a leaf and there 
is no edges with dots, {\it i.e.}, the lower boundary path is a zig-zag Dyck path.
In this case, the tree-like tableau of a shifted shape for $L$ constructed in this paper 
coincides with the symmetric tree-like tableau for $L$ studied in \cite{ABN11}.
\end{remark}

\subsection{Enumeration of tree-like tableaux of shifted shapes}
In this subsection, we study the enumeration of tree-like tableaux of 
shifted shapes.
Since there is a one-to-one correspondence between insertion histories
and tree-like tableaux of shifted shapes, this enumeration gives also 
the total number of symmetric Dyck tilings with a general lower boundary
ballot path and equivalently the total number of ballot tableaux (see 
Section \ref{sec:bttts} for a bijection between ballot tableaux and 
tree-like tableaux of shifted shapes).

Let $a_{n}$ be a sequence of integers (A001517 in \cite{Slo}) satisfying
\begin{align*}
A_{n}=(4n-6)A_{n-1}+A_{n-2},
\end{align*}
with initial conditions $A_{1}=1$ and $A_{2}=3$.
First few values of $A_{n\ge1}$ is $1, 3, 19, 193, 2721,\ldots$.
The values $\{A_{n}\}$ are the Bessel polynomials evaluated at $2$.

\begin{theorem}
\label{thrm:tlt}
The number of tree-like tableaux of shifted shapes of size $n$ is 
given by $A_{n+1}$.
\end{theorem}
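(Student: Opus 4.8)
The plan is to pass from the tableaux to insertion histories and to establish a second-order recurrence for their number. Write $T_n$ for the number of tree-like tableaux of shifted shapes of size $n$. By the one-to-one correspondence between insertion histories and tree-like tableaux of shifted shapes recorded in Section \ref{sec:tltss} (together with Proposition \ref{prop:BTperi}), $T_n$ equals the number of insertion histories $\mathbf{h}=(h_1,\ldots,h_n)$, and I will compute this by building $\mathbf{h}$ one entry at a time through the insertion procedure of Definition \ref{defn:IPTTab}. The goal is the recurrence $T_n=(4n-2)T_{n-1}+T_{n-2}$, which is precisely the relation $A_{n+1}=(4n-2)A_n+A_{n-1}$ obtained by shifting the index in $A_n=(4n-6)A_{n-1}+A_{n-2}$; combined with the base values this gives $T_n=A_{n+1}$.

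First I would read off the contribution of each insertion type in Definition \ref{defn:IPTTab} when the entry $h_n$ is appended to a history of length $n-1$. A boxed $h_n$ forces a diagonal insertion on the \emph{unique} boundary vertex of the main diagonal, so every size-$(n-1)$ tableau admits exactly one such extension (and no circle), contributing $T_{n-1}$ in total. A non-boxed $h_n$ is a symmetric row/column insertion at a boundary edge or a symmetric diagonal insertion at a valid vertex, and it independently may or may not carry a circle; hence the non-boxed extensions of a fixed size-$(n-1)$ tableau number $2(m+v)$, where $m$ is the number of boundary edges below the diagonal (the length of $\mathbf{e}$) and $v$ is the number of valid vertices. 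Since $\mathbf{e}$ is strictly increasing with steps in $\{1,2\}$ (Lemma \ref{lemma:edashed} and Definition \ref{defn:e}), the number of $2$-steps is $v=e_m-m+1$, so $m+v=e_m+1$ where $e_m$ is the largest entry of $\mathbf{e}$. Summing over all size-$(n-1)$ tableaux yields $T_n=2E_{n-1}+3T_{n-1}$, where $E_k:=\sum e_m$ ranges over size-$k$ tableaux.

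The heart of the argument, and the step I expect to be the main obstacle, is a companion identity controlling the perimeter statistic, namely $2E_n=(4n-1)T_n+T_{n-1}$ for $n\ge1$. I would prove this by tracking how $e_m$ (equivalently the half-perimeter together with the count of valid vertices) is modified by each of the three insertion types: a symmetric row/column insertion and a main-diagonal insertion raise the half-perimeter by two, whereas a symmetric diagonal insertion at an off-diagonal vertex raises it by four, and in each case the change in $e_m$ is controlled by the step pattern of $\mathbf{e}$. Feeding these increments through Definition \ref{defn:edashed} and summing over all insertions produces a recurrence for $E_n$ which, together with $T_n=2E_{n-1}+3T_{n-1}$, is equivalent to the stated linear relation. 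The delicate point is the bookkeeping of $\mathbf{e}$, in particular determining which insertions create or destroy valid vertices (the $2$-steps), and this is where the real work lies.

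Finally I would eliminate the auxiliary sequence. Substituting $2E_{n-1}=(4n-5)T_{n-1}+T_{n-2}$, the identity with $n$ replaced by $n-1$, into $T_n=2E_{n-1}+3T_{n-1}$ gives $T_n=(4n-5)T_{n-1}+T_{n-2}+3T_{n-1}=(4n-2)T_{n-1}+T_{n-2}$ for $n\ge2$. The base cases $T_0=1=A_1$ (the single box labeled $0$), $T_1=3=A_2$ (the three tableaux with labels in $[0,1]$), and the check $T_2=19=A_3$ provided by the nineteen tableaux of Figure \ref{fig:BT2}, seed the induction, and Theorem \ref{thrm:tlt} follows at once.
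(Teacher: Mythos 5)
Your first identity is correct and cleanly argued: for a tableau of size $n-1$ the non-boxed extensions are counted by twice the number of admissible positions, the positions are exactly the boundary edges together with the valid vertices, and since $\mathbf{e}$ starts at $0$ and increases in steps of $1$ or $2$ (Lemma \ref{lemma:edashed} and Definition \ref{defn:e}) these positions are precisely the integers $0,1,\ldots,e_m$, giving $m+v=e_m+1$ and hence $T_n=2E_{n-1}+3T_{n-1}$; the unique boxed extension contributes the remaining $T_{n-1}$. The gap is in the companion identity $2E_n=(4n-1)T_n+T_{n-1}$. As you yourself note, given the first identity this statement is \emph{equivalent} to the recurrence $T_n=(4n-2)T_{n-1}+T_{n-2}$ you are trying to prove, so it carries the entire content of the theorem, and the method you sketch for it does not work. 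Tracking increments shows that $e_{\max}$ grows by exactly $2$ under every non-boxed insertion and by $1$ under the boxed one; since the number of non-boxed extensions of a parent is $2(e_{\max}+1)$, summing over all children of all parents gives
\begin{align*}
E_n=\sum_{\text{parents}}\bigl(2(e_{\max}+1)(e_{\max}+2)+e_{\max}+1\bigr),
\end{align*}
which involves $\sum e_{\max}^{2}$. The same happens at every order: the moment hierarchy for the single statistic $e_{\max}$ (equivalently, for the number of main-diagonal insertions, since $e_{\max}=2k-m'$ for a tableau of size $k$ with $m'$ boxed entries) does not close, so ``feeding the increments through Definition \ref{defn:edashed} and summing'' produces a relation between $E_n$, $E_{n-1}$ and a second moment, not the linear identity you need. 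You would have to supply an independent proof of the equivalent statement $2S_n=T_n-T_{n-1}$, where $S_n$ is the total number of boxed entries over all histories of length $n$ (numerically: $S_1=1$, $S_2=8$), e.g.\ by a bijection between pairs (history, boxed entry) and histories ending in a circled entry; no such argument is given.

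This is exactly the difficulty the paper's proof is designed to circumvent: instead of one linear statistic it refines the count to $a_{n,l,m}$, keeping track of both the off-diagonal and on-diagonal insertion counts. For that pair the one-step recurrence \emph{does} close, yielding Eqn.~(\ref{eqn:rec}); the theorem then follows by converting this to a differential recurrence for the two-variable generating function, matching it against the explicit Bessel-type solution $C_n(y)$, and specializing. If you want to keep your more elementary framework, the missing ingredient is either that two-parameter refinement or a direct combinatorial proof of $2S_n=T_n-T_{n-1}$.
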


Before proceeding to the proof of Theorem \ref{thrm:tlt}, 
we introduce several lemmas and propositions.

Let $a_{n,l,m}$ be the total number of tree-like tableaux of 
shifted shape of size $n-1$ which have $l+m$ diagonal points 
and especially $m$ diagonal points on the main diagonal.
Thus the value $l$ counts the number of diagonal points which
are not on the main diagonal.

\begin{lemma}
The value $a_{n,l,m}$ satisfies 
\begin{align}
\label{eqn:rec}
a_{n,l,m}=2(n-1+l)a_{n-1,l,m}+2(n-1-l-m)a_{n-1,l-1,m}+a_{n-1,l,m-1},
\end{align}
with the initial conditions 
\begin{align*}
a_{1,l,m}=\begin{cases}
1, & (l,m)=(0,0), \\
0, & (l,m)\neq (0,0).
\end{cases}
\end{align*}
\end{lemma}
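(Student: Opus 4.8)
The plan is to establish the recurrence by a bijective case analysis of the insertion procedure in Definition \ref{defn:IPTTab}, reading off how a tree-like tableau of shifted shape of size $n-1$ is produced from one of size $n-2$ by appending the entry $h_{n-1}$. Since there is a one-to-one correspondence between insertion histories and tree-like tableaux of shifted shapes, I would count the insertion histories $\mathbf{h}=(h_1,\ldots,h_{n-1})$ of size $n-1$ with exactly $l$ off-diagonal diagonal points and $m$ diagonal points on the main diagonal, classifying them according to the nature of the last insertion step. The three summands on the right-hand side of Eqn. (\ref{eqn:rec}) should correspond exactly to the three types of insertion: a symmetric row/column insertion (which does not create a new diagonal point), a symmetric diagonal insertion on a non-diagonal vertex (which increases $l$ by one), and a diagonal insertion on the main diagonal (which increases $m$ by one). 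The initial condition is immediate: the unique tableau of size $0$ is a single box labeled by zero, with no diagonal points, so $a_{1,l,m}=1$ exactly when $(l,m)=(0,0)$.

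First I would count the insertion positions available for the last step. For a tableau of half-perimeter $2(n-1)$ (by Proposition \ref{prop:BTperi}, the half of the half-perimeter equals the length of $\mathbf{e}$, which for a tableau of size $n-2$ is $n-1$), the boundary edges below the main diagonal number $n-1$, contributing $2(n-1)$ symmetric row/column insertion sites. The valid vertices correspond to the circled vertices of Definition \ref{defn:vv}, one for each gap of size two in the boundary labeling; these are the sites of symmetric diagonal insertions on non-diagonal vertices. The term $2(n-1+l)a_{n-1,l,m}$ should then arise by checking that the number of insertion sites that preserve $(l,m)$ — namely the boundary-edge insertions together with the valid-vertex insertions at existing diagonal points — totals $2(n-1)+2l$, matching the factor $2(n-1+l)$. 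The term $2(n-1-l-m)a_{n-1,l-1,m}$ counts the symmetric diagonal insertions on non-diagonal vertices that create a genuinely new off-diagonal diagonal point, and the factor $2(n-1-l-m)$ should be the number of such fresh sites; the factor-of-two reflects the pairing of a vertex $v$ at $(p,q)$ with its mirror $v'$ at $(q,p)$ for $p\neq q$. Finally $a_{n-1,l,m-1}$ records the single diagonal insertion on the unique main-diagonal boundary vertex, which raises $m$ by one and carries no factor of two since that vertex is self-paired.

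The delicate bookkeeping — and the main obstacle — will be verifying that the arithmetic of the three factors is exactly right, in particular that the available insertion sites partition cleanly into ``preserving'', ``creating off-diagonal'', and ``creating on-diagonal'' classes with multiplicities $2(n-1+l)$, $2(n-1-l-m)$, and $1$. The subtlety is that valid vertices are determined by the label sequence $\mathbf{e}$ through Definition \ref{defn:vv}, and Lemma \ref{lemma:edashed} guarantees no value of $\mathbf{e}'$ repeats three times, so the circled-vertex structure is controlled; I would use this to argue that the number of valid vertices equals $l$ plus the number of newly-available diagonal sites, reconciling the geometric count of insertion positions with the diagonal-point statistics. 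A careful tally of how a row/column insertion versus a diagonal insertion alters the boundary labeling, and hence the valid-vertex count for the next step, is what makes the factors come out as stated.

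Once the recurrence (\ref{eqn:rec}) is in hand, Theorem \ref{thrm:tlt} follows by summing over $l$ and $m$. Setting $T_n:=\sum_{l,m}a_{n,l,m}$ for the total number of tree-like tableaux of shifted shape of size $n-1$, I would sum Eqn. (\ref{eqn:rec}) over all $l,m$; the weighted sums $\sum_{l,m}l\,a_{n-1,l,m}$ and $\sum_{l,m}m\,a_{n-1,l,m}$ must be shown to combine so that the result reduces to the two-term recurrence $A_{n}=(4n-6)A_{n-1}+A_{n-2}$. Reconciling the linear-in-$(l,m)$ coefficients of the three-index recurrence with the constant-coefficient Bessel recurrence is the second place where care is needed; I expect that the terms involving $l$ and $m$ telescope or cancel against one another when the full sum is assembled, leaving precisely the claimed recurrence, after which an induction on $n$ matching the initial values $A_1=1$, $A_2=3$ completes the identification with the sequence A001517.
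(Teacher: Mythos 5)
Your overall strategy---classify by the type of the last insertion step, match the three insertion types to the three summands, and dispose of the initial condition by inspecting the single-box tableau---is the same as the paper's. But the combinatorial accounting of the factors of $2$, which is the heart of the lemma, is wrong in your sketch. You attribute the $2$ in $2(n-1-l-m)$ to ``the pairing of a vertex $v$ at $(p,q)$ with its mirror $v'$ at $(q,p)$'' and the absence of a $2$ on the term $a_{n-1,l,m-1}$ to the main-diagonal vertex being self-paired. This cannot be the source of the factor: a symmetric diagonal insertion on a non-diagonal vertex is \emph{defined} (in the paragraph preceding Definition \ref{defn:vv}) as the simultaneous insertion at $v$ \emph{and} $v'$, so a mirror pair contributes one insertion operation, not two. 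Likewise, each boundary edge below the diagonal determines exactly one symmetric insertion (row if horizontal, column if vertical), so your ``$2(n-1)$ symmetric row/column insertion sites'' from $n-1$ boundary edges double-counts. The correct source of both factors of $2$ is the circle/ribbon choice in step (3) of Definition \ref{defn:IPTTab}: for an unboxed entry $h_n$ one independently chooses whether $h_n$ is circled, i.e.\ whether a ribbon joining the two mirror boxes labeled $n$ (a box with $\ast$ in the tiling picture) is added. A boxed entry admits no such choice, which is exactly why the main-diagonal term carries no factor of $2$.

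A second, related slip: you describe $2(n-1+l)$ as counting ``insertion sites that preserve $(l,m)$---namely the boundary-edge insertions together with the valid-vertex insertions at existing diagonal points.'' Every valid-vertex insertion is a diagonal insertion and therefore increases $l$; none preserve $(l,m)$. The factor $n-1+l$ is purely the number of boundary edges below the diagonal of a tableau counted by $a_{n-1,l,m}$: starting from half of the half-perimeter equal to $1$ for the single box, a row/column or main-diagonal insertion increases it by $1$ while an off-diagonal diagonal insertion increases it by $2$, giving $1+(n-2-l-m)+2l+m=n-1+l$. With the factor of $2$ correctly sourced from the circle choice and the valid-vertex count $n-1-l-m$ for the fresh off-diagonal sites, the three terms fall out as claimed. (Your final paragraph on deducing Theorem \ref{thrm:tlt} is not part of this lemma; the paper does that separately via the generating function $c_n(x,y)$ and its differential equation.)
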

\begin{proof}
Let $T_{n,l,m}$ be a tree-like tableaux of shifted shapes such 
that the size is $n-1$, the number of diagonal points is $l+m$,
and the number of diagonal points on the main diagonal is $m$.

First, we count the number of ways to produce $T_{n,l,m}$'s  
from $T_{n-1,l,m}$.
Since the number of diagonal points are the same, we have 
$2(n-1+l)$ ways to produce a tree-like tableau $T_{n,l,m}$.
Recall that a tree-like tableau of a shifted shape is one-to-one 
to a tree-like tableau for a symmetric Dyck tiling.
In the insertion algorithm for a tree-like tableau for a symmetric 
Dyck tiling, we have two choices when two points labeled by $n$ are 
inserted in a tableau. The two choices are whether we add a ribbon 
connecting the two points labeled by $n$ 
(see Definition \ref{defn:IPTTab}). 
Thus, the factor $2$ in $2(n-1+l)$ comes from these two choices.
The factor $(n-1+l)$ comes from the number of insertion points 
which does not correspond to a diagonal insertion.
The insertion of a diagonal insertion increases the number of 
insertion points by one. Since $T_{n-1,l,m}$ has $l$ diagonal 
points (which are not on the main diagonal), we have
a factor $(n-1+l)$.

Secondly, we have only one way to produce $T_{n,l,m}$ from 
$T_{n-1,l,m-1}$. 
This is obvious since we have a unique way to 
perform a diagonal insertion on the main diagonal in $T_{n-1,l,m}$.

Thirdly, we count the number of ways to produce $T_{n,l,m}$ 
from $T_{n-1,l-1,m}$.
When $(l,m)=(1,0)$, we have $n-2$ ways to perform a diagonal 
insertion since the size of $T_{n-1,l-1,m}$ is $n-2$.
The increment of $l$ or $m$ by one decrease the number of ways 
of diagonal insertions by one.
Thus, we have $n-2-(l-1)-m=n-1-l-m$ ways to perform a diagonal 
insertion. 
The factor $2$ comes from the same fact as the first case.

Combining these observations together, we obtain Eqn. (\ref{eqn:rec}).
\end{proof}

Let $c_{n}(x,y)$ be the formal power series given by
\begin{align*}
c_{n}(x,y):=\sum_{l,m\ge0}x^{l}y^{m}a_{n,l,m}.
\end{align*}
The formal power series $c_{n}(x,y)$ satisfies the following partial differential 
equation.
\begin{lemma}
$c_{n}(x,y)$ satisfies 
\begin{align*}
c_{n}(x,y)=2((n-1)(1+x)-x+x\partial_{x}-x^2\partial_{x})c_{n-1}(x,y)
+y(1-2x\partial_{y})c_{n-1}(x,y)
\end{align*}
\end{lemma}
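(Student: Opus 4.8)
The plan is to multiply the recurrence (\ref{eqn:rec}) by $x^{l}y^{m}$ and sum over all $l,m\ge0$, converting each of the three terms on its right-hand side into a differential operator applied to $c_{n-1}(x,y)$. Throughout I would adopt the convention that $a_{n,l,m}=0$ whenever $l<0$ or $m<0$; this is consistent with the combinatorial definition of $a_{n,l,m}$ and is what legitimizes the index shifts below. The two basic identities I would use repeatedly are $x\partial_{x}(x^{l})=l\,x^{l}$ and $y\partial_{y}(y^{m})=m\,y^{m}$, so that $\sum_{l,m}l\,x^{l}y^{m}a_{n-1,l,m}=x\partial_{x}c_{n-1}$ and $\sum_{l,m}m\,x^{l}y^{m}a_{n-1,l,m}=y\partial_{y}c_{n-1}$.

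First I would treat the term $2(n-1+l)a_{n-1,l,m}$, which contributes
\[
\sum_{l,m\ge0}x^{l}y^{m}\,2(n-1+l)a_{n-1,l,m}=2(n-1)c_{n-1}+2x\partial_{x}c_{n-1}.
\]
Next, for the term $2(n-1-l-m)a_{n-1,l-1,m}$ I would reindex $l\mapsto l+1$ (the $l=0$ summand vanishes by the sign convention), producing a factor of $x$ and replacing $n-1-l-m$ by $n-2-l-m$; this yields
\[
2x\bigl((n-2)c_{n-1}-x\partial_{x}c_{n-1}-y\partial_{y}c_{n-1}\bigr).
\]
Finally, reindexing $m\mapsto m+1$ in the term $a_{n-1,l,m-1}$ gives simply $y\,c_{n-1}$.

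Summing the three contributions and collecting the coefficient of $x\,c_{n-1}$ as $2(n-2)x=2(n-1)x-2x$, I would then match the result against the right-hand side of the asserted identity, whose first operator expands as $2(n-1)(1+x)c_{n-1}-2x\,c_{n-1}+2x\partial_{x}c_{n-1}-2x^{2}\partial_{x}c_{n-1}$ and whose second operator expands as $y\,c_{n-1}-2xy\partial_{y}c_{n-1}$; term-by-term agreement then finishes the proof. The computation is essentially routine, so the only real care needed lies in the bookkeeping of the polynomial prefactors after the two reindexings and in confirming that the boundary summands at $l=0$ and $m=0$ genuinely drop out under the convention $a_{n,l,m}=0$ for negative indices; that verification, rather than any conceptual difficulty, is the main place an error could creep in.
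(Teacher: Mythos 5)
Your proposal is correct and follows exactly the route the paper takes: multiply the recurrence for $a_{n,l,m}$ by $x^{l}y^{m}$, sum over $l,m\ge 0$ with the convention that negative indices vanish, and convert the factors of $l$ and $m$ into $x\partial_{x}$ and $y\partial_{y}$ after the index shifts $l\mapsto l+1$ and $m\mapsto m+1$. The paper only sketches this computation ("one can get other terms by a similar straightforward computation"), whereas you carry out all three terms and the final regrouping $2(n-2)x=2(n-1)x-2x$ explicitly; the bookkeeping checks out.
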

\begin{proof}
We take a sum (with respect to $l$ and $m$) of Eqn. (\ref{eqn:rec}) 
multiplied by $x^{l}y^{m}$.
For example, the term $\sum_{l,m}2(n-1+l)a_{n-1,l,m}$
comes from $2(n-1+x\partial_{x})c_{n-1}(x,y)$.
One can get other terms by a similar straightforward computation.
\end{proof}

By setting $x=1$ in the above equation, we have 
\begin{align}
\label{eqn:pdey}
c_{n}(1,y)=(4n-6+y)c_{n-1}(1,y)-2y\partial_{y}c_{n-1}(1,y).
\end{align}
Note that we have no terms involving a partial derivative 
with respect to $x$.

Let $T(n,k)$ be the integer sequence  (A113025 in \cite{Slo}) 
given by 
\begin{align*}
T(n,k):=\genfrac{}{}{0.8pt}{}{(n+k)!}{(n-k)!k!}.
\end{align*}
We first define a formal power series from $T(n,k)$, and 
show this formal power series solves the partial differential equation (\ref{eqn:pdey}).
\begin{lemma}
Define 
\begin{align}
\label{eqn:formulay}
C_{n}(y):=\sum_{0\le k\le n}y^{n-k-1}T(n-1,k).
\end{align}
The formal power series $C_{n}(y)$ is the solution of the equation (\ref{eqn:pdey}).
\end{lemma}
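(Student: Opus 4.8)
The plan is to verify directly that the explicitly given formal power series $C_{n}(y)$ satisfies the recurrence (\ref{eqn:pdey}), namely
\begin{align*}
C_{n}(y)=(4n-6+y)C_{n-1}(y)-2y\partial_{y}C_{n-1}(y).
\end{align*}
Since $C_{n}(y)=\sum_{0\le k\le n}y^{n-k-1}T(n-1,k)$ is a finite sum of monomials in $y$, this reduces to a comparison of coefficients of like powers of $y$ on both sides. First I would substitute the defining formula $T(n,k)=\frac{(n+k)!}{(n-k)!\,k!}$ into both sides and extract the coefficient of a fixed power $y^{n-j-1}$. On the left this coefficient is simply $T(n-1,j)$. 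On the right I would apply the operator $(4n-6+y)-2y\partial_{y}$ to $C_{n-1}(y)=\sum_{0\le k\le n-1}y^{n-k-2}T(n-2,k)$: multiplication by the scalar $4n-6$ preserves the exponent, while multiplication by $y$ and the operator $-2y\partial_{y}$ both shift or rescale it. Collecting, the coefficient of $y^{n-j-1}$ on the right should be expressible in terms of $T(n-2,j)$ and $T(n-2,j-1)$.

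The key identity to establish is then a three-term relation among the $T$-values of the form
\begin{align*}
T(n-1,j)=(4n-6-2(n-j-2))\,T(n-2,j)+T(n-2,j-1),
\end{align*}
or whatever the precise shift bookkeeping yields after carefully tracking the exponent of $y$ through each of the three operators. I would derive this by writing out the ratios $T(n-1,j)/T(n-2,j)$ and $T(n-2,j-1)/T(n-2,j)$ as explicit rational functions of $n$ and $j$ using the factorial definition, clearing denominators, and checking that the resulting polynomial identity in $n$ and $j$ holds. This is a routine but attention-demanding factorial manipulation.

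The main obstacle I anticipate is precisely the exponent bookkeeping: because $C_{n}$ and $C_{n-1}$ have different powers of $y$ attached to each $T$-value (exponent $n-k-1$ versus $n-k-2$), and because $-2y\partial_{y}$ acting on $y^{n-k-2}$ produces the factor $-2(n-k-2)$, one must align the summation indices very carefully before comparing coefficients. A sign error or an off-by-one in the index shift would make the recurrence fail spuriously, so I would treat the boundary terms (the top and bottom values of $k$, where the range of summation changes between $C_{n}$ and $C_{n-1}$) separately and confirm they match. Finally, I would check the initial condition: one verifies that $C_{1}(y)$ (and, together with the relation $A_{n+1}=c_{n}(1,1)=C_{n}(1)$ implicit in the surrounding argument) agrees with the base case $a_{1,l,m}$, so that the unique solution of (\ref{eqn:pdey}) with the given initial data is indeed $C_{n}(y)$.
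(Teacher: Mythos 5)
Your approach is exactly the paper's: the paper's proof consists of the one-line remark that one verifies the claim by substituting the formula into the recurrence and comparing initial conditions, which is precisely the coefficient-of-$y^{n-j-1}$ comparison you outline. The only caution is that your tentatively stated three-term identity pairs the factor $(4n-6)-2(n-j-2)$ with the wrong $T$-value; carrying out the index shift you flag as delicate gives $T(n-1,j)=(2n+2j-4)\,T(n-2,j-1)+T(n-2,j)$, which does hold by the factorial computation you describe.
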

\begin{proof}
One can easily show that (\ref{eqn:formulay}) is a solution of 
the partial differential equation (\ref{eqn:pdey}) by substituting 
Eqn. (\ref{eqn:formulay}) into (\ref{eqn:pdey}) and 
comparing the initial conditions.
\end{proof}

\begin{lemma}
The formal power series $C_{n}(y)$ satisfies the following 
recurrence relation:
\label{lemma:py}
\begin{align*}
C_{n}(y)-2\partial_{y}C_{n}(y)=y C_{n-1}(y)
\end{align*}
\end{lemma}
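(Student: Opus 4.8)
The plan is to prove Lemma \ref{lemma:py} by a direct comparison of the coefficients of powers of $y$ on both sides, using the explicit closed form $T(n-1,k)=\frac{(n-1+k)!}{(n-1-k)!\,k!}$ rather than any generating-function manipulation. The first thing I would record is that $C_n(y)$ is genuinely a polynomial of degree $n-1$: since $T(n-1,k)$ carries the factor $1/(n-1-k)!$, it vanishes for $k\ge n$, so the sum defining $C_n(y)$ in $(\ref{eqn:formulay})$ effectively runs over $0\le k\le n-1$ and $C_n(y)=\sum_{k=0}^{n-1}T(n-1,k)\,y^{n-1-k}$, with no negative powers of $y$. This disposes of the spurious $y^{-1}$ term suggested by the naive range of summation.

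Next I would expand all three quantities appearing in the identity in this monomial basis. Differentiating gives $2\partial_y C_n(y)=\sum_{k=0}^{n-1}2(n-1-k)T(n-1,k)\,y^{n-2-k}$, and after an index shift the coefficient of $y^{\,n-1-k}$ in $C_n(y)-2\partial_y C_n(y)$ is $T(n-1,k)-2(n-k)T(n-1,k-1)$ for $1\le k\le n-1$, and $T(n-1,0)$ for $k=0$. On the other side, $y\,C_{n-1}(y)=\sum_{k=0}^{n-2}T(n-2,k)\,y^{\,n-1-k}$, whose coefficient of $y^{\,n-1-k}$ is $T(n-2,k)$ for $0\le k\le n-2$ and $0$ for $k=n-1$. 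Thus the whole lemma reduces to the single factorial identity
\begin{equation*}
T(n-1,k)-2(n-k)\,T(n-1,k-1)=T(n-2,k),\qquad 1\le k\le n-2,
\end{equation*}
together with the two boundary checks $T(n-1,0)=T(n-2,0)$ at $k=0$ and $T(n-1,n-1)-2\,T(n-1,n-2)=0$ at $k=n-1$.

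The core identity I would verify by factoring out the common factorial $\frac{(n+k-2)!}{(n-k-1)!\,k!}$: writing $T(n-1,k)=(n+k-1)\frac{(n+k-2)!}{(n-k-1)!\,k!}$ and $2(n-k)T(n-1,k-1)=2k\,\frac{(n+k-2)!}{(n-k-1)!\,k!}$, their difference becomes $(n+k-1-2k)\frac{(n+k-2)!}{(n-k-1)!\,k!}=(n-k-1)\frac{(n+k-2)!}{(n-k-1)!\,k!}$, and the factor $n-k-1$ collapses $(n-k-1)!$ to $(n-k-2)!$, yielding exactly $T(n-2,k)$. The boundary cases are then immediate: $T(m,0)=1$ for every $m$ settles $k=0$, while at $k=n-1$ one computes $T(n-1,n-1)=\frac{(2n-2)!}{(n-1)!}=\frac{2(2n-3)!}{(n-2)!}=2\,T(n-1,n-2)$, so the constant term indeed vanishes.

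The only real subtlety, and hence the step I expect to require the most care, is the index bookkeeping at the two ends of the polynomial: one must confirm separately that the top-degree term ($k=0$) and the constant term ($k=n-1$) match, since these are precisely the coefficients for which the generic three-term identity loses either its $T(n-1,k-1)$ contribution on the left or its $T(n-2,k)$ contribution on the right. Once the closed form is in hand the argument is purely algebraic; in particular it needs neither induction nor a re-derivation of the differential equation $(\ref{eqn:pdey})$.
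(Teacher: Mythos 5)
Your proposal is correct and is exactly the ``straightforward computation'' the paper alludes to: expanding $C_n(y)=\sum_{k=0}^{n-1}T(n-1,k)y^{n-1-k}$ and matching coefficients reduces the claim to the factorial identity $T(n-1,k)-2(n-k)T(n-1,k-1)=T(n-2,k)$, which you verify correctly along with both boundary cases. No gap; this fills in the details the paper omits.
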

\begin{proof}
One can easily show the statement by a straightforward computation. 
\end{proof}

Combining the partial differential equation (\ref{eqn:pdey}) with 
Lemma \ref{lemma:py},
we obtain
\begin{prop}
\label{prop:inhorec}
The formal power series $C_{n}(y)$ satisfies the following 
recurrence relation:
\begin{align}
\label{eqn:inhomrec}
C_{n}(y)=(4n-6)C_{n-1}(y)+y^{2}C_{n-2}(y).
\end{align}
\end{prop}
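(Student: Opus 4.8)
The plan is to establish this three-term recurrence purely algebraically, by combining the first-order differential recurrence already known for $C_n(y)$ with Lemma \ref{lemma:py}. Recall from the lemma preceding Lemma \ref{lemma:py} that $C_n(y)$ is the solution of the partial differential equation (\ref{eqn:pdey}), so that $C_n(y) = (4n-6+y)C_{n-1}(y) - 2y\partial_y C_{n-1}(y)$. This is the starting identity I would write down.

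The first step is to split the scalar coefficient $(4n-6+y)$ as $(4n-6)+y$. This separates out the desired main term $(4n-6)C_{n-1}(y)$ and isolates the remaining contribution $y C_{n-1}(y) - 2y\partial_y C_{n-1}(y)$. It therefore suffices to show that this remaining contribution equals $y^2 C_{n-2}(y)$.

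The second step is to factor a single $y$ out of the remaining contribution, obtaining $y\bigl(C_{n-1}(y) - 2\partial_y C_{n-1}(y)\bigr)$, and then to invoke Lemma \ref{lemma:py} with its index shifted from $n$ to $n-1$. Since Lemma \ref{lemma:py} asserts $C_n(y) - 2\partial_y C_n(y) = y C_{n-1}(y)$ for every $n$, its instance at $n-1$ reads $C_{n-1}(y) - 2\partial_y C_{n-1}(y) = y C_{n-2}(y)$. Substituting this into the factored expression yields $y\cdot y C_{n-2}(y) = y^2 C_{n-2}(y)$, and hence $C_n(y) = (4n-6)C_{n-1}(y) + y^2 C_{n-2}(y)$, which is exactly Eqn. (\ref{eqn:inhomrec}).

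There is no genuine analytic obstacle here; the only delicate points are bookkeeping ones. I would check that the index shift in Lemma \ref{lemma:py} is legitimate in the relevant range, so that $C_{n-2}(y)$ is defined (that is, $n$ is at least the appropriate base value), and that all manipulations are read at the level of formal power series, where multiplication by $y$ and the operator $\partial_y$ act in the usual way. As an optional cross-check, one could instead verify (\ref{eqn:inhomrec}) directly from the closed form (\ref{eqn:formulay}) by comparing coefficients of $y^{n-k-1}$ and using the factorial identity defining $T(n,k)$; this is computationally heavier than the operator argument above but independently confirms the recurrence.
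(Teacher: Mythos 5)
Your proposal is correct and is essentially the paper's own argument: the paper obtains Eqn. (\ref{eqn:inhomrec}) precisely by combining the recurrence (\ref{eqn:pdey}) with Lemma \ref{lemma:py}, which is exactly your splitting of $(4n-6+y)$ and the index-shifted application of the lemma. Your explicit write-up just fills in the algebra the paper leaves implicit.
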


\begin{proof}[Proof of Theorem \ref{thrm:tlt}]
The total number of tree-like tableaux of shifted shapes  
of size $n-1$ is given by $\sum_{l,m}a_{n,l,m}$.
In terms of $c_{n}(x,y)$, this number is equal to the 
specialization $c_{n}(1,1)$, which is equal to $C_{n}(1)$.
From Proposition \ref{prop:inhorec}, we have, by specializing 
$y$ to $1$, the recurrence relation
\begin{align*}
c_{n}(1,1)=(4n-6)c_{n-1}(1,1)+c_{n-2}(1,1),
\end{align*}
which is the defining relation of $A_{n}$. 
The sequence $A_{n}$ and $c_{n}(1,1)$ have the same initial 
conditions, therefore, we conclude $A_{n}=c_{n}(1,1)$.
\end{proof}

\subsection{Ribbons in a tree-like tableau of a shifted shape}
In this subsection, we enumerate the number of boxes added as a ribbon
in the insertion process of a tree-like tableau of a shifted shape.
We make use of the structure of a tree associated with a lower boundary 
path $\lambda$.

Let $L$ be a natural label of the tree $\mathrm{Tree}(\lambda)$,
and $\mathbf{h}:=(h_1,\ldots,h_{n})$ be its insertion history.
Recall that we have three types of integer values $h_{i}$ in $\mathrm{h}$:
1) $h_{i}$ is boxed, 2) $h_{i}$ is circled, and 3) $h_i$ is neither 
boxed nor circled. 
From the definition of the insertion procedure of tree-like tableaux of shifted shapes,
we add a ribbon to a tree-like tableau of a shifted shape in the following 
two cases: 1) when $h_{n-1}>h_{n}$ if $h_{n}$ is not circled, or 
2) when $h_{n}$ is circled.

Let $R_{n}$ be the number of boxes in a ribbon added to a tableau of 
a shifted shape at the $n$-th step.
We give the value $R_{n}$ in terms of an insertion history and a natural label of 
a tree for a ballot path.

Let $T$ be a tree constructed from the insertion history $\mathbf{h}$ 
and $E(i)$ with $i\in[1,n]$ be the edge labeled by $i$ in $L$. 

\begin{prop}
\label{prop:Rn}
The value $R_{n}$ satisfies the following properties.
\begin{enumerate}
\item If $h_{n}$ is boxed, then $R_{n}=0$. 
\item If $h_{n-1}=h_{n}$ with $h_{n-1}$ boxed, then 
\begin{align*}
R_{n}=
\begin{cases}
1, & h_{n} \text{ does not have a circle}, \\
2, & h_{n} \text{ has a circle}. 
\end{cases}
\end{align*}

\item If $h_{n}$ is not circled with $h_{n-1}>h_{n}$, then 
\begin{align*}
R_{n}=h_{n-1}-h_{n}
-\#\{ k<n | E(n)\rightarrow E(k) \rightarrow E(n-1), E(k)\in \mathcal{E}_{l}\}
+
\begin{cases}
0, & h_{n-1} \text{ unboxed}, \\
1, & h_{n-1} \text{ boxed}, \\
\end{cases}
\end{align*}
where $\mathcal{E}_{l}$ is the set of edges connected to leaves in $L$.

\item Suppose that $h_{n}$ is circled. 
Then, we have 
\begin{align*}
R_{n}=&1-h_{n}+2\cdot\#\{k<n| E(k)\notin\mathcal{E}_{\bullet}\}
+\#\{k<n|E(k)\in\mathcal{E}_{\bullet}\} \\
&-\#\{k<n | E(n)\rightarrow E(k), E(k)\notin\mathcal{E}_{\bullet}\},
\end{align*}
where $\mathcal{E}_{\bullet}$ is the set of dotted edges in the tree.
\item Otherwise, $R_{n}=0$.
\end{enumerate}
\end{prop}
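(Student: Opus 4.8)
The plan is to read off $R_n$ directly from the insertion procedure in Definition \ref{defn:IPTTab}, after first establishing a dictionary between the combinatorial data $(\mathbf{h}, L)$ and the geometric positions of the labeled boxes in the tree-like tableau of a shifted shape. The trivial cases (1), (2) and (5) fall out immediately, so the genuine content lies in cases (3) and (4), where I would compute the length of the added ribbon as a count of boundary edges traversed and then translate that count into the stated tree statistics.

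For case (1), a boxed $h_n$ triggers the diagonal insertion on the main diagonal of Definition \ref{defn:IPTTab}(2b); since the ribbon step of Definition \ref{defn:IPTTab}(3) is performed only when $h_n$ is unboxed, no ribbon is added and $R_n = 0$. Case (5) is the complementary situation in which neither $h_{n-1} > h_n$ (with $h_n$ uncircled) nor ``$h_n$ circled'' holds, so again no ribbon step fires. Case (2) treats the single degenerate configuration $h_{n-1} = h_n$ with $h_{n-1}$ boxed; here the box labeled $n$ is inserted immediately next to the box labeled $n-1$, and a direct inspection of the two subcases (uncircled versus circled $h_n$) gives the ribbon lengths $1$ and $2$ respectively.

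For case (3) I would argue as follows. By Definition \ref{defn:IPTTab}(3a) the ribbon runs from the box labeled $n$ to the special point below the diagonal and, symmetrically, to the box labeled $n-1$ above the diagonal, so its length equals the number of boundary edges separating the insertion column $h_n$ from the column occupied by the box labeled $n-1$. The raw horizontal span of this interval is $h_{n-1} - h_n$, read off from the definition of the insertion history. Each edge $E(k)$ with $E(n) \to E(k) \to E(n-1)$ that is connected to a leaf corresponds to a column already sitting at the bottom of the Ferrers shape, so the ribbon skips a box there; this accounts for the subtracted term $\#\{k < n \mid E(n) \to E(k) \to E(n-1),\ E(k) \in \mathcal{E}_{l}\}$. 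Finally, when $h_{n-1}$ is boxed the endpoint of the ribbon lies one step further out along the boundary because of the extra boundary box created by the corresponding dotted edge, giving the $+1$ correction, whereas an unboxed $h_{n-1}$ produces no such shift.

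Case (4) is handled by the same geometric principle applied to the circled ribbon of Definition \ref{defn:IPTTab}(3b), which joins the box labeled $n$ below the main diagonal to its mirror above. Here the ribbon length is the distance from the insertion column to the main diagonal, which equals the total boundary-edge count contributed by the edges with labels smaller than $n$, namely $2\#\{k < n \mid E(k) \notin \mathcal{E}_{\bullet}\} + \#\{k < n \mid E(k) \in \mathcal{E}_{\bullet}\}$, since each undotted edge contributes a $UD$-pair and each dotted edge a single $U$; from this one subtracts the insertion offset $h_n$ and the non-dotted edges lying to the right of $E(n)$, which contribute boundary to the right of the ribbon and are not traversed, the residual constant $1$ coming from the box labeled $n$ itself. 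I expect the main obstacle throughout to be precisely this last bookkeeping: proving rigorously that the boundary edges skipped by the ribbon are exactly the leaf edges in the specified interval (case 3) and the undotted edges to the right of $E(n)$ (case 4). This will require a careful induction showing that the Ferrers shape produced by the first $n-1$ insertions has its short columns at the leaf positions and its diagonal structure governed by the dotted edges, which I would phrase using the sequences $\mathbf{e}'$ and $\mathbf{e}$ of Definitions \ref{defn:edashed} and \ref{defn:e} together with the valid vertices of Definition \ref{defn:vv}.
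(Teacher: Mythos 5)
Your case split and your handling of (1), (2) and (5) match the paper exactly, and your case (3) is essentially the paper's argument in other words: the paper sets $m:=h_{n-1}-h_{n}-R_{n}$, identifies $m$ with the number of valid vertices strictly between the two boundary positions (adjusted by one according to whether the larger entry is boxed), and then identifies valid vertices with the $UD$-peaks of $\lambda$, i.e.\ with leaf edges; your ``the ribbon skips a box at each leaf column'' is the same count. The genuine divergence is case (4). The paper does not redo any geometry there: it enlarges the data to size $n+1$ by first performing the diagonal insertion on the main diagonal (a new dotted edge at the bottom of the dotted path) and then re-inserting the old circled entry as an ordinary one, notes that the ribbon is unchanged, and applies case (3) with the diagonal position $2\cdot\#\{k<n\mid E(k)\notin\mathcal{E}_{\bullet}\}+\#\{k<n\mid E(k)\in\mathcal{E}_{\bullet}\}$ playing the role of $h_{n-1}$. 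That reduction buys you case (4) for free once (3) is established; your direct computation has to re-derive the boundary bookkeeping from scratch.

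And that bookkeeping, as you state it, is wrong. The non-dotted edges $E(k)$ with $E(n)\rightarrow E(k)$ do not ``contribute boundary to the right of the ribbon that is not traversed'': the case-(4) ribbon runs from column $h_{n}$ all the way to the main diagonal, i.e.\ to the far end of the boundary, so the boundary contributed by those edges lies inside the traversed interval. The subtraction is needed for the same reason as in case (3): each such edge contributes two boundary positions (a $U$ and a $D$ step) but only one box of the ribbon, since the ribbon turns a corner there; equivalently, the ribbon contains exactly one box for each previously inserted edge lying above or to the right of $E(n)$, plus the diagonal box. Note that your stated reason, applied consistently, would also force you to subtract the boundary contributed by dotted edges to the right of $E(n)$, which the formula does not do (a dotted edge contributes one boundary step and one ribbon box, so nothing is saved). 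Finally, you explicitly postpone the induction identifying the skipped boxes with the stated edge sets; since that identification is the entire nontrivial content of cases (3) and (4), the proposal as written is a correct plan whose key lemma is unproved. Adopting the paper's reduction for (4) would let you prove that lemma once, in the setting of case (3) only.
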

\begin{proof}
(1) and (2) are clear from the definition of insertion procedure of tree-like  
tableaux of shifted shapes.

(3)
We define the value $m$ as $m:=h_{n-1}-h_{n}-R_{n}$. 

The value $m$ may be a negative integer since $h_{n}$ is not circled, 
and $h_{n-1}-h_{n}-R_{n}$ can be a negative integer (see (2)). 
When $h_{n}$ is boxed, the value $m$ is equal to the number of valid vertices 
between the $h_{n-1}$-th and the $h_{n}$-th positions in the tree-like 
tableau of a shifted shape minus one.
Similarly, when $h_{n}$ is not boxed, $m$ is equal to the number of valid vertices 
between the $h_{n-1}$-th and the $h_{n}$-th positions in the tree-like 
tableau.
From the definition of $\mathbf{e}$ for a ballot path $\lambda$, we have a valid 
vertex if we have a partial $UD$ path in $\lambda$. 
In terms of trees, a partial path $UD$ corresponds to an edge connected to 
a leaf of the tree. 
Summarizing the above observations, the value $m$ is expressed 
in terms of the number of valid vertices.
We have 
\begin{align*}
m=
\#\{ k<n | E(n)\rightarrow E(k) \rightarrow E(n-1), E(k)\in \mathcal{E}_{l}\}
-
\begin{cases}
0, & h_{n-1} \text{ unboxed}, \\
1, & h_{n-1} \text{ boxed}. \\
\end{cases}
\end{align*}

(4) Since $h_{n}$ is circled, we have a ribbon connecting two same labels 
$n$ in the tree-like tableau of a non-shifted shape.
By keeping the value $R_{n}$ the same, we transform the tree-like tableau of a shifted shape
of size $n$ into another tree-like tableau of a shifted shape of size $n+1$ and apply 
the formula in (3).
This is realized in terms of the tree-like tableau of a shifted shape as follows.

Let $T'$ be a tree-like tableau of $\mathbf{h}':=(h_1,\ldots,h_{n-1})$.
We insert the box labeled by $n$ on the main diagonal and then 
insert the box labeled by $n+1$ such that its insertion history is equal to $h_{n}$. 
The new insertion history satisfies $h_{n}>h_{n+1}$ and we connect 
the box labeled by $n+1$ and the box labeled by $n$ by a ribbon.
The entry of the insertion history $h_{n+1}$ has no longer a circle.
Since the box labeled by $n$ is a diagonal box, $h_{n}$ is equal to 
$2\#\{k<n | E(k)\notin\mathcal{E}_{\bullet} \}+\#\{k<n | E(k)\in\mathcal{E}_{\bullet} \}$.
Then, the number of boxes in the ribbon added to the new tree-like tableau of a shifted shape 
is the same as the one added to the old tree-like tableau.
We apply the case (3) to the new tree-like tableau. 

(5) The remaining case is $h_{n-1}\le h_{n}$ and $h_{n}$ is not circled.
We have no ribbon starting from the box labeled by $n$, which implies 
$R_{n}=0$.
\end{proof}

\subsection{Ballot tableaux and tree-like tableaux of shifted shapes}
\label{sec:bttts}
In this subsection, we study a bijection between ballot tableaux and 
tree-like tableaux of shifted shapes.
Obviously, both objects are bijective to the natural labels of a tree. 
We show that the bijection connect the properties of these two objects such as 
shadow boxes in ballot tableaux and the number of valid vertices in
tree-like tableaux.

Ballot tableaux introduced in Section \ref{sec:BTab} and tree-like tableaux
of shifted shapes have similar recursive structure as in the case 
of Dyck tableaux and tree-like tableaux \cite{ABDH11,ABN11,S19}.
The following theorem implies that the correspondence between Dyck tableaux
and tree-like tableaux can be generalized in case of ballot tableaux and 
tree-like tableaux of shifted shapes.
\begin{theorem}
\label{thrm:BTTT}
We have a bijection between ballot tableaux $B$ of shifted shapes and 
tree-like tableaux $T$ of shifted shapes satisfying the following properties.
\begin{enumerate}
\item The  number of labeled box in $T$ is one plus the number of labeled box in $B$. 
\item There is a ribbon between the boxes labeled by $i$ and $i+1$ with $1\le i\le n-1$ 
in $B$ if and only if there is a ribbon between the boxes labeled by $i$ and $i+1$ or 
if the box labeled by $i+1$ is below the box labeled by $i$ in $T$.
\item There is a ribbon starting from the box labeled by $i$ and ending at a terminal 
box in $B$ if and only if there is a ribbon starting from the box labeled by $i$
and ending at a box on the main diagonal in $T$. 
\item Let $M_{p}$ be the total number of proper shadow boxes in $B$.
Suppose that $\mathbf{h}=(h_1,\ldots,h_{n})$ be the insertion history for $T$.
When $h_{i}<h_{i-1}$ or $h_{i}$ is circled, we add a ribbon in $T$.
We denote by $R_{i}$ the number of added boxes in the ribbon in $T$ defined 
in Proposition \ref{prop:Rn}.
We define a positive integer $m_{i}$ when $h_{i}<h_{i-1}$ or $h_i$ is circled, 
and $m_{i}=0$ otherwise. 
We have three cases:
\begin{enumerate}
\item $h_{i-1}$ is boxed and $h_{i}$ do not have a circle.
We define $m_{i}:=h_{i-1}-h_{i}+1-R_{i}$.
\item $h_{i}$ is circled.
Let $n$ be the number of entries without a box and $n'$ be the number of 
entries with a box in the subsequence $(h_1,\ldots,h_{i})$.
We define $m_{i}:=2n-1-h_{i}-R_{i}$ for $n'=0$ and 
$m_{i}:=2n+n'-h_{i}-R_{i}$ for $n'\ge1$.
\item Both $h_{i}$ and $h_{i-1}$ do not have a circle.
We define $m_{i}:=h_{i-1}-h_{i}-R_{i}$.
\end{enumerate}
Then, we have 
\begin{align*}
M_{p}=\sum_{i}m_{i}.
\end{align*}
\end{enumerate}
\end{theorem}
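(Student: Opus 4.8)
The plan is to build the bijection through the common parametrization by natural labels and then verify the four listed properties one at a time, leaving the proper-shadow-box count for last. Both a ballot tableau $B$ and a tree-like tableau $T$ of a shifted shape are produced from a single natural label $L$ of $\mathrm{Tree}(\lambda)$ (equivalently, from its insertion history $\mathbf{h}$): the former by the insertion procedure for ballot tableaux, the latter by Definition \ref{defn:IPTTab}. So I would define the bijection by sending $B=\mathrm{BTab}(L)$ to the tree-like tableau $T$ associated with the \emph{same} $L$. Property (1) is then immediate from counting: for $\lambda$ of length $(n,n')$ the tree has $n+n'$ edges, so $B$ carries exactly $n+n'$ labeled boxes, while the labels of $T$ range over $[0,n+n']$, giving $n+n'+1$ labeled boxes.

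First I would treat properties (2) and (3) by induction on the number of labeled boxes, comparing the two insertion algorithms step by step. The key observation is that both procedures add a ribbon at the $i$-th step under the same condition on $\mathbf{h}$, namely that $h_i$ is unboxed and either $h_{i-1}>h_i$ or $h_i$ is circled. When $h_i$ is not circled this ribbon connects the boxes labeled $i-1$ and $i$ in $B$ (a $2^{+}2$ pattern with two labeled edges) and, in $T$, either connects boxes $i-1$ and $i$ or places box $i$ below box $i-1$; when $h_i$ is circled the ribbon runs from box $i$ to a terminal box in $B$ and to a box on the main diagonal in $T$ (Definition \ref{defn:IPTTab} together with the $2^{+}2$ analysis). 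Matching these cases yields (2) and (3) directly.

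The main work, and the main obstacle, is property (4). By Proposition \ref{prop:shadow} shadow boxes of $L$ are bijective to patterns $2^{+}12$, but the claim concerns the refined count of \emph{proper} shadow boxes in the sense of Definition \ref{defn:pshadow}. I would argue locally, ribbon by ribbon: the ribbon added at step $i$ contributes some proper shadow boxes, and I want to show that this local number is exactly $m_i$, whence $M_p=\sum_i m_i$ after summing over $i$. To pin down the local count I would reuse the analysis behind Proposition \ref{prop:Rn}: there the quantity $h_{i-1}-h_i-R_i$ (with the boxed and circled corrections already built into the definition of $m_i$) was identified with the number of valid vertices lying between the $h_i$-th and $h_{i-1}$-th positions. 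A valid vertex arises from a partial path $UD$, i.e. from an edge of the tree connected to a leaf, and such an edge is precisely a frozen region over which the ballot-tableau ribbon passes as a proper shadow box. Thus the displacement $h_{i-1}-h_i$ splits as $R_i$, the ordinary ribbon boxes recorded in $T$, plus $m_i$, the proper shadow boxes of $B$.

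The delicate point will be reconciling the two meanings of ``proper'': in $B$ a shadow box is proper when, after all the downward local moves producing $B'$, no empty or labeled box lies between it and the labeled box it shadows (Definition \ref{defn:pshadow}), whereas the count on the tree side is the intrinsically combinatorial count of leaf-edges skipped by the ribbon. Showing these agree requires a careful local analysis of the frozen-region geometry, and the circled case is the most subtle: there the ribbon terminates on the main diagonal and the displacement formula becomes $2n-1-h_i$ or $2n+n'-h_i$ according to whether boxed entries occur among $h_1,\dots,h_i$, which forces one to relate the position of a diagonal box to the separate counts of edges with and without dots exactly as in case (4) of Proposition \ref{prop:Rn}. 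Once the local identities expressing $m_i$ as the number of proper shadow boxes of the $i$-th ribbon are established in all three cases, property (4) follows by summation and the theorem is proved.
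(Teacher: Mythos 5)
Your proposal follows essentially the same route as the paper: the bijection is induced by the common parametrization via natural labels (equivalently insertion histories), properties (1)--(3) are read off by comparing the two insertion procedures step by step, and property (4) is obtained by identifying $m_i$ with the number of valid vertices between the boxes labeled $i$ and $i-1$, which in turn correspond to leaf-edges (partial $UD$ paths) and hence to the proper shadow boxes contributed by the $i$-th ribbon. The only difference is presentational (you frame (2)--(3) as an induction where the paper argues directly from the conditions on $\mathbf{h}$), and you are candid about the same delicate point the paper also treats briefly, namely matching Definition \ref{defn:pshadow}'s notion of proper shadow box with the intrinsic leaf-edge count.
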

\begin{proof}
Both a ballot tableau and a tree-like tableau of a shifted shape are constructed 
from a natural label of size $n$ and bijective to this natural label.
This induces a natural bijection between ballot tableaux and tree-like tableaux
of shifted shapes through natural labels of a tree.
We show that this bijection satisfies the four properties.

(1) Note that ballot tableaux have a box labeled by $0$. So, the number of labeled 
boxes in $B$ is one plus that of labeled boxes in $T$. 

(2) 
Let $\mathbf{h}:=(h_1,\ldots,h_n)$ be the insertion history of a tree-like tableau $T$, 
which is also the insertion history of the ballot tableau $B$.
We add a ribbon starting from the box labeled by $i$ and ending at the box 
labeled by $i-1$ in $T$ if and only if $h_{i}<h_{i-1}$.
The box labeled by $i$ is left to the box labeled by $i-1$ and there is no boxes
labeled by $j<i-1$ in-between the boxes labeled by $i-1$ and $i$ if and only if 
$h_{i}=h_{i-1}$. In this case, we have a ribbon between the boxes labeled by 
$i-1$ and $i$.

(3) We have a ribbon between the box labeled by $i$ and a terminal box in $B$
if and only if $h_{i}$ has a circle.
Similarly, we have a ribbon between the box labeled by $i$ and a box on the main 
diagonal if and only if $h_{i}$ has a circle.

(4) 
Let $L$ be a natural label and $L(m)$ be a natural label consisting of 
labels in $[1,m]$.
From Proposition \ref{prop:shadow}, a shadow box 
is characterized by a pattern $2^{+}12$. 
Suppose that $h_{i}$ is not circled.
Let $i$ and $j$ be integers such that 
$E(i)\rightarrow E(j)\rightarrow E(i-1)$ and $j<i$, which gives a pattern 
$2^{+}12$, where $E(p)$ is an edge labeled by $p$ in $L$.
Suppose $h_{i}$ is circled.
Let $i$ and $j$ be integers such that $E(i)\rightarrow E(j)$ with $j<i$.
Among such $i$ and $j$, a proper shadow box is above a box labeled by $j$ and 
the edge labeled by $j$ is connected to a leaf in $L(i)$.
The number of shadow boxes on the line connecting the box labeled by $i$
and the box labeled by $i-1$ in the ballot tableau for $L$ is 
equal to the number of valid vertices between the box labeled by $i$ 
and the box labeled by $i-1$ in the tree-like tableau of a shifted shape 
for $L$.
In the cases of (a), (b) and (c), the number $m_{i}$ gives the number of such 
valid vertices in the tree-like tableau.
It is obvious that the number of valid vertices is equal to the number of 
a partial tree $UD$ in the lower ballot path $\lambda$, which implies 
$M_p=\sum_{i}m_{i}$.
\end{proof}

\begin{remark}
From Proposition \ref{prop:Rn}, we have an expression of $R_{n}$ 
in terms of the structure of a natural label.
Thus, the value $m_{i}$ in Theorem \ref{thrm:BTTT}, 
which counts the number of proper shadow boxes,
can be expressed in terms of a natural label and its insertion 
history.

When a natural label has neither dotted edges and circled labels, 
the number $m_{i}$ defined in Theorem \ref{thrm:BTTT} is 
equal to the number of boxes added as a ribbon in $T$.
That is, we have $m_{i}=b_{i}$ as observed in \cite{ABDH11}.
This is because the insertion history $\mathbf{h}=(h_{1},\ldots,h_{n})$
and its symmetric tree-like tableau satisfy the following two properties: 
1) $h_{i}$ is an even integer, 
and 2) the number of valid vertices between the boxes 
labeled by $i$ and $i+1$ in the tree-like tableau is equal to the number of edges 
between the two boxes.
\end{remark}

\bibliographystyle{amsplainhyper} 
\bibliography{biblio}

\end{document}